\documentclass[11pt]{amsart}

%


\usepackage{
  amsmath,
  amsfonts,
  amssymb,
  amsthm,
  amscd,
  gensymb,      
  graphicx,
  comment,      
  etoolbox,     
  mathtools,    
  enumitem,
  mathdots,     
  stmaryrd,     
  txfonts,      
  booktabs,     
  stackrel,     
  xifthen,
}
\usepackage[all]{xy}
\usepackage[usenames,dvipsnames]{xcolor}

\makeatletter
\@namedef{subjclassname@2020}{%
  \textup{2020} Mathematics Subject Classification}
\makeatother


\usepackage{bbm}                     

\DeclareFontFamily{OT1}{pzc}{}
\DeclareFontShape{OT1}{pzc}{m}{it}{<-> s * [1.10] pzcmi7t}{}
\DeclareMathAlphabet{\mathpzc}{OT1}{pzc}{m}{it}

\usepackage[colorlinks=true, linkcolor=blue, citecolor=blue, urlcolor=blue, breaklinks=true]{hyperref}


\usepackage{tikz}
\usetikzlibrary{decorations.markings}
\usetikzlibrary{cd}
\usetikzlibrary{patterns}

\tikzset{anchorbase/.style={baseline={([yshift=-0.5ex]current bounding box.center)}}}
\tikzset{wipe/.style={white,line width=4pt}}

\newcommand{\braidto}{to[out=up,in=down]}

\tikzset{->-/.style={decoration={

  markings,
  mark=at position #1 with {\arrow{>}}},postaction={decorate}}}
\tikzset{-<-/.style={decoration={
  markings,
  mark=at position #1 with {\arrow{<}}},postaction={decorate}}}






\newcommand\dotlabel[1]{$\scriptstyle{#1}$}
\newcommand\shiftlabel[1]{$\color{purple} \scriptstyle{#1}$}
\newcommand\shiftline[3]{\draw[purple] (#1) to (#2) node[anchor=west] {\shiftlabel{#3}}}

\newcommand\reddot[1]{\filldraw[fill=white, draw=red] (#1) circle (1.5pt)}
\newcommand\bluedot[1]{\filldraw[fill=white, draw=blue] (#1) circle (1.5pt)}
\newcommand\blackdot[1]{\filldraw[fill=white, draw=black] (#1) circle (1.5pt)}

\newcommand\multreddot[3][]{%
  \ifthenelse{\isempty{#1}}{%
    \filldraw[fill=white, draw=red] (#2) circle (1.5pt)
  }{%
    \filldraw[fill=white, draw=red] (#2) circle (1.5pt) node[anchor=#1] {\color{red} \dotlabel{#3}}
  }
}
\newcommand\multbluedot[3][]{%
  \ifthenelse{\isempty{#1}}{%
    \filldraw[fill=white, draw=blue] (#2) circle (1.5pt)
  }{%
    \filldraw[fill=white, draw=blue] (#2) circle (1.5pt) node[anchor=#1] {\color{blue} \dotlabel{#3}}
  }
}
\newcommand\multblackdot[3][]{%
  \ifthenelse{\isempty{#1}}{%
    \filldraw[fill=white] (#2) circle (1.5pt)
  }{%
    \filldraw[fill=white] (#2) circle (1.5pt) node[anchor=#1] {\dotlabel{#3}}
  }
}
\newcommand\redtoken[3][]{%
  \ifthenelse{\isempty{#1}}{%
    \filldraw[red] (#2) circle (1.5pt)
  }{%
    \filldraw[red] (#2) circle (1.5pt) node[anchor=#1] {\dotlabel{#3}}
  }
}
\newcommand\bluetoken[3][]{%
  \ifthenelse{\isempty{#1}}{%
    \filldraw[blue] (#2) circle (1.5pt)
  }{%
    \filldraw[blue] (#2) circle (1.5pt) node[anchor=#1] {\dotlabel{#3}}
  }
}

\newcommand\graytoken[3][]{%
 \ifthenelse{\isempty{#1}}{%
    \filldraw[gray] (#2) circle (1.5pt)
  }{%
    \filldraw[gray] (#2) circle (1.5pt) node[anchor=#1] {$\color{black}\scriptstyle{#3}$}
  }
}

\newcommand\blacktoken[3][]{%
  \ifthenelse{\isempty{#1}}{%
    \filldraw (#2) circle (1.5pt)
  }{%
    \filldraw (#2) circle (1.5pt) node[anchor=#1] {\dotlabel{#3}}
  }
}


\newcommand\opensq[2][black]{%
  \filldraw[fill=white,draw=#1] (#2)++(-0.06,-0.06) rectangle ++(0.12,0.12)
}


\newcommand\dashdumb[2]{
  \draw[very thick,dotted] (#1) to (#2);
  \blackdot{#1};
  \blackdot{#2}
}
\newcommand\dotdumb[2]{
  \draw (#1) to (#2);
  \blackdot{#1};
  \blackdot{#2}
}
\newcommand\sqdumb[2]{
  \draw (#1) to (#2);
  \opensq{#1};
  \opensq{#2}
}
\newcommand\dottrip[3]{
  \draw (#1) to (#2) to (#3);
  \blackdot{#1};
  \blackdot{#2};
  \blackdot{#3}
}
\newcommand\sqtrip[3]{
  \draw (#1) to (#2) to (#3);
  \opensq{#1};
  \opensq{#2};
  \opensq{#3}
}
\newcommand\teleport[4]{
  \draw (#3) to (#4);
  \filldraw[#1] (#3) circle (1.5pt);
  \filldraw[#2] (#4) circle (1.5pt)
}


\newcommand\intleft[2]{
  \filldraw[->,draw=#1,fill=white] (#2)++(-0.18,0) arc (-180:180:0.18)
}
\newcommand\intright[2]{
  \filldraw[->,draw=#1,fill=white] (#2)++(0.18,0) arc (0:-360:0.18)
}
\newcommand\intleftsm[2]{
  \filldraw[white] (#2) circle (0.1);
  \draw[#1,->] (#2)++(0,-0.1) to[out=0,in=-90] ++(0.1,0.1) to[out=90,in=0] ++(-0.1,0.1) to[out=180,in=90] ++(-0.09,-0.14);
  \draw[#1] (#2)++(-0.09,-0.02) to[out=-84,in=0] ++(0.09,-0.08)
}
\newcommand\intrightsm[2]{
  \filldraw[white] (#2) circle (0.1);
  \draw[#1,->] (#2)++(0,-0.1) to[out=180,in=-90] ++(-0.1,0.1) to[out=90,in=180] ++(0.1,0.1) to[out=0,in=90] ++(0.09,-0.14);
  \draw[#1] (#2)++(0.09,-0.02) to[out=-96,in=0] ++(-0.09,-0.08)
}


\newcommand\bubright[4][black]{
  \draw[->,#1] (#2)++(0,0.2) arc(90:-270:0.2);
  \filldraw[#1] (#2)++(0.2,0) circle (1.5pt) node[anchor=west] {\dotlabel{#3}};
  \filldraw[fill=white, draw=#1] (#2)++(-0.2,0) circle (1.5pt) node[anchor=east] {{\color{#1} \dotlabel{#4}}}
}

\newcommand\jonbubright[4][black]{
  \draw[->,#1] (#2)++(0.2,0) arc(0:-360:0.2);
  \filldraw[#1] (#2)++(-0.18,.1) circle (1.5pt) node[anchor=east] {\dotlabel{#3}};
  \filldraw[fill=white, draw=#1] (#2)++(-0.18,-0.1) circle (1.5pt) node[anchor=east] {{\color{#1} \dotlabel{#4}}}
}
\newcommand\JONbubright[4][black]{
  \draw[->,#1] (#2)++(0,0.3) arc(90:-270:0.3);
  \filldraw[#1] (#2)++(-0.25,.17) circle (1.5pt) node[anchor=east] {\dotlabel{#3}};
  \filldraw[fill=white, draw=#1] (#2)++(-0.25,-0.17) circle (1.5pt) node[anchor=east] {{\color{#1} \dotlabel{#4}}}
}

\newcommand\bubleft[4][black]{
  \draw[->,#1] (#2)++(0,0.2) arc(90:450:0.2);
  \filldraw[#1] (#2)++(-0.2,0) circle (1.5pt) node[anchor=east] {\dotlabel{#3}};
  \filldraw[fill=white, draw=#1] (#2)++(0.2,0) circle (1.5pt) node[anchor=west] {{\color{#1} \dotlabel{#4}}}
}
\newcommand\jonbubleft[4][black]{
  \draw[->,#1] (#2)++(-0.2,0) arc(180:540:0.2);
  \filldraw[#1] (#2)++(0.18,-0.1) circle (1.5pt) node[anchor=west] {\dotlabel{#3}};
  \filldraw[fill=white, draw=#1] (#2)++(0.18,.1) circle (1.5pt) node[anchor=west] {{\color{#1} \dotlabel{#4}}}
}
\newcommand\JONbubleft[4][black]{
  \draw[->,#1] (#2)++(0,0.3) arc(90:450:0.3);
  \filldraw[#1] (#2)++(0.25,-0.17) circle (1.5pt) node[anchor=west] {\dotlabel{#3}};
  \filldraw[fill=white, draw=#1] (#2)++(0.25,.17) circle (1.5pt) node[anchor=west] {{\color{#1} \dotlabel{#4}}}
}


\newcommand\bubgenright[3][black]{%
  \draw[->,#1] (#2)++(0,0.2) arc(90:-270:0.2);
  \node at (#2) {\dotlabel{#3}}
}
\newcommand\bubgenleft[3][black]{%
  \draw[<-,#1] (#2)++(0,0.2) arc(90:-270:0.2);
  \node at (#2) {\dotlabel{#3}}
}


\newcommand\cbubble[3][black]{
  \begin{tikzpicture}[baseline={(0,-0.15)}]
    \jonbubright[#1]{0,0}{#2}{#3};
  \end{tikzpicture}
}

\newcommand\ccbubble[3][black]{
  \begin{tikzpicture}[baseline={(0,-0.15)}]
    \jonbubleft[#1]{0,0}{#2}{#3};
  \end{tikzpicture}
}


\tikzset{darkg/.style={green!70!black}}


\tikzset{->-/.style={decoration={
  markings,
  mark=at position #1 with {\arrow{>}}},postaction={decorate}}}
\tikzset{-<-/.style={decoration={
  markings,
  mark=at position #1 with {\arrow{<}}},postaction={decorate}}}


\leftmargin=0in
\topmargin=0pt
\headheight=0pt
\oddsidemargin=0in
\evensidemargin=0in
\textheight=8.75in
\textwidth=6.5in
\parindent=0.5cm
\headsep=0.25in
\widowpenalty10000
\clubpenalty10000


\usepackage[capitalize]{cleveref}   

\crefname{defin}{Definition}{Definitions}
\crefname{eg}{Example}{Examples}
\crefname{lem}{Lemma}{Lemmas}
\crefname{theo}{Theorem}{Theorems}
\crefname{rem}{Remark}{Remarks}
\crefname{equation}{}{}
\crefname{enumi}{}{}


\newcommand\Z{\mathbb{Z}}
\newcommand\Q{\mathbb{Q}}

\newcommand\N{\mathbb{N}}
\newcommand\OO{\mathbb{O}}
\newcommand\kk{\Bbbk}
\newcommand\KK{\mathbb{K}}
\newcommand{\inv}{\!\operatorname{-inv}}
\newcommand{\anti}{\!\operatorname{-anti}}

\newcommand\WC[1][A]{\mathpzc{Wr}(#1)}                
\newcommand\AWC[1][A]{\mathpzc{Wr}^\textup{aff}(#1)}  
\newcommand\WA[2][A]{\mathrm{Wr}_{#2}(#1)}                     
\newcommand\AWA[2][A]{\mathrm{Wr}^\textup{aff}_{#2}(#1)}       
\newcommand\AWAR[2][A]{\mathrm{Wr}^\textup{aff}_{#2}({#1}_R)}  
\newcommand\CWA[3][A]{\mathrm{Wr}^{#3}_{#2}(#1)}               
\newcommand\CWAR[3][A]{\mathrm{Wr}^{#3}_{#2}(#1_R)}          
\newcommand\CWAopR[3][A]{\mathrm{Wr}^{#3}_{#2}(#1_R^\op)}
\newcommand\GCQ[3][R]{\mathpzc{H}_{#1}(#2|#3)} 
\newcommand\Zpi{{\Z_\pi}}
\newcommand\Qpi{{\Q_\pi}}
\newcommand\Zq{{\Z_\pi[q,q^{-1}]}}
\newcommand\Qq{{\Q_\pi[q,q^{-1}]}}
\newcommand\cA{\mathcal{A}}
\newcommand\cB{\mathcal{B}}
\newcommand\cC{\mathcal{C}}
\newcommand\cD{\mathcal{D}}
\newcommand\cEnd{\mathcal{E}nd}

\newcommand\cIR{\mathcal{I}_R}
\newcommand\cP{\mathcal{P}}

\newcommand\cV{\mathcal{V}}
\newcommand\cW{\mathcal{W}}
\newcommand{\B}{\mathbf{B}}
\newcommand\ba{\mathbf{a}}
\newcommand\bb{\mathbf{b}}

\newcommand\be{\mathbf{e}}

\newcommand\one{\mathbbm{1}}
\newcommand\blambda{{{\text{\boldmath$\lambda$}}}}
\newcommand\bmu{{{\text{\boldmath$\mu$}}}}

\newcommand\rHeis{q\!\operatorname{-Heis}}
\newcommand\rtHeis{\operatorname{Heis}}

\newcommand\op{\mathrm{op}}                 

\newcommand\rev{\mathrm{rev}}               

\newcommand\even{{\bar 0}}
\newcommand\odd{{\bar 1}}

\newcommand\gsmod{\operatorname{gsmod-}\!}
\newcommand\smod{\operatorname{smod-}\!}

\newcommand\pgsmod{\operatorname{pgsmod-}\!}
\newcommand\psmod{\operatorname{psmod-}\!}
\newcommand\ug[1]{\underline{#1}}                   
\newcommand\tgamma{\underline{\gamma}}

\newcommand\fh{\mathfrak{h}}
\newcommand\fS{\mathfrak{S}}

\newcommand{\dA}{d}                         
\newcommand{\AR}{A_R}                     

\newcommand\actplus{{\scriptstyle\,\oplus\,}}
\newcommand\actminus{{\scriptstyle\,\ominus\,}}

\newcommand\red[1]{{\color{red} #1 }}
\newcommand\blue[1]{{\color{blue} #1}}

\newcommand\upblue{\blue{\uparrow}}
\newcommand\upred{\red{\uparrow}}
\newcommand\downblue{\blue{\downarrow}}
\newcommand\downred{\red{\downarrow}}
\newcommand\cDelt[2]{%
  \Delta_{{\color{blue} #1} | {\color{red} #2}}
}

\newcommand\Heis[3][black]{%
  {\color{#1} {\mathpzc{Heis}}_{#3}(#2)}
}
\newcommand\HeisR[3][black]{%
  {\color{#1} {\mathpzc{Heis}}_{#3}({#2}_R)}
}

\newcommand\Heisenv[3][black]{%
  {\color{#1} {\mathpzc{Heis}}_{#3}(#2)_{q,\pi}}
}
\newcommand\extdim[2]{
    \binom{#2}{#1}_{q,\pi}
}
\newcommand\qint[1]{
  [#1]
}

\newcommand\SVec{\mathpzc{SVec}}

\newcommand\GSVec{\mathpzc{GSVec}}

\newcommand\SCat{\mathpzc{SCat}}
\newcommand\GSEnd{\mathpzc{GSEnd}}
\newcommand\SEnd{\mathpzc{SEnd}}

\newcommand\barotimes{\, \overline{\otimes}\, }
\newcommand\xodot[1]{\,\odot_{#1}\,}
\newcommand\barodot{\, \overline{\odot}_{2d}\, }
\newcommand\barodotnonumber{\, \overline{\odot}\, }

\newcommand\cocenter[1]{
    \dot{#1}
}

\newcommand\Laurent[1]{
    (\!(#1)\!)
}


\DeclareMathOperator{\Add}{Add}

\DeclareMathOperator{\End}{End}

\DeclareMathOperator{\Ev}{Ev}
\DeclareMathOperator{\flip}{flip}
\DeclareMathOperator{\gr}{gr}       
\DeclareMathOperator{\grdim}{gsdim} 
\DeclareMathOperator{\Hom}{Hom}

\DeclareMathOperator{\id}{id}       
\DeclareMathOperator{\Id}{Id}       
\DeclareMathOperator{\ind}{ind}
\DeclareMathOperator{\Kar}{Kar}     
\DeclareMathOperator{\res}{res}

\DeclareMathOperator{\Sym}{Sym}
\DeclareMathOperator{\tr}{tr}


\newtheorem{theo}{Theorem}[section]
\newtheorem{prop}[theo]{Proposition}
\newtheorem{lem}[theo]{Lemma}
\newtheorem*{lem*}{Lemma}
\newtheorem{cor}[theo]{Corollary}

\theoremstyle{definition}
\newtheorem{defin}[theo]{Definition}
\newtheorem{rem}[theo]{Remark}
\newtheorem{eg}[theo]{Example}

\numberwithin{equation}{section}
\allowdisplaybreaks

\setenumerate[1]{label=(\alph*)}  
\setenumerate[2]{label=(\roman*)}

\setcounter{tocdepth}{1}


\newtoggle{comments}
\newtoggle{details}
\newtoggle{detailsnote}


\iftoggle{comments}{%
  \newcommand{\acomments}[1]{
    \ \\
    {\color{red}
      \textbf{AS:} #1
    }
    \ \\
    }
  \newcommand{\bcomments}[1]{
    \ \\
    {\color{blue!50!black}
      \textbf{BW:} #1
    }
    \ \\
    }
  
}{%
  \newcommand{\acomments}[1]{}
  \newcommand{\bcomments}[1]{}
}

\iftoggle{details}{%
  \newcommand{\details}[1]{
      \ \\
      {\color{OliveGreen}
        \textbf{Details:} #1
      }
      \ \\
  }
}{%
  \newcommand{\details}[1]{}
}

\begin{document}

\title{Foundations of Frobenius Heisenberg categories}

\author{Jonathan Brundan}
\address[J.B.]{
  Department of Mathematics \\
  University of Oregon \\
  Eugene, OR, USA
}
\email{brundan@uoregon.edu}

\author{Alistair Savage}
\address[A.S.]{
  Department of Mathematics and Statistics \\
  University of Ottawa \\
  Ottawa, ON, Canada
}
\urladdr{\href{http://alistairsavage.ca}{alistairsavage.ca}, \textrm{\textit{ORCiD}:} \href{https://orcid.org/0000-0002-2859-0239}{orcid.org/0000-0002-2859-0239}}
\email{alistair.savage@uottawa.ca}

\author{Ben Webster}
\address[B.W.]{
  Department of Pure Mathematics, University of Waterloo \&
  Perimeter Institute for Theoretical Physics\\
  Waterloo, ON, Canada
}
\email{ben.webster@uwaterloo.ca}

\thanks{J.B.\ supported in part by NSF grant DMS-1700905.}
\thanks{A.S.\ supported by Discovery Grant RGPIN-2017-03854 from the Natural Sciences and Engineering Research Council of Canada.}
\thanks{B.W.\ supported by Discovery Grant RGPIN-2018-03974 from the Natural Sciences and Engineering Research Council of Canada.  Research at Perimeter Institute is supported in part by the Government of Canada through the Department of Innovation, Science and Economic Development Canada and by the Province of Ontario through the Ministry of Colleges and Universities.}

\begin{abstract}
    We describe bases for the morphism spaces of the Frobenius Heisenberg categories associated to a symmetric graded Frobenius algebra, proving several open conjectures.  Our proof uses a categorical comultiplication and generalized cyclotomic quotients of the category.  We use our basis theorem to prove that the Grothendieck ring of the Karoubi envelope of the Frobenius Heisenberg category recovers the lattice Heisenberg algebra associated to the Frobenius algebra.
\end{abstract}

\subjclass[2020]{Primary 18M05; Secondary 17B10, 17B65}
\keywords{Heisenberg category, wreath product, Frobenius algebra}

\ifboolexpr{togl{comments} or togl{details}}{%
  {\color{magenta}DETAILS OR COMMENTS ON}
}{%
}

\maketitle
\thispagestyle{empty}


\section{Introduction}

Throughout this article we work over a field $\kk$ of characteristic
zero (see also \cref{park}).  The \emph{Frobenius Heisenberg category}
$\Heis{A}{k}$ is a strictly pivotal graded $\kk$-linear monoidal
category depending on a graded Frobenius algebra $A$ and a central
charge $k \in \Z$.  Its name arises from the fact that it categorifies
the quantum analog $\rHeis_k(A)$ of a lattice Heisenberg algebra depending on $A$.  The category first appeared for $A=\kk$ and $k=-1$ in work of Khovanov \cite{Kho14}, motivated by the study of induction and restriction functors between modules for symmetric groups.  The definition was then extended to the case of arbitrary $A$ and $k$ in a series of works by several authors \cite{CL12,RS17,MS18,Bru18,Sav18}. In fact, in its most general form, $A$ can be a graded Frobenius {\em super}algebra, and $\Heis{A}{k}$ is a monoidal \emph{super}category in the sense of \cite{BE17}.

The Frobenius Heisenberg categories are defined in terms of generators and relations.  In order to fully understand them, one wants an explicit linear basis for each morphism space.  The standard approach to proving such basis theorems involves two steps.  First, one derives relations that provide a straightening algorithm, allowing one to reduce arbitrary morphisms to linear combinations of morphisms in a standard form.  Then, to prove linear independence, one exploits actions on certain module categories constructed from some natural cyclotomic quotients.  This was the original approach used to give a basis theorem for $\Heis{\kk}{-1}$ \cite{Kho14}, and then also to give a basis theorem for $\Heis{\kk}{k}$ \cite{MS18} for all $k \neq 0$.

Unfortunately, the above method for proving linear independence
\emph{fails} for general $A$.  The natural action of $\Heis{A}{k}$ is
on modules for the \emph{cyclotomic wreath product algebras} studied
in \cite{Sav20} (see also \cite{KM19}).  However, one can show by
degree considerations that certain morphisms, expected to be nonzero
in $\Heis{A}{k}$, must act as zero in any such module category; see
\cite[Rem.~8.11]{RS17}.  Therefore, until now, basis theorems in this
general setting have remained conjectural, even in the important case
where $A$ is a zigzag algebra as in \cref{motivatingeg}, when the Frobenius Heisenberg category is related to the geometry of Hilbert schemes \cite{CL12} and categorical vertex operators \cite{CL11}.

One of the main results of the current paper is a basis theorem for the $\Heis{A}{k}$ in general (\cref{basis}).  Our proof involves the construction of a categorical comultiplication
\[
  \Delta_{l|m} \colon \Heis{A}{k} \to \Add\left(\Heis{A}{l}\barodotnonumber\Heis{A}{m}\right),
  \quad k=l+m,
\]
where $\Heis{A}{l}\barodotnonumber\Heis{A}{m}$ is a certain
localization of a symmetric product of the monoidal supercategories
$\Heis{A}{l}$ and $\Heis{A}{m}$, and $\Add$ denotes additive envelope.
This functor allows one to form tensor products of Heisenberg module
categories, provided that the localized morphism acts invertibly.
Equipped with this method of forming larger module categories, we are
able to construct asymptotically faithful module categories using
\emph{generalized cyclotomic quotients}. This technique, which was
first used in \cite{Web16}, was employed in  \cite{BSW-K0} to give a
new proof of the basis theorem for  $\Heis{\kk}{k}$.  The current
paper further illustrates the wide applicability of this approach.   It will  be used in \cite{BSW-qFrobHeis} to prove a basis theorem for quantum Frobenius Heisenberg categories built from the quantum affine wreath product algebras of \cite{RS19}. 

Let $e \in A$ be a homogeneous idempotent such that $A = AeA$, so that
$A$ and $eAe$ are graded Morita equivalent.  Note that $eAe$ is also a graded Frobenius superalgebra with trace map that is the restriction of the given trace on $A$.  As a first application of the basis theorem, we prove that
the graded Karoubi envelopes of $\Heis{A}{k}$ and $\Heis{eAe}{k}$ are equivalent as graded monoidal supercategories (\cref{moritathm}).  For example, if $A$ is semisimple and purely even with trivial grading, so that $A$ is Morita equivalent to the direct sum of $N$ copies of the field $\kk$ (where $N$ is the number of pairwise inequivalent irreducible $A$-modules), this result implies that the graded Karoubi envelope of $\Heis{A}{k}$ is monoidally equivalent to the graded Karoubi envelope of the symmetric product of $N$ copies of $\Heis{\kk}{k}$; the latter category is studied in \cite{Gan18}.

The other main result of the current paper concerns the Grothendieck ring of the Frobenius Heisenberg supercategory.  We prove that the Grothendieck ring of the graded Karoubi envelope of $\Heis{A}{k}$ is isomorphic to the lattice Heisenberg algebra $\rHeis_k(A)$ of central charge $k$
associated to $A$ (\cref{K0isom}) providing the following hypothesis holds (see also \cref{Dagger}):
\begin{equation} \tag{$\dagger$} \label{virginia}
    \text{The graded Frobenius superalgebra $A$ is positively graded
      with $A_0$ being purely even and semisimple.}
\end{equation}
In the case that the grading on $A$ is nontrivial, this was proved already in \cite[Th.~1.5]{Sav18}, although the proof there left many details to the reader and it was assumed there that additional relations were imposed in the category, allowing one to reduce degree zero bubbles to scalars.  The argument given here is based instead on ideas from \cite{BSW-K0}, which computed the Grothendieck ring in the special case that $A = \kk$.  These methods allow us to give a more concise argument that also does not require the additional bubble relations needed in \cite[Th.~1.5]{Sav18}.  This provides another example of the power of the categorified comultiplication technique.  We also explain a sense in which the comultiplication $\Delta_{l|m}$ categorifies the comultiplication on $\rHeis_k(A)$ (\cref{camping}).

We do make one simplifying assumption compared to \cite{Sav18}: we assume throughout that $A$ is a \emph{symmetric} graded Frobenius superalgebra. This greatly simplifies the exposition and covers most of the cases of current interest, including zigzag algebras (where the Frobenius Heisenberg category is related to the geometry of the Hilbert scheme \cite{CL12} and vertex operators \cite{CL11}) and group algebras of finite groups (where the Frobenius Heisenberg category provides a tool for studying representations of wreath product groups and their higher level analogues).  The one exception is the case where $A$ is a rank one Clifford algebra, which is excluded by our assumption.  Nevertheless, we expect that the methods of the current paper could be adapted to the Clifford case in one of two ways.  One method is to relax the assumption that $A$ is symmetric, and keep track of the Nakayama automorphism as in \cite{Sav18}.   Alternatively, one can allow the trace map of $A$ to be odd, in which case the (polynomial) dot generators become odd.  This is considered in the case $k=-1$ in \cite{RS17}.  We expect the Clifford case to be treated in detail in \cite{CK}.

\iftoggle{detailsnote}{
\subsection*{Hidden details} For the interested reader, the tex file of the arXiv version of this paper includes hidden details of some straightforward computations and arguments that are omitted in the pdf file.  These details can be displayed by switching the \texttt{details} toggle to true in the tex file and recompiling.
}{}

\subsection*{Corrections to published version}  This version of the paper contains corrections of some errors present in the published version:
\begin{itemize}
    \item Indices on $(f \circ g)$ at the end of \cref{pienv} were fixed.
    \item Equation \cref{wednesdaypm} was corrected.
    \item In the first line of the proof of \cref{Chicago}, $\OO_r$ was changed to $\OO^{(r)}$.
    \item Three minus signs in the proof of \cref{Chicago} were deleted.
\end{itemize}

\section{Monoidal supercategories\label{sec:gradecat}}

Let $\kk$ be a fixed ground field of characteristic zero.  All vector spaces, algebras, categories and
functors will be assumed to be linear over $\kk$ unless otherwise specified.  Unadorned tensor products denote tensor products over $\kk$.  Almost everything in the article will be enriched over the category $\SVec$ of \emph{vector superspaces}, that is, $\Z/2$-graded vector spaces $V = V_{\even}\oplus V_{\odd}$ with parity-preserving morphisms.  Writing $\bar v \in \Z/2$ for the parity of a homogeneous vector $v \in V$, the category $\SVec$ is a symmetric monoidal category with symmetric braiding $V \otimes W \rightarrow W\otimes V$ defined by $v\otimes w \mapsto (-1)^{\bar v \bar w} w \otimes v$; this formula as written only makes sense if both $v$ and $w$ are homogeneous, with its intended meaning for general vectors following by linearity.

Sometimes we will be working with an additional $\Z$-grading, that is,
we will be working in the category $\GSVec$ of \emph{graded vector
  superspaces} $V = \bigoplus_{n \in \Z} V_n = \bigoplus_{n \in \Z}
V_{n,\even}\oplus V_{n,\odd}$ with grading-preserving morphisms.  The
term \emph{positively graded} will mean graded with all negative
graded pieces equal to zero, and the grading will be called
\emph{trivial} if it is concentrated in degree zero, i.e.\ $V = V_0 =
V_{0,\even}\oplus V_{0,\odd}$.  Assuming all $V_{n,r}$ are finite dimensional, we set
\begin{equation}
    \grdim V := \sum_{n \in \Z, r \in \Z/2} q^n\pi^r \dim V_{n,r} \in \Zq,
\end{equation}
where $\Zpi := \Z[\pi] / (\pi^2-1)$.  Also let $\Qpi := \Q[\pi] /
(\pi^2-1)$. Since all of the subtleties involving signs come from the underlying vector superspace, it is usually straightforward to incorporate this additional $\Z$-grading into the definitions, so we will not say much more about it below.

For superalgebras $A=A_\even\oplus A_\odd$ and $B = B_\even\oplus
B_\odd$, multiplication in the superalgebra $A \otimes B$ is defined
by \begin{equation}
(a' \otimes b) (a \otimes b') = (-1)^{\bar a \bar b} a'a \otimes
bb'
\end{equation}
for homogeneous $a,a' \in A$, $b,b' \in B$.  The \emph{opposite}
superalgebra $A^\op$ is a copy  $\{a^\op : a \in A\}$
of the vector superspace $A$ with
multiplication defined from
\begin{equation}\label{mups}
a^\op  b^\op := (-1)^{\bar a \bar b} (ba)^{\op}.
\end{equation}
The \emph{center} $Z(A)$ is the subalgebra generated by all homogeneous $a \in A$ such that
\begin{equation}\label{center}
    ab = (-1)^{\bar a \bar b}ba
\end{equation}
for all homogeneous $b \in A$.  The \emph{cocenter} $C(A)$ (which is merely a vector superspace not a
superalgebra!) is the quotient of $A$ by the subspace spanned by $ab-(-1)^{\bar a \bar b}ba$ for all homogeneous $a,b \in A$.  In case $A$ and $B$ are graded superalgebras, so are $A \otimes B$, $A^\op$ and $Z(A)$, while $C(A)$ is a graded superspace.

In fact, throughout this document, we will be working with \emph{strict monoidal supercategories} in the sense of \cite{BE17}.  In such a category, the \emph{super interchange law} is
\begin{equation}\label{interchange}
    (f' \otimes g) \circ (f \otimes g')
    = (-1)^{\bar f \bar g} (f' \circ f) \otimes (g \circ g').
\end{equation}
We denote the unit object by $\one$ and the identity morphism of an object $X$ by $1_X$.  We will use the usual calculus of string diagrams, representing the horizontal composition $f \otimes g$ (resp.\ vertical composition $f \circ g$) of morphisms $f$ and $g$ diagrammatically by drawing $f$ to the left of $g$ (resp.\ drawing $f$ above $g$).  Care is needed with horizontal levels in such diagrams due to
the signs arising from the super interchange law:
\begin{equation}\label{intlaw}
    \begin{tikzpicture}[anchorbase]
        \draw (-0.5,-0.5) -- (-0.5,0.5);
        \draw (0.5,-0.5) -- (0.5,0.5);
        \filldraw[fill=white,draw=black] (-0.5,0.15) circle (5pt);
        \filldraw[fill=white,draw=black] (0.5,-0.15) circle (5pt);
        \node at (-0.5,0.15) {$\scriptstyle{f}$};
        \node at (0.5,-0.15) {$\scriptstyle{g}$};
    \end{tikzpicture}
    \quad=\quad
    \begin{tikzpicture}[anchorbase]
        \draw (-0.5,-0.5) -- (-0.5,0.5);
        \draw (0.5,-0.5) -- (0.5,0.5);
        \filldraw[fill=white,draw=black] (-0.5,0) circle (5pt);
        \filldraw[fill=white,draw=black] (0.5,0) circle (5pt);
        \node at (-0.5,0) {$\scriptstyle{f}$};
        \node at (0.5,0) {$\scriptstyle{g}$};
    \end{tikzpicture}
    \quad=\quad
    (-1)^{\bar f\bar g}\
    \begin{tikzpicture}[anchorbase]
        \draw (-0.5,-0.5) -- (-0.5,0.5);
        \draw (0.5,-0.5) -- (0.5,0.5);
        \filldraw[fill=white,draw=black] (-0.5,-0.15) circle (5pt);
        \filldraw[fill=white,draw=black] (0.5,0.15) circle (5pt);
        \node at (-0.5,-0.15) {$\scriptstyle{f}$};
        \node at (0.5,0.15) {$\scriptstyle{g}$};
    \end{tikzpicture}
    \ .
\end{equation}
We refer the reader to \cite{Sav-exp} for a brief overview of these concepts, to \cite[Ch.~1, 2]{TV17} for a more in-depth treatment, and to \cite{BE17} for a detailed discussion of signs in the superalgebra setting.  We review below a few of the more important ideas that are crucial for our exposition and somewhat less well known.

\medskip

A \emph{supercategory} means a category enriched in $\SVec$. Thus, its morphism spaces are actually superspaces and composition is parity-preserving.  A \emph{superfunctor} between supercategories induces a parity-preserving linear map between morphism superspaces.  For superfunctors $F,G \colon \cA\rightarrow\cB$, a \emph{supernatural transformation} $\alpha \colon F \Rightarrow G$ of \emph{parity $r\in\Z/2$} is the data of morphisms $\alpha_X\in \Hom_{\cB}(FX, GX)_r$ for each $X\in\cA$ such that $Gf \circ \alpha_X = (-1)^{r \bar f}\alpha_Y\circ Ff$ for each homogeneous $f \in \Hom_{\cA}(X, Y)$.  Note when $r$ is odd that $\alpha$ is \emph{not} a natural transformation in the usual sense due to the sign. A \emph{supernatural transformation} $\alpha \colon F \Rightarrow G$ is $\alpha = \alpha_\even + \alpha_\odd$ with each $\alpha_r$ being a supernatural transformation of parity $r$.  A superfunctor $F \colon \cA \to \cB$ is an \emph{equivalence} of supercategories if there exists a superfunctor $G \colon \cB \to \cA$ and even supernatural
isomorphisms $GF \Rightarrow \Id_\cA$ and $FG \Rightarrow \Id_\cB$.

Enriching in $\GSVec$ in place of $\SVec$, one obtains similar notions with an additional $\Z$-grading, which we call \emph{graded supercategory}, \emph{graded superfunctor} and \emph{graded supernatural transformation}.

\begin{defin}\label{pienv}
    For a supercategory $\cA$, its \emph{$\Pi$-envelope} $\cA_\pi$ is the supercategory with objects given by formal symbols $\{ \Pi^r X : X \in \cA,\ r \in \Z/2\}$ and morphisms defined by
    \begin{equation}
        \Hom_{\cA_\pi}( \Pi^r X, \Pi^s Y )
        := \Pi^{s-r} \Hom_{\cA}(X,Y),
    \end{equation}
    where, on the right-hand side, $\Pi$ denotes the parity shift operator determined by $(\Pi V)_r := V_{r-\odd}$ for a vector superspace $V$. The composition law in $\cA_\pi$ is induced in the obvious way from the one in $\cA$: writing $f_r^s$ for the morphism in $\Hom_{\cA_\pi}(\Pi^r X, \Pi^s Y)$
    of parity $\bar{f}+r-s$ defined by $f \in \Hom_{\cA}(X,Y)$, we have that $f_s^u \circ g_r^s = (f \circ g)_r^u$.
\end{defin}

The $\Pi$-envelope $\cA_\pi$ from \cref{pienv} is a \emph{$\Pi$-supercategory} in the sense of \cite[Def.~1.7]{BE17} with parity shift functor $\Pi \colon \cA_\pi\rightarrow \cA_\pi$ sending object $\Pi^r X$ to $\Pi^{r+\odd}X$ and morphism $f_r^s$ to $f_{r+\odd}^{s+\odd}$.  Viewing $\cA$ as a full subcategory of its $\Pi$-envelope $\cA_\pi$ via the canonical embedding
\begin{equation}\label{flowers}
    J \colon \cA \rightarrow \cA_\pi, \qquad X \mapsto \Pi^\even X, \quad f \mapsto f_\even^\even,
\end{equation}
the $\Pi$-envelope satisfies a universal property: any superfunctor $F \colon \cA \rightarrow \cB$ to a $\Pi$-supercategory $\cB$ extends in a canonical way to a graded functor $\tilde F \colon \cA_\pi \rightarrow \cB$ such that $\tilde F \circ \Pi = \Pi \circ \tilde F$.  In turn, any supernatural transformation $\theta \colon F \Rightarrow G$ between superfunctors $F, G \colon \cA\rightarrow \cB$ extends in a unique way to a supernatural transformation $\tilde\theta \colon \tilde F \Rightarrow \tilde G$; see \cite[Lem.~4.2]{BE17}.

The \emph{underlying category} of a $\Pi$-supercategory is the category with the same objects, but only the even morphisms. It has the structure of a \emph{$\Pi$-category} in the sense of \cite[Def.~1.6]{BE17}.

\begin{defin}\label{qpienv}
    For a graded supercategory $\cA$, its \emph{$(Q,\Pi)$-envelope} is the graded supercategory $\cA_{q,\pi}$ with objects $\{Q^m \Pi^r X : X \in\cA,\ m \in \Z,\ r \in \Z/2\}$ and morphisms
    \begin{equation}
        \Hom_{\cA_{q,\pi}}( Q^m \Pi^r X, Q^n \Pi^s Y )
        := Q^{n-m} \Pi^{s-r} \Hom_{\cA}(X,Y),
    \end{equation}
    where $Q$ on the right is the grading shift functor $(Q V)_n :=
    V_{n-1}$.  We use the notation $f_{m,r}^{n,s}$ for the morphism in
    $\Hom_{\cA_{q,\pi}}(Q^m \Pi^r X, Q^n \Pi^s Y)$ of degree
    $\deg(f)+n-m$ and parity $\bar{f}+s-r$ defined by $f \in
    \Hom_{\cA}(X,Y)$.
\end{defin}

The $(Q, \Pi)$-envelope $\cA_{q,\pi}$ from \cref{qpienv} is a \emph{graded $\Pi$-supercategory} in the sense of \cite[Def.~6.4]{BE17}, and satisfies analogous universal properties to $\Pi$-envelopes.

The \emph{underlying category} of a $(Q,\Pi)$-supercategory has only the even morphisms of degree zero, and is a \emph{$(Q,\Pi)$-category} in the sense of \cite[Def.~6.12]{BE17}.  Intuitively, the passage from a graded supercategory $\cA$ to the underlying category of its $(Q,\Pi)$-envelope $\cA_{q,\pi}$ should be thought of as modifying $\cA$ so that its morphisms may be reinterpreted as even morphisms of degree zero between formal parity shifts of objects.

\medskip

The \emph{Karoubi envelope} $\Kar(\cA)$ of a supercategory (resp.\ a graded supercategory) $\cA$ is the completion of its additive envelope $\Add(\cA)$ at all homogeneous idempotents.  Thus, objects of $\Kar(\cA)$ are pairs $(X,e)$ consisting of a finite direct sum $X$ of objects of $\cA$ together with a homogeneous idempotent $e \in \End_{\Add(\cA)}(X)$.  Morphisms $(X,e) \rightarrow (Y,f)$ are elements of $f\Hom_{\Add(\cA)}(X,Y)e$.  Also let $K_0(\Kar(\cA))$ denote the split Grothendieck group of the underlying category of $\Kar(\cA)$, that is, the abelian group generated by degree zero isomorphism classes of objects subject to the relations $[X]+[Y]=[X \oplus Y]$.  If $\cA$ is a $\Pi$-supercategory (resp.\ a graded $(Q,\Pi)$-supercategory), the grading shift functors extend by the usual universal property of Karoubi envelopes to make $\Kar(\cA)$ into a $\Pi$-supercategory (resp.\ a graded $(Q,\Pi)$-supercategory) too.  Then $K_0(\Kar(\cA))$ is a $\Zpi$-module (resp.\ a $\Zq$-module) with
\begin{align} \label{elephant}
    \pi [X] &= [\Pi X],
    &q^n [X] &= [Q^n X].
\end{align}
Starting from a graded supercategory $\cA$, the Karoubi envelope of the $(Q,\Pi)$-envelope of $\cA$ is the \emph{graded Karoubi envelope} $\Kar(\cA_{q,\pi})$ of $\cA$.  This has objects that are summands of finite direct sums of formal degree and parity shifts of objects of $\cA$.

\medskip

The archetypical example of a $\Pi$-supercategory arises as follows.  Let $A$ be a locally unital superalgebra, i.e.\ an associative (but not necessarily unital) superalgebra with a system $\{1_X : X\in \mathbb{A}\}$ of mutually orthogonal idempotents such that $A = \bigoplus_{X,Y \in \mathbb{A}} 1_Y A 1_X$.  Let $\smod A$ be the category of (locally unital) right $A$-supermodules and all (not necessarily homogeneous) $A$-module homomorphisms.  To make the supercategory $\smod A$ into a $\Pi$-supercategory, we define $\Pi$ so that it sends supermodule $V$ to the parity switch $\Pi V$
viewed as a right $A$-module with the same underlying action as before.  On a homogeneous morphism $f \colon V \rightarrow W$, $\Pi f \colon \Pi V \rightarrow \Pi W$ is the function $(-1)^{\bar f} f$.
Note that the usual category of $A$-supermodules and parity-preserving morphisms is the underlying category of our category $\smod A$.  Let $\psmod A$ be the full subcategory of $\smod A$ consisting of the finitely-generated projective supermodules.

The data of a locally unital superalgebra $A$ is just the same as the data of a supercategory $\cA$.  Indeed, given $\cA$, one takes the set $\mathbb{A}$ indexing the distinguished idempotents of $A$ to be the object set of $\cA$, then defines $A$ so that $1_Y A 1_X = \Hom_{\cA}(X,Y)$ with multiplication induced by composition in $\cA$.  There is then a superfunctor $\cA \rightarrow \psmod A$ sending object $X$ to the right ideal $1_X A$ and morphism $a \in 1_Y A 1_X$ to the homomorphism $1_X A \rightarrow 1_Y A$ defined by left multiplication by $a$.  Applying the universal properties first of $\Pi$-envelope then of Karoubi envelope, this induces a superfunctor
\begin{equation}\label{yoneda}
    \Kar\big(\cA_\pi\big) \rightarrow \psmod A.
\end{equation}
The Yoneda lemma implies that this functor is an equivalence of supercategories.  Using it, we may identify the $\Zpi$-module $K_0\left(\Kar\big(\cA_\pi\big)\right)$ with the usual Grothendieck group $K_0(\psmod A)$.

The same constructions can be performed in the graded setting.  The data of a graded supercategory $\cA$ is equivalent to the data of a locally unital graded superalgebra $A$.  Let $\gsmod A$ be the category of graded right $A$-supermodules, with morphisms being sums $f = \sum_{n \in \Z} f_n$ of homogeneous homomorphisms of various degrees; a homogeneous $A$-supermodule homomorphism $f \colon V \rightarrow W$ of degree $n$ is an $A$-supermodule homomorphism as usual sending each graded piece $V_m$ into $W_{m+n}$.  In fact, $\gsmod A$ is a $(Q,\Pi)$-supercategory with $Q$ and $\Pi$ acting
as grading and parity shift functors.  Let $\pgsmod A$ be the full subcategory consisting of the finitely generated projective graded supermodules.  The Yoneda lemma again gives an equivalence of graded supercategories
\begin{equation}\label{yoneda2}
    \Kar\big(\cA_{q,\pi}\big) \rightarrow \pgsmod A.
\end{equation}
Hence, $K_0\left(\Kar\big(\cA_{q,\pi}\big)\right)\cong K_0(\pgsmod A)$ as $\Zq$-modules.

\medskip

Now we step up to the monoidal situation.  We make the (for once, not $\kk$-linear!) category $\SCat$ of supercategories and superfunctors into a symmetric monoidal category following the general construction of \cite[$\S$1.4]{Kelly}.  In particular, for supercategories $\cA$ and $\cB$, their $\kk$-linear product, denoted $\cA\boxtimes\cB$, has as objects pairs $(X,Y)$ for $X \in \cA$ and $Y \in \cB$, and
\begin{equation}
    \Hom_{\cA\boxtimes\cB}((X,Y), (X',Y'))
    = \Hom_{\cA}(X,X') \otimes \Hom_{\cB}(Y,Y')
\end{equation}
with composition defined via \cref{interchange}.  A \emph{strict monoidal supercategory} is a supercategory $\cC$ with an associative, unital tensor functor $-\otimes- \colon \cC \boxtimes \cC \rightarrow \cC$.  For example, given any supercategory $\cA$, the category $\SEnd(\cA)$ of
superfunctors $F \colon \cA\rightarrow\cA$ and supernatural transformations is a strict monoidal supercategory.  See \cite[Def.~1.4]{BE17} for the appropriate notions of (not necessarily strict) \emph{monoidal superfunctors} between strict monoidal supercategories, and of \emph{monoidal natural transformations} between monoidal superfunctors (these are required to be even).

There is also a notion of \emph{strict monoidal $\Pi$-supercategory}; see \cite[Def.~1.12]{BE17}.
Such a category is a $\Pi$-supercategory in the earlier sense with $\Pi := \pi \otimes-$ for a distinguished object $\pi$ admitting an odd isomorphism $\zeta \colon \pi \xrightarrow{\sim} \one$.  For example, if $\cA$ is a $\Pi$-supercategory, then $\SEnd(\cA)$ is actually a strict monoidal $\Pi$-supercategory with the distinguished object $\pi$ being the parity shift functor $\Pi \colon \cA\rightarrow\cA$.  The underlying category of a strict monoidal $\Pi$-supercategory is a \emph{strict monoidal $\Pi$-category}. Again we refer to \cite[Def.~1.14]{BE17} for a detailed discussion.

The \emph{$\Pi$-envelope} $\cC_\pi$ of a strict monoidal supercategory $\cC$ is the $\Pi$-supercategory from \cref{pienv} viewed as a strict monoidal $\Pi$-supercategory with $\pi := \Pi\one$,
tensor product of objects defined by
\begin{equation}\label{curtain}
    \left( \Pi^r X \right) \otimes \left( \Pi^s Y \right)
    := \Pi^{r+s} (X \otimes Y),
\end{equation}
and tensor product (horizontal composition) of morphisms defined by
\begin{equation}\label{pole}
    f_r^s \otimes g_u^v := (-1)^{r(\bar{g}+u+v) +\bar{f} v}(f \otimes g)_{r+u}^{s+v}
\end{equation}
for homogeneous morphisms $f$ and $g$ in $\cC$.  See
\cite[Def.~1.16]{BE17} for more details and discussion of its
universal property.
When working with string diagrams, the morphism $f_{r}^{s}$ in $\cC_{\pi}$ may be represented by adding horizontal lines labeled by $r$ and $s$ at the bottom and top of the diagram for $f \colon X \to Y$:
    \begin{equation}\label{covid1}
        \begin{tikzpicture}[anchorbase]
          \draw (0,-0.4) -- (0,0.4);
          \filldraw[fill=white,draw=black] (0,0) circle(4pt);
          \node at (0,0) {\dotlabel{f}};
          \shiftline{-0.3,-0.4}{0.3,-0.4}{r};
          \shiftline{-0.3,0.4}{0.3,0.4}{s};
        \end{tikzpicture}
        \colon \Pi^r X \to \Pi^s Y.
    \end{equation}
Then the rules for horizontal and vertical composition in $\mathcal{C}_\pi$ become
  \begin{equation} \label{slush}
  \mathord{
  \begin{tikzpicture}[baseline =-1.3]
  	\draw (0.08,-.4) to (0.08,-.13);
  	\draw (0.08,.4) to (0.08,.13);
        \draw (0.08,0) circle (4pt);
     \node at (0.08,0) {$\scriptstyle{f}$};
  \draw[-,purple](.4,-.4) to (-.24,-.4);
  \draw[-,purple](.4,.4) to (-.24,.4);
  \node at (.5,.4) {$\color{purple}\scriptstyle s$};
  \node at (.5,-.4) {$\color{purple}\scriptstyle r$};
  \end{tikzpicture}
  }
  \,\otimes
  \mathord{
  \begin{tikzpicture}[baseline = -1.3]
  	\draw (0.08,-.4) to (0.08,-.13);
  	\draw (0.08,.4) to (0.08,.13);
        \draw (0.08,0) circle (4pt);
     \node at (0.08,0) {$\scriptstyle{g}$};
  \draw[-,purple](.4,-.4) to (-.24,-.4);
  \draw[-,purple](.4,.4) to (-.24,.4);
  \node at (.5,.4) {$\color{purple}\scriptstyle v$};
  \node at (.5,-.4) {$\color{purple}\scriptstyle u$};
  \end{tikzpicture}
  }
  =
  (-1)^{r(\bar{g}+u+v)+\bar{f}v}
  \mathord{
  \begin{tikzpicture}[baseline = -2]
  	\draw[-] (0.08,-.4) to (0.08,-.13);
  	\draw[-] (0.08,.4) to (0.08,.13);
        \draw (0.08,0) circle (4pt);
     \node at (0.08,0) {$\scriptstyle{g}$};
  	\draw[-] (-.8,-.4) to (-.8,-.13);
  	\draw[-] (-.8,.4) to (-.8,.13);
        \draw (-.8,0) circle (4pt);
     \node at (-.8,0) {$\scriptstyle{f}$};
  \draw[-,purple](.45,-.4) to (-1.1,-.4);
  \draw[-,purple](.45,.4) to (-1.1,.4);
  \node at (.74,.4) {$\color{purple}\scriptstyle s+v$};
  \node at (.74,-.4) {$\color{purple}\scriptstyle r+u$};
  \end{tikzpicture}
  },
  \qquad\!
  \mathord{
  \begin{tikzpicture}[baseline = -1.3]
  	\draw[-] (0.08,-.4) to (0.08,-.13);
  	\draw[-] (0.08,.4) to (0.08,.13);
        \draw (0.08,0) circle (4pt);
     \node at (0.08,0) {$\scriptstyle{f}$};
  \draw[-,purple](.4,-.4) to (-.24,-.4);
  \draw[-,purple](.4,.4) to (-.24,.4);
  \node at (.5,.4) {$\color{purple}\scriptstyle t$};
  \node at (.5,-.4) {$\color{purple}\scriptstyle s$};
  \end{tikzpicture}
  }
  \:\circ
  \mathord{
  \begin{tikzpicture}[baseline =-1.3]
  	\draw[-] (0.08,-.4) to (0.08,-.13);
  	\draw[-] (0.08,.4) to (0.08,.13);
        \draw (0.08,0) circle (4pt);
     \node at (0.08,0) {$\scriptstyle{g}$};
  \draw[-,purple](.4,-.4) to (-.24,-.4);
  \draw[-,purple](.4,.4) to (-.24,.4);
  \node at (.5,.4) {$\color{purple}\scriptstyle s$};
  \node at (.5,-.4) {$\color{purple}\scriptstyle r$};
  \end{tikzpicture}
  }
  =\!
  \mathord{
  \begin{tikzpicture}[baseline = 5]
  	\draw[-] (0.08,-.4) to (0.08,-.13);
  	\draw[-] (0.08,.37) to (0.08,.13);
  	\draw (0.08,.63) to (0.08,.9);
        \draw (0.08,0) circle (4pt);
        \draw (0.08,.5) circle (4pt);
     \node at (0.08,0) {$\scriptstyle{g}$};
     \node at (0.08,.51) {$\scriptstyle{f}$};
  \draw[-,purple](.4,-.4) to (-.24,-.4);
  \draw[-,purple](.4,.9) to (-.24,.9);
  \node at (.5,.9) {$\color{purple}\scriptstyle t$};
  \node at (.5,-.4) {$\color{purple}\scriptstyle r$};
  \end{tikzpicture}
  }.
\end{equation}

The \emph{Karoubi envelope} $\Kar(\cC)$ of a strict monoidal $\Pi$-supercategory is a strict monoidal $\Pi$-supercategory.  Its Grothendieck group $K_0(\Kar(\cC))$ is actually a $\Zpi$-algebra with multiplication induced by tensor product.

A \emph{module supercategory} over a strict monoidal supercategory $\cC$ is a supercategory $\cA$ together with a monoidal superfunctor $\cC \rightarrow \SEnd(\cA)$.  If $\cA$ is actually a $\Pi$-supercategory, then $\SEnd(\cA)$ is a strict monoidal $\Pi$-supercategory, so applying the universal property of $\Pi$-envelope gives us an induced monoidal superfunctor $\cC_\pi \rightarrow \SEnd(\cA)$.  In turn, if $\cA$ is also Karoubian, then so is $\SEnd(\cA)$, hence applying the universal property of Karoubi envelope gives us a monoidal superfunctor $\Kar(\cC_\pi) \rightarrow \SEnd(\cA)$. In particular, this makes $K_0(\cA)$ into a module over the $\Zpi$-algebra $K_0(\Kar(\cC_\pi))$.

In the presence of an additional $\Z$-grading, the definitions just recalled become \emph{strict graded monoidal supercategories} and \emph{strict graded monoidal $(Q,\Pi)$-supercategories}; see \cite[$\S$6]{BE17}.  The basic example of a strict graded monoidal supercategory is the category $\GSEnd(\cA)$ of graded superfunctors and graded supernatural transformations for a graded supercategory $\cA$.  For a strict graded monoidal supercategory $\cC$, its $(Q,\Pi)$-envelope
$\cC_{q,\pi}$ is a strict graded monoidal $(Q,\Pi)$-supercategory, and
we represent the morphism $f_{m,r}^{n,s}:Q^m \Pi^r X \rightarrow Q^n
\Pi^s Y$ in $\cC_{q,\pi}$ diagrammatically by
 \begin{equation}\label{covid2}
        \begin{tikzpicture}[anchorbase]
          \draw (0,-0.4) -- (0,0.4);
          \filldraw[fill=white,draw=black] (0,0) circle(4pt);
          \node at (0,0) {\dotlabel{f}};
          \shiftline{-0.3,-0.4}{0.3,-0.4}{m,r};
          \shiftline{-0.3,0.4}{0.3,0.4}{n,s};
        \end{tikzpicture}
        \colon Q^m\Pi^r X \to Q^n\Pi^s Y.
    \end{equation}
Horizontal and vertical composition in $\cC_{q,\pi}$ are defined in
terms of the underlying
parities as in \cref{slush}.
Then $K_0(\Kar(\cC_{q,\pi}))$ is a $\Zq$-algebra.  If $\cA$ is a
graded module supercategory over $\cC$ then $K_0(\Kar(\cA_{q,\pi}))$
is a module over $K_0(\Kar(\cC_{q,\pi}))$.

\section{The wreath product category\label{sec:wreath}}

Let $A$ be a graded superalgebra.  The symmetric group $\fS_n$ acts on
$A^{\otimes n}$ (tensor product of graded superalgebras) by permuting
the factors, with the appropriate sign when odd elements are flipped.  To match our diagrammatic conventions to be introduced below, we will number the factors of $A^{\otimes n}$ from right to left.  In particular,
\[
    s_i(a_n \otimes \dotsb \otimes a_1)
    = (-1)^{\bar a_i \bar a_{i+1}} a_n \otimes \dotsb \otimes a_{i+2} \otimes a_i \otimes a_{i+1} \otimes a_{i-1} \otimes \dotsb \otimes a_1,\quad
    a_1,\dotsc,a_n \in A,
\]
where $s_i$ denotes the simple transposition of $i$ and $i+1$.  The \emph{wreath product algebra} $\WA{n} = A^{\otimes n} \rtimes \fS_n$ is the algebra equal to $A^{\otimes n} \otimes \kk \fS_n$ as a vector space, with multiplication determined by
\[
    (\ba \otimes \sigma) (\bb \otimes \tau) = \ba \sigma(\bb) \otimes \sigma \tau,\quad
    \ba, \bb \in A^{\otimes n},\ \sigma,\tau \in \fS_n.
\]
This a graded superalgebra, where we consider elements of $\fS_n$ to be of degree zero.

Let $\WC$ be the free strict symmetric graded monoidal supercategory generated by an object $\uparrow$ whose endomorphism algebra is $A$.  So $\WC$ is the strict graded monoidal supercategory generated by one object $\uparrow$ and morphisms
\begin{equation} \label{Wrupgen}
    \begin{tikzpicture}[anchorbase]
        \draw [->](0,0) -- (0.6,0.6);
        \draw [->](0.6,0) -- (0,0.6);
    \end{tikzpicture}
    \colon \uparrow \otimes \uparrow \to \uparrow \otimes \uparrow,\quad
    \begin{tikzpicture}[anchorbase]
        \draw[->] (0,0) -- (0,0.6);
        \blacktoken[west]{0,0.3}{a};
    \end{tikzpicture}
    \colon \uparrow \to \uparrow
    ,\ a \in A,
\end{equation}
where the crossing is even of degree zero and the morphism
$
  \begin{tikzpicture}[anchorbase]
    \draw[->] (0,0) -- (0,0.4);
    \blacktoken[west]{0,0.2}{a};
  \end{tikzpicture}
$,
which we refer to as a \emph{token}, is of the same degree and parity as $a$.  The relations are as follows:
\begin{align} \label{wreathrel}
    \begin{tikzpicture}[anchorbase]
        \draw[->] (0,0) -- (0,0.7);
        \blacktoken[west]{0,0.35}{1};
    \end{tikzpicture}
    &=
    \begin{tikzpicture}[anchorbase]
        \draw[->] (0,0) -- (0,0.7);
    \end{tikzpicture}
    ,&
    \lambda\:
    \begin{tikzpicture}[anchorbase]
        \draw[->] (0,0) -- (0,0.7);
        \blacktoken[west]{0,0.35}{a};
    \end{tikzpicture}
    + \mu\:
    \begin{tikzpicture}[anchorbase]
        \draw[->] (0,0) -- (0,0.7);
        \blacktoken[west]{0,0.35}{b};
    \end{tikzpicture}
    &=
    \begin{tikzpicture}[anchorbase]
        \draw[->] (0,0) -- (0,0.7);
        \blacktoken[west]{0,0.35}{\lambda a + \mu b};
    \end{tikzpicture}
    ,&
    \begin{tikzpicture}[anchorbase]
        \draw[->] (0,0) -- (0,0.7);
        \blacktoken[east]{0,0.2}{b};
        \blacktoken[east]{0,0.45}{a};
    \end{tikzpicture}
    &=
    \begin{tikzpicture}[anchorbase]
        \draw[->] (0,0) -- (0,0.7);
        \blacktoken[west]{0,0.35}{ab};
    \end{tikzpicture}
    ,&
    \begin{tikzpicture}[anchorbase]
        \draw[->] (-0.3,-0.3) -- (0.3,0.3);
        \draw[->] (0.3,-0.3) -- (-0.3,0.3);
        \blacktoken[east]{-0.15,-0.15}{a};
    \end{tikzpicture}
    &=
    \begin{tikzpicture}[anchorbase]
        \draw[->] (-0.3,-0.3) -- (0.3,0.3);
        \draw[->] (0.3,-0.3) -- (-0.3,0.3);
        \blacktoken[west]{0.1,0.1}{a};
    \end{tikzpicture}
    ,&
    \begin{tikzpicture}[anchorbase]
        \draw[->] (0,0) -- (1,1);
        \draw[->] (1,0) -- (0,1);
        \draw[->] (0.5,0) .. controls (0,0.5) .. (0.5,1);
    \end{tikzpicture}
    &=
    \begin{tikzpicture}[anchorbase]
        \draw[->] (0,0) -- (1,1);
        \draw[->] (1,0) -- (0,1);
        \draw[->] (0.5,0) .. controls (1,0.5) .. (0.5,1);
    \end{tikzpicture}
    ,&
    \begin{tikzpicture}[anchorbase]
        \draw[->] (0,0) \braidto (0.5,0.5) \braidto (0,1);
        \draw[->] (0.5,0) \braidto (0,0.5) \braidto (0.5,1);
    \end{tikzpicture}
    &=
    \begin{tikzpicture}[anchorbase]
        \draw[->] (0,0) --(0,1);
        \draw[->] (0.5,0) -- (0.5,1);
    \end{tikzpicture}
    \ ,
\end{align}
for $a,b \in A$ and $\lambda,\mu \in \kk$.  It follows from the defining relations that the map
\begin{equation*}
    A \to \End_{\WC}(\uparrow),\quad a \mapsto
    \begin{tikzpicture}[anchorbase]
        \draw[->] (0,0) -- (0,0.6);
        \blacktoken[west]{0,0.3}{a};
    \end{tikzpicture}
    \ ,
\end{equation*}
is a graded superalgebra homomorphism and, also using \cref{intlaw}, we have automatically that
\begin{align}\label{inkyoto}
    \begin{tikzpicture}[anchorbase]
        \draw[->] (0,0) -- (0,0.7);
        \blacktoken[east]{0,0.45}{a};
        \draw[->] (0.4,0) -- (0.4,0.7);
        \blacktoken[west]{.4,0.25}{b};
    \end{tikzpicture}
    &=
    \begin{tikzpicture}[anchorbase]
        \draw[->] (0,0) -- (0,0.7);
        \blacktoken[east]{0,0.35}{a};
        \draw[->] (0.4,0) -- (0.4,0.7);
        \blacktoken[west]{.4,0.35}{b};
    \end{tikzpicture}
    = (-1)^{\bar a \bar b}
    \begin{tikzpicture}[anchorbase]
        \draw[->] (0,0) -- (0,0.7);
        \blacktoken[east]{0,0.25}{a};
        \draw[->] (0.4,0) -- (0.4,0.7);
        \blacktoken[west]{.4,0.45}{b};
    \end{tikzpicture}
    \ ,&
    \begin{tikzpicture}[anchorbase]
        \draw[->] (-0.3,-0.3) -- (0.3,0.3);
        \draw[->] (0.3,-0.3) -- (-0.3,0.3);
        \blacktoken[west]{0.15,-0.15}{a};
    \end{tikzpicture}
    &=
    \begin{tikzpicture}[anchorbase]
        \draw[->] (-0.3,-0.3) -- (0.3,0.3);
        \draw[->] (0.3,-0.3) -- (-0.3,0.3);
        \blacktoken[east]{-0.1,0.1}{a};
    \end{tikzpicture}
    \ .
\end{align}
The objects of $\WC$ are $\left\{\uparrow^{\otimes n} : n \in \N\right\}$.  There are no nonzero  morphisms $\uparrow^{\otimes m} \to \uparrow^{\otimes n}$ for $m \ne n$.  Furthermore, we have an isomorphism of graded superalgebras
\begin{equation} \label{Wrimath1}
    \imath_n \colon \WA{n} \xrightarrow{\cong} \End_{\WC}(\uparrow^{\otimes n})
\end{equation}
sending the simple transposition $s_i$ to the crossing of the $i$-th and $(i+1)$-st strings and $1^{\otimes (n-i)} \otimes a \otimes 1^{\otimes (i-1)}$ to a token labeled $a$ on the $i$-th string,
numbering strings $1,2,\dotsc,n$ from right to left.  We will use this isomorphism to \emph{identify}
elements of $\WA{n}$ (hence also elements of $\fS_n$) with morphisms
in $\End_{\WC}(\uparrow^{\otimes n})$ without further comment.

Suppose $\cC$ and $\cD$ are strict graded monoidal supercategories
defined by generators and relations, e.g.\ $\cC=\cD=\WC$.
Their free product is the strict graded monoidal supercategory defined by taking the disjoint union of
the given generators and relations of $\cC$ and $\cD$.  Then the \emph{symmetric product} $\cC \odot \cD$ is the strict graded monoidal category obtained from the free product by adjoining additional
even degree zero isomorphisms $\sigma_{X,Y} \colon X \otimes Y \xrightarrow{\cong} Y \otimes X$ for each pair of objects $X \in \cC$ and $Y \in \cD$, subject to the relations
\begin{align}\label{flight1}
    \sigma_{X_1 \otimes X_2, Y} &= (\sigma_{X_1,Y} \otimes 1_{X_2}) \circ (1_{X_1} \otimes \sigma_{X_2,Y}),
    &\sigma_{X,Y_1 \otimes Y_2} &= (1_{Y_1} \otimes \sigma_{X,Y_2}) \circ (\sigma_{X,Y_1} \otimes 1_{Y_2}),
    \\
    \sigma_{X_2,Y} \circ (f \otimes 1_Y) &= (1_Y \otimes f) \circ \sigma_{X_1,Y},
    &\sigma_{X,Y_2} \circ (1_X \otimes g) &= (g \otimes 1_X) \circ \sigma_{X,Y_1},\label{flight2}
\end{align}
for all $X, X_1, X_2 \in \cC$, $Y, Y_1, Y_2 \in \cD$, $f \in \Hom_\cC(X_1,X_2)$, and $g \in \Hom_\cD(Y_1,Y_2)$.

We use two colors to denote the factors in the symmetric product $\blue{\WC} \odot \red{\WC}$. Morphisms are then represented by linear combinations of string diagrams colored blue and red.  In order to write down an efficient monoidal presentation for this category, as well as the colored tokens and one-color crossings that are the generating morphisms of $\blue{\WC}$ and $\red{\WC}$, one just needs two additional generating morphisms represented by the two-color crossings
\begin{equation}\label{referencethislatertoo}
    \begin{tikzpicture}[anchorbase]
        \draw [->,blue](0,0) -- (0.6,0.6);
        \draw [->,red](0.6,0) -- (0,0.6);
    \end{tikzpicture}
    := \sigma_{\upblue,\upred}
    \colon \upblue \otimes \upred \to \upred \otimes \upblue,
    \qquad
    \begin{tikzpicture}[anchorbase]
        \draw [->,red](0,0) -- (0.6,0.6);
        \draw [->,blue](0.6,0) -- (0,0.6);
    \end{tikzpicture}
    :=
    \left(\sigma_{\upblue,\upred}\right)^{-1}
    \colon \upred \otimes \upblue \to \upblue \otimes \upred,
\end{equation}
both of which are even of degree zero.  All of the other required morphisms $\sigma_{X,Y}$ and their inverses can be obtained from these using \cref{flight1}.  Then, as well as the defining relations of
$\blue{\WC}$ and $\red{\WC}$, one needs additional relations asserting that the two-color crossings displayed above are mutual inverses, and that the colored tokens and one-color crossings commute with the two-color crossings as in \cref{flight2}.

There is a strict graded monoidal functor, which we call \emph{categorical comultiplication},
\begin{equation}\label{stroppel}
    \Delta \colon \WC \to \Add(\blue{\WC} \odot \red{\WC})
\end{equation}
sending the generating object $\uparrow$ to $\upblue \oplus \upred$, and defined on the generating morphisms by
\[
    \Delta
    \left(\:
        \begin{tikzpicture}[anchorbase]
            \draw[->] (0,-0.3) to (0,0.3);
            \blacktoken[west]{0,0}{a};
        \end{tikzpicture}
    \right)
    =\
    \begin{tikzpicture}[anchorbase]
        \draw[->,blue] (0,-0.3) to (0,0.3);
        \bluetoken[west]{0,0}{a};
    \end{tikzpicture}
    +\
    \begin{tikzpicture}[anchorbase]
        \draw[->,red] (0,-0.3) to (0,0.3);
        \redtoken[west]{0,0}{a};
    \end{tikzpicture}
    \ ,\qquad
    \Delta
    \left(\:
        \begin{tikzpicture}[anchorbase]
            \draw[->] (-0.4,-0.4) to (0.4,0.4);
            \draw[->] (0.4,-0.4) to (-0.4,0.4);
        \end{tikzpicture}
    \: \right)
    =
    \begin{tikzpicture}[anchorbase]
            \draw[->,blue] (-0.4,-0.4) to (0.4,0.4);
            \draw[->,blue] (0.4,-0.4) to (-0.4,0.4);
    \end{tikzpicture}
    \ +\
    \begin{tikzpicture}[anchorbase]
        \draw[->,red] (-0.4,-0.4) to (0.4,0.4);
        \draw[->,red] (0.4,-0.4) to (-0.4,0.4);
    \end{tikzpicture}
    \ +\
    \begin{tikzpicture}[anchorbase]
        \draw[->,blue] (-0.4,-0.4) to (0.4,0.4);
        \draw[->,red] (0.4,-0.4) to (-0.4,0.4);
    \end{tikzpicture}
    \ +\
    \begin{tikzpicture}[anchorbase]
        \draw[->,red] (-0.4,-0.4) to (0.4,0.4);
        \draw[->,blue] (0.4,-0.4) to (-0.4,0.4);
    \end{tikzpicture}
    \ .
\]
This follows from the universal property defining $\WC$: the category $\Add(\blue{\WC}\odot\red{\WC})$ is a strict symmetric monoidal category and the map
$
    a \mapsto
    \begin{tikzpicture}[anchorbase]
        \draw[->,blue] (0,-0.3) to (0,0.3);
        \bluetoken[west]{0,0}{a};
    \end{tikzpicture}
    +\
    \begin{tikzpicture}[anchorbase]
        \draw[->,red] (0,-0.3) to (0,0.3);
        \redtoken[west]{0,0}{a};
    \end{tikzpicture}
$
defines a homomorphism from $A$ to the endomorphism algebra of the
object $\upblue\oplus \upred$.

Let $\cP$ be the set of all partitions, writing $\lambda^T$ for the
transpose of partition $\lambda$.
For $0 \le r \le n$, let $\cP_{r,n}$ denote the set of size $\binom{n}{r}$ consisting of tuples $\lambda = (\lambda_1,\dotsc,\lambda_r) \in \Z^r$ such that $n-r \ge \lambda_1 \ge \dotsb \ge \lambda_r \ge 0$.  Thus, viewing partitions as Young diagrams, $\cP_{r,n}$ is the set of partitions fitting inside an $r \times (n-r)$ rectangle.  We let $\min_{r,n}$ (resp.\ $\max_{r,n}$) denote the element $\lambda \in \cP_{r,n}$ with $\lambda_1 = \dotsb = \lambda_r = 0$ (resp.\ $\lambda_1 = \dotsb = \lambda_r = n-r$).  For $\lambda \in \cP_{r,n}$, we define
\begin{equation}\label{referencelater}
    \uparrow^{\otimes\lambda} :=
    \upblue^{\otimes (n-r-\lambda_1)} \otimes \upred \otimes \upblue^{\otimes (\lambda_1-\lambda_2)} \otimes \upred \otimes \dotsb \otimes \upred \otimes \upblue^{\otimes \lambda_r}
    \in {\color{blue} \WC} \odot {\color{red} \WC}.
\end{equation}
In particular, $\uparrow^{\otimes \min_{r,n}} = \upblue^{\otimes (n-r)} \otimes \upred^{\otimes r}$ and $\uparrow^{\otimes \max_{r,n}} = \upred^{\otimes r} \otimes \upblue^{\otimes (n-r)}$.  We denote the identity morphism of $\uparrow^{\otimes \lambda}$ by $1_\lambda$.  There is a unique isomorphism
\begin{equation} \label{organize}
    \sigma_\lambda \colon \uparrow^{\otimes \lambda}
    \ \xrightarrow{\cong}\ \uparrow^{\otimes \min_{r,n}}
\end{equation}
whose diagram only involves crossings of the form
$
  \begin{tikzpicture}[anchorbase]
    \draw[->,red] (-0.2,-0.25) to (0.2,0.25);
    \draw[->,blue] (0.2,-0.25) to (-0.2,0.25);
  \end{tikzpicture};
$
in particular $\sigma_{\min_{r,n}} = 1_{\min_{r,n}}$.
We will view the tensor product $\WA{n-r} \otimes \WA{r}$ as a subalgebra of $\WA{n}$ via the  identifications
\begin{gather*}
    1 \otimes s_i \leftrightarrow s_i, \quad
    s_j \otimes 1 \leftrightarrow s_{r+j}, \quad
    1 \otimes \ba \leftrightarrow 1^{\otimes (n-r)} \otimes \ba, \quad
    \bb \otimes 1 \leftrightarrow \bb \otimes 1^{\otimes r},
\end{gather*}
for $\ba \in A^{\otimes r}$, $\bb \in A^{\otimes (n-r)}$, $1 \le i \le r-1$, $1 \le j \le n-r-1$.  Generalizing \cref{Wrimath1}, we have an isomorphism of graded superalgebras
\begin{equation} \label{Wrimath2}
    \imath_{r,n} \colon \WA{n-r} \otimes \WA{r} \to \End_{\blue{\WC} \odot \red{\WC}} \left( \upblue^{\otimes (n-r)} \otimes \upred^{\otimes r} \right).
\end{equation}
Combining this with the elements $\{ \sigma_\lambda^{-1} \circ \sigma_\mu : \lambda, \mu \in \cP_{r,n} \}$, which give the matrix units, we see that
\begin{equation} \label{box}
    \End_{\Add(\blue{\WC} \odot \red{\WC})} \left( (\upblue \oplus \upred)^{\otimes n} \right)
    \cong \bigoplus_{r=0}^n \mathrm{Mat}_{\binom{n}{r}} \left( \WA{n-r} \otimes \WA{r} \right).
\end{equation}

\begin{rem} \label{coass1}
    The categorical comultiplication is coassociative in the sense that the compositions $(\Delta \odot \Id) \circ \Delta$ and $(\Id \odot \Delta) \circ \Delta$ agree (on identifying $(\blue{\WC} \odot \red{\WC}) \odot {\color{green} \WC}$ with $\blue{\WC} \odot (\red{\WC} \odot {\color{green} \WC})$).
\end{rem}

In the remainder of the section, we are going to describe what the
above constructions look like at
the level of the Grothendieck ring.
We assume for this that the algebra $A$ is {\em finite dimensional}
and moreover, for simplicity, that
{\em all irreducible $A$-supermodules are of type $\mathtt{M}$}, i.e.\
they do not admit any odd automorphisms. This is automatic, for
example, if $A$ satisfies the hypothesis \eqref{virginia} from the
introduction.
Fix a choice of homogeneous idempotents $e_1,\dotsc,e_N \in A$ such that $e_1 A, \dotsc, e_N A$ is a complete list of representatives of the isomorphism classes of indecomposable projective graded right $A$-supermodules, up to grading and parity shift.  Then $K_0(\pgsmod A)$ is a free $\Zq$-module with basis given by the classes $\left\{[e_i A] : 1\leq i\leq N\right\}$.

We refer to $N$-tuples
$\blambda = (\lambda^1,\dotsc,\lambda^N) \in \cP^N$ of partitions as
\emph{multipartitions},
defining the transpose
\begin{equation}\label{transpose}
\blambda^T:= \left( (\lambda^1)^T, \dotsc, (\lambda^N)^T \right)
\end{equation}
for $\blambda = (\lambda^1,\dotsc,\lambda^N)$.
We denote the \emph{size} of $\lambda \in \cP$ by $|\lambda|$, and set
$|\blambda| := |\lambda^1| + \dotsb + |\lambda^N|$.  For
$\lambda \in \cP$ of size $n$, let $e_\lambda$ denote the corresponding Young idempotent of $\kk \fS_n$, so that $\left\{e_\lambda \,\kk \fS_n : \lambda \in \cP,\, |\lambda|= n\right\}$ is a complete set of representatives of the isomorphism classes of irreducible right $\kk\fS_n$-modules.  In particular, the complete symmetrizer and complete antisymmetrizer in $\kk\fS_n$ are given by
\[
    e_{(n)} = \frac{1}{n!} \sum_{\sigma \in \fS_n} \sigma
    \quad \text{and} \quad
    e_{(1^n)} = \frac{1}{n!} \sum_{\sigma \in \fS_n} (-1)^{\ell(\sigma)} \sigma,
\]
where $\ell(\sigma)$ is the length of the permutation $\sigma$.  Given also $i \in \{1,\dotsc,N\}$, let
\begin{equation} \label{sugar}
    e_{\lambda,i}
    := e_\lambda (e_i \otimes \cdots \otimes e_i \otimes e_i)
    = (e_i \otimes\cdots\otimes e_i \otimes e_i) e_\lambda
    \in \WA{n}.
\end{equation}
Then for $\blambda \in \cP^N$ we let
\begin{equation} \label{berries}
    e_\blambda
    := e_{\lambda^N,N} \otimes \cdots \otimes e_{\lambda^2,2} \otimes e_{\lambda^1,1}
    \in \End_{\WC}(\uparrow^{\otimes n}).
\end{equation}

\begin{lem}[{\cite[Prop.~4.4]{RS17}}] \label{trips}
    The right ideals $\left\{e_\blambda \WA{n} : \blambda \in \cP^N,\,
      |\blambda|=n\right\}$ give a complete set of representatives of
    the isomorphism classes of indecomposable projective graded right
    $\WA{n}$-supermodules, up to grading and parity shift.  Hence
    their isomorphism classes give a basis for $K_0(\pgsmod \WA{n})$ as a free
    $\Zq$-module.
The same assertions hold without the grading, so that the isomorphism classes
$\left\{[e_\blambda \WA{n}] : \blambda \in \cP^N,\,
      |\blambda|=n\right\}$ give a basis for
$K_0(\psmod \WA{n})$ as a free
    $\Zpi$-module.
\end{lem}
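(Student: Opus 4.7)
The plan is to prove the graded assertion; the ungraded version follows by forgetting the $\Z$-grading, and the $K_0$ basis statements are then formal consequences of having a complete irredundant list of indecomposable projective supermodules (the $\Zq$- and $\Zpi$-actions absorbing the grading and parity shifts, respectively). The argument is Clifford/Mackey theory for the semidirect product $\WA{n}=A^{\otimes n}\rtimes\fS_n$: first classify simples and then identify each projective cover with an explicit $e_\blambda\WA{n}$.

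For the classification, I would reduce to the semisimple case. The graded Jacobson radical $\mathrm{rad}(A^{\otimes n})$ is $\fS_n$-stable, so it generates a two-sided graded ideal $\widetilde J$ in $\WA{n}$. Since $\widetilde J$ is nilpotent and the quotient $\WA{n}/\widetilde J\cong \mathrm{Wr}_n(\bar A)$ for $\bar A:=A/\mathrm{rad}(A)$ is semisimple (as $\bar A$ is semisimple, $\fS_n$ acts by automorphisms, and $|\fS_n|$ is invertible in $\kk$), we conclude $\widetilde J$ is precisely the graded Jacobson radical of $\WA{n}$. Under the type-$\mathtt{M}$ hypothesis, $\bar A\cong\bigoplus_{i=1}^N M_{d_i}(\kk)$ with each summand concentrated in a single parity. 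The simples of $\bar A^{\otimes n}$ are then outer tensor products $L_{i_1}\boxtimes\cdots\boxtimes L_{i_n}$; the $\fS_n$-orbits of these are indexed by compositions $(n_1,\dots,n_N)$ of $n$, and the stabilizer of a representative is the Young subgroup $\fS_{n_1}\times\cdots\times\fS_{n_N}$, whose irreducibles (in characteristic zero) are $S^{\lambda^1}\boxtimes\cdots\boxtimes S^{\lambda^N}$ with $|\lambda^i|=n_i$. Standard Clifford theory then gives a complete parameterization of simple $\WA{n}$-supermodules, up to parity shift, by multipartitions $\blambda\in\cP^N$ with $|\blambda|=n$; passing to projective covers yields the asserted count.

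To identify $e_\blambda\WA{n}$ with the projective cover of the simple labeled by $\blambda$, I would show $e_\blambda\WA{n}e_\blambda$ is local graded and match its head with the target simple. Choosing the $e_i$ pairwise orthogonal (without loss of generality, since any two systems of primitive idempotent representatives are conjugate), a direct computation using the wreath product multiplication rule shows that $e_i^{\otimes n}\cdot\sigma\cdot e_j^{\otimes n}=0$ unless $i=j$ and $\sigma$ stabilizes the underlying tuple of labels, yielding the identification
\[
    e_\blambda\WA{n}e_\blambda
    \;\cong\; \bigotimes_{i=1}^N e_{\lambda^i}\,\mathrm{Wr}_{|\lambda^i|}(e_iAe_i)\,e_{\lambda^i}.
\]
Since $e_i$ is primitive, $e_iAe_i$ is local graded with residue $\kk$; its radical extends to the graded Jacobson radical of $\mathrm{Wr}_{|\lambda^i|}(e_iAe_i)$ by the same argument as in Step~1, and modulo it we obtain $e_{\lambda^i}\kk\fS_{|\lambda^i|}e_{\lambda^i}=\kk$ by primitivity of the Young symmetrizer in characteristic zero. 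Each tensor factor is therefore local graded with residue $\kk$, hence so is $e_\blambda\WA{n}e_\blambda$. A parallel computation of the head of $e_\blambda\WA{n}$ then matches it with the simple attached to $\blambda$ in the Mackey parameterization, completing the identification.

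The main obstacle I foresee is this final identification: pushing the $e_i$-factors past permutations in the super wreath product while keeping track of signs (via \cref{intlaw} and the relations \cref{wreathrel}), and carefully matching the head of $e_\blambda \WA{n}$ with the correct simple in the Clifford parameterization, takes some bookkeeping. Everything else—the radical reduction in Step~1 and the Mackey orbit count in Step~2—is a formal consequence of characteristic-zero representation theory applied to the semidirect-product structure.
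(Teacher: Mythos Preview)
The paper does not give its own proof of this lemma; it simply records the statement with a citation to \cite[Prop.~4.4]{RS17}. Your Clifford--Mackey approach (reduce modulo the $\fS_n$-stable radical of $A^{\otimes n}$, classify simples of the semisimple quotient via orbits and Young-subgroup stabilizers, then identify $e_\blambda\WA{n}$ as the projective cover by showing $e_\blambda\WA{n}e_\blambda$ is local) is exactly the standard argument, and it is essentially how the result is established in the cited reference. So the proposal is correct and aligned with what the paper relies on.

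One caveat worth flagging: your step ``$e_iAe_i$ is local graded with residue $\kk$'' tacitly assumes that $\kk$ is a splitting field for $A$, i.e.\ that $\End_A(L)\cong\kk$ for each irreducible $A$-supermodule $L$. This is assumed in \cite{RS17} (where $\kk$ is taken algebraically closed) but is not made explicit in the present paper beyond the type-$\mathtt{M}$ hypothesis. Without it, the count of indecomposable projectives for $\WA{n}$ can genuinely differ from $|\{\blambda\in\cP^N:|\blambda|=n\}|$: for instance, with $\kk=\mathbb{R}$ and $A=\mathbb{C}$ (so $N=1$), one finds $\WA[\mathbb{C}]{2}\cong\mathbb{C}\times\mathbb{C}\times M_2(\mathbb{R})$ with three simples, not two. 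Your argument is fine once this splitting assumption is in place; just be aware that it is a hypothesis, not a consequence of type $\mathtt{M}$ alone.
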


Recall from \cref{sec:gradecat} that $\Kar(\WC_{q,\pi})$ denotes the completion of the additive envelope of the $(Q,\Pi)$-envelope $\WC_{q,\pi}$ at all homogeneous idempotents.  Its Grothendieck ring is a $\Zq$-algebra with the $\Zq$-action defined as in \cref{elephant}.  For $\blambda \in \cP^N$ with $|\blambda|=n$, let
\begin{equation} \label{hide}
    S_\blambda := \left( \uparrow^{\otimes n}, e_\blambda \right) \in \Kar\big(\WC_{q,\pi}\big).
\end{equation}
In particular, for $n \in \N$, $1 \le i \le N$, the objects
\begin{align} \label{seek}
    H_{n,i} &:= \left( \uparrow^{\otimes n}, e_{(n),i} \right),
    & E_{n,i} &:= \left( \uparrow^{\otimes n}, e_{(1^n),i} \right)
\end{align}
are equal to $S_{\blambda}$ for the multipartition $\blambda \in \cP^N$ with  $\lambda^i = (n)$ or $(1^n)$, respectively, and $\lambda^j = \varnothing$ for all $j \neq i$.

Let $\Sym_{\Zq}$ and $\Sym_\Qq$ denote the rings of symmetric functions over $\Zq$ and $\Qq$, respectively.  Then let $\Sym_\Zq^{\otimes N} \subseteq \Sym_\Qq^{\otimes N}$ be the rings
\begin{align} \label{dogs}\textstyle
    \Sym_\Zq^{\otimes N}
    &:= \Sym_\Zq \otimes_{\Zq} \cdots \otimes_{\Zq} \Sym_\Zq,
    \\\label{cats}
    \Sym_\Qq^{\otimes N} &:= \Sym_\Qq \otimes_\Qq\cdots\otimes_\Qq \Sym_\Qq,
\end{align}
where in both cases there are $N$ tensor factors.  For $f \in \Sym_\Zq$ and $1 \le i \le N$, we write $f_i$ to denote the pure tensor in $\Sym_\Zq^{\otimes N}$ equal to $f$ in the $i$-th factor and to $1$ in the other factors.  If $p_n$, $e_n$, and $h_n$ denote the usual power sums, elementary symmetric functions, and complete symmetric functions, respectively, then $\Sym_\Zq^{\otimes N}$ is freely generated by $\{e_{n,i} : n >0,\ 1 \le i \le N\}$ or $\{h_{n,i} : n > 0,\ 1 \le i \le N\}$.  In addition, $\{p_{n,i} : n >0,\ 1 \le i \le N\}$ freely generates $\Sym_\Qq^{\otimes N}$.  Writing $s_\lambda$ for the Schur function associated to $\lambda \in \cP$, let
\[
    s_\blambda
    := s_{\lambda^N} \otimes \dotsb \otimes s_{\lambda^1}
    = s_{\lambda^N,N} \dotsm s_{\lambda^1,1}
\]
for $\blambda \in \cP^N$.  Then $\Sym_\Zq^{\otimes N}$ is free as a $\Zq$-module with the basis $\{s_\blambda : \blambda \in \cP^N\}$.

\begin{lem}\label{definingg}
    There is a $\Zq$-algebra isomorphism
    \begin{equation*}
        \gamma \colon \Sym_\Zq^{\otimes N}
        \xrightarrow{\cong}
        K_0\left(\Kar\left(\WC_{q,\pi}\right)\right),\qquad
        s_{\lambda,i} \mapsto [S_{\lambda,i}],\quad
        h_{n,i} \mapsto [H_{n,i}],\quad
        e_{n,i} \mapsto [E_{n,i}].
    \end{equation*}
\end{lem}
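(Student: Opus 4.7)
The plan is to use the universal property of the polynomial ring $\Sym_\Zq^{\otimes N}$ to define $\gamma$, then verify bijectivity by checking that $\gamma$ sends the $\Zq$-basis $\{s_\blambda : \blambda \in \cP^N\}$ of the domain to the $\Zq$-basis $\{[S_\blambda] : \blambda \in \cP^N\}$ of the codomain.

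First I would observe that since $\WC$ is a strict symmetric graded monoidal supercategory, $K_0(\Kar(\WC_{q,\pi}))$ is a commutative $\Zq$-algebra, so the assignment $h_{n,i} \mapsto [H_{n,i}]$ extends uniquely to a $\Zq$-algebra homomorphism $\gamma \colon \Sym_\Zq^{\otimes N} \to K_0(\Kar(\WC_{q,\pi}))$ by the universal property of the polynomial ring.

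To identify the images of the Schur and elementary symmetric generators, I would reduce to the classical symmetric group case. For each $i \in \{1,\ldots,N\}$, the idempotent $e_i \in A$ determines a monoidal superfunctor $\imath_i \colon \WC[\kk] \to \Kar(\WC)$ sending the generating object $\uparrow$ to $(\uparrow, e_i)$ and the crossing of $\WC[\kk]$ to the induced endomorphism on $(\uparrow, e_i)^{\otimes 2}$; this is well defined by the token-crossing commutation in \cref{wreathrel}. Passing to $(Q,\Pi)$-envelopes and Karoubi envelopes, $\imath_i$ induces a $\Zq$-algebra homomorphism $(\imath_i)_* \colon K_0(\Kar(\WC[\kk]_{q,\pi})) \to K_0(\Kar(\WC_{q,\pi}))$ that sends $[(\uparrow^{\otimes n}, e_\lambda)]$ to $[S_{\lambda, i}]$. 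I would then invoke the classical Frobenius characteristic isomorphism $\Sym_\Zq \xrightarrow{\cong} K_0(\Kar(\WC[\kk]_{q,\pi}))$, under which $h_n$, $e_n$, and $s_\lambda$ correspond to the classes of the trivial, sign, and Young-idempotent projectives of $\kk\fS_n$. Composing with each $(\imath_i)_*$ and combining the resulting $N$ homomorphisms (using commutativity of the target) produces a $\Zq$-algebra homomorphism $\Sym_\Zq^{\otimes N} \to K_0(\Kar(\WC_{q,\pi}))$ which agrees with $\gamma$ on the generators $h_{n,i}$; hence it coincides with $\gamma$, forcing $\gamma(e_{n,i}) = [E_{n,i}]$ and $\gamma(s_{\lambda,i}) = [S_{\lambda,i}]$. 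Multiplicativity together with the factorization $S_\blambda \cong S_{\lambda^N, N} \otimes \cdots \otimes S_{\lambda^1, 1}$ coming from the definition \cref{berries} then yields $\gamma(s_\blambda) = [S_\blambda]$ for all $\blambda \in \cP^N$.

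Finally, by \cref{trips} combined with the Yoneda equivalence \cref{yoneda2}, $\{[S_\blambda] : \blambda \in \cP^N\}$ is a $\Zq$-basis of $K_0(\Kar(\WC_{q,\pi}))$, matching the basis $\{s_\blambda\}$ under $\gamma$, so $\gamma$ is a $\Zq$-algebra isomorphism. The main obstacle I anticipate is the graded version of the classical Frobenius characteristic used as a black box in the reduction; since $\kk$ is trivially graded and purely even, however, the grading and parity shifts in $\WC[\kk]_{q,\pi}$ act freely on objects and the graded statement reduces in a routine manner to the classical ungraded Frobenius characteristic theorem for $\kk\fS_n$.
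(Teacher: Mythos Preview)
Your proposal is correct and follows essentially the same strategy as the paper: both use \cref{trips} together with the Yoneda equivalence \cref{yoneda2} to identify the $\Zq$-module basis $\{[S_\blambda]\}$, and both rely on the classical Frobenius characteristic for the algebra structure. The only difference is presentational: the paper simply cites \cite[Lem.~5.3]{RS17} for the multiplicativity, whereas you sketch the reduction to the $A=\kk$ case via the monoidal functors $\imath_i$ directly.
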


\begin{proof}
    In view of \cref{yoneda2}, we can identify $K_0(\Kar(\WC_{q,\pi}))$ with $\bigoplus_{n \geq 0} K_0(\pgsmod \WA{n})$ so that $[S_\blambda]$ corresponds to $[e_\blambda \WA{n}]$ for $\blambda \in \cP^N$ with $|\blambda|=n$.  It follows that the classes $\left\{[S_\blambda] : \blambda \in \cP^N\right\}$ give a basis for $K_0(\Kar(\WC_{q,\pi}))$ as a free $\Zq$-module. Hence, $\gamma$ is a $\Zq$-module isomorphism. For the proof that it is actually an algebra isomorphism, see \cite[Lem.~5.3]{RS17}.
\end{proof}

We view $\Sym_\Zq^{\otimes N}$ as a Hopf algebra over $\Zq$ with comultiplication $\delta \colon f \mapsto \sum_{(f)} f_{(1)} \otimes f_{(2)}$ determined by
\begin{equation} \label{Symcomult}
    \delta(h_{n,i}) = \sum_{r=0}^n h_{n-r,i} \otimes h_{r,i},\quad
    \delta(e_{n,i}) = \sum_{r=0}^n e_{n-r,i} \otimes e_{r,i},\quad
    \delta(p_{n,i}) = p_{n,i} \otimes 1 + 1 \otimes p_{n,i},
\end{equation}
where $h_{0,i} = e_{0,i} = 1$ by convention.
The final result in this section shows that
the functor (\ref{stroppel}) categorifies this comultiplication.
To formulate the result precisely, observe that
the inclusions of $\blue{\WC}$ and $\red{\WC}$ into $\blue{\WC} \odot
\red{\WC}$ induce inclusions of
$\Kar(\blue{\WC_{q,\pi}})$ and $\Kar(\red{\WC_{q,\pi}})$ into $\Kar((\blue{\WC} \odot \red{\WC})_{q,\pi})$.  In turn, these induce a $\Zq$-algebra homomorphism
\begin{equation}
    \epsilon \colon  K_0 \left( \Kar\left(\blue{\WC_{q,\pi}}\right) \right)
    \otimes_\Zq K_0 \left( \Kar\left(\red{\WC_{q,\pi}}\right) \right)
    \rightarrow K_0 \left( \Kar \left (\left(\blue{\WC} \odot \red{\WC}\right)_{q,\pi}\right) \right).
\end{equation}
Moreover, the map
\begin{equation} \label{topaz}
    \epsilon \circ (\gamma\otimes\gamma) \colon \Sym_\Zq^{\otimes N}\otimes_{\Zq} \Sym_\Zq^{\otimes N}
    \xrightarrow{\cong} K_0 \left(\Kar\left(\blue{\WC}\odot \red{\WC}\right)_{q,\pi}\right)
\end{equation}
is an isomorphism of $\Zq$-algebras. This follows from \cref{box,definingg}.
The functor $\Delta$ extends to a monoidal functor $\tilde\Delta \colon \Kar(\WC_{q,\pi}) \to \Kar((\blue{\WC} \odot \red{\WC})_{q,\pi})$.  The following lemma implies that the diagram
\begin{equation}\label{flyingoverVancouverIsland}
    \begin{tikzcd}
        \Sym^{\otimes N}_\Zq \arrow[r, "\delta"] \arrow[d, "\gamma" right, "\cong" left]
        &
        \Sym^{\otimes N}_\Zq \otimes_\Zq \Sym^{\otimes N}_\Zq \arrow[d, "\epsilon \circ (\gamma \otimes \gamma)" right, "\cong" left]
        \\
        K_0\left(\Kar\left(\WC_{q,\pi}\right)\right) \arrow[r, "{[\tilde\Delta]}"]
        &
        K_0\left(\Kar\left(\left(\blue{\WC} \odot \red{\WC}\right)_{q,\pi}\right)\right)
    \end{tikzcd}
\end{equation}
commutes, i.e.\ $\Delta$ categorifies $\delta$.

\begin{lem} \label{Symcat}
    For $n > 0$ and $1 \le i \le N$, there are degree zero isomorphisms
    \begin{align*}
        \tilde\Delta(H_{n,i}) &\cong \bigoplus_{r=0}^n \blue{H_{n-r,i}} \otimes \red{H_{r,i}},
        &
        \tilde\Delta(E_{n,i}) &\cong \bigoplus_{r=0}^n \blue{E_{n-r,i}} \otimes \red{E_{r,i}}
    \end{align*}
in $\Kar((\blue{\WC} \odot \red{\WC})_{q,\pi})$.
\end{lem}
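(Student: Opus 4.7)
Since $\tilde\Delta$ is monoidal with $\tilde\Delta(\uparrow)=\upblue\oplus\upred$ and $\tilde\Delta(e_i) = \blue{e_i}+\red{e_i}$, setting $E_i := (\uparrow, e_i) \in \Kar(\WC_{q,\pi})$ we have $\tilde\Delta(E_i) = \blue{E_i}\oplus\red{E_i}$ and hence $\tilde\Delta(H_{n,i})\cong \bigl((\blue{E_i}\oplus\red{E_i})^{\otimes n}, \tilde\Delta(e_{(n)})\bigr)$. The plan is to decompose $(\blue{E_i}\oplus\red{E_i})^{\otimes n} = \bigoplus_{r=0}^n Y_r$ by the number of red strands --- where $Y_r$ is the direct sum of the $\binom{n}{r}$ colored tensor products indexed by $\lambda\in\cP_{r,n}$ (each carrying the appropriately colored $e_i$ token on each strand) --- and then identify the restriction of $\tilde\Delta(e_{(n),i})$ to each $Y_r$ with an idempotent Karoubi-equivalent to $\blue{e_{(n-r),i}}\otimes\red{e_{(r),i}}$.

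I first observe that $\tilde\Delta(e_{(n),i})$ preserves each $Y_r$. Indeed, expanding each generating crossing under $\tilde\Delta$ as a sum of four colored crossings shows that for any $\sigma\in\fS_n$ the morphism $\tilde\Delta(\sigma)$ acts on the summands of $Y_r$ via the $\fS_n$-action on size-$r$ subsets of $[n]$, with the appropriate colored crossing diagrams as the intertwiners between summands; and $\tilde\Delta(e_i^{\otimes n})$ merely tags each strand with the appropriately colored $e_i$. Hence it suffices to identify $\tilde\Delta(e_{(n),i})|_{Y_r}$ inside $\End(Y_r) \cong \mathrm{Mat}_{\binom{n}{r}}(\WA{n-r}\otimes\WA{r})$, where the isomorphism uses \cref{box} and the identifications $\sigma_\lambda$ of \cref{organize}.

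The key computation is that $\tilde\Delta(e_{(n),i})|_{Y_r}$ is the rank-one matrix each of whose entries equals $\tfrac{1}{\binom{n}{r}}\bigl(\blue{e_{(n-r),i}}\otimes\red{e_{(r),i}}\bigr)$. Indeed, $\fS_n$ acts transitively on $\cP_{r,n}$ with stabilizer $\fS_{n-r}\times\fS_r$ fixing $\min_{r,n}$ and acting on $\upblue^{\otimes(n-r)}\otimes\upred^{\otimes r}$ as its standard inclusion into $\WA{n-r}\otimes\WA{r}$; the $(\min_{r,n},\min_{r,n})$-entry is thus $\tfrac{1}{n!}\sum_{\sigma\in\fS_{n-r}\times\fS_r}\sigma\cdot e_i^{\otimes n} = \tfrac{(n-r)!\,r!}{n!}\bigl(\blue{e_{(n-r),i}}\otimes\red{e_{(r),i}}\bigr)$, and the off-diagonal entries arise from the remaining cosets and agree with the diagonal entry after identifying summands via the $\sigma_\lambda$. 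Such a rank-one idempotent $M$ is Karoubi-equivalent to $p := \blue{e_{(n-r),i}}\otimes\red{e_{(r),i}}$ via morphisms
\[
  \alpha_r := \sum_{\lambda\in\cP_{r,n}} \iota_\lambda \circ p \colon \upblue^{\otimes(n-r)}\otimes\upred^{\otimes r} \to Y_r,
  \qquad
  \beta_r := \tfrac{1}{\binom{n}{r}}\sum_{\lambda\in\cP_{r,n}} p\circ \pi_\lambda \colon Y_r \to \upblue^{\otimes(n-r)}\otimes\upred^{\otimes r},
\]
where $\iota_\lambda, \pi_\lambda$ are the inclusion of and projection onto the $\lambda$-th summand (identified via $\sigma_\lambda$); one checks $\beta_r\circ\alpha_r = p$ and $\alpha_r\circ\beta_r = M$.

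The argument for $E_{n,i}$ is identical after replacing the complete symmetrizer $e_{(n)}$ by the complete antisymmetrizer $e_{(1^n)}$, which restricts to $\fS_{n-r}\times\fS_r$ as $e_{(1^{n-r})}\otimes e_{(1^r)}$ (up to an overall sign absorbed by the conventions for $e_{(1^n)}$), yielding summands $\blue{E_{n-r,i}}\otimes\red{E_{r,i}}$. All constructed morphisms are even of degree zero: $e_i$ is automatically even of degree zero (being a homogeneous idempotent in a graded superalgebra), the colored crossings are even degree-zero generators, and the scalars involved are rational. The main technical obstacle is verifying the rank-one-matrix description of $\tilde\Delta(e_{(n),i})|_{Y_r}$, but this follows directly from the standard permutation-representation decomposition of $\fS_n/(\fS_{n-r}\times\fS_r)$ once one is careful with coset representatives and sign conventions.
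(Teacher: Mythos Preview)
Your argument is correct and is essentially the same direct computation that the paper's cited reference \cite[Th.~3.2]{BSW-K0} carries out: once one observes that in the non-affine setting $\tilde\Delta$ sends crossings to sums of colored crossings (no dumbbells), the image of $e_{(n),i}$ is block-diagonal in the ``number of red strands'' decomposition, and on each block it becomes the rank-one idempotent with entries $\binom{n}{r}^{-1}(\blue{e_{(n-r),i}}\otimes\red{e_{(r),i}})$ after conjugating by the $\sigma_\lambda$. One minor point worth recording for the $E_{n,i}$ case: the off-diagonal $(\mu,\lambda)$ entries acquire the sign $(-1)^{\ell(w_\mu)+\ell(w_\lambda)}$ (where $w_\lambda\in\fS_n$ corresponds to $\sigma_\lambda$), so the matrix is only rank-one up to conjugation by $\mathrm{diag}((-1)^{\ell(w_\lambda)})$; incorporating these signs into your $\alpha_r,\beta_r$ fixes this and the rest of the argument goes through unchanged.
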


\begin{proof}
    The proof of this result is almost identical to that of \cite[Th.~3.2]{BSW-K0}; one merely replaces the $e_{\lambda}$ appearing there by $e_{\lambda,i}$.
\end{proof}

\section{The affine wreath product category\label{sec:affwreath}}

We assume for the remainder of the article that $A$ is a \emph{symmetric graded Frobenius superalgebra} with trace $\tr \colon A \to \kk$ that is even of degree $-2\dA$, $\dA \in \Z$.  This means that
\begin{equation}
    \tr(ab) = (-1)^{\bar a \bar b} \tr(ba),\quad
    a,b \in A.
\end{equation}
The assumption that $A$ is Frobenius implies that it is
finite dimensional.
Recall also the definitions of the center $Z(A)$ (see \cref{center}) and cocenter $C(A)$.  We denote the canonical image of $a \in A$ in $C(A)$ by $\cocenter{a}$.

\begin{lem} \label{slipstream}
There is a well-defined nondegenerate pairing $Z(A) \times C(A) \to \kk,  (a,\cocenter{b}) \mapsto \tr(ab)$, $a,b \in A$.
\end{lem}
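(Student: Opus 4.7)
The plan is to deduce both well-definedness and nondegeneracy from the fact that the Frobenius form $(a,b) \mapsto \tr(ab)$ on $A$ is supersymmetric and nondegenerate, by identifying $Z(A)$ with the superspace orthogonal to the super-commutator subspace
\[
  [A,A]_s := \Span_\kk\{ cd - (-1)^{\bar c \bar d} dc : c, d \in A \text{ homogeneous}\},
\]
so that $C(A) = A/[A,A]_s$.

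First I would verify well-definedness by a direct supercyclicity calculation: for homogeneous $a \in Z(A)$ and homogeneous $c, d \in A$, one has
\[
  \tr\!\bigl(a(cd - (-1)^{\bar c \bar d} dc)\bigr) = \tr(acd) - (-1)^{\bar c \bar d}\tr(adc).
\]
Apply the supersymmetry of $\tr$ to move $c$ to the front of the second term, then use $ca = (-1)^{\bar a \bar c} ac$ (which holds because $a \in Z(A)$) to move $a$ back past $c$. A short sign computation shows the two terms cancel, so the pairing descends to $Z(A) \times C(A) \to \kk$.

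Second, I would prove nondegeneracy by showing $[A,A]_s^{\perp} = Z(A)$, where $\perp$ is taken with respect to the Frobenius form. For $a$ homogeneous, $a \in [A,A]_s^\perp$ is the condition $\tr(a(cd - (-1)^{\bar c \bar d}dc)) = 0$ for all homogeneous $c,d$; repeating the sign manipulation above rewrites this as $\tr\bigl((ac - (-1)^{\bar a \bar c}ca)d\bigr) = 0$ for all $d$, and the nondegeneracy of the Frobenius form on $A$ then forces $ac = (-1)^{\bar a \bar c}ca$ for every homogeneous $c$, i.e.\ $a \in Z(A)$. Conversely, the well-definedness computation already established $Z(A) \subseteq [A,A]_s^\perp$.

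Once this equality is in hand, nondegeneracy of the induced pairing is automatic: the Frobenius form gives an isomorphism $A \xrightarrow{\sim} A^*$, which restricts to an isomorphism $[A,A]_s^\perp \xrightarrow{\sim} (A/[A,A]_s)^* = C(A)^*$, i.e.\ $Z(A) \xrightarrow{\sim} C(A)^*$, via exactly the pairing in question. The only delicate point is the sign bookkeeping in the two symmetric computations, which is routine but must be done carefully using the conventions \eqref{mups} and \eqref{center} together with the supersymmetry of $\tr$; I do not anticipate any substantive obstacle beyond that.
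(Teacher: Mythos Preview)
Your proposal is correct and is essentially the same argument as the paper's: both establish that $Z(A)$ coincides with the orthogonal complement of the super-commutator subspace $[A,A]_s$ under the Frobenius form, and then deduce well-definedness and nondegeneracy from this. The paper packages the whole thing into a single chain of equivalences $c \in Z(A) \iff \tr(c(ab - (-1)^{\bar a \bar b}ba)) = 0$ for all $a,b$, whereas you separate well-definedness from nondegeneracy, but the substance is identical.
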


\begin{proof}
    For $c \in A$, we have
    \begin{align*}
        c \in Z(A)
        &\iff ca = (-1)^{\bar c \bar a} ac \ \forall\ a \in A \\
        &\iff \tr(cab) = (-1)^{\bar c \bar a} \tr(acb) \ \forall\ a,b \in A \\
        &\iff \tr \left( c \big( ab - (-1)^{\bar a \bar b} ba \big) \right) = 0 \ \forall\ a,b \in A.
    \end{align*}
  The result then follows from the nondegeneracy of the trace.
\end{proof}

Fix a choice of homogeneous basis $\B_A$ for the finite dimensional algebra $A$. Let $\{b^\vee : b \in \B_A\}$ be the dual basis determined by
\begin{equation}
    \tr(a^\vee b) = \delta_{a,b},\quad a,b \in \B_A.
\end{equation}
Note that $b^\vee$ is of the same parity as $b$, and $\deg(b^\vee) = 2d-\deg(b)$.  For any homogeneous $a \in A$, we let
\begin{equation}\label{adag}
    a^\dagger := \sum_{b \in \B_A} (-1)^{\bar a \bar b} b a  b^\vee,
\end{equation}
which is well-defined independent of the choice of the basis $\B_A$.  In particular, $1^\dagger \in A_{2d}$ is the \emph{canonical central element} of $A$.  Also let $\B_{C(A)}$ be a homogeneous basis for the cocenter $C(A)$ and $\{\cocenter{b}^\vee : \cocenter{b} \in \B_{C(A)}\}$ be the dual basis for $Z(A)$.

View $\kk[x_1,\dotsc,x_n]$ as a graded superalgebra with each $x_i$ being even of degree $2d$.  Let $P_n(A)$ denote the graded superalgebra $A^{\otimes n} \otimes \kk[x_1,\dotsc,x_n]$, i.e.\ the algebra of polynomials in the commuting variables $x_1,\dotsc,x_n$ with coefficients in $A^{\otimes n}$.
The group $\fS_n$ acts on $P_n(A)$ by automorphisms, permuting the variables $x_1,\dotsc,x_n$ and
the tensor factors of $A^{\otimes n}$ (with the usual sign convention).

For $1 \le i < j \le n$, define
\begin{equation} \label{tau}
    \tau_{i,j}
    := \sum_{b \in \B_A} 1^{\otimes (n-j)} \otimes b \otimes 1^{\otimes (j-i-1)} \otimes b^\vee \otimes 1^{\otimes (i-1)}.
\end{equation}
Note that the $\tau_{i,j}$ do not depend on the choice of basis $\B_A$.  In addition, for all $a \in A$, we have
\begin{equation} \label{plane}
    \sum_{b \in \B_A} \tr (b^\vee a) b
    = a
    = \sum_{b \in \B_A} \tr (a b) b^\vee.
\end{equation}
It follows that
\begin{equation} \label{travel}
    \tau_{i,j} \ba = s_{i,j}(\ba) \tau_{i,j},
    \quad \ba \in A^{\otimes n},
\end{equation}
where $s_{i,j} \in \fS_n$ is the transposition of $i$ and $j$.  In particular, this gives that $\tau_i \ba = s_i(\ba) \tau_i$ where
\begin{equation} \label{taui}
    \tau_i
    := \tau_{i,i+1}
    = \sum_{b \in \B_A} 1^{\otimes (n-i-1)} \otimes b \otimes b^\vee \otimes 1^{\otimes (i-1)}.
\end{equation}

For $i=1,\dotsc,n-1$, we define the \emph{Demazure operator} $\partial_i \colon P_n(A) \rightarrow P_n(A)$ to be the even degree zero linear map defined by
\begin{equation} \label{Demazure2}
    \partial_i (\ba p)
    := \tau_i \ba \frac{p-s_i(p)}{x_{i+1}-x_i} = s_i(\ba) \tau_i
    \frac{p-s_i(p)}{x_{i+1}-x_i},\quad
    \ba \in A^{\otimes n}, p \in \kk[x_1,\dotsc,x_n].
\end{equation}
Equivalently, we have that
\begin{equation} \label{Demazure}
    \partial_i(f) = \frac{\tau_i f - s_i(f) \tau_i}{x_{i+1}-x_i},\quad f \in P_n(A).
\end{equation}
Note also that the Demazure operators satisfy the twisted Leibniz identity
\begin{equation}
    \partial_i(fg) = \partial_i(f) g + s_i(f) \partial_i(g),\quad f,g \in P_n(A).
\end{equation}
(See also \cite[\S4.1]{Sav20}.)

The \emph{affine wreath product algebra} $\AWA{n}$ is the vector space $P_n(A) \otimes \kk \fS_n$ viewed as a graded superalgebra with multiplication defined so that $P_n(A)$ and $\kk \fS_n$ are subalgebras and
\begin{equation} \label{sif}
    s_i f = s_i(f) s_i + \partial_i(f),\quad
    f \in P_n(A),\ i \in \{1,2,\dotsc,n-1\}.
\end{equation}
By \cite[Th.~4.6]{Sav20}, it is also the case that $\AWA{n} \cong
\kk[x_1,\dotsc,x_n] \otimes_\kk \WA{n}$ as a graded vector superspace,
with both tensor factors being subalgebras.
Let
\begin{align} \label{Pinv}
    P_n(A)^{\fS_n\inv}
    &:= \left\{f \in P_n(A) : \sigma(f) = f \text{ for all }\sigma \in \fS_n\right\},
    \\ \label{Panti}
    P_n(A)^{\fS_n\anti}
    &:= \left\{f \in P_n(A) : \sigma(f) = (-1)^{\ell(\sigma)} f \text{ for all }\sigma \in \fS_n\right\}.
\end{align}
Given a homogeneous subspace $B \leq A$, we use the similar notation $P_n(B), P_n(B)^{\fS_n\inv}$ and $P_n(B)^{\fS_n\anti}$ for the analogously-defined subspaces of $P_n(A)$.
By \cite[Th.~4.14]{Sav20}, the center of $\AWA{n}$ in the sense of \cref{center} is the
subalgebra $P_n(Z(A))^{\fS_n\inv}$ of $\fS_n$-invariants in $P_n(Z(A))$, and $\AWA{n}$ is finitely generated as a module over its center.

It follows immediately from \cref{sif} that, in $\AWA{n}$, we have
\begin{equation} \label{sunroof}
  \begin{split}
    s_i f e_{(n)} = (s_i \actplus f) e_{(n)} \quad \text{where} \quad
    s_i \actplus f := s_i(f) + \partial_i(f),
    \\
    s_i f e_{(1^n)} = -(s_i \actminus f) e_{(1^n)} \quad \text{where} \quad
    s_i \actminus f := s_i(f) - \partial_i(f).
  \end{split}
\end{equation}
Transporting the left $\AWA{n}$-action through the linear isomorphism
$P_n(A) \xrightarrow{\cong} \AWA{n} e_{(n)}, f \mapsto f e_{(n)}$, we see that
$P_n(A)$ is a left $\AWA{n}$-module with $P_n(A)$ acting by left
multiplication and $\fS_n$ acting by $\actplus$.  We call this the
\emph{polynomial representation} of $\AWA{n}$.
Similarly, we can transport the $\AWA{n}$-action through the
isomorphism
$P_n(A) \xrightarrow{\cong} \AWA{n} e_{(1^n)}, f \mapsto f e_{(1^n)}$
to obtain a left action of $\AWA{n}$ on $P_n(A)$, with $\fS_n$ acting
by $\actminus$.
Note that, for $\sigma \in \fS_n$, $\ba \in A^{\otimes n}$, and $f \in \kk[x_1,\dotsc,x_n]$, we have
\[
  \sigma \actplus (\ba f) = \sigma(\ba) \left( \sigma \actplus f \right)
  \quad \text{and} \quad
  \sigma \actminus (\ba f) = \sigma(\ba) \left( \sigma \actminus f \right).
\]
Furthermore,
\begin{equation} \label{happy}
  e_{(n)} f e_{(n)}
  = e_{(n)} \left( \frac{1}{n!} \sum_{\sigma \in \fS_n} \sigma \actplus f \right) e_{(n)}
\end{equation}
and
\begin{equation} \label{sad}
  e_{(1^n)} f e_{(1^n)}
  = e_{(1^n)} \left( \frac{1}{n!} \sum_{\sigma \in \fS_n} \sigma \actminus f \right) e_{(1^n)}.
\end{equation}
The $\actplus$- and $\actminus$-actions extend to define actions
of $\fS_n$ on  $A^{\otimes n} \otimes \kk(x_1,\dotsc,x_n)$ with the simple
 transpositions satisfying the same formulae
from \cref{sunroof}.

\begin{lem} \label{dog}
    For $n,r \in \N$, the maps
    \begin{align*}
        P_n(A)^{\fS_n\inv}
        &\to e_{(n)} P_n(A) e_{(n)},\quad f \mapsto e_{(n)} f e_{(n)},
        \\
        P_n(A)^{\fS_n\anti}
        &\to e_{(1^n)} P_n(A) e_{(n)},\quad f \mapsto e_{(1^n)} f e_{(n)},
    \end{align*}
    are isomorphisms of graded vector superspaces.  The same is true when
    $A$ is replaced everywhere by $e A e'$ for homogeneous idempotents
    $e, e' \in A$.
\end{lem}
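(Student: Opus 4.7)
The plan is to introduce a filtration on $\AWA{n}$ by total $x$-degree, placing $A^{\otimes n}$ and $\fS_n$ in degree zero. Using the PBW isomorphism $\AWA{n} \cong \kk[x_1,\dotsc,x_n] \otimes \WA{n}$ of \cite[Th.~4.6]{Sav20}, this filtration is well-defined. The defining relation $s_i f = s_i(f) s_i + \partial_i(f)$ combined with the fact that the Demazure operator $\partial_i$ strictly lowers the $x$-degree identifies the associated graded $\gr \AWA{n}$ with the unmodified smash product $P_n(A) \rtimes \fS_n$, in which $\fS_n$ acts by the geometric action permuting the tensor factors. Within this associated graded, a direct calculation using $\sigma \cdot e_{(n)} = e_{(n)}$ yields, for any $h \in P_n(A)$,
\begin{equation*}
  e_{(n)} h e_{(n)} \equiv \biggl( \tfrac{1}{n!} \sum_{\sigma \in \fS_n} \sigma(h) \biggr) e_{(n)},
  \qquad
  e_{(1^n)} h e_{(n)} \equiv \biggl( \tfrac{1}{n!} \sum_{\sigma \in \fS_n} (-1)^{\ell(\sigma)} \sigma(h) \biggr) e_{(n)},
\end{equation*}
modulo strictly smaller $x$-degree in $\AWA{n}$.

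Both maps clearly preserve degree and parity since $e_{(n)}$ and $e_{(1^n)}$ are even of degree zero, so the content is bijectivity, which I will establish by induction on top $x$-degree. For injectivity of $f \mapsto e_{(n)} f e_{(n)}$: if $f \in P_n(A)^{\fS_n\inv}$ lies in the kernel and $f_k$ is its top $x$-degree component, then the displayed formula (together with $\sigma(f_k) = f_k$) gives $f_k \cdot e_{(n)} = 0$ in $P_n(A) \rtimes \fS_n$, forcing $f_k = 0$ by linear independence of the PBW basis. For surjectivity, given $g = e_{(n)} h e_{(n)}$ with $h$ of top $x$-degree $k$ and leading component $h_k$, set $f := \tfrac{1}{n!} \sum_\sigma \sigma(h_k) \in P_n(A)^{\fS_n\inv}$; then $g - e_{(n)} f e_{(n)}$ has strictly smaller top $x$-degree by the same formula, and the induction continues. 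The antisymmetric map is handled identically, replacing the symmetrizer with the antisymmetrizer and noting that $\tfrac{1}{n!} \sum_\sigma (-1)^{\ell(\sigma)} \sigma(f_k) = f_k$ when $f_k \in P_n(A)^{\fS_n\anti}$.

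The extension to $eAe'$ requires no further work: the geometric action permutes tensor factors and so preserves $(eAe')^{\otimes n}$, hence also $P_n(eAe')$; the (anti)symmetrizers therefore restrict to operators on $P_n(eAe')$, and the inductive construction of $f_k$ automatically lies in $P_n(eAe')^{\fS_n\inv}$ (respectively $P_n(eAe')^{\fS_n\anti}$).

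The main technical hurdle is setting up the $x$-degree filtration correctly and verifying the displayed associated graded computation; once these identifications are in place, the inductive argument is essentially mechanical.
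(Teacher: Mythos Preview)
Your proof is correct and follows essentially the same approach as the paper: both arguments rest on the observation that $\partial_i$ strictly lowers polynomial $x$-degree, so that $e_{(n)} f e_{(n)} = e_{(n)}\bigl(\tfrac{1}{n!}\sum_\sigma \sigma(f) + \text{lower degree}\bigr) e_{(n)}$, and then proceed by induction on $x$-degree. Your associated-graded packaging is a slightly more formal restatement of the paper's direct computation via the $\actplus$-action and \eqref{happy}; for the $eAe'$ extension, the paper simply multiplies both sides by $e^{\otimes n}$ and ${e'}^{\otimes n}$ (using that these idempotents commute with $e_{(n)}$), which is a shortcut equivalent to your observation that the geometric action preserves $P_n(eAe')$.
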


\begin{proof}
    Since the operator $\partial_i$ lowers polynomial degree, we see that, for any $f \in P_n(A)$ that is homogeneous in the $x_i$,
    \[
        e_{(n)} f e_{(n)}
        \overset{\cref{happy}}{=} e_{(n)} \left( \frac{1}{n!} \sum_{\sigma \in \fS_n} \sigma \actplus f \right) e_{(n)}
        = e_{(n)} \left( \left( \frac{1}{n!} \sum_{\sigma \in \fS_n} \sigma(f) \right) + \text{terms of lower polynomial degree} \right) e_{(n)}.
    \]
    The fact that the first map in the statement of the lemma is an   isomorphism of graded vector spaces then follows by induction on polynomial degree.  The argument for the second map is analogous.  The final assertion in the lemma follows by multiplying each side on the left by $e^{\otimes n}$ and on the right by ${e'}^{\otimes n}$.
\end{proof}

The \emph{affine wreath product category} $\AWC$ is a strict graded monoidal supercategory whose nonzero morphism spaces are the graded superalgebras $\AWA{n}$ for all $n \geq 0$. Formally, it is obtained from the category $\WC$ from the previous section by adjoining one additional generating morphism
\begin{equation} \label{upgen}
    \begin{tikzpicture}[anchorbase]
        \draw[->] (0,0) -- (0,0.6);
        \blackdot{0,0.3};
    \end{tikzpicture}
    \colon \uparrow \to \uparrow
\end{equation}
which is even of degree $2d$, imposing the additional relations
\begin{align} \label{affrel}
    \begin{tikzpicture}[anchorbase]
        \draw[->] (-0.3,-0.3) -- (0.3,0.3);
        \draw[->] (0.3,-0.3) -- (-0.3,0.3);
        \blackdot{-0.12,0.12};
    \end{tikzpicture}
    \ -\
    \begin{tikzpicture}[anchorbase]
        \draw[->] (-0.3,-0.3) -- (0.3,0.3);
        \draw[->] (0.3,-0.3) -- (-0.3,0.3);
        \blackdot{0.15,-0.15};
    \end{tikzpicture}
    &= \sum_{b \in \B_A}
    \begin{tikzpicture}[anchorbase]
        \draw[->] (-0.2,-0.3) -- (-0.2,0.3);
        \draw[->] (0.2,-0.3) -- (0.2,0.3);
        \blacktoken[east]{-0.2,0}{b};
        \blacktoken[west]{0.2,0}{b^\vee};
    \end{tikzpicture}
    \ ,\\\label{affrel1}
    \begin{tikzpicture}[anchorbase]
        \draw[->] (0,0) -- (0,0.7);
        \blackdot{0,0.2};
        \blacktoken[east]{0,0.4}{a};
    \end{tikzpicture}
    &=
    \begin{tikzpicture}[anchorbase]
        \draw[->] (0,0) -- (0,0.7);
        \blackdot{0,0.4};
        \blacktoken[west]{0,0.2}{a};
    \end{tikzpicture}
    \ ,\quad a \in A.
\end{align}
We refer to the morphisms
$
  \begin{tikzpicture}[anchorbase]
    \draw[->] (0,0) -- (0,0.4);
    \blackdot{0,0.2};
  \end{tikzpicture}
$
as \emph{dots}.  It is immediate from this definition that there is an isomorphism of graded superalgebras
\begin{equation} \label{imathaff}
    \imath_n \colon \AWA{n} \to \End_{\AWC}(\uparrow^{\otimes n})
\end{equation}
sending $s_i$ to a crossing of the $i$-th and $(i+1)$-st strings, $x_j$ to a dot on the $j$-th string, and $1^{\otimes {(n-j)}} \otimes a \otimes 1^{\otimes (j-1)}$ to a token labeled $a$ on the $j$-th
string.  As before, we number strings by $1,2,\dotsc$ from right to left.  Using $\imath_n$, we can \emph{identify} the algebra $\AWA{n}$ with $\End_{\AWC}(\uparrow^{\otimes n})$.

We refer to the morphisms in $\AWC$ defined by the elements $\tau_{i,j} \in \AWA{n}_{2d}$ from \cref{tau} as \emph{teleporters}, and denote them by a line segment connecting tokens on strings $i$ and $j$:
\begin{equation}\label{brexit}
    \begin{tikzpicture}[anchorbase]
        \draw[->] (0,-0.4) --(0,0.4);
        \draw[->] (0.5,-0.4) -- (0.5,0.4);
        \teleport{black}{black}{0,0}{0.5,0};
    \end{tikzpicture}
    =
    \begin{tikzpicture}[anchorbase]
        \draw[->] (0,-0.4) --(0,0.4);
        \draw[->] (0.5,-0.4) -- (0.5,0.4);
        \teleport{black}{black}{0,0.2}{0.5,-0.2};
    \end{tikzpicture}
    =
    \begin{tikzpicture}[anchorbase]
        \draw[->] (0,-0.4) --(0,0.4);
        \draw[->] (0.5,-0.4) -- (0.5,0.4);
        \teleport{black}{black}{0,-0.2}{0.5,0.2};
    \end{tikzpicture}
    = \sum_{b \in \B_A}
    \begin{tikzpicture}[anchorbase]
        \draw[->] (0,-0.4) --(0,0.4);
        \draw[->] (0.5,-0.4) -- (0.5,0.4);
        \blacktoken[east]{0,0.15}{b};
        \blacktoken[west]{0.5,-0.15}{b^\vee};
    \end{tikzpicture}
    = \sum_{b \in \B_A}
    \begin{tikzpicture}[anchorbase]
        \draw[->] (0,-0.4) --(0,0.4);
        \draw[->] (0.5,-0.4) -- (0.5,0.4);
        \blacktoken[east]{0,-0.15}{b^\vee};
        \blacktoken[west]{0.5,0.15}{b};
    \end{tikzpicture}\ .
\end{equation}
Note that we do not insist that the tokens in a teleporter are drawn at the same horizontal level, the convention when this is not the case being that $b$ is on the higher of the tokens and $b^\vee$ is on the lower one.  We will also draw teleporters in larger diagrams.  When doing so, we add a sign of $(-1)^{y \bar b}$ in front of the $b$ summand in \cref{brexit}, where $y$ is the sum of the parities of all morphisms in the diagram vertically between the tokens labeled $b$ and $b^\vee$.  For example,
\[
    \begin{tikzpicture}[anchorbase]
        \draw[->] (-1,-0.4) -- (-1,0.4);
        \draw[->] (-0.5,-0.4) -- (-0.5,0.4);
        \draw[->] (0,-0.4) -- (0,0.4);
        \draw[->] (0.5,-0.4) -- (0.5,0.4);
        \blacktoken[east]{-1,0}{a};
        \blacktoken[west]{0,0.1}{c};
        \teleport{black}{black}{-0.5,0.2}{0.5,-0.2};
    \end{tikzpicture}
    =
    \sum_{b \in \B_A} (-1)^{(\bar a + \bar c) \bar b}
    \begin{tikzpicture}[anchorbase]
        \draw[->] (-1,-0.4) -- (-1,0.4);
        \draw[->] (-0.5,-0.4) -- (-0.5,0.4);
        \draw[->] (0,-0.4) -- (0,0.4);
        \draw[->] (0.5,-0.4) -- (0.5,0.4);
        \blacktoken[east]{-1,0}{a};
        \blacktoken[west]{0,0.1}{c};
        \blacktoken[east]{-0.5,0.2}{b};
        \blacktoken[west]{0.5,-0.2}{b^\vee};
    \end{tikzpicture}
    \ .
\]
This convention ensures that one can slide the endpoints of teleporters along strands:
\[
    \begin{tikzpicture}[anchorbase]
        \draw[->] (-1,-0.4) -- (-1,0.4);
        \draw[->] (-0.5,-0.4) -- (-0.5,0.4);
        \draw[->] (0,-0.4) -- (0,0.4);
        \draw[->] (0.5,-0.4) -- (0.5,0.4);
        \blacktoken[east]{-1,0}{a};
        \blacktoken[west]{0,0.1}{c};
        \teleport{black}{black}{-0.5,0.2}{0.5,-0.2};
    \end{tikzpicture}
    =
    \begin{tikzpicture}[anchorbase]
        \draw[->] (-1,-0.4) -- (-1,0.4);
        \draw[->] (-0.5,-0.4) -- (-0.5,0.4);
        \draw[->] (0,-0.4) -- (0,0.4);
        \draw[->] (0.5,-0.4) -- (0.5,0.4);
        \blacktoken[east]{-1,0}{a};
        \blacktoken[east]{0,0.1}{c};
        \teleport{black}{black}{-0.5,-0.2}{0.5,0.2};
    \end{tikzpicture}
    =
    \begin{tikzpicture}[anchorbase]
        \draw[->] (-1,-0.4) -- (-1,0.4);
        \draw[->] (-0.5,-0.4) -- (-0.5,0.4);
        \draw[->] (0,-0.4) -- (0,0.4);
        \draw[->] (0.5,-0.4) -- (0.5,0.4);
        \blacktoken[east]{-1,0}{a};
        \blacktoken[west]{0,0.1}{c};
        \teleport{black}{black}{-0.5,0.2}{0.5,0.2};
    \end{tikzpicture}
    =
    \begin{tikzpicture}[anchorbase]
        \draw[->] (-1,-0.4) -- (-1,0.4);
        \draw[->] (-0.5,-0.4) -- (-0.5,0.4);
        \draw[->] (0,-0.4) -- (0,0.4);
        \draw[->] (0.5,-0.4) -- (0.5,0.4);
        \blacktoken[east]{-1,0}{a};
        \blacktoken[west]{0,0.1}{c};
        \teleport{black}{black}{-0.5,-0.2}{0.5,-0.2};
    \end{tikzpicture}
    \ .
\]
Using teleporters, the relation \cref{affrel} can be written as
\begin{equation} \label{upslides1}
    \begin{tikzpicture}[anchorbase]
        \draw[->] (-0.3,-0.3) -- (0.3,0.3);
        \draw[->] (0.3,-0.3) -- (-0.3,0.3);
        \blackdot{-0.12,0.12};
    \end{tikzpicture}
    \ -\
    \begin{tikzpicture}[anchorbase]
        \draw[->] (-0.3,-0.3) -- (0.3,0.3);
        \draw[->] (0.3,-0.3) -- (-0.3,0.3);
        \blackdot{0.15,-0.15};
    \end{tikzpicture}
    \ =\
    \begin{tikzpicture}[anchorbase]
        \draw[->] (-0.2,-0.3) -- (-0.2,0.3);
        \draw[->] (0.2,-0.3) -- (0.2,0.3);
        \teleport{black}{black}{-0.2,0}{0.2,0};
    \end{tikzpicture}
    \ .
\end{equation}
Composing with the crossing on the top and bottom, we see that we also have the relation
\begin{equation} \label{upslides2}
    \begin{tikzpicture}[anchorbase]
        \draw[->] (-0.3,-0.3) -- (0.3,0.3);
        \draw[->] (0.3,-0.3) -- (-0.3,0.3);
        \blackdot{-0.12,-0.12};
    \end{tikzpicture}
    \ -\
    \begin{tikzpicture}[anchorbase]
        \draw[->] (-0.3,-0.3) -- (0.3,0.3);
        \draw[->] (0.3,-0.3) -- (-0.3,0.3);
        \blackdot{0.15,0.15};
    \end{tikzpicture}
    \ =\
    \begin{tikzpicture}[anchorbase]
        \draw[->] (-0.2,-0.3) -- (-0.2,0.3);
        \draw[->] (0.2,-0.3) -- (0.2,0.3);
        \teleport{black}{black}{-0.2,0}{0.2,0};
    \end{tikzpicture}
    \ .
\end{equation}
As in \cref{travel}, tokens can ``teleport'' across teleporters (justifying the terminology) in the sense that, for $a \in A$, we have
\begin{equation} \label{tokteleport}
    \begin{tikzpicture}[anchorbase]
        \draw[->] (0,-0.5) --(0,0.5);
        \draw[->] (0.5,-0.5) -- (0.5,0.5);
        \blacktoken[west]{0.5,-0.25}{a};
        \teleport{black}{black}{0,0}{0.5,0};
    \end{tikzpicture}
    =
    \begin{tikzpicture}[anchorbase]
        \draw[->] (0,-0.5) --(0,0.5);
        \draw[->] (0.5,-0.5) -- (0.5,0.5);
        \blacktoken[east]{0,0.25}{a};
        \teleport{black}{black}{0,0}{0.5,0};
    \end{tikzpicture}
    \ ,\qquad
    \begin{tikzpicture}[anchorbase]
        \draw[->] (0,-0.5) --(0,0.5);
        \draw[->] (0.5,-0.5) -- (0.5,0.5);
        \blacktoken[east]{0,-0.25}{a};
        \teleport{black}{black}{0,0}{0.5,0};
    \end{tikzpicture}
    =
    \begin{tikzpicture}[anchorbase]
        \draw[->] (0,-0.5) --(0,0.5);
        \draw[->] (0.5,-0.5) -- (0.5,0.5);
        \blacktoken[west]{0.5,0.25}{a};
        \teleport{black}{black}{0,0}{0.5,0};
    \end{tikzpicture}
    \ ,
\end{equation}
where the strings can occur anywhere in a diagram (i.e.\ they do not need to be adjacent).  The endpoints of teleporters slide through crossings and dots, and they can teleport too.  For example we have
\begin{equation} \label{laser}
  \begin{tikzpicture}[anchorbase]
    \draw[->] (-0.4,-0.4) to (0.4,0.4);
    \draw[->] (0.4,-0.4) to (-0.4,0.4);
    \draw[->] (0.8,-0.4) to (0.8,0.4);
    \teleport{black}{black}{-0.15,0.15}{0.8,0.15};
  \end{tikzpicture}
  \ =\
  \begin{tikzpicture}[anchorbase]
    \draw[->] (-0.4,-0.4) to (0.4,0.4);
    \draw[->] (0.4,-0.4) to (-0.4,0.4);
    \draw[->] (0.8,-0.4) to (0.8,0.4);
    \teleport{black}{black}{0.2,-0.2}{0.8,-0.2};
  \end{tikzpicture}
  \ ,\qquad
  \begin{tikzpicture}[anchorbase]
    \draw[->] (-0.2,-0.5) to (-0.2,0.5);
    \draw[->] (0.2,-0.5) to (0.2,0.5);
    \blackdot{-0.2,0};
    \teleport{black}{black}{-0.2,-0.25}{0.2,0};
  \end{tikzpicture}
  \ =\
  \begin{tikzpicture}[anchorbase]
    \draw[->] (-0.2,-0.5) to (-0.2,0.5);
    \draw[->] (0.2,-0.5) to (0.2,0.5);
    \blackdot{-0.2,0};
    \teleport{black}{black}{-0.2,0.25}{0.2,0};
  \end{tikzpicture}
  \ ,\qquad
  \begin{tikzpicture}[anchorbase]
    \draw[->] (-0.4,-0.5) to (-0.4,0.5);
    \draw[->] (0,-0.5) to (0,0.5);
    \draw[->] (0.4,-0.5) to (0.4,0.5);
    \teleport{black}{black}{-0.4,-0.25}{0,-0.25};
    \teleport{black}{black}{0,0}{0.4,0};
  \end{tikzpicture}
  \ =\
  \begin{tikzpicture}[anchorbase]
    \draw[->] (-0.4,-0.5) to (-0.4,0.5);
    \draw[->] (0,-0.5) to (0,0.5);
    \draw[->] (0.4,-0.5) to (0.4,0.5);
    \teleport{black}{black}{-0.4,-0.2}{0.4,0.3};
    \teleport{black}{black}{0,-.1}{0.4,-.1};
  \end{tikzpicture}
  \ =\
  \begin{tikzpicture}[anchorbase]
    \draw[->] (-0.4,-0.5) to (-0.4,0.5);
    \draw[->] (0,-0.5) to (0,0.5);
    \draw[->] (0.4,-0.5) to (0.4,0.5);
    \teleport{black}{black}{-0.4,0.15}{0.4,0.15};
    \teleport{black}{black}{0,-.1}{0.4,-.1};
  \end{tikzpicture}
    \ .
\end{equation}

The next goal is to construct an analogue for $\AWC$ of the monoidal functor $\Delta$ from \cref{stroppel}.  The symmetric product ${\color{blue} \AWC} \odot {\color{red} \AWC}$ can be defined in the same way as in the previous section, adjoining degree zero isomorphisms represented by the mutually-inverse two-color crossings \cref{referencethislatertoo}; these are required also to commute with the colored dots.  However, for the definition of $\Delta$ for arbitrary $A$, it seems to be essential to work with a different grading on this symmetric product category; see \cref{stroppel2} below for further discussion.  Recalling that the trace has degree $-2d$, we denote this modified symmetric product by ${\color{blue} \AWC} \xodot{2d} {\color{red} \AWC}$. It has the same underlying monoidal supercategory as ${\color{blue} \AWC} \odot {\color{red} \AWC}$ but now, rather than being of degree zero, the two-color crossings are even of degrees
\begin{equation}\label{par}
    \deg
    \left( \:
    \begin{tikzpicture}[anchorbase]
        \draw[->,blue] (-0.4,-0.4) to (0.4,0.4);
        \draw[->,red] (0.4,-0.4) to (-0.4,0.4);
    \end{tikzpicture}
    \:\right)
    = 2\dA, \qquad
    \deg
    \left( \:
    \begin{tikzpicture}[anchorbase]
        \draw[->,red] (-0.4,-0.4) to (0.4,0.4);
        \draw[->,blue] (0.4,-0.4) to (-0.4,0.4);
    \end{tikzpicture}
    \:\right)
    = - 2\dA.
\end{equation}
Teleporters can be defined in the same way as \cref{brexit}, including in situations where the token at one end is on a red string and the token at the other end is on a blue string. The two-colored teleporters satisfy analogous properties to \cref{tokteleport} (for the same reasons).

We also need to localize in a similar way to  \cite[Sec.\ 4]{BSW-K0}.  Let
\begin{equation} \label{dashdumb}
  \begin{tikzpicture}[anchorbase]
    \draw[->,blue] (-0.2,-0.4) to (-0.2,0.4);
    \draw[->,red] (0.2,-0.4) to (0.2,0.4);
    \dashdumb{-0.2,0}{0.2,0};
  \end{tikzpicture}
  \ :=\
  \begin{tikzpicture}[anchorbase]
    \draw[->,blue] (-0.2,-0.4) to (-0.2,0.4);
    \draw[->,red] (0.2,-0.4) to (0.2,0.4);
    \reddot{0.2,0};
  \end{tikzpicture}
  \ -\
  \begin{tikzpicture}[anchorbase]
    \draw[->,blue] (-0.2,-0.4) to (-0.2,0.4);
    \draw[->,red] (0.2,-0.4) to (0.2,0.4);
    \bluedot{-0.2,0};
  \end{tikzpicture}
  \qquad \text{and} \qquad
  \begin{tikzpicture}[anchorbase]
    \draw[->,red] (-0.2,-0.4) to (-0.2,0.4);
    \draw[->,blue] (0.2,-0.4) to (0.2,0.4);
    \dashdumb{-0.2,0}{0.2,0};
  \end{tikzpicture}
  \ :=\
  \begin{tikzpicture}[anchorbase]
    \draw[->,red] (-0.2,-0.4) to (-0.2,0.4);
    \draw[->,blue] (0.2,-0.4) to (0.2,0.4);
    \reddot{-0.2,0};
  \end{tikzpicture}
  \ -\
  \begin{tikzpicture}[anchorbase]
    \draw[->,red] (-0.2,-0.4) to (-0.2,0.4);
    \draw[->,blue] (0.2,-0.4) to (0.2,0.4);
    \bluedot{0.2,0};
  \end{tikzpicture}
  \ .
\end{equation}
Then we define $\blue{\AWC} \barodot \red{\AWC}$ to be the strict graded monoidal supercategory obtained by localizing $\blue{\AWC} \xodot{2d} \red{\AWC}$ at these morphisms.  This means that we adjoin new morphisms, the \emph{dot dumbbells}, which we declare to be two-sided inverses to \cref{dashdumb}:
\begin{equation} \label{dotdumb}
  \begin{tikzpicture}[anchorbase]
    \draw[->,blue] (-0.2,-0.4) to (-0.2,0.4);
    \draw[->,red] (0.2,-0.4) to (0.2,0.4);
    \dotdumb{-0.2,0}{0.2,0};
  \end{tikzpicture}
  \ :=
  \left(
    \begin{tikzpicture}[anchorbase]
      \draw[->,blue] (-0.2,-0.4) to (-0.2,0.4);
      \draw[->,red] (0.2,-0.4) to (0.2,0.4);
      \dashdumb{-0.2,0}{0.2,0};
    \end{tikzpicture}
  \right)^{-1}
  \qquad \text{and} \qquad
  \begin{tikzpicture}[anchorbase]
    \draw[->,red] (-0.2,-0.4) to (-0.2,0.4);
    \draw[->,blue] (0.2,-0.4) to (0.2,0.4);
    \dotdumb{-0.2,0}{0.2,0};
  \end{tikzpicture}
  \ :=
  \left(
    \begin{tikzpicture}[anchorbase]
      \draw[->,red] (-0.2,-0.4) to (-0.2,0.4);
      \draw[->,blue] (0.2,-0.4) to (0.2,0.4);
      \dashdumb{-0.2,0}{0.2,0};
    \end{tikzpicture}
  \right)^{-1}
  .
\end{equation}
In fact, one only needs to require that one of these morphisms is invertible, since that implies the invertibility of the other one too. The dot dumbbells are even of degree $-2\dA$ and we have
\begin{equation} \label{dotteleport}
  \begin{tikzpicture}[anchorbase]
    \draw[->,blue] (-0.2,-0.4) to (-0.2,0.4);
    \draw[->,red] (0.2,-0.4) to (0.2,0.4);
    \reddot{0.2,0.15};
    \dotdumb{-0.2,-0.15}{0.2,-0.15};
  \end{tikzpicture}
  \ =\
  \begin{tikzpicture}[anchorbase]
    \draw[->,blue] (-0.2,-0.4) to (-0.2,0.4);
    \draw[->,red] (0.2,-0.4) to (0.2,0.4);
    \bluedot{-0.2,0.15};
    \dotdumb{-0.2,-0.15}{0.2,-0.15};
  \end{tikzpicture}
  \ +\
  \begin{tikzpicture}[anchorbase]
    \draw[->,blue] (-0.2,-0.4) to (-0.2,0.4);
    \draw[->,red] (0.2,-0.4) to (0.2,0.4);
  \end{tikzpicture}
  \quad \text{or, more generally,} \quad
  \begin{tikzpicture}[anchorbase]
    \draw[->,blue] (-0.2,-0.4) to (-0.2,0.4);
    \draw[->,red] (0.2,-0.4) to (0.2,0.4);
    \multreddot[west]{0.2,0.15}{n};
    \dotdumb{-0.2,-0.15}{0.2,-0.15};
  \end{tikzpicture}
  \ =\
  \begin{tikzpicture}[anchorbase]
    \draw[->,blue] (-0.2,-0.4) to (-0.2,0.4);
    \draw[->,red] (0.2,-0.4) to (0.2,0.4);
    \multbluedot[east]{-0.2,0.15}{n};
    \dotdumb{-0.2,-0.15}{0.2,-0.15};
  \end{tikzpicture}
  \ + \sum_{\substack{r,s \ge 0 \\ r + s = n-1}}
  \begin{tikzpicture}[anchorbase]
    \draw[->,blue] (-0.2,-0.4) to (-0.2,0.4);
    \draw[->,red] (0.2,-0.4) to (0.2,0.4);
    \multbluedot[east]{-0.2,0}{r};
    \multreddot[west]{0.2,0}{s};
  \end{tikzpicture}
  \ .
\end{equation}
We also define the \emph{box dumbbells}
\begin{equation} \label{sqdumb}
  \begin{tikzpicture}[anchorbase]
    \draw[->,red] (-0.2,-0.4) to (-0.2,0.4);
    \draw[->,blue] (0.2,-0.4) to (0.2,0.4);
    \sqdumb{-0.2,0}{0.2,0};
  \end{tikzpicture}
  \ :=\
  \begin{tikzpicture}[anchorbase]
    \draw[->,red] (-0.2,-0.4) to (-0.2,0.4);
    \draw[->,blue] (0.2,-0.4) to (0.2,0.4);
    \dashdumb{-0.2,0}{0.2,0};
  \end{tikzpicture}
  \ -
  \begin{tikzpicture}[anchorbase]
    \draw[->,red] (-0.2,-0.4) to (-0.2,0.4);
    \draw[->,blue] (0.2,-0.4) to (0.2,0.4);
    \dotdumb{-0.2,0.15}{0.2,0.15};
    \teleport{red}{blue}{-0.2,-0.05}{0.2,-0.05};
    \teleport{red}{blue}{-0.2,-0.25}{0.2,-0.25};
  \end{tikzpicture}
  \quad \text{and} \quad
  \begin{tikzpicture}[anchorbase]
    \draw[->,blue] (-0.2,-0.4) to (-0.2,0.4);
    \draw[->,red] (0.2,-0.4) to (0.2,0.4);
    \sqdumb{-0.2,0}{0.2,0};
  \end{tikzpicture}
  \ :=\
  \begin{tikzpicture}[anchorbase]
    \draw[->,blue] (-0.2,-0.4) to (-0.2,0.4);
    \draw[->,red] (0.2,-0.4) to (0.2,0.4);
    \dashdumb{-0.2,0}{0.2,0};
  \end{tikzpicture}
  \ -
  \begin{tikzpicture}[anchorbase]
    \draw[->,blue] (-0.2,-0.4) to (-0.2,0.4);
    \draw[->,red] (0.2,-0.4) to (0.2,0.4);
    \dotdumb{-0.2,0.15}{0.2,0.15};
    \teleport{blue}{red}{-0.2,-0.05}{0.2,-0.05};
    \teleport{blue}{red}{-0.2,-0.25}{0.2,-0.25};
  \end{tikzpicture}
  \ ,
\end{equation}
which are even of degree $2\dA$.  As with teleporters, we will often draw dumbbells in more general positions than the ones displayed above.  (See \cite[Sec.\ 4]{BSW-K0} for further discussion.)

Similarly to the previous section, we view $\AWA{n-r} \otimes \AWA{r}$ as a subalgebra of $\AWA{n}$ via the identifications
\begin{gather*}
      1 \otimes s_i \leftrightarrow s_i, \
      s_j \otimes 1 \leftrightarrow s_{r+j}, \
      1 \otimes x_i \leftrightarrow x_i, \
      x_j \otimes 1 \leftrightarrow x_{r+j}, \
      1 \otimes \ba \leftrightarrow 1^{\otimes (n-r)} \otimes \ba, \
      \bb \otimes 1 \leftrightarrow \bb \otimes 1^{\otimes r}.
\end{gather*}
Let $\AWA{n-r} \barotimes \AWA{r}$  denote the Ore localization of $\AWA{n-r} \otimes \AWA{r}$ at the central element
\[
    \prod_{i=1}^r \prod_{j=1}^{n-r} (x_i - x_{r+j}).
\]
This localization is the algebraic counterpart of introducing the dot dumbbells \cref{dotdumb}.  Indeed, generalizing \cref{imathaff}, we have an isomorphism of graded superalgebras
\begin{equation}
    \imath_{r,n} \colon \AWA{n-r} \barotimes \AWA{r} \to \End_{{\color{blue} \AWC} \barodot {\color{red} \AWC}} \left( \upblue^{\otimes (n-r)} \otimes \upred^{\otimes r} \right).
\end{equation}
Combining this with the elements $\{ \sigma_\lambda^{-1} \circ \sigma_\mu : \lambda, \mu \in \cP_{r,n} \}$ (recall the definition of the $\sigma_\lambda$ in \cref{organize}), which give the matrix units, we see that
\begin{equation} \label{shipped}
    \End_{\Add(\blue{\AWC} \barodot \red{\AWC})} \left( (\upblue \oplus \upred)^{\oplus n} \right)
    \cong \bigoplus_{r=0}^n \mathrm{Mat}_{\binom{n}{r}} \left( \AWA{n-r} \barotimes \AWA{r} \right).
\end{equation}

One can forget the grading on $\blue{\AWC} \barodot \red{\AWC}$ to view it as a monoidal supercategory rather than as a graded monoidal supercategory.  We will denote this simply by $\blue{\AWC} \barodotnonumber \red{\AWC}$.  There is a strict monoidal superfunctor
\begin{equation}\label{flippy}
    \flip \colon
    \blue{\AWC} \barodotnonumber \red{\AWC}\to
    \blue{\AWC} \barodotnonumber \red{\AWC}
\end{equation}
defined on diagrams by switching the colors blue and red, then multiplying by $(-1)^w$, where $w$ is the total number of dot dumbbells in the picture.

Dots, tokens, and teleporters commute with all (i.e.\ both dot and box) dumbbells.  All dumbbells commute with each other and all dumbbells commute past two-color crossings.  For instance, we have
\begin{gather*}

    \ ,
  \]
  which is the image of the right-hand side of \cref{upslides1}.

  It remains to show that $\Delta$ preserves the braid relation.  The image under $\Delta$ of the two sides of this relation are $8 \times 8$ matrices, corresponding to the 8 choices of colors of the incoming and outgoing strings.  A component is zero unless the number of incoming red strings is equal to the number of outgoing red strings.  In addition, the single-colored components are clearly equal, using the braid relation in ${\color{blue} \AWC}$ and ${\color{red} \AWC}$.  Thus, it suffices to consider the multi-colored components.  First consider the component $\upred \upblue \upblue \to \upblue \upblue \upred$.  Denote this component by $\Delta_{\upred \upblue \upblue}^{\upblue \upblue \upred}$.  Then we have
  \[
    \Delta_{\upred \upblue \upblue}^{\upblue \upblue \upred}
    \left(

    \right),
  \end{align*}
  }
\end{proof}

\begin{rem} \label{coass2}
  As in \cref{coass1}, the categorical comultiplication is coassociative.
\end{rem}

\begin{rem}\label{stroppel2}
    The categorical comultiplication $\Delta$ of \cref{AWcoprod} does \emph{not} specialize to the one of \cite[Th.~4.2]{BSW-K0} when $A=\kk$.  However, when $A$ is supercommutative, there is another way to proceed, working with the localization ${\color{blue} \AWC} \:\overline{\odot}\: {\color{red} \AWC}$ of the original symmetric product ${\color{blue} \AWC} \odot {\color{red} \AWC}$ in which the two-color crossings are even of degree zero.  Note for this category that the involution \cref{flippy} is actually a graded monoidal superfunctor.  Then one can replace \cref{Deltcross} by
    \[
        \Delta
        \left(\:
            \begin{tikzpicture}[anchorbase]
                \draw[->] (-0.4,-0.4) to (0.4,0.4);
                \draw[->] (0.4,-0.4) to (-0.4,0.4);
            \end{tikzpicture}
        \: \right)
        =
        \begin{tikzpicture}[anchorbase]
            \draw[->,blue] (-0.4,-0.4) to (0.4,0.4);
            \draw[->,blue] (0.4,-0.4) to (-0.4,0.4);
        \end{tikzpicture}
        \ +\
        \begin{tikzpicture}[anchorbase]
            \draw[->,red] (-0.4,-0.4) to (0.4,0.4);
            \draw[->,red] (0.4,-0.4) to (-0.4,0.4);
        \end{tikzpicture}
        \ +\
        \begin{tikzpicture}[anchorbase]
        \draw[->,blue] (-0.4,-0.4) to (0.4,0.4);
        \draw[->,red] (0.4,-0.4) to (-0.4,0.4);
        \end{tikzpicture}
        \ +\
        \begin{tikzpicture}[anchorbase]
            \draw[->,red] (-0.4,-0.4) to (0.4,0.4);
            \draw[->,blue] (0.4,-0.4) to (-0.4,0.4);
        \end{tikzpicture}
        \ -\
        \begin{tikzpicture}[anchorbase]
            \draw[->,blue] (-0.4,-0.4) to (0.4,0.4);
            \draw[->,red] (0.4,-0.4) to (-0.4,0.4);
            \teleport{red}{blue}{-0.2,0.2}{0.2,0.2};
            \dotdumb{-0.2,-0.2}{0.2,-0.2};
        \end{tikzpicture}
        \ +\
        \begin{tikzpicture}[anchorbase]
            \draw[->,red] (-0.4,-0.4) to (0.4,0.4);
            \draw[->,blue] (0.4,-0.4) to (-0.4,0.4);
            \teleport{blue}{red}{-0.2,0.2}{0.2,0.2};
            \dotdumb{-0.2,-0.2}{0.2,-0.2};
        \end{tikzpicture}
        \ -
        \begin{tikzpicture}[anchorbase]
            \draw[->,blue] (-0.2,-0.4) to (-0.2,0.4);
            \draw[->,red] (0.2,-0.4) to (0.2,0.4);
            \teleport{blue}{red}{-0.2,-0.2}{0.2,-0.2};
            \dotdumb{-0.2,0.15}{0.2,0.15};
        \end{tikzpicture}
        +
        \begin{tikzpicture}[anchorbase]
            \draw[->,red] (-0.2,-0.4) to (-0.2,0.4);
            \draw[->,blue] (0.2,-0.4) to (0.2,0.4);
            \teleport{red}{blue}{-0.2,-0.2}{0.2,-0.2};
            \dotdumb{-0.2,0.15}{0.2,0.15};
        \end{tikzpicture}
        \ .
  \]
    This formula is more symmetric than \cref{Deltcross} as it satisfies $\flip \circ \Delta = \Delta$. Moreover, in the case $A = \kk$, it exactly recovers the comultiplication of \cite[Th.~4.2]{BSW-K0}.  For another approach to defining these types of categorical comultiplications, which explains how they are related to intertwining operators, see \cite[\S7.3]{BSW-qFrobHeis}.
\end{rem}

\details{There is a more conceptual way to understand the definition
  of the categorical comultiplications, which is similar to the
  approach in the quantum case which will be developed in a subsequent article.}

Now we are going to discuss some results pertaining to the Grothendieck ring of the category $\AWC$; in particular, we will show that  this gives another categorification of the ring $\Sym_\Zq^{\otimes N}$.
We assume for the remainder of the section that the algebra $A$ satisfies \emph{assumption} \eqref{virginia} from the introduction. This is needed in order to be able to prove the following.

\begin{theo} \label{salad}
    Suppose that $A$ satisfies the hypothesis \eqref{virginia} from the introduction.  Then the natural inclusion $\WA{n} \hookrightarrow \AWA{n}$ induces an isomorphism of Grothendieck groups
    \[
        K_0(\pgsmod\WA{n}) \cong K_0(\pgsmod\AWA{n}).
    \]
    More generally, this assertion holds when $\WA{n}$ is replaced by
    $\WA{n_1} \otimes \dotsb \otimes \WA{n_r}$ and $\AWA{n}$ is
    replaced by $\AWA{n_1} \otimes \dotsb \otimes \AWA{n_r} \otimes \Lambda$
    for any $n_1,\dotsc,n_r \ge 0$ and any positively graded polynomial algebra
    $\Lambda$ (possibly of infinite rank) viewed as a purely even graded superalgebra.
\end{theo}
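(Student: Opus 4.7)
The plan is to exploit the fact that, under hypothesis \eqref{virginia}, both algebras in the statement are positively graded, finite over a common semisimple degree-zero subalgebra. Concretely, $\WA{n}$ is positively graded with $\WA{n}_0 = A_0^{\otimes n} \rtimes \kk\fS_n$, a finite-dimensional semisimple purely even algebra (product of a semisimple algebra with a group algebra of a finite group in characteristic zero). Using the graded vector-space decomposition $\AWA{n} \cong \kk[x_1,\dotsc,x_n] \otimes \WA{n}$ from \cite[Th.~4.6]{Sav20} (with each $x_i$ of positive degree $2\dA$), the algebra $\AWA{n}$ is also positively graded with $\AWA{n}_0 = \WA{n}_0$. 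More generally, $\AWA{n_1} \otimes \dotsb \otimes \AWA{n_r} \otimes \Lambda$ is positively graded with degree-zero part $\bigotimes_i (A_0^{\otimes n_i} \rtimes \kk\fS_{n_i})$, since the polynomial generators of $\Lambda$ lie in strictly positive degrees and therefore $\Lambda_0 = \kk$. The same degree-zero part arises for $\WA{n_1}\otimes\dotsb\otimes \WA{n_r}$.

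With the setup in place, I would invoke the standard graded Nakayama lemma: for any positively graded $\kk$-algebra $R$ with $R_0$ finite-dimensional semisimple purely even, every primitive idempotent of $R_0$ lifts (uniquely up to conjugation) to a primitive idempotent of $R$, and every indecomposable projective graded right $R$-supermodule is of the form $eR$ for such a primitive idempotent $e\in R_0$, up to grading and parity shift. Thus $K_0(\pgsmod R)$ is a free $\Zq$-module with basis indexed by the conjugacy classes of primitive idempotents of $R_0$. Applying this to $B := \WA{n_1}\otimes\dotsb\otimes\WA{n_r}$ recovers (the tensor product version of) \cref{trips}, while applying it to $B' := \AWA{n_1}\otimes\dotsb\otimes\AWA{n_r}\otimes\Lambda$ gives the analogous classification $e_\blambda B'$ of the indecomposable projectives, indexed by the \emph{same} set of idempotents $e_\blambda\in B_0 = B'_0$.

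To conclude that the inclusion $B \hookrightarrow B'$ induces the desired isomorphism on $K_0$, I would observe that $B'$ is free (hence flat) as a right $B$-module, using the iterated PBW decomposition $\AWA{n_i} \cong \WA{n_i}\otimes \kk[x_1,\dotsc,x_{n_i}]$ together with freeness of $\Lambda$. Consequently the induction functor $-\otimes_B B'$ sends $e_\blambda B$ to $e_\blambda B'$, mapping the $\Zq$-basis on the left to the corresponding $\Zq$-basis on the right. The main obstacle is carefully establishing the graded Nakayama/idempotent-lifting statement in the graded-super setting, keeping track of parity shifts $\Pi$ and grading shifts $Q$; this is routine but requires verifying that $J := \bigoplus_{n>0} B'_n$ is contained in the (graded) Jacobson radical of $B'$, which follows from the positivity of the grading and semisimplicity of $B'/J = B'_0$.
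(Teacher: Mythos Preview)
Your argument is correct and essentially coincides with the paper's when the grading on $A$ is nontrivial (i.e.\ $d>0$): in that case $\deg x_i = 2d > 0$, so $\WA{n}$ and $\AWA{n}$ are both positively graded with identical degree-zero piece $A_0^{\otimes n}\rtimes\kk\fS_n$, and one concludes via the graded Nakayama argument you describe (the paper simply cites \cite[Ch.~XII, Prop.~3.3]{Bass} for this step).

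The gap is the case $d=0$. Hypothesis \eqref{virginia} allows $A$ to be \emph{trivially} graded (i.e.\ $A=A_0$ purely even semisimple), and then the trace has degree $0$, so $d=0$ and $\deg x_i = 2d = 0$. In this situation $\AWA{n}$ is \emph{not} positively graded with degree-zero part $\WA{n}_0$; rather, the whole of $\AWA{n}$ sits in degree zero, and your graded Nakayama/idempotent-lifting argument breaks down completely. (Likewise the extra polynomial algebra $\Lambda$ in the ``more generally'' clause could a priori also be trivially graded, but the statement says ``positively graded polynomial algebra'', which in the paper's conventions still permits trivial grading.) The paper handles the $d=0$ case by an entirely different route: since $A$ is then semisimple, one uses Morita equivalence to reduce to $A=\kk^m$, whence $\AWA{n}$ decomposes into tensor products of degenerate affine Hecke algebras, and the $K_0$ comparison is the classical one from \cite[Prop.~8]{Kho14}. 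You need to add this second case to complete the proof.
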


\begin{proof}
    If the grading on $A$ is positive and nontrivial, then the first statement follows, using \cite[Ch.~XII, Prop.~3.3]{Bass}, from the fact that $\deg x_i = 2\dA > 0$, which implies that $\WA{n}$ and $\AWA{n}$ have the same degree zero piece.  Instead, if the grading on $A$ is trivial, then $A=A_{0,\even}$ is semisimple by the assumption \eqref{virginia}.  Thus $A$ is Morita equivalent to $\kk^m$ for some $m \ge 1$.  Hence $\WA{n}$ and $\AWA{n}$ are Morita equivalent to $\WA[\kk^m]{n} \cong \bigoplus_{n_1 + \dotsb + n_m = n} \WA[\kk]{n_1} \otimes \dotsb \otimes \WA[\kk]{n_m}$ and $\AWA[\kk^m]{n} \cong \bigoplus_{n_1 + \dotsb + n_m = n} \AWA[\kk]{n_1} \otimes \dotsb \otimes \AWA[\kk]{n_m}$, respectively.  Since $\WA[\kk]{n} \cong \kk \fS_n$ and $\AWA[\kk]{n}$ is the degenerate affine Hecke algebra, the result then follows from \cite[Prop.~8]{Kho14}.
    \details{
        If $A=A_{0,\even}$ is semisimple, one can also argue as follows. Consider the filtration
        \[
            0 = \AWA{n}_{\le -1} \subseteq \AWA{n}_{\le 0} \subseteq \AWA{n}_{\le 1} \subseteq \dotsb
        \]
        on $\AWA{n}$, where $\AWA{n}_{\le r}$ is spanned by elements of the form $\ba x_1^{m_1} \dotsm x_n^{m_n} \sigma$ for $\ba \in A^{\otimes n}$, $\sigma \in \fS_n$, and $m_1 + \dotsb + m_n \le r$.  This gives $\AWA{n}$ the structure of a filtered superalgebra, with $\AWA{n}_{\le 0} = \WA{n}$.  The associated graded superalgebra $\gr \AWA{n}$ is isomorphic to $\WA[{A[x]}]{n} = P_n(A) \rtimes \fS_n$.  Let $\bigwedge^r(x_1,\dotsc,x_n)$ be the $r$th exterior power of the vector space $\kk x_1 \oplus \dotsb \oplus \kk x_n$ (viewed as a purely even superspace).  We have an exact sequence of right $\WA[{A[x]}]{n}$-supermodules
        \begin{multline*}
            0 \to {\textstyle\bigwedge}^n(x_1,\dotsc,x_n) \otimes \WA[{A[x]}]{n}
            \to {textstyle\bigwedge}^{n-1}(x_1,\dotsc,x_n) \otimes \WA[{A[x]}]{n}
            \to \dotsb \\
            \dotsb \to
            {\textstyle\bigwedge}^1(x_1,\dotsc,x_n) \otimes \WA[{A[x]}]{n}
            \to \WA[{A[x]}]{n}
            \xrightarrow{\rho} \WA{n},
        \end{multline*}
        where $\rho$ is the map given by $x_i \mapsto 0$, $1 \le i \le n$, and the other nonzero maps are $\sum_{i=1}^n \phi_i \otimes x_i$, where $\phi_i$ is contraction with respect to $x_i$.  In particular, $\WA{n}$ has finite Tor dimension as a right $(\gr \AWA{n})$-module.  On the other hand, $\WA{n}$ is semisimple since $A$ is.  Hence $\gr \AWA{n}$ has Tor dimension zero as a right $\WA{n}$-supermodule.  The remainder of the proof follows as in \cite[\S5.2]{Kho14}, which treats the case $A = \kk$.
    }
\end{proof}

Fix a choice of homogeneous idempotents $e_1,\dots,e_N \in A$ as we
did after \cref{coass1}.
Then we can define the idempotents \cref{berries,sugar}, which will
now be viewed as idempotents in $\AWA{n}$.
Thus, for $\blambda \in \cP^N$, $n \in \N$ and $i=1,\dotsc,N$, we have associated objects $S_\blambda, H_{n,i}$ and $E_{n,i}$ defined as in \cref{hide}, but viewed now as objects in $\Kar(\AWC_{q,\pi})$.  When the assumption \eqref{virginia} from the introduction is satisfied, we deduce using \cref{salad,yoneda} that the canonical embedding $\WC \to \AWC$ induces a $\Zq$-algebra isomorphism $K_0(\Kar(\WC_{q,\pi})) \xrightarrow{\cong} K_0(\Kar(\AWC_{q,\pi}))$.  Therefore, we can reformulate \cref{definingg}: there is a $\Zq$-algebra isomorphism
\begin{equation} \label{emerald}
    \gamma \colon \Sym_\Zq^{\otimes N} \xrightarrow{\cong} K_0 \left(\Kar\left( \AWC_{q,\pi}\right) \right),\quad
    s_{\lambda,i} \mapsto [S_{\lambda,i}],\
    h_{n,i} \mapsto [H_{n,i}],\
    e_{n,i} \mapsto [E_{n,i}],
\end{equation}
of course still assuming that the hypothesis \eqref{virginia} holds.

The canonical functors $\blue{\AWC} \to \blue{\AWC} \barodot \red{\AWC}$ and $\red{\AWC} \to \blue{\AWC} \barodot \red{\AWC}$ induce graded monoidal superfunctors between the Karoubi envelopes of the $(Q,\Pi)$-envelopes of these categories.  Hence, we get a $\Zq$-algebra homomorphism
\[
    \epsilon \colon K_0\left(\Kar\left(\blue{\AWC_{q,\pi}}\right)\right) \otimes_\Zq K_0\left(\Kar\left(\red{\AWC_{q,\pi}}\right)\right)
    \to K_0\left( \Kar \left(\left(\blue{\AWC} \barodot \red{\AWC}\right)_{q,\pi}\right)\right).
\]
Also $\Delta$ extends to $\tilde\Delta \colon \Kar(\AWC_{q,\pi}) \rightarrow \Kar((\blue{\AWC}\barodot\red{\AWC})_{q,\pi})$.  The following theorem implies that the diagram
\[
  \begin{tikzcd}
    \Sym^{\otimes N}_\Zq \arrow[r, "\delta"] \arrow[d, "\gamma"]
    &
    \Sym^{\otimes N}_\Zq \otimes_\Zq \Sym^{\otimes N}_\Zq \arrow[d, "\epsilon \circ (\gamma \otimes \gamma)"]
    \\
    K_0\left(\Kar\left(\AWC_{q,\pi}\right)\right) \arrow[r, "{[\tilde\Delta]}"]
    &
    K_0\left(\Kar\left(\left(\blue{\AWC} \barodot \red{\AWC}\right)_{q,\pi}\right)\right)
  \end{tikzcd}
\]
commutes, where $\gamma$ comes from \cref{emerald}.  Thus, again, the functor $\Delta$ categorifies the comultiplication $\delta$.

\begin{prop} \label{coprodcat}
    For each $n \ge 0$ and $1 \le i \le N$, there are degree zero isomorphisms
    \begin{align}\label{bakedbeans}
        \tilde\Delta(H_{n,i}) &\cong \bigoplus_{r=0}^n \blue{H_{n-r,i}} \otimes \red{H_{r,i}},&
        \tilde\Delta(E_{n,i}) &\cong \bigoplus_{r=0}^n \blue{E_{n-r,i}}
        \otimes \red{E_{r,i}}.
    \end{align}
\end{prop}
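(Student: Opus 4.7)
The plan is to mimic the argument for the non-affine analogue \cref{Symcat}, which in turn is a direct translation of \cite[Th.~3.2]{BSW-K0}. The extra feature compared to the wreath product case is that $\Delta$ of a crossing, as given in \cref{Deltcross}, is no longer just a sum of four two-colored crossings; it picks up dumbbell corrections. The whole point of passing to the localized symmetric product $\blue{\AWC}\barodot\red{\AWC}$ is precisely that the dot dumbbell is invertible, which will let us absorb these corrections into a conjugation by an invertible morphism.

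First I would apply $\tilde\Delta$ to $(\uparrow^{\otimes n}, e_{(n),i})$ and use \cref{shipped} to split the underlying object $(\upblue\oplus\upred)^{\otimes n}$ into the direct sum $\bigoplus_{r=0}^n \bigoplus_{\lambda \in \cP_{r,n}} \uparrow^{\otimes \lambda}$ indexed by color patterns. Next, I would use the isomorphisms $\sigma_\lambda \colon \uparrow^{\otimes \lambda} \xrightarrow{\sim} \uparrow^{\otimes \min_{r,n}} = \upblue^{\otimes(n-r)}\otimes\upred^{\otimes r}$ from \cref{organize} to transport everything onto the standard object $\upblue^{\otimes(n-r)}\otimes\upred^{\otimes r}$; then $\tilde\Delta(e_{(n),i})$ is encoded by a block matrix inside $\bigoplus_r \mathrm{Mat}_{\binom{n}{r}}(\AWA{n-r}\barotimes\AWA{r})$, and the goal is to find an idempotent change of basis that conjugates it into the block-diagonal idempotent whose $(r,r)$ block is $\blue{e_{(n-r),i}} \otimes \red{e_{r,i}}$ sitting in the diagonal corner $\uparrow^{\otimes \min_{r,n}}$. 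A parallel argument handles $E_{n,i}$ with $e_{(1^n),i}$ in place of $e_{(n),i}$.

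The concrete mechanism is as follows. Write $e_{(n)} = \frac{1}{n!}\sum_{\sigma\in\fS_n}\sigma$. Then $\tilde\Delta(e_{(n),i})$ is the average of the images under $\Delta$ of the permutations, each decorated with a pair of colored idempotents $e_i$ on every strand, which makes sense color-by-color since the two-color crossings commute with colored tokens. To identify the image with the claimed direct sum, the idea borrowed from \cite[Th.~3.2]{BSW-K0} is that the $\binom{n}{r}$ summands indexed by $\cP_{r,n}$ are permuted transitively by the symmetric group action in $\Delta$ of the $\fS_n$-action, and moreover any two permutations of colors are related by isomorphisms built out of two-color crossings (adjusted by the dumbbell corrections coming from \cref{Deltcross}). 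Averaging over $\fS_n$ collapses the diagonal of the resulting matrix while the off-diagonal entries telescope using the identities of the form \cref{hops1,hops2,hops3,hops4,dnb}; the key algebraic input is the identification $e_{(n-r)}\otimes e_{(r)} = \binom{n}{r}^{-1}\sum_{\sigma\in\fS_n/(\fS_{n-r}\times\fS_r)} \sigma \cdot e_{(n)}$ inside the symmetrizer, combined with the fact that each coset representative sends one $\uparrow^{\otimes \lambda}$ to $\uparrow^{\otimes \min_{r,n}}$ via some $\sigma_\lambda$. All that remains is to track that the idempotent $e_i^{\otimes n}$ simply distributes color-wise, giving the extra $e_i$'s needed in $\blue{e_{(n-r),i}}\otimes\red{e_{r,i}}$.

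The main obstacle I anticipate is handling the dumbbell corrections in \cref{Deltcross} cleanly: when one averages over $\fS_n$, the naive single-color image of $e_{(n)}$ is correct, but the mixed-color blocks produce extra terms involving dot and box dumbbells which must be shown to recombine into either pure $\sigma_\lambda$-conjugates or zero. Here one uses that dumbbells commute with dots, tokens, teleporters, and pass through two-color crossings via \cref{hops1,hops2,hops3,hops4,dnb}, and in particular that $\frac{1}{2}(\sigma + \sigma^{-1})$ of a mixed two-color crossing on the localized category reduces, after inserting a dot dumbbell, to something which can be recognized as a projector onto the symmetric part. Once this cancellation is established for the elementary blocks, an easy induction on $n$ (splitting off one strand at a time, made coassociative by \cref{coass2}) completes the decomposition. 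The argument for $E_{n,i}$ is formally identical after replacing the complete symmetrizer by the complete antisymmetrizer and tracking the sign conventions in \cref{sad}.
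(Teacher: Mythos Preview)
Your high-level strategy is right: one does show that $\tilde\Delta(e_{(n),i})$ is conjugate to $\sum_{r} \imath_{r,n}(e_{(n-r),i}\otimes e_{(r),i})$ via explicit morphisms $u$ and $v$ with $u\circ v = \tilde\Delta(e_{(n),i})$ and $v\circ u$ equal to the target idempotent. But your proposal skips precisely the two nontrivial ingredients that make this work, and your expectation that the dumbbell corrections ``recombine into either pure $\sigma_\lambda$-conjugates or zero'' is not what actually happens.

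The first missing piece is an explicit formula for the matrix entries $1_\mu\circ\tilde\Delta(e_{(n)})\circ 1_\lambda$. These are not simply $\sigma_\mu^{-1}\circ(e_{(n-r)}\otimes e_{(r)})\circ\sigma_\lambda$ up to a scalar as in the non-affine case; the dumbbell corrections in \cref{Deltcross} contribute genuine factors $z_\mu^+$ and $y_\lambda^+$ in $\AWA{n-r}\barotimes\AWA{r}$, built from products of $(x_{r+1-i}-x_{r+j})^{-1}$ and $\tau_{r+1-i,r+j}+x_{r+1-i}-x_{r+j}$ indexed by boxes inside or outside the Young diagram $\lambda$ (this is \cref{jack}). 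The induction establishing this is on $|\lambda|-|\mu|$, not on $n$, and uses the identity $e_{(n)}=\tfrac12(1+s_j)e_{(n)}$ together with the fact that $\Delta(s_j)$ restricted to a particular color pattern picks out a single dumbbell term.

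The second, and harder, missing piece is showing that $v\circ u$ really equals the target idempotent. After using \cref{happy}, this reduces to the purely combinatorial identity
\[
\sum_{\sigma\in\fS_{n-r}\times\fS_r}\sum_{\lambda\in\cP_{r,n}}\sigma\actplus(y_\lambda^+ z_\lambda^+)=n!,
\]
which is \cref{wrap}. This is where the real work lies; it does not follow from averaging or telescoping alone, and its proof (an induction computing $X+Y=n!$ via ``hook'' sequences, adapted from \cite[Lem.~4.6]{BSW-K0}) is the core of the argument. Without these two explicit computations, your sketch does not close.
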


The proof of \cref{coprodcat} will occupy the remainder of the section.  For $\lambda \in \cP_{r,n}$ and $1 \le i \le r$, $1 \le j \le r-n$, we define the following elements of $\AWA{n-r} \barotimes \AWA{r}$:
\begin{align*}
  y_{i,j}^+ &:=
  \begin{cases}
    \tau_{r+1-i,r+j} + x_{r+1-i} - x_{r+j} & \text{if } j \le \lambda_i, \\
    1 & \text{if } j > \lambda_i,
  \end{cases}
  &
  z_{i,j}^+ &:=
  \begin{cases}
    (x_{r+1-i} - x_{r+j})^{-1} & \text{if } j \le \lambda_i, \\
    1 - \tau_{r+1-i,r+j} (x_{r+1-i} - x_{r+j})^{-1} & \text{if } j > \lambda_i,
  \end{cases}
  \\
   y_{i,j}^- &:=
  \begin{cases}
    \tau_{r+1-i,r+j} - x_{r+1-i} + x_{r+j} & \text{if } j \le \lambda_i, \\
    1 & \text{if } j > \lambda_i,
  \end{cases}
  &
  z_{i,j}^- &:=
  \begin{cases}
    -(x_{r+1-i} - x_{r+j})^{-1} & \text{if } j \le \lambda_i, \\
    1 + \tau_{r+1-i,r+j} (x_{r+1-i} - x_{r+j})^{-1} & \text{if } j > \lambda_i,
  \end{cases}
\end{align*}
and
\[
  y_\lambda^+ := \prod_{\substack{ i=1,\dotsc,r \\ j=1,\dotsc,n-r }} y_{i,j}^+,\quad
  z_\lambda^+ := \prod_{\substack{ i=r,\dotsc,1 \\ j=n-r,\dotsc,1 }} z_{i,j}^+,\quad
  y_\lambda^- := (-1)^{|\lambda|} \prod_{\substack{ i=1,\dotsc,r \\ j=1,\dotsc,n-r }} y_{i,j}^-,\quad
  z_\lambda^- := (-1)^{|\mu|} \prod_{\substack{ i=r,\dotsc,1 \\ j=n-r,\dotsc,1 }} z_{i,j}^-.
\]
Note that, since $A$ is not necessarily supercommutative, the order of these products is important.  However, it is straightforward to verify that, for $\lambda \in \cP_{r,n}$, the products $z_\lambda^\pm$ depend only on the order of the terms $z_{i,j}^\pm$ for $j > \lambda_i$, that is, the terms corresponding to boxes outside the Young diagram $\lambda$.  Furthermore, for these terms, any order such that removing the boxes (starting from $\max_{r,n}$) in the given order results in a sequence of Young diagrams is equivalent.  Similarly, the products $y_\lambda^\pm$ depend only on the order of the boxes inside the Young diagram and any order such that adding the boxes in the given order (starting from $\min_{r,n}$) results in a sequence of Young diagrams is equivalent.

\begin{lem} \label{jack}
  For $0 \le r \le n$ and $\lambda, \mu \in \cP_{r,n}$, we have
  \begin{align*}
    1_\mu \circ \tilde\Delta(e_{(n)}) \circ 1_\lambda
    &= \binom{n}{r}^{-1} \sigma_\mu^{-1} \circ \imath_{r,n} ( z_\mu^+ ) \circ \imath_{r,n} \left( e_{(n-r)} \otimes e_{(r)} \right) \circ \imath_{r,n} ( y_\lambda^+ ) \circ \sigma_\lambda,
    \\
    1_\mu \circ \tilde\Delta(e_{(1^n)}) \circ 1_\lambda
    &= \binom{n}{r}^{-1} \sigma_\mu^{-1} \circ \imath_{r,n} ( z_\mu^- ) \circ \imath_{r,n} \left( e_{(n-r)} \otimes e_{(r)} \right) \circ \imath_{r,n} ( y_\lambda^- ) \circ \sigma_\lambda.
  \end{align*}
\end{lem}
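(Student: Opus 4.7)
The plan is a direct computation: expand $\tilde\Delta(e_{(n)})$ as a sum of images of permutations, restrict to the $(\mu,\lambda)$-block, and match the surviving terms to the claimed factored product. The key algebraic input is the standard factorization $e_{(n)} = \binom{n}{r}^{-1} \sum_\nu d_\nu (e_{(n-r)} \otimes e_{(r)})$, where $d_\nu \in \fS_n$ ranges over a set of minimal-length coset representatives for $\fS_n/(\fS_{n-r} \times \fS_r)$ naturally indexed by $\nu \in \cP_{r,n}$; one interprets each $d_\nu$ as a shuffle permutation moving $r$ ``red'' positions through $n-r$ ``blue'' positions. Applying $\tilde\Delta$ to this decomposition and using that $\Delta$ is a monoidal superfunctor, only specific terms survive the projection $1_\mu \circ \_ \circ 1_\lambda$.

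For the surviving terms, I would expand each crossing in $d_\nu$ via \cref{Deltcross} and track the color flow. After conjugating by $\sigma_\lambda$ and $\sigma_\mu^{-1}$ to move into the standard ordering $\upblue^{\otimes(n-r)} \otimes \upred^{\otimes r}$, each ``box position'' $(i,j)$ (indexing a crossing between the $i$-th red string from the right and the $j$-th blue string) contributes one factor on each side. Using the commutation and simplification identities \cref{hops1,hops2,hops3,hops4,dnb,laser,dotteleport,sqdumb,tokteleport}, one checks: boxes inside $\lambda$ produce a $\tau + x - x$ factor from the right, boxes outside $\lambda$ give $1$; boxes inside $\mu$ produce an $(x-x)^{-1}$ from the left (absorbed into a dot dumbbell), while boxes outside $\mu$ give $1 - \tau(x-x)^{-1}$. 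Assembling these recovers $\imath_{r,n}(z_\mu^+) \cdot \imath_{r,n}(e_{(n-r)} \otimes e_{(r)}) \cdot \imath_{r,n}(y_\lambda^+)$. The antisymmetric case is parallel, using the signed factorization of $e_{(1^n)}$ and the $\actminus$-action from \cref{sunroof} in place of $\actplus$; the overall $(-1)^{|\lambda|}$ and $(-1)^{|\mu|}$ factors, and the internal sign changes in $y^-_{i,j}, z^-_{i,j}$, arise naturally from these modifications.

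The main obstacle will be the combinatorial bookkeeping: verifying that the large sum arising from expanding $\tilde\Delta$ on each permutation and each crossing collapses to the single ordered product $z_\mu^+ (e_{(n-r)} \otimes e_{(r)}) y_\lambda^+$ with the prescribed ordering of factors. The text already notes that the ordering of factors in $y_\lambda^\pm, z_\mu^\pm$ is only well-defined up to a choice of valid box-addition (or box-removal) sequence, and this ambiguity must be matched against the commutation relations that hold among teleporters, dots, tokens, and dumbbells past single- and two-colored crossings. Because all the required local identities are established either in the preceding text or in the proof of \cref{AWcoprod}, the argument is tedious but mechanical once the bijective correspondence between surviving summands and box positions has been set up cleanly.
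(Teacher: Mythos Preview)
Your proposal outlines a direct expansion, but the paper proceeds quite differently, by induction on $|\lambda|-|\mu|$. The base case is $\lambda=\min_{r,n}$, $\mu=\max_{r,n}$: there the coset factorization $e_{(n)}=\binom{n}{r}^{-1}\sum_{\sigma\in D}\sigma\,(e_{(n-r)}\otimes e_{(r)})$ is used, but only a single coset representative (the longest one) contributes to the $(\mu,\lambda)$-block, and each crossing in that representative is forced to contribute exactly one term from \cref{Deltcross}. The inductive step then adds a box to $\mu$ or removes one from $\lambda$ using the identity $e_{(n)}=\tfrac12(1+s_j)e_{(n)}$ (or $e_{(n)}=e_{(n)}\tfrac12(1+s_j)$), which produces a simple two-term recursion relating the $(\mu,\lambda)$-block to an adjacent one.

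The difficulty with your direct route is more serious than ``bookkeeping''. The factorization $e_{(n)}=\binom{n}{r}^{-1}\sum_\nu d_\nu(e_{(n-r)}\otimes e_{(r)})$ only interacts cleanly with $\tilde\Delta$ when $\lambda=\min_{r,n}$: in that case $\tilde\Delta(\rho)\circ 1_{\min}=1_{\min}\circ\imath_{r,n}(\rho)$ for $\rho\in\fS_{n-r}\times\fS_r$, since all crossings in $\rho$ are single-colored. For general $\lambda$, the Young subgroup $\fS_{n-r}\times\fS_r$ crosses strings of different colors, so $\tilde\Delta(e_{(n-r)}\otimes e_{(r)})\circ 1_\lambda$ does not reduce to $\imath_{r,n}(e_{(n-r)}\otimes e_{(r)})$ composed with a color shuffle. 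Consequently, many coset representatives $d_\nu$ contribute, each via multiple terms, and the claimed collapse to a single ordered product $z_\mu^+\,(e_{(n-r)}\otimes e_{(r)})\,y_\lambda^+$ is precisely what needs proof rather than something that falls out mechanically. The paper's induction is designed to avoid this: it handles the full direct computation only in the one case where it is essentially trivial, and then propagates the answer box-by-box using a relation that involves just two terms.
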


\begin{proof}
  First note that
  \begin{align*}
    \tilde\Delta(e_{(2)})
    &= \frac{1}{2}
    \left(\
      \begin{tikzpicture}[anchorbase]
        \draw[->,blue] (-0.2,-0.4) to (-0.2,0.4);
        \draw[->,blue] (0.2,-0.4) to (0.2,0.4);
      \end{tikzpicture}
      +
      \begin{tikzpicture}[anchorbase]
        \draw[->,blue] (-0.2,-0.4) to (0.2,0.4);
        \draw[->,blue] (0.2,-0.4) to (-0.2,0.4);
      \end{tikzpicture}
    \ \right)
    + \frac{1}{2}
    \left(\
      \begin{tikzpicture}[anchorbase]
        \draw[->,red] (-0.2,-0.4) to (-0.2,0.4);
        \draw[->,red] (0.2,-0.4) to (0.2,0.4);
      \end{tikzpicture}
      +
      \begin{tikzpicture}[anchorbase]
        \draw[->,red] (-0.2,-0.4) to (0.2,0.4);
        \draw[->,red] (0.2,-0.4) to (-0.2,0.4);
      \end{tikzpicture}
    \ \right)
    + \frac{1}{2}
    \left(\
      \begin{tikzpicture}[anchorbase]
        \draw[->,blue] (-0.2,-0.4) to (-0.2,0.4);
        \draw[->,red] (0.2,-0.4) to (0.2,0.4);
      \end{tikzpicture}
      -
      \begin{tikzpicture}[anchorbase]
        \draw[->,blue] (-0.2,-0.4) to (-0.2,0.4);
        \draw[->,red] (0.2,-0.4) to (0.2,0.4);
        \dotdumb{-0.2,0.15}{0.2,0.15};
        \teleport{blue}{red}{-0.2,-0.15}{0.2,-0.15};
      \end{tikzpicture}
      +
      \begin{tikzpicture}[anchorbase]
        \draw[->,blue] (-0.4,-0.4) to (0.4,0.4);
        \draw[->,red] (0.4,-0.4) to (-0.4,0.4);
        \dotdumb{-0.2,0.2}{0.2,0.2};
      \end{tikzpicture}
    \ \right)
    \circ
    \left(\
      \begin{tikzpicture}[anchorbase]
        \draw[->,blue] (-0.2,-0.4) to (-0.2,0.4);
        \draw[->,red] (0.2,-0.4) to (0.2,0.4);
      \end{tikzpicture}
      +
      \begin{tikzpicture}[anchorbase]
        \draw[->,red] (-0.4,-0.4) to (0.4,0.4);
        \draw[->,blue] (0.4,-0.4) to (-0.4,0.4);
        \teleport{blue}{red}{-0.2,0.2}{0.2,0.2};
      \end{tikzpicture}
      +
      \begin{tikzpicture}[anchorbase]
        \draw[->,red] (-0.4,-0.4) to (0.4,0.4);
        \draw[->,blue] (0.4,-0.4) to (-0.4,0.4);
        \dashdumb{-0.2,0.2}{0.2,0.2};
      \end{tikzpicture}
    \ \right),
    \\
    \tilde\Delta(e_{(1^2)})
    &= \frac{1}{2}
    \left(\
      \begin{tikzpicture}[anchorbase]
        \draw[->,blue] (-0.2,-0.4) to (-0.2,0.4);
        \draw[->,blue] (0.2,-0.4) to (0.2,0.4);
      \end{tikzpicture}
      -
      \begin{tikzpicture}[anchorbase]
        \draw[->,blue] (-0.2,-0.4) to (0.2,0.4);
        \draw[->,blue] (0.2,-0.4) to (-0.2,0.4);
      \end{tikzpicture}
    \ \right)
    + \frac{1}{2}
    \left(\
      \begin{tikzpicture}[anchorbase]
        \draw[->,red] (-0.2,-0.4) to (-0.2,0.4);
        \draw[->,red] (0.2,-0.4) to (0.2,0.4);
      \end{tikzpicture}
      -
      \begin{tikzpicture}[anchorbase]
        \draw[->,red] (-0.2,-0.4) to (0.2,0.4);
        \draw[->,red] (0.2,-0.4) to (-0.2,0.4);
      \end{tikzpicture}
    \ \right)
    + \frac{1}{2}
    \left(\
      \begin{tikzpicture}[anchorbase]
        \draw[->,blue] (-0.2,-0.4) to (-0.2,0.4);
        \draw[->,red] (0.2,-0.4) to (0.2,0.4);
      \end{tikzpicture}
      +
      \begin{tikzpicture}[anchorbase]
        \draw[->,blue] (-0.2,-0.4) to (-0.2,0.4);
        \draw[->,red] (0.2,-0.4) to (0.2,0.4);
        \dotdumb{-0.2,0.15}{0.2,0.15};
        \teleport{blue}{red}{-0.2,-0.15}{0.2,-0.15};
      \end{tikzpicture}
      -
      \begin{tikzpicture}[anchorbase]
        \draw[->,blue] (-0.4,-0.4) to (0.4,0.4);
        \draw[->,red] (0.4,-0.4) to (-0.4,0.4);
        \dotdumb{-0.2,0.2}{0.2,0.2};
      \end{tikzpicture}
    \ \right)
    \circ
    \left(\
      \begin{tikzpicture}[anchorbase]
        \draw[->,blue] (-0.2,-0.4) to (-0.2,0.4);
        \draw[->,red] (0.2,-0.4) to (0.2,0.4);
      \end{tikzpicture}
      +
      \begin{tikzpicture}[anchorbase]
        \draw[->,red] (-0.4,-0.4) to (0.4,0.4);
        \draw[->,blue] (0.4,-0.4) to (-0.4,0.4);
        \teleport{blue}{red}{-0.2,0.2}{0.2,0.2};
      \end{tikzpicture}
      -
      \begin{tikzpicture}[anchorbase]
        \draw[->,red] (-0.4,-0.4) to (0.4,0.4);
        \draw[->,blue] (0.4,-0.4) to (-0.4,0.4);
        \dashdumb{-0.2,0.2}{0.2,0.2};
      \end{tikzpicture}
    \ \right).
  \end{align*}
  The lemma in the case $n=2$ follows from these formulae.  For the general case, we proceed by induction on $|\lambda| - |\mu|$.  We give the proof for the first formula, since the proof of the second is analogous.

  For the base case, we have $\lambda = \min_{r,n}$ (so $1_\lambda = \sigma_\lambda = \upblue^{\otimes (n-r)} \otimes \upred^{\otimes r}$) and $\mu = \max_{r,n}$ (so $1_\mu = \upred^{\otimes r} \otimes \upblue^{\otimes (n-r)}$.  Note that
  \[
    e_{(n)} = \frac{1}{n!} \sum_{\rho \in \fS_{n-r} \times \fS_r} \sum_{\sigma \in D} \sigma \rho,
  \]
  where $D$ denotes the set of minimal length $\fS_n / (\fS_{n-r} \times \fS_r)$-coset representatives.  For $\rho \in \fS_{n-r} \times \fS_r$, we have $\tilde\Delta(\rho) \circ 1_\lambda = 1_\lambda \circ \imath_{r,n} (\rho) \circ \sigma_\lambda$.  Therefore
  \[
    1_\mu \circ \tilde\Delta ( e_{(n)} ) \circ 1_\lambda
    = \binom{n}{r}^{-1} \sum_{\sigma \in D} 1_\mu \circ \tilde\Delta(\sigma) \circ 1_\lambda \circ \imath_{r,n} \left( e_{(n-r)} \otimes e_{(r)} \right) \circ \sigma_\lambda.
  \]
  Since $\mu$ is maximal, the term $1_\mu \circ \tilde\Delta(\sigma) \circ 1_\lambda$ above can only be nonzero when $\sigma$ is the longest coset representative.  For this $\sigma$, when computing $\tilde\Delta(\sigma)$, we must replace each crossing
  $
    \begin{tikzpicture}[anchorbase]
      \draw[->] (-0.2,-0.25) to (0.2,0.25);
      \draw[->] (0.2,-0.25) to (-0.2,0.25);
    \end{tikzpicture}
  $
  in a reduced word for $\sigma$ with
  $
    \begin{tikzpicture}[anchorbase]
      \draw[->,blue] (-0.2,-0.25) to (0.2,0.25);
      \draw[->,red] (0.2,-0.25) to (-0.2,0.25);
      \dotdumb{-0.1,-0.125}{0.1,-0.125};
    \end{tikzpicture},
  $
  i.e.\ the terms from \cref{Deltcross} that are colored $\upred \upblue$ at the top and $\upblue \upred$ at the bottom.  It follows for this longest $\sigma$ that
  \begin{align*}
    1_\mu \circ \tilde\Delta (\sigma) \circ 1_\lambda
    &= \prod_{\substack{ 1 \le i \le r \\ 1 \le j \le n-r}} \imath_{r,n} \left( (x_{r+1-i} - x_{r+j})^{-1} \right) \circ \sigma_\lambda.
  \end{align*}
  Since $j > \lambda_i$ and $j \le \mu_i$ for all $i,j$, this completes the proof of the base case.

  For the induction step, consider $\mu,\lambda \in \cP_{r,n}$ such that either $\mu$ is not maximal or $\lambda$ is not minimal, and consider $X := 1_\mu \circ \tilde\Delta (e_{(n)}) \circ 1_\lambda$.  If $\mu$ is not maximal, let $\nu \in \cP_{r,n}$ be obtained from $\mu$ by adding a box.  Let $j$ be the unique index such that
  $
    \sigma_\mu^{-1} =
    \left(
      \begin{tikzpicture}[anchorbase]
        \draw[->,red] (-0.2,-0.25) to (0.2,0.25);
        \draw[->,blue] (0.2,-0.25) to (-0.2,0.25);
      \end{tikzpicture}
    \right)_j
    \circ \sigma_\nu^{-1},
  $
  where the subscript $j$ indicates that we are applying the crossing to the $j$-th and $(j+1)$-st strings.  Using the formula for $Y := 1_\nu \circ \tilde\Delta(e_{(n)}) \circ 1_\lambda$ provided by the induction hypothesis, we are reduced to verifying that
  \begin{equation} \label{clouds1}
    X
    =
    \left(
      \begin{tikzpicture}[anchorbase]
        \draw[->,red] (-0.4,-0.4) to (0.4,0.4);
        \draw[->,blue] (0.4,-0.4) to (-0.4,0.4);
      \end{tikzpicture}
    \right)_j
    \circ
    \left(\
      \begin{tikzpicture}[anchorbase]
        \draw[->,red] (-0.2,-0.4) to (-0.2,0.4);
        \draw[->,blue] (0.2,-0.4) to (0.2,0.4);
      \end{tikzpicture}
      -
      \begin{tikzpicture}[anchorbase]
        \draw[->,red] (-0.2,-0.4) to (-0.2,0.4);
        \draw[->,blue] (0.2,-0.4) to (0.2,0.4);
        \dotdumb{-0.2,0.15}{0.2,0.15};
        \teleport{red}{blue}{-0.2,-0.15}{0.2,-0.15};
      \end{tikzpicture}
    \ \right)_j
    \circ
    \left(
      \begin{tikzpicture}[anchorbase]
        \draw[->,red] (-0.2,-0.4) to (-0.2,0.4);
        \draw[->,blue] (0.2,-0.4) to (0.2,0.4);
        \dashdumb{-0.2,0}{0.2,0};
      \end{tikzpicture}
    \right)_j
    \circ Y.
  \end{equation}
  To see this, we apply $1_\nu \circ \tilde\Delta( - ) \circ 1_\lambda$ to the identity $e_{(n)} = \frac{1}{2} (1 + s_j) e_{(n)}$ to see that
  \[
    Y = \frac{1}{2}
    \left(\
      \begin{tikzpicture}[anchorbase]
        \draw[->,red] (-0.2,-0.4) to (-0.2,0.4);
        \draw[->,blue] (0.2,-0.4) to (0.2,0.4);
      \end{tikzpicture}
      +
      \begin{tikzpicture}[anchorbase]
        \draw[->,red] (-0.2,-0.4) to (-0.2,0.4);
        \draw[->,blue] (0.2,-0.4) to (0.2,0.4);
        \dotdumb{-0.2,0.15}{0.2,0.15};
        \teleport{red}{blue}{-0.2,-0.15}{0.2,-0.15};
      \end{tikzpicture}
    \ \right)_j
    \circ Y + \frac{1}{2}
    \left(
      \begin{tikzpicture}[anchorbase]
        \draw[->,blue] (-0.4,-0.4) to (0.4,0.4);
        \draw[->,red] (0.4,-0.4) to (-0.4,0.4);
        \dotdumb{-0.2,0.2}{0.2,0.2};
      \end{tikzpicture}
    \right)_j
    \circ X,
  \]
  which is equivalent to \cref{clouds1}.

  On the other hand, if $\lambda$ is not minimal, let $\kappa$ be obtained from $\lambda$ by removing a box, and define $j$ so that
  $
    \sigma_\lambda = \sigma_\kappa \circ
    \left(
      \begin{tikzpicture}[anchorbase]
        \draw[->,red] (-0.2,-0.25) to (0.2,0.25);
        \draw[->,blue] (0.2,-0.25) to (-0.2,0.25);
      \end{tikzpicture}
    \right)_j.
  $
  Let $Z := 1_\mu \circ \tilde\Delta(e_{(n)}) \circ 1_\kappa$.  Then we need to show that
  \begin{equation} \label{clouds2}
    X \circ
    \left(
      \begin{tikzpicture}[anchorbase]
        \draw[->,blue] (-0.4,-0.4) to (0.4,0.4);
        \draw[->,red] (0.4,-0.4) to (-0.4,0.4);
      \end{tikzpicture}
    \right)_j
    = Z \circ
    \left(\
      \begin{tikzpicture}[anchorbase]
        \draw[->,blue] (-0.2,-0.4) to (-0.2,0.4);
        \draw[->,red] (0.2,-0.4) to (0.2,0.4);
        \teleport{blue}{red}{-0.2,0}{0.2,0};
      \end{tikzpicture}
      +
      \begin{tikzpicture}[anchorbase]
        \draw[->,blue] (-0.2,-0.4) to (-0.2,0.4);
        \draw[->,red] (0.2,-0.4) to (0.2,0.4);
        \dashdumb{-0.2,0}{0.2,0};
      \end{tikzpicture}
    \ \right)_j.
  \end{equation}
  To see this, we apply $1_\mu \circ \tilde\Delta(-) \circ 1_\kappa$ to the identity $e_{(n)} = e_{(n)} \frac{1}{2} (1 + s_j)$ to obtain
  \[
    Z = \frac{1}{2} Z \circ
    \left(\
      \begin{tikzpicture}[anchorbase]
        \draw[->,blue] (-0.2,-0.4) to (-0.2,0.4);
        \draw[->,red] (0.2,-0.4) to (0.2,0.4);
      \end{tikzpicture}
      -
      \begin{tikzpicture}[anchorbase]
        \draw[->,blue] (-0.2,-0.4) to (-0.2,0.4);
        \draw[->,red] (0.2,-0.4) to (0.2,0.4);
        \dotdumb{-0.2,0.15}{0.2,0.15};
        \teleport{blue}{red}{-0.2,-0.15}{0.2,-0.15};
      \end{tikzpicture}
    \ \right)_j
    + \frac{1}{2} X \circ
    \left(
      \begin{tikzpicture}[anchorbase]
        \draw[->,blue] (-0.4,-0.4) to (0.4,0.4);
        \draw[->,red] (0.4,-0.4) to (-0.4,0.4);
        \dotdumb{-0.2,0.2}{0.2,0.2};
      \end{tikzpicture}
    \right)_j,
  \]
  which is equivalent to
  \[
    X \circ
    \left(
      \begin{tikzpicture}[anchorbase]
        \draw[->,blue] (-0.4,-0.4) to (0.4,0.4);
        \draw[->,red] (0.4,-0.4) to (-0.4,0.4);
        \dotdumb{-0.2,0.2}{0.2,0.2};
      \end{tikzpicture}
    \right)_j
    = Z \circ
    \left(\
      \begin{tikzpicture}[anchorbase]
        \draw[->,blue] (-0.2,-0.4) to (-0.2,0.4);
        \draw[->,red] (0.2,-0.4) to (0.2,0.4);
      \end{tikzpicture}
      +
      \begin{tikzpicture}[anchorbase]
        \draw[->,blue] (-0.2,-0.4) to (-0.2,0.4);
        \draw[->,red] (0.2,-0.4) to (0.2,0.4);
        \dotdumb{-0.2,0.15}{0.2,0.15};
        \teleport{blue}{red}{-0.2,-0.15}{0.2,-0.15};
      \end{tikzpicture}
    \ \right)_j.
  \]
  Composing on the right with
  $
    \left(
      \begin{tikzpicture}[anchorbase]
        \draw[->,blue] (-0.2,-0.25) to (-0.2,0.25);
        \draw[->,red] (0.2,-0.25) to (0.2,0.25);
        \dashdumb{-0.2,0}{0.2,0};
      \end{tikzpicture}
    \right)_j
  $
  gives \cref{clouds2}.
\end{proof}

\begin{lem}
    For $0 \le r \le n$, we have
    \begin{equation} \label{wrap}
        \sum_{\sigma \in \fS_{n-r} \times \fS_r} \sum_{\lambda \in \cP_{r,n}} \sigma \actplus (y_\lambda^+ z_\lambda^+)
        = n!
        = \sum_{\sigma \in \fS_{n-r} \times \fS_r} \sum_{\mu \in \cP_{r,n}} \sigma \actminus (y_\mu^- z_\mu^-).
    \end{equation}
\end{lem}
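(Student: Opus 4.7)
My plan is to prove the first identity in \cref{wrap}; the second follows by an entirely analogous argument using the idempotency of $e_{(1^n)}$ in place of $e_{(n)}$, the second formula of \cref{jack} (which should feature $e_{(1^{n-r})} \otimes e_{(1^r)}$ in place of $e_{(n-r)} \otimes e_{(r)}$), and \cref{sad} in place of \cref{happy}. Set $\Sigma^+ := \sum_{\lambda \in \cP_{r,n}} y_\lambda^+ z_\lambda^+ \in \AWA{n-r} \barotimes \AWA{r}$. The proof proceeds in two steps: first, establish the algebraic identity
\[
    (e_{(n-r)} \otimes e_{(r)}) \, \Sigma^+ \, (e_{(n-r)} \otimes e_{(r)}) \;=\; \binom{n}{r} \, (e_{(n-r)} \otimes e_{(r)})
\]
in $\AWA{n-r} \barotimes \AWA{r}$; then, translate it via the polynomial representation.

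To establish the displayed identity, I would apply $\tilde\Delta$ to the relation $e_{(n)}^2 = e_{(n)}$ in $\AWA{n}$ and extract the $(\max_{r,n}, \min_{r,n})$-component. Inserting $\sum_\mu 1_\mu$ between the two copies of $\tilde\Delta(e_{(n)})$ and computing each piece via \cref{jack}, the simplifications $\sigma_{\min} = 1$ and $y_{\min}^+ = 1$ cause the sum to telescope, producing an expression containing $\sum_\mu y_\mu^+ z_\mu^+ = \Sigma^+$. After cancelling the isomorphism $\sigma_{\max}^{-1}$ and applying the inverse of $\imath_{r,n}$, one obtains
\[
    z_{\max}^+ \, (e_{(n-r)} \otimes e_{(r)}) \, \Sigma^+ \, (e_{(n-r)} \otimes e_{(r)}) \;=\; \binom{n}{r}\, z_{\max}^+ \, (e_{(n-r)} \otimes e_{(r)}).
\]
Since $z_{\max}^+ = \prod_{i,j}(x_{r+1-i} - x_{r+j})^{-1}$ is a central unit in the Ore localization, cancelling it yields the desired identity.

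For the translation step, I would extend the polynomial representation of $\AWA{n-r} \otimes \AWA{r}$ on $P_n(A)$ by Ore localization to a representation of $\AWA{n-r} \barotimes \AWA{r}$ on the localization $L$ of $P_n(A)$ at $\prod_{i,j}(x_i - x_{r+j})$, with elements of $L$ acting by left multiplication and $\sigma \in \fS_{n-r} \times \fS_r$ acting by $\actplus$. By \cref{happy}, $e_{(n-r)} \otimes e_{(r)}$ acts as the averaging operator $\frac{1}{(n-r)!r!}\sum_\sigma \sigma \actplus$, and since $\sigma \actplus 1 = 1$ for all $\sigma$, one has $(e_{(n-r)} \otimes e_{(r)}) \cdot 1 = 1$. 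Applying both sides of the algebraic identity to $1 \in L$: the right side evaluates to $\binom{n}{r}$, while the left side, using $\Sigma^+ \cdot 1 = \Sigma^+$ (left multiplication), simplifies to $(e_{(n-r)} \otimes e_{(r)}) \cdot \Sigma^+ = \frac{1}{(n-r)!r!} \sum_\sigma \sigma \actplus \Sigma^+$. Multiplying through by $(n-r)!r!$ yields $\sum_\sigma \sigma \actplus \Sigma^+ = n!$.

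The main subtlety is the choice of the $(\max_{r,n},\min_{r,n})$-component in the first step, which ensures that the factor requiring cancellation is $z_{\max}^+$, a central product of inverted linear polynomial differences -- hence a unit -- in the Ore localization. Other component choices would yield similar identities, but with factors of $z_\lambda^+$ or $y_\lambda^+$ whose invertibility in $\AWA{n-r} \barotimes \AWA{r}$ is far less transparent, so this choice of component is what makes the argument go through cleanly.
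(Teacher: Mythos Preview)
Your proof is correct and takes a genuinely different route from the paper. The paper argues by first computing $y_{i,j}^+ z_{i,j}^+ = 1 + \varepsilon_{i,j}(\lambda)\tau_{r+1-i,r+j}(x_{r+1-i}-x_{r+j})^{-1}$, then reducing the claim to an explicit combinatorial identity over $\fS_{n-r}\times\fS_r$ which it proves by induction on $n$, adapting \cite[Lem.~4.6]{BSW-K0}. Your argument instead bootstraps from \cref{jack}: applying $\tilde\Delta$ to $e_{(n)}^2 = e_{(n)}$, inserting $\sum_{\nu\in\cP_{r,n}} 1_\nu$, and extracting the $(\max_{r,n},\min_{r,n})$-component yields an equality in $\AWA{n-r}\barotimes\AWA{r}$; the choice of component makes the outer factor $z_{\max}^+$, which is the inverse of the very central element at which one localized, so it cancels cleanly. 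Evaluating on $1$ in (the localization of) the polynomial representation then turns the sandwich $(e_{(n-r)}\otimes e_{(r)})\,\Sigma^+\,(e_{(n-r)}\otimes e_{(r)})$ into the averaging operator applied to $\Sigma^+$, giving the result.

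Your approach is more conceptual and avoids the combinatorial induction entirely; the paper's is more hands-on and makes the underlying ``hook'' structure of the sum visible. Note that, in the logical flow of the paper, \cref{jack} and the present lemma are both inputs to \cref{coprodcat}, so there is no circularity in your use of \cref{jack}. Your parenthetical observation is also correct: in the second formula of \cref{jack} one should have $e_{(1^{n-r})}\otimes e_{(1^r)}$ in place of $e_{(n-r)}\otimes e_{(r)}$ (visible already for $r=0$ or $r=n$), and this is exactly what is needed for your $\actminus$ argument to run.
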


\begin{proof}
  First note that
  \[
    y_{i,j}^\pm z_{i,j}^\pm = 1 \pm \varepsilon_{i,j}(\lambda) \tau_{r+1-i,r+j} (x_{r+1-i} - x_{r+j})^{-1}
    \quad \text{where} \quad
    \varepsilon_{i,j}(\lambda) =
    \begin{cases}
      1 & \text{if } j \le \lambda_i, \\
      -1 & \text{if } j > \lambda_i.
    \end{cases}
  \]
  Thus, it suffices to show that
  \begin{gather} \label{dubstep1}
      \sum_{\sigma \in \fS_{n-r} \times \fS_r} \sigma \actplus \left( \sum_{\lambda \in \cP_{r,n}} \prod_{\substack{1 \le i \le r \\ 1 \le j \le n-r}} \left(1 + \varepsilon_{i,j}(\lambda) \tau_{r+1-i,r+j}(x_{r+1-i} - x_{r+j})^{-1} \right) \right) = n!
      \quad \text{and}
      \\ \label{dubstep2}
      \sum_{\sigma \in \fS_{n-r} \times \fS_r} \sigma \actminus \left( \sum_{\lambda \in \cP_{r,n}} \prod_{\substack{1 \le i \le r \\ 1 \le j \le n-r}} \left(1 - \varepsilon_{i,j}(\lambda) \tau_{r+1-i,r+j}(x_{r+1-i} - x_{r+j})^{-1} \right) \right) = n!,
  \end{gather}
  where the order of the products is given as follows: all $(i,j)$-terms for $(i,j)$ in the Young diagram $\lambda$ occur before those with $(i,j)$ outside the Young diagram.  The terms corresponding to $(i,j)$ in the Young diagram $\lambda$ are ordered as they are in the definition of $y_\lambda^\pm$ (i.e.\ such that adding the boxes to $\min_{r,n}$ in order yields a sequence of Young diagrams) and the terms corresponding to $(i,j)$ outside the Young diagram $\lambda$ are ordered as they are in the definition of $z_\lambda^\pm$ (i.e.\ such that removing the boxes from $\max_{r,n}$ in order yields a sequence of Young diagrams).

  The proof of \cref{dubstep1} is almost identical to the proof of \cite[Lem.~4.6]{BSW-K0}; one merely replaces $y_{i,j} = (x_{r+1-i} - x_{r+j})^{-1}$ everywhere there by $\tau_{r+1-i,r+j} (x_{r+1-i} - x_{r+j})^{-1}$.  Then \cref{dubstep2} follows by applying the automorphism $x_i \mapsto -x_i$, $1 \le i \le n$, of $A^{\otimes n} \otimes \kk(x_1,\dotsc,x_n)$.
  \details{
    We prove \cref{dubstep1} by induction on $n$.  Decompose the left-hand side of \cref{dubstep1} as the sum $X + Y$, where in $X$ the second sum in \cref{dubstep1} is only over $\lambda \in \cP_{r,n}$ with $\lambda_1 = n-r$, and in $Y$ the second sum in \cref{dubstep1} is only over $\lambda \in \cP_{r,n}$ with $\lambda_1 < n-r$.  Using the induction hypothesis, together with the fact that $\{ 1, s_{n-1}, s_{n-2} s_{n-1}, \dotsc, s_1 \dotsm s_{n-1} \}$ is a set of $\fS_n / \fS_{n-1}$ coset representatives, we see that
    \begin{align*}
      X &= (n-1)! (1 + s_{r-1} + \dotsb + s_1 \dotsm s_{r-1}) \actplus \prod_{j=1,\dotsc,n-r} \left( 1 + \tau_{r,r+j} (x_r - x_{r+j})^{-1} \right), \\
      Y &= (n-1)! (1 + s_{n-1} + \dotsb + s_{r+1} \dotsm s_{n-1}) \actplus \prod_{i=r,\dotsc,1} \left( 1 - \tau_{i,n} (x_i - x_n)^{-1} \right).
    \end{align*}
    For $1 \le i \le r$ and $1 \le j \le n-r$, define
    \[
      \zeta_{i,j} = \tau_{r+1-i,r+j} (x_{r+1-i} - x_{r+j})^{-1}.
    \]
    Using the algebraic analogues of the relations \cref{hops1,hops2,laser}, we have, for $1 \le i \le r$ and $1 \le j \le n-r$,
    \begin{align} \label{dance1} \tag{$*$}
      s_{r+1-q} \zeta_{i,j}
      &=
      \begin{cases}
        \zeta_{i+1,j} s_{r+1-q} - \zeta_{i+1,j} \zeta_{i,j}
        & \text{if } i+1 = q \le r,
        \\
        \zeta_{i,j} s_{r+1-q}
        & \text{if } i+1 < q \le r;
      \end{cases}
      \\ \label{dance2} \tag{$**$}
      s_{r+q} \zeta_{i,j}
      &=
      \begin{cases}
        \zeta_{i,j-1} s_{r+q} + \zeta_{i,j-1} \zeta_{i,j}
        & \text{if } 1 \le q = j-1,
        \\
        \zeta_{i,j} s_{r+q}
        & \text{if } 1 \le q < j-1.
      \end{cases}
    \end{align}

    For $m \geq 1$, let $C_m$ be the set of sequences $\left((i_1,j_1),\dots,(i_m,j_m)\right) \in \left(\{1,\dots,r\}\times\{1,\dots,n-r\}\right)^{m}$ such that either $i_q > i_{q+1}, j_q = j_{q+1}$ or $i_q=i_{q+1},j_q < j_{q+1}$ for each $q=1,\dotsc,m-1$.  Such a sequence may be visualized as a ``hook'' drawn inside the $r \times (n-r)$ rectangle, e.g.\ if $r=4,n=9$ then $((4,1),(4,2), (2,2), (2,4)) \in C_4$ is
    \[
      \begin{tikzpicture}[baseline=-5mm]
        \draw[-] (0,0.2) to (1,0.2) to (1,-.6) to (0,-.6) to (0,0.2);
        \draw[-](.1,-.5) to (.3,-.5) to (.3,-.1) to (.7,-.1);
        \node at (0.1,-0.5) {$\bullet$};
        \node at (0.3,-0.5) {$\bullet$};
        \node at (0.3,-0.1) {$\bullet$};
        \node at (0.7,-0.1) {$\bullet$};
      \end{tikzpicture}
    \]
    Using \cref{dance1} and induction on $i=1,\dots,r$, one shows that
    \[
      s_{r+1-i} \dotsm s_{r-2} s_{r-1} \actplus \prod_{j=1}^{n-r} \left(1+\zeta_{1,j}\right)
      = 1 - \sum_{m \geq 1} \sum_{\substack{\left((i_1,j_1),\dotsc,(i_m,j_m)\right)\in C_m\\i_1 = i}} (-1)^{|\{i_1,\dotsc,i_m\}|} \zeta_{i_1,j_1} \dotsm \zeta_{i_m,j_m}.
    \]
    Hence:
    \[
      X= r(n-1)! -(n-1)!\sum_{m \geq 1} \sum_{\left((i_1,j_1),\dotsc,(i_m,j_m)\right)\in C_m} (-1)^{|\{i_1,\dotsc,i_m\}|} \zeta_{i_1,j_1} \dotsm \zeta_{i_m,j_m}.
    \]
    Similarly, using \eqref{dance2} and induction on $j=n-r,\dotsc,1$, one shows that
    \[
      s_{r+j} \dotsm s_{n-2} s_{n-1} \actplus \prod_{i=1}^r \left(1-\zeta_{i,n-r}\right)
      = 1 + \sum_{m \geq 1} \sum_{\substack{\left((i_1,j_1),\dots,(i_m,j_m)\right)\in C_m\\j_1 =
      j}} (-1)^{|\{i_1,\dotsc,i_m\}|} \zeta_{i_1,j_1} \dotsm \zeta_{i_m,j_m}.
    \]
    Hence
    \[
      Y = (n-r)(n-1)!+(n-1)! \sum_{m \geq 1}
      \sum_{\left((i_1,j_1),\dotsc,(i_m,j_m)\right)\in C_m}
      (-1)^{|\{i_1,\dotsc,i_m\}|} \zeta_{i_1,j_1} \dotsb \zeta_{i_m,j_m}.
    \]
    Adding the above expressions for $X$ and $Y$ gives that $X+Y = n!$.
  }
\end{proof}

\begin{proof}[Proof of \cref{coprodcat}]
    We just go through the proof for the first isomorphism; the argument for the second one is similar, using $z_\mu^-$ and $y_\lambda^-$ instead of $z_\mu^+$ and $y_\lambda^+$, and \cref{sad} instead of \cref{happy}.  Define morphisms $u$ and $v$ in $\Kar((\blue{\AWC} \barodot \red{\AWC})_{q,\pi})$ by
    \[
        u := \sum_{r=0}^n \binom{n}{r}^{-1} \sum_{\mu \in \cP_{r,n}} \sigma_\mu^{-1} \circ \imath_{r,n}(z_\mu^+) \circ \imath_{r,n} \left( e_{(n-r),i} \otimes e_{(r),i} \right),
        \quad
        v := \sum_{r=0}^n \sum_{\lambda \in \cP_{r,n}} \imath_{r,n} \left( e_{(n-r),i} \otimes e_{(r),i} \right) \circ \imath_{r,n}(y_\lambda^+) \circ \sigma_\lambda.
    \]
    \Cref{jack} implies that $u \circ v = \tilde\Delta(e_{(n),i})$.  (We use here the fact that the tokens labeled $e_i$ commute with all crossings and dots.)  We also have
    \begin{align*}
        v \circ u
        &= \sum_{r=0}^n \binom{n}{r}^{-1} \imath_{r,n} \left( e_{(n-r),i} \otimes e_{(r),i} \right) \circ \imath_{r,n} (y_\lambda^+ z_\lambda^+) \circ \imath_{r,n} \left( e_{(n-r),i} \otimes e_{(r),i} \right)
        \\
        &\overset{\mathclap{\cref{happy}}}{=}\
        \sum_{r=0}^n \imath_{r,n} \left( e_{(n-r),i} \otimes e_{(r),i} \right) \circ \imath_{r,n} \left( \frac{1}{n!} \sum_{\sigma \in \fS_{n-r} \times \fS_r} \sigma \actplus (y_\lambda^+ z_\lambda^+) \right) \circ \imath_{r,n} \left( e_{(n-r),i} \otimes e_{(r),i} \right)
        \\
        &\overset{\mathclap{\cref{wrap}}}{=}\
        \sum_{r=0}^n \imath_{r,n} \left( e_{(n-r),i} \otimes e_{(r),i} \right).
    \end{align*}
    It follows that the idempotents $\tilde\Delta(e_{(n),i})$ and $\sum_{r=0}^n \imath_{r,n}(e_{(n-r),i} \otimes e_{(r),i})$ are conjugate. Hence, the objects   $\tilde\Delta(H_n)$ and $\bigoplus_{r=0}^n \blue{H_{n-r}} \otimes \red{H_r}$ of the Karoubi   envelope that they define are isomorphic.  Note finally that $y_\lambda^+$ has degree $2 |\lambda| \dA$, $z_\lambda^+$ has degree $-2 |\lambda| \dA$, and $\sigma_\lambda$ has degree $-2|\lambda|\dA$.  It follows that the morphisms $u$ and $v$ are both homogeneous of degree zero.  So the isomorphism we have constructed is also of degree zero.
\end{proof}

\section{The Frobenius Heisenberg category}

Continue with $A$ being a symmetric graded Frobenius superalgebra with
trace of degree $-2d$; we do not need the hypothesis \eqref{virginia}
until the last paragraph of the section.  We now introduce our main object of study, the Frobenius Heisenberg category, which is a slight modification of the category $\Heis{A}{k}$ defined in \cite{Sav18}; see \cref{expedia}. (Note that in \cite{Sav18} it was assumed that $A$ is positively graded, but actually this is not needed for the definition or the proofs of the other relations used below.)

\begin{defin} \label{star}
  The \emph{Frobenius Heisenberg category} $\Heis{A}{k}$ of central charge $k \in \Z$ is the strict graded monoidal supercategory generated by objects $\uparrow$ and $\downarrow$ and morphisms
  \begin{gather*}
    \begin{tikzpicture}[anchorbase]
      \draw[->] (0,0) -- (0,0.6);
      \blackdot{0,0.3};
    \end{tikzpicture}
    \colon \uparrow \to \uparrow
    \ ,\quad
    \begin{tikzpicture}[anchorbase]
      \draw [->](0,0) -- (0.6,0.6);
      \draw [->](0.6,0) -- (0,0.6);
    \end{tikzpicture}
    \colon \uparrow \otimes \uparrow\ \to \uparrow \otimes \uparrow
    \ ,\quad
    \begin{tikzpicture}[anchorbase]
      \draw[->] (0,0) -- (0,0.6);
      \blacktoken[west]{0,0.3}{a};
    \end{tikzpicture}
    \colon \uparrow \to \uparrow
    \ ,\ a \in A,
    \\
    \begin{tikzpicture}[anchorbase]
      \draw[->] (0,.2) -- (0,0) arc (180:360:.3) -- (.6,.2);
    \end{tikzpicture}
    \ \colon \one \to\ \downarrow\otimes \uparrow
    , \quad
    \begin{tikzpicture}[anchorbase]
      \draw[->] (0,-.2) -- (0,0) arc (180:0:.3) -- (.6,-.2);
    \end{tikzpicture}
    \ \colon \uparrow\otimes \downarrow\ \to \one
    , \quad
    \begin{tikzpicture}[anchorbase]
      \draw[<-] (0,.2) -- (0,0) arc (180:360:.3) -- (.6,.2);
    \end{tikzpicture}
    \ \colon \one \to\ \uparrow\otimes \downarrow
    , \quad
    \begin{tikzpicture}[anchorbase]
      \draw[<-] (0,0) -- (0,.2) arc (180:0:.3) -- (.6,0);
    \end{tikzpicture}
    \ \colon \downarrow\otimes \uparrow\ \to \one,
  \end{gather*}
  subject to certain relations.  The degree of the dot, crossing, and token are as in \cref{Wrupgen,upgen}, while the cups/caps are even of degrees
  \begin{equation} \label{trappist}
    \deg \left(\:
    \begin{tikzpicture}[anchorbase]
      \draw[->] (0,.2) -- (0,0) arc (180:360:.3) -- (.6,.2);
    \end{tikzpicture}
    \: \right) = k \dA,
    \quad
    \deg \left( \:
    \begin{tikzpicture}[anchorbase]
      \draw[->] (0,-.2) -- (0,0) arc (180:0:.3) -- (.6,-.2);
    \end{tikzpicture}
    \:\right) = - k \dA,
    \quad
    \deg \left(
    \:\begin{tikzpicture}[anchorbase]
      \draw[<-] (0,.2) -- (0,0) arc (180:360:.3) -- (.6,.2);
    \end{tikzpicture}
    \:\right) = - k \dA,
    \quad
    \deg \left(
    \:\begin{tikzpicture}[anchorbase]
      \draw[<-] (0,0) -- (0,.2) arc (180:0:.3) -- (.6,0);
    \end{tikzpicture}
    \:\right) = k \dA.
  \end{equation}
  The defining relations are \cref{wreathrel,affrel,affrel1}, plus the following additional relations:
  \begin{gather} \label{squid}
    \begin{tikzpicture}[anchorbase]
      \draw[->] (-0.3,-0.5) to (-0.3,0) to[out=up,in=up,looseness=2] (0,0) to[out=down,in=down,looseness=2] (0.3,0) to (0.3,0.5);
    \end{tikzpicture}
    \ =\
    \begin{tikzpicture}[anchorbase]
      \draw[->] (0,-0.5) to (0,0.5);
    \end{tikzpicture}
    \ ,\qquad
    \begin{tikzpicture}[anchorbase]
      \draw[->] (-0.3,0.5) to (-0.3,0) to[out=down,in=down,looseness=2] (0,0) to[out=up,in=up,looseness=2] (0.3,0) to (0.3,-0.5);
    \end{tikzpicture}
    \ =\
    \begin{tikzpicture}[anchorbase]
      \draw[<-] (0,-0.5) to (0,0.5);
    \end{tikzpicture}
    \ ,
  \\ \label{chew}
  \begin{tikzpicture}[baseline={(0,-0.15)}]
    \jonbubright{0,0}{a}{r};
  \end{tikzpicture}
    \ = -\delta_{r,k-1} \tr(a)
    \quad \text{if } 0 \le r < k,
    \qquad
  \begin{tikzpicture}[baseline={(0,-0.15)}]
    \jonbubleft{0,0}{a}{r};
  \end{tikzpicture}
    \ = \delta_{r,-k-1} \tr(a)
    \quad \text{if } 0 \le r < -k,
    \\ \label{curly}
    \begin{tikzpicture}[anchorbase]
      \draw[->] (0,-0.5) to[out=up,in=0] (-0.3,0.2) to[out=180,in=up] (-0.45,0) to[out=down,in=180] (-0.3,-0.2) to[out=0,in=down] (0,0.5);
    \end{tikzpicture}
    = \delta_{k,0}\
    \begin{tikzpicture}[anchorbase]
      \draw[->] (0,-0.5) -- (0,0.5);
    \end{tikzpicture}
    \quad \text{if } k \le 0,
    \qquad
    \begin{tikzpicture}[anchorbase]
      \draw[->] (0,-0.5) to[out=up,in=180] (0.3,0.2) to[out=0,in=up] (0.45,0) to[out=down,in=0] (0.3,-0.2) to[out=180,in=down] (0,0.5);
    \end{tikzpicture}
    = \delta_{k,0}\
    \begin{tikzpicture}[anchorbase]
      \draw[->] (0,-0.5) -- (0,0.5);
    \end{tikzpicture}
    \quad \text{if } k \ge 0,
    \\ \label{squish}
    \begin{tikzpicture}[anchorbase]
      \draw[<-] (0,0) \braidto (0.5,0.6) \braidto (0,1.2);
      \draw[->] (0.5,0) \braidto (0,0.6) \braidto (0.5,1.2);
    \end{tikzpicture}
    \ =\
    \begin{tikzpicture}[anchorbase]
      \draw[<-] (0,0) -- (0,1.2);
      \draw[->] (0.3,0) -- (0.3,1.2);
    \end{tikzpicture}
    \ + \sum_{r,s \ge 0}
    \begin{tikzpicture}[anchorbase]
      \draw[->] (-0.2,0.6) to (-0.2,0.35) arc(180:360:0.2) to (0.2,0.6);
      \draw[<-] (-0.2,-0.6) to (-0.2,-0.35) arc(180:0:0.2) to (0.2,-0.6);
      \draw[->] (0.8,0) arc(360:0:0.2);
      \multblackdot[east]{0.4,0}{-r-s-2};
      \multblackdot[west]{0.2,0.42}{r};
      \multblackdot[west]{0.2,-0.42}{s};
      \teleport{black}{black}{0.18,0.25}{0.459,0.141};
      \teleport{black}{black}{0.18,-0.25}{0.459,-0.141};
    \end{tikzpicture}
    \ ,\qquad
    \begin{tikzpicture}[anchorbase]
      \draw[->] (0,0) \braidto (0.5,0.6) \braidto (0,1.2);
      \draw[<-] (0.5,0) \braidto (0,0.6) \braidto (0.5,1.2);
    \end{tikzpicture}
    \ =\
    \begin{tikzpicture}[anchorbase]
      \draw[->] (0,0) -- (0,1.2);
      \draw[<-] (0.3,0) -- (0.3,1.2);
    \end{tikzpicture}
    \ + \sum_{r,s \ge 0}
    \begin{tikzpicture}[anchorbase]
      \draw[<-] (-0.2,0.6) to (-0.2,0.35) arc(180:360:0.2) to (0.2,0.6);
      \draw[->] (-0.2,-0.6) to (-0.2,-0.35) arc(180:0:0.2) to (0.2,-0.6);
      \draw[->] (-0.8,0) arc(-180:180:0.2);
      \multblackdot[west]{-0.4,0}{-r-s-2};
      \multblackdot[east]{-0.2,0.42}{r};
      \multblackdot[east]{-0.2,-0.42}{s};
      \teleport{black}{black}{-0.18,0.25}{-0.459,0.141};
      \teleport{black}{black}{-0.18,-0.25}{-0.459,-0.141};
    \end{tikzpicture}
    \ ,
  \end{gather}
  where we have used a dot labeled $r$ to denote the composition of $r$ dots.  The last relation here needs some explanation, since it involves some diagrammatic shorthands which have not yet been defined.  First, we are using the left and right crossings defined by
  \begin{equation} \label{fall}
    \begin{tikzpicture}[anchorbase]
      \draw[->] (0,0) -- (0.6,0.6);
      \draw[<-] (0.6,0) -- (0,0.6);
    \end{tikzpicture}
    \ :=\
    \begin{tikzpicture}[anchorbase]
      \draw[->] (0.2,-0.3) to (-0.2,0.3);
      \draw[<-] (0.6,-0.3) to[out=up,in=45,looseness=2] (0,0) to[out=225,in=down,looseness=2] (-0.6,0.3);
    \end{tikzpicture}
    \ ,\qquad
    \begin{tikzpicture}[anchorbase]
      \draw[<-] (0,0) -- (0.6,0.6);
      \draw[->] (0.6,0) -- (0,0.6);
    \end{tikzpicture}
    \ :=\
    \begin{tikzpicture}[anchorbase]
      \draw[->] (-0.2,-0.3) to (0.2,0.3);
      \draw[<-] (-0.6,-0.3) to[out=up,in=135,looseness=2] (0,0) to[out=-45,in=down,looseness=2] (0.6,0.3);
    \end{tikzpicture}
    \ .
  \end{equation}
  The teleporters appearing on the right hand sides of \cref{squish}
  are defined as in \cref{brexit}. The sums in \cref{squish} also
  involve negatively-dotted bubbles.  To interpret these, one first
  uses \cref{affrel1} to collect the tokens which arise on expanding
  the definitions of the teleporters into a single token; see also
  \cref{sta} below.
Then the negatively-dotted bubbles labeled by $a \in A$ are defined from
  \begin{align} \label{fakel}
    \begin{tikzpicture}[baseline={(0,-0.15)}]
      \jonbubleft{0,0}{a}{r-k};
    \end{tikzpicture}
      &:= \sum_{b_1,\dotsc,b_{r} \in \B_A} \det
      \left(
    \begin{tikzpicture}[baseline={(0,-0.15)}]
      \JONbubright{0,0}{b^\vee_{j-1}b_j}{i-j+k};
    \end{tikzpicture}
    \right)_{i,j=1}^{r+1},\quad\:\:\qquad
    r < k,
    \\ \label{faker}
    \begin{tikzpicture}[baseline={(0,-0.15)}]
      \jonbubright{0,0}{a}{r+k};
    \end{tikzpicture}
    &:= (-1)^{r} \sum_{b_1,\dotsc,b_{r} \in \B_A} \det
    \left(
    \begin{tikzpicture}[baseline={(0,-0.15)}]
      \JONbubleft{0,0}{b^\vee_{j-1}b_j}{i-j-k};
    \end{tikzpicture}
    \right)_{i,j=1}^{r+1},\quad
    r < -k,
  \end{align}
  adopting the convention that $b_0^\vee := a$ and $b_{r+1} := 1$.
  The determinants here mean the usual Laplace expansions keeping the
  non-commuting variables in each monomial ordered in the same way as
  the columns from which they are taken (see \cite[(17)]{Sav18}), and
  we interpret them as $\tr(a)$ if $r+1 = 0$ or as $0$ if $r+1 <
  0$. Note in particular that the sums in \cref{squish} are actually
  finite according to these definitions.
\end{defin}

\begin{rem} \label{expedia}
    Our choices \cref{trappist} of degrees for the cups and caps
    differ from the choices made in \cite{Sav18}.  The current
    choices have the advantage of the grading being compatible with
    the pivotal structure to be discussed shortly (i.e.\ the degree of
    a diagram is invariant under rotation through $180\degree$).  They
    also seem more natural at the level of the Grothendieck  ring; see
    \cref{sec:k0}.
Note also that the only odd
morphisms in $\Heis{A}{k}$ arise from tokens labeled by odd elements
of $A$. This is an important point when interpreting diagrams such as
\cref{squish}, as it means that one really only needs to be careful about
horizontal levels when such odd tokens are involved.
\end{rem}

\begin{lem}
    Up to isomorphism of graded monoidal supercategories, $\Heis{A}{k}$ depends only on the underlying graded superalgebra $A$, and not on its trace map.
\end{lem}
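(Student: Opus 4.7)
Given two supersymmetric trace maps $\tr, \tr'$ on $A$ both of degree $-2\dA$, the plan is to construct an explicit isomorphism of graded monoidal supercategories between the two corresponding Frobenius Heisenberg categories. As a preliminary step, the argument of \cref{slipstream} yields a unique $u \in Z(A)$ such that $\tr'(a) = \tr(ua)$ for all $a \in A$; parity and degree constraints on $\tr'$ force $u$ to be even of degree $0$, and nondegeneracy of $\tr'$ implies $u$ is a unit. A key consequence is that $\{u^{-1}b^\vee\}_{b \in \B_A}$ is the basis dual to $\B_A$ with respect to $\tr'$, giving $\tau_i^{\tr'} = u^{-1}_{(i)} \tau_i^\tr = u^{-1}_{(i+1)} \tau_i^\tr$ by direct computation or the token teleport identity \cref{tokteleport}.

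Next, I would define the functor $F$ from the $\tr$-version of the category to the $\tr'$-version by specifying it on generators as follows: $F$ is the identity on objects, on token generators, on the upward crossing, and on the cup/cap generators $\cup_L$ and $\cap_R$; it sends the dot to the dot post-composed with a $u$-token on the same strand; and it sends $\cap_L$ to itself with an extra $u^k$-token inserted and $\cup_R$ to itself with an extra $u^{-k}$-token inserted (on any chosen strand of each, since $u$ is central of degree $0$). Since $u$ has degree $0$ and even parity, this preserves grading and parity of every generator.

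I would then verify that $F$ respects each defining relation of \cref{star}. The wreath relations \cref{wreathrel} and the dot-token commutativity \cref{affrel1} reduce immediately to centrality of $u$. For the affine relation \cref{affrel}, the $u$-token introduced by the modified dot commutes through the crossing on the opposite strand (up to the appropriate rearrangement of indices), exactly cancelling the $u^{-1}$-factor appearing in $\tau_i^{\tr'} = u^{-1}_{(i+1)} \tau_i^\tr$. The zigzag relations \cref{squid} survive because the two nontrivial $u^{\pm k}$-modifications sit on $\cap_L$ and $\cup_R$, which meet only in the zigzag whose tokens consequently multiply to $u^k \cdot u^{-k} = 1$. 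For the bubble relations \cref{chew}, the clockwise loop uses $\cap_R \circ \cup_R$ whose $F$-image contributes $u^{-k}$ inside the loop; combined with the $u^{k-1}$ from the $k-1$ modified dots this gives net token $u^{-1}a$, and $-\tr'(u^{-1}a) = -\tr(a)$ recovers the right-hand side. The counterclockwise bubble case is symmetric, and the curl relation \cref{curly} is handled by the same $u$-bookkeeping.

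The main obstacle will be the inversion relation \cref{squish}, whose right-hand side involves both teleporters (whose underlying element changes by a factor of $u$ between the two categories) and the negatively-dotted bubbles defined via the determinantal formulas \cref{fakel,faker} of positively-dotted bubbles. Each such positively-dotted bubble transforms under $F$ by a fixed power of $u$, and the multiplicativity of the determinant ensures that the $u$-powers combine into a single factor that precisely balances the teleporter discrepancy; alternatively, \cref{squish} is characterized as the assertion that certain mixed crossings have a specified inverse, so once $F$ has been shown to preserve all the other generators and relations, its preservation of \cref{squish} follows automatically. Finally, the inverse functor is obtained by the same recipe with $u^{-1}$ in place of $u$, so $F$ is an isomorphism of graded monoidal supercategories; since any two trace maps on $A$ are related by such a central unit, the lemma follows.
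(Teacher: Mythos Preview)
Your approach is correct and, in one respect, more explicit than the paper's. The paper only records the modification of the dot generator (by a $v^{-1}$-token, which in your conventions is the $u$-token) and states that ``all other generating morphisms'' map to themselves. Read literally against the full presentation of \cref{star}, this would not preserve \cref{chew}: the clockwise bubble with $k-1$ dots and token $a$ would map to the $\tr_2$-bubble with token $v^{-(k-1)}a$, yielding $-\tr_2(v^{-(k-1)}a)$ rather than $-\tr_1(a)=-\tr_2(va)$, unless $v^k=1$. What the paper is really using is the equivalent presentation \cref{lemming} (stated just after the lemma) in which the left cup and cap are not generators but are uniquely determined by \cref{lock}; on that smaller generating set the check is genuinely straightforward. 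Your explicit insertion of $u^{\pm k}$-tokens on the left cup/cap is precisely what that uniqueness forces, so you have simply written out the implicit step.

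Two small points. Your cup/cap labels $\cup_L,\cup_R,\cap_L,\cap_R$ do not quite match the paper's ``left/right'' conventions (the clockwise bubble uses the \emph{right} cap and the \emph{left} cup), so it would help to fix notation at the outset. And your second justification for \cref{squish} is the cleaner one: rather than saying that preserving the other relations ``automatically'' gives \cref{squish}, phrase it as defining $F$ on the generators \cref{lemming} and checking \cref{wreathrel,affrel1,upslides1,squid} together with invertibility of \cref{invrel} (the latter holds because the modified matrix differs from the original by multiplying entries by powers of the unit $u$). Then the full functor---including your formulas for the left cup/cap---follows from the universal property, and all of \cref{chew,curly,squish} come for free.
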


\begin{proof}
    Suppose $(A,\tr_1)$ and $(A,\tr_2)$ are symmetric graded Frobenius superalgebras with the same underlying graded superalgebra $A$.  Then there exists an even, degree zero, invertible element $v \in Z(A)$ such that $\tr_1(a) = \tr_2(va)$ for all $a \in A$.   If $\{b^{\vee,1} : b \in \B_A\}$ and $\{b^{\vee,2} : b \in \B_A\}$ denote the dual bases of $\B_A$ with respect to $\tr_1$ and $\tr_2$, respectively, then $b^{\vee,2} = v b^{\vee,1}$ for $b \in \B_A$.  Then it is straightforward to check that we have an isomorphism of graded monoidal supercategories $\Heis{A,\tr_1}{k} \xrightarrow{\cong} \Heis{A,\tr_2}{k}$ given by
    \[
        \begin{tikzpicture}[anchorbase]
            \draw[->] (0,-0.3) -- (0,0.3);
            \blackdot{0,0};
        \end{tikzpicture}
        \mapsto
        \begin{tikzpicture}[anchorbase]
            \draw[->] (0,-0.3) -- (0,0.3);
            \blackdot{0,0.1};
            \blacktoken[west]{0,-0.1}{v^{-1}};
        \end{tikzpicture}
    \]
    and mapping all other generating morphisms to themselves.
\end{proof}

As explained in the proof of \cite[Th.~1.2]{Sav18}, the defining relations of $\Heis{A}{k}$ imply that the following, which is a $(1+ k\dim A)\times 1$ matrix or a $1 \times (1-k \dim A)$ matrix, respectively, is an isomorphism in $\Add(\Heis{A}{k})$:
\begin{equation} \label{invrel}
  \begin{aligned}
    \left[
      \begin{tikzpicture}[anchorbase]
        \draw [->](0,0) -- (0.6,0.6);
        \draw [<-](0.6,0) -- (0,0.6);
      \end{tikzpicture}
      \quad
      \begin{tikzpicture}[anchorbase]
        \draw[->] (0,0) -- (0,0.5) arc (180:0:.3) -- (0.6,0);
        \multblackdot[west]{0,0.4}{r};
        \blacktoken[west]{0,0.15}{b^\vee};
      \end{tikzpicture}\ ,\
      0 \le r \le k-1,\ b \in \B_A
    \right]^T
    \ &\colon \uparrow \otimes\downarrow \to \downarrow \otimes\uparrow \oplus \one ^{\oplus k \dim A} & \text{if } k \ge 0,
    \\
    \left[
      \begin{tikzpicture}[anchorbase]
        \draw [->](0,0) -- (0.6,0.6);
        \draw [<-](0.6,0) -- (0,0.6);
      \end{tikzpicture}
      \quad
      \begin{tikzpicture}[anchorbase]
        \draw[->] (0,1) -- (0,0.3) arc (180:360:.3) -- (0.6,.8);
        \multblackdot[east]{0.6,0.5}{r};
        \blacktoken[east]{0.6,0.25}{b^\vee};
      \end{tikzpicture}\ ,\
      0 \le r \le -k-1,\ b \in \B_A
    \right]
    \ &\colon \uparrow \otimes\downarrow \oplus \one^{\oplus (-k \dim A)} \to \downarrow \otimes\uparrow & \text{if } k < 0.
  \end{aligned}
\end{equation}
In fact, as in \cite[Def.~1.1]{Sav18}, one can define $\Heis{A}{k}$ equivalently as the strict graded monoidal supercategory generated by morphisms
\begin{equation} \label{lemming}
  \begin{tikzpicture}[anchorbase]
    \draw[->] (0,0) -- (0,0.6);
    \blackdot{0,0.3};
  \end{tikzpicture}
  \ ,\quad
  \begin{tikzpicture}[anchorbase]
    \draw [->](0,0) -- (0.6,0.6);
    \draw [->](0.6,0) -- (0,0.6);
  \end{tikzpicture}
  \ ,\quad
  \begin{tikzpicture}[anchorbase]
    \draw[->] (0,.2) -- (0,0) arc (180:360:.3) -- (.6,.2);
  \end{tikzpicture}
  , \quad
  \begin{tikzpicture}[anchorbase]
    \draw[->] (0,-.2) -- (0,0) arc (180:0:.3) -- (.6,-.2);
  \end{tikzpicture}
  \ , \quad
    \begin{tikzpicture}[anchorbase]
    \draw[->] (0,0) -- (0,0.6);
    \blacktoken[west]{0,0.3}{a};
  \end{tikzpicture}
  \ ,\ a \in A,
\end{equation}
subject only to the relations \cref{wreathrel,affrel1,upslides1,squid}, plus the condition that the morphism \cref{invrel} is invertible, where the right crossing is defined as in \cref{fall}.  In the category defined in this way, there are unique morphisms
\begin{equation} \label{kittens}
  \begin{tikzpicture}[anchorbase]
    \draw[<-] (0,.2) -- (0,0) arc (180:360:.3) -- (.6,.2);
  \end{tikzpicture}
  \quad \text{and} \quad
  \begin{tikzpicture}[anchorbase]
    \draw[<-] (0,0) -- (0,.2) arc (180:0:.3) -- (.6,0);
  \end{tikzpicture}
\end{equation}
such that the other relations in \cref{star} hold, due to the following:

\begin{lem} \label{lock}
    Suppose that $\cC$ is a strict monoidal supercategory containing objects $\uparrow, \downarrow$ and morphisms \cref{lemming} of   the same parities as above which satisfy   \cref{wreathrel,affrel1,upslides1,squid}.  If $\cC$ contains morphisms \cref{kittens} satisfying \cref{squish,chew,curly} for the sideways crossings and the negatively dotted bubbles defined as in \cref{fall,fakel,faker}, then these two morphisms are uniquely determined.
\end{lem}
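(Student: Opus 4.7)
The plan is to exploit the fact that, thanks to the definitions in \cref{fall}, the morphism $\Phi$ of \cref{invrel} is itself expressible using only the upward generators together with the rightward cup and cap, without invoking $\cup_l$ or $\cap_l$. Since $\Phi$ is declared invertible, its inverse $\Psi$ is uniquely determined, hence independent of any choice of $(\cup_l, \cap_l)$. The uniqueness of the leftward cup and cap will then follow from the observation that they can be read off as specific matrix entries of $\Psi$.

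More precisely, my first step is to show that certain decorated versions of $\cup_l$ coincide with the components of $\Psi$ of type $\one \to \uparrow\otimes\downarrow$ (for $0 \le r < k$ and $b \in \B_A$ when $k \ge 0$), while the component $\Psi_0 \colon \downarrow\otimes\uparrow \to \uparrow\otimes\downarrow$ encodes $\cap_l$ through the definition \cref{fall} of the other sideways crossing. These identifications will be extracted directly from the matrix identities $\Phi \Psi = 1$ and $\Psi \Phi = 1$, using the relations \cref{squish,curly}, and translating between the ``braid-like'' compositions on the left and the bubble decompositions on the right.

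The second step is to use the bubble relations \cref{chew} to pin down the low-dot clockwise bubbles (built from $\cup_l$ and $\cap_r$) when $k>0$, and the low-dot counter-clockwise bubbles (built from $\cup_r$ and $\cap_l$) when $k<0$, while the negatively dotted bubbles are determined by the Grassmann-type determinantal formulas \cref{fakel,faker}. For the bubbles of dot count at least $|k|$, which are not directly fixed by \cref{chew}, I will argue recursively: specializing \cref{squish} and matching coefficients by dot count shows that each higher bubble is determined by lower-dot ones together with the known entries of $\Psi$. Combined with the first step, this forces $\cup_l$ and $\cap_l$ to equal explicit expressions in terms of $\Psi$ and the other generators, proving uniqueness.

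The main obstacle will be the inductive determination of the higher-dot bubbles, where the bookkeeping of tokens, teleporter conventions, and the graded-super signs demands care. A secondary technical point is verifying that the components of $\Psi$ matched in step one really do coincide with the leftward cup/cap (and not merely with some rescaled version): here the normalization forced by \cref{chew} for the specific scalar $-\delta_{r,k-1}\tr(a)$ is what eliminates the ambiguity, so the proof must weave together the matrix identities with this explicit scalar normalization.
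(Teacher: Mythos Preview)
Your approach is essentially the same as the paper's, which simply cites the argument in the third-to-last paragraph of the proof of \cite[Th.~1.2]{Sav18}: one builds an explicit matrix $\Psi$ out of the leftward crossing and decorated left cups (or caps), uses \cref{squish,chew,curly} to verify that $\Psi\Phi=1$ and $\Phi\Psi=1$, and then reads off $\cup_l,\cap_l$ from the entries of $\Psi$, which is unique as a two-sided inverse.

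One point of phrasing deserves correction. You write that ``$\Phi$ is declared invertible,'' but in the hypotheses of \cref{lock} no such declaration is made; you are working in an arbitrary $\cC$ satisfying only the listed relations. Invertibility of $\Phi$ is a \emph{consequence} of \cref{squish,chew,curly}, obtained precisely by exhibiting $\Psi$ as a two-sided inverse --- which you do correctly describe in your ``first step''. So the logic is sound, but the opening sentence obscures this.

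Your ``second step'' --- recursively determining the higher-dot bubbles from \cref{squish} --- is more than is needed. Once $\Psi$ is known, the $r=0$ components of type $\one\to\uparrow\otimes\downarrow$ already determine $\cup_l$ (since $1\in A$ is a linear combination of the $b\in\B_A$), and then $\cap_l$ can be extracted from the leftward-crossing entry $\Psi_0$ by attaching a right cap and using the already-known $\cup_l$ together with \cref{squid}. The bubble analysis is a byproduct, not an ingredient, of the uniqueness proof.
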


\begin{proof}
    This follows by the argument in the third-to-last paragraph of the proof of \cite[Th.~1.2]{Sav18}.
\end{proof}

We will need some other relations that follow from the defining relations, which are proved in \cite[Th.~1.3]{Sav18}.  The relation \cref{squid} means that $\downarrow$ is right dual to $\uparrow$ (in the appropriate graded sense).  In fact, we also have
\begin{equation}
    \begin{tikzpicture}[anchorbase]
        \draw[<-] (-0.3,-0.5) to (-0.3,0) to[out=up,in=up,looseness=2] (0,0) to[out=down,in=down,looseness=2] (0.3,0) to (0.3,0.5);
    \end{tikzpicture}
    \ =\
    \begin{tikzpicture}[anchorbase]
        \draw[<-] (0,-0.5) to (0,0.5);
    \end{tikzpicture}
    \ ,\qquad
    \begin{tikzpicture}[anchorbase]
        \draw[<-] (-0.3,0.5) to (-0.3,0) to[out=down,in=down,looseness=2] (0,0) to[out=up,in=up,looseness=2] (0.3,0) to (0.3,-0.5);
    \end{tikzpicture}
    \ =\
    \begin{tikzpicture}[anchorbase]
        \draw[->] (0,-0.5) to (0,0.5);
    \end{tikzpicture}
    \ ,
\end{equation}
and so $\downarrow$ is also left dual to $\uparrow$.  Thus $\Heis{A}{k}$ is \emph{rigid}.  Moreover, we have that
\begin{gather}\label{slc1}
    \begin{tikzpicture}[anchorbase]
        \draw[<-] (0,-0.5) -- (0,0.5);
        \blacktoken[east]{0,0}{a};
    \end{tikzpicture}
    :=
    \begin{tikzpicture}[anchorbase]
        \draw[->] (-0.4,0.5) -- (-0.4,-0.1) arc(180:360:0.2) -- (0,0.1) arc(180:0:0.2) -- (0.4,-0.5);
        \blacktoken[west]{0,0}{a};
    \end{tikzpicture}
    \ =\
    \begin{tikzpicture}[anchorbase]
        \draw[->] (0.4,0.5) -- (0.4,-0.1) arc(360:180:0.2) -- (0,0.1) arc(0:180:0.2) -- (-0.4,-0.5);
        \blacktoken[west]{0,0}{a};
    \end{tikzpicture}
    \ ,\qquad
    \begin{tikzpicture}[anchorbase]
        \draw[<-] (0,-0.5) -- (0,0.5);
        \blackdot{0,0};
    \end{tikzpicture}
    :=
    \begin{tikzpicture}[anchorbase]
        \draw[->] (-0.4,0.5) -- (-0.4,-0.1) arc(180:360:0.2) -- (0,0.1) arc(180:0:0.2) -- (0.4,-0.5);
        \blackdot{0,0};
    \end{tikzpicture}
    \ =\
    \begin{tikzpicture}[anchorbase]
        \draw[->] (0.4,0.5) -- (0.4,-0.1) arc(360:180:0.2) -- (0,0.1) arc(0:180:0.2) -- (-0.4,-0.5);
        \blackdot{0,0};
    \end{tikzpicture}
    \ ,
    \\\label{slc2}
    \begin{tikzpicture}[anchorbase,scale=0.5]
        \draw[->] (-1.5,1.5) .. controls (-1.5,0.5) and (-1,-1) .. (0,0) .. controls (1,1) and (1.5,-0.5) .. (1.5,-1.5);
        \draw[->] (-2,1.5) .. controls (-2,-2) and (1.5,-1.5) .. (0,0) .. controls (-1.5,1.5) and (2,2) .. (2,-1.5);
    \end{tikzpicture}
    \ =\
    \begin{tikzpicture}[anchorbase]
        \draw[<-] (0.2,-0.3) to (-0.2,0.3);
        \draw[<-] (0.6,-0.3) to[out=up,in=45,looseness=2] (0,0) to[out=225,in=down,looseness=2] (-0.6,0.3);
    \end{tikzpicture}
    \ =\
    \begin{tikzpicture}[anchorbase]
        \draw[<-] (-0.2,-0.3) to (0.2,0.3);
        \draw[<-] (-0.6,-0.3) to[out=up,in=135,looseness=2] (0,0) to[out=-45,in=down,looseness=2] (0.6,0.3);
    \end{tikzpicture}
    \ =\
    \begin{tikzpicture}[anchorbase,scale=0.5]
        \draw[->] (1.5,1.5) .. controls (1.5,0.5) and (1,-1) .. (0,0) .. controls (-1,1) and (-1.5,-0.5) .. (-1.5,-1.5);
        \draw[->] (2,1.5) .. controls (2,-2) and (-1.5,-1.5) .. (0,0) .. controls (1.5,1.5) and (-2,2) .. (-2,-1.5);
    \end{tikzpicture}
    \ .
\end{gather}
These relations mean that dots, tokens, and crossings slide over all cups and caps:
\begin{align}\label{bogey}
    \begin{tikzpicture}[anchorbase,scale=.8]
        \draw[->] (0,0) -- (0,0.3) arc (180:0:.3) -- (0.6,0);
        \blacktoken[east]{0,0.28}{a};
    \end{tikzpicture}=
    \begin{tikzpicture}[anchorbase,scale=.8]
        \draw[->] (0,0) -- (0,0.3) arc (180:0:.3) -- (0.6,0);
        \blacktoken[west]{0.6,0.28}{a};
    \end{tikzpicture}
    \ ,\
    \begin{tikzpicture}[anchorbase,scale=.8]
        \draw[->] (0,0) -- (0,-0.3) arc (180:360:.3) -- (0.6,0);
        \blacktoken[east]{0,-0.28}{a};
    \end{tikzpicture}=
    \begin{tikzpicture}[anchorbase,scale=.8]
        \draw[->] (0,0) -- (0,-0.3) arc (180:360:.3) -- (0.6,0);
        \blacktoken[west]{0.6,-0.28}{a};
    \end{tikzpicture}
    \ ,\
    \begin{tikzpicture}[anchorbase,scale=.8]
        \draw[->] (0,0) -- (0,0.3) arc (180:0:.3) -- (0.6,0);
        \multblackdot[east]{0,0.28}{};
    \end{tikzpicture}=
    \begin{tikzpicture}[anchorbase,scale=.8]
        \draw[->] (0,0) -- (0,0.3) arc (180:0:.3) -- (0.6,0);
        \multblackdot[west]{0.6,0.28}{};
    \end{tikzpicture}\ ,\
    \begin{tikzpicture}[anchorbase,scale=.8]
        \draw[->] (0,0) -- (0,-0.3) arc (180:360:.3) -- (0.6,0);
        \multblackdot[east]{0,-0.28}{};
    \end{tikzpicture}=
    \begin{tikzpicture}[anchorbase,scale=.8]
        \draw[->] (0,0) -- (0,-0.3) arc (180:360:.3) -- (0.6,0);
        \multblackdot[west]{0.6,-0.28}{};
    \end{tikzpicture}\ ,\
    \begin{tikzpicture}[anchorbase,scale=.8]
        \draw[<-,thin](.6,.4) to (.1,-.3);
	   \draw[<-] (0.6,-0.3) to[out=140, in=0] (0.1,0.2);
	   \draw[-] (0.1,0.2) to[out = -180, in = 90] (-0.2,-0.3);
    \end{tikzpicture}
    =
    \begin{tikzpicture}[anchorbase,scale=.8]
        \draw[<-](-.5,.4) to (0,-.3);
	   \draw[<-] (0.3,-0.3) to[out=90, in=0] (0,0.2);
	   \draw[-] (0,0.2) to[out = -180, in = 40] (-0.5,-0.3);
    \end{tikzpicture}
    \ ,\
    \begin{tikzpicture}[anchorbase,scale=.8]
        \draw[->](-.5,-.3) to (0,.4);
        \draw[<-] (0.3,0.4) to[out=-90, in=0] (0,-0.1);
        \draw[-] (0,-0.1) to[out = 180, in = -40] (-0.5,0.4);
    \end{tikzpicture}
    \ ,\
    \begin{tikzpicture}[anchorbase,scale=.8]
        \draw[->](.6,-.3) to (.1,.4);
	      \draw[<-] (0.6,0.4) to[out=-140, in=0] (0.1,-0.1);
	\draw[-] (0.1,-0.1) to[out = 180, in = -90] (-0.2,0.4);
    \end{tikzpicture}
    \ .
\end{align}
More precisely, the cups and caps equip $\Heis{A}{k}$ with the structure of a \emph{strictly pivotal} graded supercategory: we have an isomorphism of strict graded monoidal supercategories
\begin{equation} \label{spin}
    \ast \colon \Heis{A}{k} \to \left( (\Heis{A}{k})^\op \right)^\rev.
\end{equation}
For a general morphism $f$ represented by a single diagram with homogeneous tokens, the morphism $f^*$ is given by rotating the diagram through $180^\circ$ then multiplying by $(-1)^{\binom{z}{2}}$
where $z$ is the total number odd tokens in the diagram (including ones in negatively-dotted bubbles).  Here `op' denotes the opposite supercategory and `rev' denotes the reverse supercategory (changing the order of the tensor product); these are defined as for categories, but with appropriate signs.

There is another symmetry
\begin{equation} \label{Omega}
    \Omega_k \colon \Heis{A}{k} \xrightarrow{\cong} \Heis{A}{-k}^\op
\end{equation}
that reflects diagrams (with homogeneous tokens) in the horizontal axis then multiplies by $(-1)^{x+y+\binom{z}{2}}$, where $x$ is the total number of crossings, $y$ is the total number of left cups and caps appearing in the diagram, and $z$ is the number of \emph{odd} tokens; see \cite[Lem.~2.1, Cor.~2.2]{Sav18}.

Since the relations \cref{wreathrel,affrel1,upslides1} hold in $\Heis{A}{k}$, we have a strict graded monoidal superfunctor $\imath \colon \AWC \to \Heis{A}{k}$ sending diagrams in $\AWC$ to the same diagrams in $\Heis{A}{k}$.  It will follow from the basis theorem (\cref{basis}) that $\imath$ is actually an inclusion, but we do not need that fact here.  The functor $\imath$ induces a homomorphism of graded algebras
\begin{equation} \label{imath}
    \imath_n \colon \AWA{n} \to \End_{\Heis{A}{k}}(\uparrow^{\otimes n})
\end{equation}
defined in the same way as \cref{imathaff}.  Viewing $A^\op$ as a graded Frobenius algebra with trace map $\tr$ being the same underlying linear map as for $A$, there is also a homomorphism of graded algebras
\begin{equation} \label{jmath}
    \jmath_n \colon \AWA[A^\op]{n} \to \End_{\Heis{A}{k}}(\downarrow^{\otimes n})
\end{equation}
sending $-s_i$ to the crossing of the $i$-th and $(i+1)$-st strings, $x_j$ to a dot on the $j$-th string, and $1^{\otimes {(n-j)}} \otimes a \otimes 1^{\otimes (j-1)}$ to a token labeled $a$ on the $j$-th string.

The basis $\B_A$ is also a basis for $A^\op$, and the dual basis with respect to the Frobenius form on $A^\op$ is the just same as the dual basis for $A$. It follows that the elements $\tau_{i,j} \in \AWA[A^\op]{n}$ are given by exactly the same formula \cref{tau} as for $\AWA[A]{n}$, and we can use similar diagrams to \cref{brexit}, with downward strings replacing the upward ones, to represent the images of these elements under $\jmath_n$.  Tokens teleport across these new downward teleporters
in just the same way as in \cref{tokteleport}.  We may also draw teleporters with endpoints on oppositely oriented strings, interpreting them as usual by putting the label $b$ on the higher of the tokens and $b^\vee$ on the lower one, summing over all $b \in \B_A$. The way that dots teleport across these is slightly different; for example, we have that
\[

    \ \right)^{-1}
    \ .
\]
These satisfy various commuting relations, including ``pitchfork relations'' that are two-colored versions of the last two relations from \cref{bogey}.  We view $\Heis[blue]{A}{l} \xodot{2d} \Heis[red]{A}{m}$ as a graded supercategory by declaring that the parities and degrees of its generating morphisms are defined as usual for the one-color generators, and the parities and degrees of the two-color crossings are as in \cref{par}.  Then we pass to the localization $\Heis[blue]{A}{l} \barodot \Heis[red]{A}{m}$ by adjoining dot dumbbells like in \cref{dotdumb}.  We also get dot dumbbells with endpoints on downward as well as upward strings; this is similar to the situation discussed in \cite[Sec.~4]{BSW-K0}.  Also we introduce the shorthands that are the box dumbbells as in \cref{sqdumb}, but again we now allow the strings to be oriented downward as well as upward.  Finally, we introduce the following \emph{internal bubbles} in $\Heis[blue]{A}{l} \barodot \Heis[red]{A}{m}$:
\begin{gather} \label{intblue}
  \begin{tikzpicture}[anchorbase]
    \draw[->,red] (0,-0.5) to (0,0.5);
    \intleft{blue}{0,0};
  \end{tikzpicture}
  :=
  \sum_{r \ge 0}
  \begin{tikzpicture}[anchorbase]
    \draw[blue,->] (-0.5,0) arc(90:450:0.2);
    \multbluedot[east]{-0.7,-0.2}{-r-1};
    \draw[->,red] (0,-0.5) to (0,0.5);
    \multreddot[west]{0,0.2}{r};
    \teleport{blue}{red}{-0.3,-0.2}{0,-0.2};
  \end{tikzpicture}
  +
  \begin{tikzpicture}[anchorbase]
    \draw[->,blue] (-0.2,0) arc(180:540:0.2);
    \draw[->,red] (0.5,-0.5) to (0.5,0.5);
    \dotdumb{0.141,0.141}{0.5,0.141};
    \teleport{blue}{red}{0.141,-0.141}{0.5,-0.141};
  \end{tikzpicture}
  \ ,\qquad
  \begin{tikzpicture}[anchorbase]
    \draw[->,red] (0,-0.5) to (0,0.5);
    \intright{blue}{0,0};
  \end{tikzpicture}
  :=
  \sum_{r \ge 0}
  \begin{tikzpicture}[anchorbase]
    \draw[blue,->] (0.5,0) arc(90:-270:0.2);
    \multbluedot[west]{0.7,-0.2}{-r-1};
    \draw[->,red] (0,-0.5) to (0,0.5);
    \multreddot[east]{0,0.2}{r};
    \teleport{blue}{red}{0.3,-0.2}{0,-0.2};
  \end{tikzpicture}
  +
  \begin{tikzpicture}[anchorbase]
    \draw[->,blue] (0.2,0) arc(0:-360:0.2);
    \draw[->,red] (-0.5,-0.5) to (-0.5,0.5);
    \dotdumb{-0.141,0.141}{-0.5,0.141};
    \teleport{blue}{red}{-0.141,-0.141}{-0.5,-0.141};
  \end{tikzpicture}
  \ ,
  \\ \label{intred}
  \begin{tikzpicture}[anchorbase]
    \draw[->,blue] (0,-0.5) to (0,0.5);
    \intleft{red}{0,0};
  \end{tikzpicture}
  :=
  \sum_{r \ge 0}
  \begin{tikzpicture}[anchorbase]
    \draw[red,->] (-0.5,0) arc(90:450:0.2);
    \multreddot[east]{-0.7,-0.2}{-r-1};
    \draw[->,blue] (0,-0.5) to (0,0.5);
    \multbluedot[west]{0,0.2}{r};
    \teleport{red}{blue}{-0.3,-0.2}{0,-0.2};
  \end{tikzpicture}
  -
  \begin{tikzpicture}[anchorbase]
    \draw[->,red] (-0.2,0) arc(180:540:0.2);
    \draw[->,blue] (0.5,-0.5) to (0.5,0.5);
    \dotdumb{0.141,0.141}{0.5,0.141};
    \teleport{red}{blue}{0.141,-0.141}{0.5,-0.141};
  \end{tikzpicture}
  \ ,\qquad
  \begin{tikzpicture}[anchorbase]
    \draw[->,blue] (0,-0.5) to (0,0.5);
    \intright{red}{0,0};
  \end{tikzpicture}
  :=
  \sum_{r \ge 0}
  \begin{tikzpicture}[anchorbase]
    \draw[red,->] (0.5,0) arc(90:-270:0.2);
    \multreddot[west]{0.7,-0.2}{-r-1};
    \draw[->,blue] (0,-0.5) to (0,0.5);
    \multbluedot[east]{0,0.2}{r};
    \teleport{red}{blue}{0.3,-0.2}{0,-0.2};
  \end{tikzpicture}
  -
  \begin{tikzpicture}[anchorbase]
    \draw[->,red] (0.2,0) arc(0:-360:0.2);
    \draw[->,blue] (-0.5,-0.5) to (-0.5,0.5);
    \dotdumb{-0.141,0.141}{-0.5,0.141};
    \teleport{red}{blue}{-0.141,-0.141}{-0.5,-0.141};
  \end{tikzpicture}
  \ .
\end{gather}
(One can also formulate these definitions using generating functions, thereby eliminating the summations in \cref{intblue,intred}, but we did not find that this led to significant benefits in the arguments below.)  Note that the internal bubbles are even with degrees
\[
  \deg \left(
    \begin{tikzpicture}[anchorbase]
      \draw[->,red] (0,-0.5) to (0,0.5);
      \intleft{blue}{0,0};
    \end{tikzpicture}
  \right)
  = 2 l \dA,\quad
  \deg \left(
    \begin{tikzpicture}[anchorbase]
      \draw[->,red] (0,-0.5) to (0,0.5);
      \intright{blue}{0,0};
    \end{tikzpicture}
  \right)
  = - 2 l \dA,\quad
  \deg \left(
    \begin{tikzpicture}[anchorbase]
      \draw[->,blue] (0,-0.5) to (0,0.5);
      \intleft{red}{0,0};
    \end{tikzpicture}
  \right)
  = 2 m \dA,\quad
  \deg \left(
    \begin{tikzpicture}[anchorbase]
      \draw[->,blue] (0,-0.5) to (0,0.5);
      \intright{red}{0,0};
    \end{tikzpicture}
  \right)
  = - 2 m \dA.
\]
Internal bubbles commute with tokens (using \cref{tokteleport} and the fact that tokens slide around bubbles), dots, and other internal bubbles.

There is a strictly pivotal structure on $\Heis[blue]{A}{l} \barodot \Heis[red]{A}{m}$, which is defined in a similar way the one on $\Heis{A}{k}$ discussed above.  In particular, we have the duals of the internal bubbles:
\[
    \begin{tikzpicture}[anchorbase]
        \draw[<-,red] (0,-0.5) to (0,0.5);
        \intleft{blue}{0,0};
    \end{tikzpicture}
    :=
    \left(\
        \begin{tikzpicture}[anchorbase]
            \draw[->,red] (0,-0.5) to (0,0.5);
            \intleft{blue}{0,0};
        \end{tikzpicture}
    \ \right)^*
    \ ,\qquad
    \begin{tikzpicture}[anchorbase]
        \draw[<-,red] (0,-0.5) to (0,0.5);
        \intright{blue}{0,0};
    \end{tikzpicture}
    :=
    \left(\
        \begin{tikzpicture}[anchorbase]
            \draw[->,red] (0,-0.5) to (0,0.5);
            \intright{blue}{0,0};
        \end{tikzpicture}
    \ \right)^*
    \ ,\qquad
    \begin{tikzpicture}[anchorbase]
        \draw[<-,blue] (0,-0.5) to (0,0.5);
        \intleft{red}{0,0};
    \end{tikzpicture}
    :=
    \left(\
        \begin{tikzpicture}[anchorbase]
            \draw[->,blue] (0,-0.5) to (0,0.5);
            \intleft{red}{0,0};
        \end{tikzpicture}
    \ \right)^*
    \ ,\qquad
    \begin{tikzpicture}[anchorbase]
        \draw[<-,blue] (0,-0.5) to (0,0.5);
        \intright{red}{0,0};
    \end{tikzpicture}
    :=
    \left(\
        \begin{tikzpicture}[anchorbase]
            \draw[->,blue] (0,-0.5) to (0,0.5);
            \intright{red}{0,0};
        \end{tikzpicture}
    \ \right)^*
    \ .
\]

As well as the duality $*$ arising from this strictly pivotal structure, there are two other useful symmetries:
\begin{align}\label{wormy}
    \flip &\colon \Heis[blue]{A}{l} \barodotnonumber \Heis[red]{A}{m}\xrightarrow{\cong} \Heis[blue]{A}{m} \barodotnonumber \Heis[red]{A}{l},
    \\
    \Omega_{{\color{blue} l}|{\color{red} m}} &\colon \Heis[blue]{A}{l} \barodot \Heis[red]{A}{m} \xrightarrow{\cong} \left( \Heis[blue]{A}{-l} \barodot \Heis[red]{A}{-m} \right)^\op.
\end{align}
In \cref{wormy}, we view the domain and codomain as monoidal supercategories, not graded, as we did in \cref{flippy}.  The symmetry $\flip$ is a monoidal superfunctor rather than a graded monoidal functor since it preserves the parities but not the degrees of two-color crossings. It is defined on diagrams by switching the colors blue and red, then multiplying by $(-1)^w$, where $w$ is the total number of dumbbells in the picture. It interchanges the internal bubbles in \cref{intblue} with those in \cref{intred}.  The isomorphism $\Omega_{{\color{blue} l}|{\color{red} m}}$, which does preserve degrees, is defined in a similar way to \cref{Omega}.  It sends a morphism $f$ represented by a single diagram (with homogeneous tokens) to $\Omega_{{\color{blue} l}|{\color{red} m}}(f)^\op$ where $\Omega_{{\color{blue}  l}|{\color{red} m}}(f)$ is the morphism defined by reflecting the diagram in a horizontal axis then multiplying by $(-1)^{x+y+\binom{z}{2}}$, where $x$ is the number of one-color crossings, $y$ is the number of left cups and caps (including ones in negatively-dotted and internal bubbles), and $z$ is the total number of \emph{odd} tokens.

The following seven lemmas are analogous to Lemmas 5.6 to 5.12 in \cite{BSW-K0}, and the proofs are similar, the main difference coming from the fact that we need to add teleporters in appropriate places.  We provide the details for the first three and omit the remaining proofs.

\begin{lem} \label{wormhole}
  We have
  \[

    \ .
  \]
\end{lem}

\begin{proof}
  The proof is analogous to that of \cite[Lem.~5.12]{BSW-K0}.
\end{proof}

Now we can define an analogue of the categorical comultiplication $\Delta$ from \cref{AWcoprod}.  We do this in two steps: first, we define it at the level of supercategories; then, in the corollary, we upgrade the result to graded supercategories by passing to the $(Q,\Pi)$-envelope.

\begin{theo} \label{bunny}
  There is a unique strict monoidal superfunctor
  \[
    \cDelt{l}{m} \colon
    \Heis{A}{k} \to \Add \left( \Heis[blue]{A}{l} \barodotnonumber \color{red} \Heis[red]{A}{m} \right)
  \]
  such that $\uparrow \mapsto \upblue \oplus \upred$, $\downarrow \mapsto \downblue \oplus \downred$, and on morphisms
  \begin{gather} \label{fuzzy1}
    \begin{tikzpicture}[anchorbase]
      \draw[->] (0,-0.3) to (0,0.3);
      \blackdot{0,0};
    \end{tikzpicture}
    \mapsto\
    \begin{tikzpicture}[anchorbase]
      \draw[->,blue] (0,-0.3) to (0,0.3);
      \bluedot{0,0};
    \end{tikzpicture}
    \ +\
    \begin{tikzpicture}[anchorbase]
      \draw[->,red] (0,-0.3) to (0,0.3);
      \reddot{0,0};
    \end{tikzpicture}
    \ ,\qquad
    \begin{tikzpicture}[anchorbase]
      \draw[->] (0,-0.3) to (0,0.3);
      \blacktoken[west]{0,0}{a};
    \end{tikzpicture}
    \mapsto\
    \begin{tikzpicture}[anchorbase]
      \draw[->,blue] (0,-0.3) to (0,0.3);
      \bluetoken[west]{0,0}{a};
    \end{tikzpicture}
    +\
    \begin{tikzpicture}[anchorbase]
      \draw[->,red] (0,-0.3) to (0,0.3);
      \redtoken[west]{0,0}{a};
    \end{tikzpicture}
    \ ,
    \\ \label{fuzzy3}
    \begin{tikzpicture}[anchorbase]
      \draw[->] (-0.4,-0.4) to (0.4,0.4);
      \draw[->] (0.4,-0.4) to (-0.4,0.4);
    \end{tikzpicture}
    \mapsto
    \begin{tikzpicture}[anchorbase]
      \draw[->,blue] (-0.4,-0.4) to (0.4,0.4);
      \draw[->,blue] (0.4,-0.4) to (-0.4,0.4);
    \end{tikzpicture}
    \ +\
    \begin{tikzpicture}[anchorbase]
      \draw[->,red] (-0.4,-0.4) to (0.4,0.4);
      \draw[->,red] (0.4,-0.4) to (-0.4,0.4);
    \end{tikzpicture}
    \ +\
    \begin{tikzpicture}[anchorbase]
      \draw[->,blue] (-0.4,-0.4) to (0.4,0.4);
      \draw[->,red] (0.4,-0.4) to (-0.4,0.4);
      \dotdumb{-0.2,0.2}{0.2,0.2};
    \end{tikzpicture}
    \ +\
    \begin{tikzpicture}[anchorbase]
      \draw[->,red] (-0.4,-0.4) to (0.4,0.4);
      \draw[->,blue] (0.4,-0.4) to (-0.4,0.4);
      \sqdumb{-0.2,0.2}{0.2,0.2};
    \end{tikzpicture}
    \ -
    \begin{tikzpicture}[anchorbase]
      \draw[->,blue] (-0.2,-0.4) to (-0.2,0.4);
      \draw[->,red] (0.2,-0.4) to (0.2,0.4);
      \teleport{blue}{red}{-0.2,-0.2}{0.2,-0.2};
      \dotdumb{-0.2,0.15}{0.2,0.15};
    \end{tikzpicture}
    +
    \begin{tikzpicture}[anchorbase]
      \draw[->,red] (-0.2,-0.4) to (-0.2,0.4);
      \draw[->,blue] (0.2,-0.4) to (0.2,0.4);
      \teleport{red}{blue}{-0.2,-0.2}{0.2,-0.2};
      \dotdumb{-0.2,0.15}{0.2,0.15};
    \end{tikzpicture}
    \ ,
  \\ \label{fuzzy2}
    \begin{tikzpicture}[anchorbase]
      \draw[->] (0,-.2) -- (0,0) arc (180:0:.3) -- (.6,-.2);
    \end{tikzpicture}
    \mapsto
    \begin{tikzpicture}[anchorbase]
      \draw[->,blue] (0,-.2) -- (0,0) arc (180:0:.3) -- (.6,-.2);
    \end{tikzpicture}
    -
    \begin{tikzpicture}[anchorbase]
      \draw[->,red] (-0.3,-0.5) -- (-0.3,0) arc (180:0:0.3) -- (0.3,-0.5);
      \intright{blue}{-0.3,-0.1};
    \end{tikzpicture}
    \ ,\qquad
    \begin{tikzpicture}[anchorbase]
      \draw[->] (0,.2) -- (0,0) arc (180:360:.3) -- (.6,.2);
    \end{tikzpicture}
    \mapsto
    \begin{tikzpicture}[anchorbase]
      \draw[->,blue] (0,.2) -- (0,0) arc (180:360:.3) -- (.6,.2);
    \end{tikzpicture}
    +
    \begin{tikzpicture}[anchorbase]
      \draw[->,red] (-0.3,0.5) -- (-0.3,0) arc (180:360:0.3) -- (0.3,.5);
      \intleft{blue}{-0.3,0.1};
    \end{tikzpicture}
    \ .
  \end{gather}
  Moreover, $\cDelt{l}{m}$ satisfies the following for all $a \in A$ and $n \in \Z$:
  \begin{gather}  \label{gloves}
    \begin{tikzpicture}[anchorbase]
      \draw[<-] (0,-.2) -- (0,0) arc (180:0:.3) -- (.6,-.2);
    \end{tikzpicture}
    \mapsto
    \begin{tikzpicture}[anchorbase]
      \draw[<-,blue] (-0.3,-0.5) -- (-0.3,0) arc (180:0:0.3) -- (0.3,-0.5);
      \intleft{red}{0.3,-0.1};
    \end{tikzpicture}
    +
    \begin{tikzpicture}[anchorbase]
      \draw[<-,red] (0,-.2) -- (0,0) arc (180:0:.3) -- (.6,-.2);
    \end{tikzpicture}
    \ ,\qquad
    \begin{tikzpicture}[anchorbase]
      \draw[<-] (0,.2) -- (0,0) arc (180:360:.3) -- (.6,.2);
    \end{tikzpicture}
    \mapsto -
    \begin{tikzpicture}[anchorbase]
      \draw[<-,blue] (-0.3,0.5) -- (-0.3,0) arc (180:360:0.3) -- (0.3,.5);
      \intright{red}{0.3,0.1};
    \end{tikzpicture}
    +
    \begin{tikzpicture}[anchorbase]
      \draw[<-,red] (0,.2) -- (0,0) arc (180:360:.3) -- (.6,.2);
    \end{tikzpicture}
    \ ,
    \\ \label{shoes}
    \ccbubble{a}{n}
    \mapsto \sum_{r \in \Z}\
    \begin{tikzpicture}[anchorbase]
      \draw[->,blue] (0,0.55) arc(90:450:0.2);
      \draw[->,red] (0,-0.55) arc(-90:270:0.2);
      \multbluedot[west]{0.2,0.35}{r};
      \multreddot[west]{0.2,-0.35}{n-r-1};
      \bluetoken[east]{-0.2,0.35}{a};
      \teleport{blue}{red}{0,0.15}{0,-0.15};
    \end{tikzpicture}
    \ ,\qquad
    \cbubble{a}{n}
    \mapsto - \sum_{r \in \Z}
    \begin{tikzpicture}[anchorbase]
      \draw[<-,blue] (0,0.55) arc(90:450:0.2);
      \draw[<-,red] (0,-0.55) arc(-90:270:0.2);
      \multbluedot[west]{0.2,0.35}{r};
      \multreddot[west]{0.2,-0.35}{n-r-1};
      \bluetoken[east]{-0.2,0.35}{a};
      \teleport{blue}{red}{0,0.15}{0,-0.15};
    \end{tikzpicture}
    \ .
  \end{gather}
\end{theo}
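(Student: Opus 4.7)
The plan is to prove existence of $\cDelt{l}{m}$ by invoking \cref{lock}, which provides a criterion for a strict monoidal supercategory equipped with candidate images of the affine wreath product generators and the right cup/cap to admit uniquely determined left cup and cap morphisms extending to a full monoidal superfunctor from $\Heis{A}{k}$. The first step is to observe that the restriction of $\cDelt{l}{m}$ to the subcategory generated by $\uparrow$ coincides with the categorical comultiplication $\Delta$ constructed in \cref{AWcoprod} followed by the evident inclusion $\Add(\blue{\AWC}\barodot\red{\AWC}) \to \Add(\blue{\Heis[blue]{A}{l}}\barodot\red{\Heis[red]{A}{m}})$. This immediately takes care of relations \cref{wreathrel,affrel1,upslides1}.

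Next I would verify the zigzag relations \cref{squid} for the proposed images of the right cup and cap from \cref{fuzzy2}. This reduces, after expanding the internal bubble into the right-hand side of \cref{intblue,intred}, to a sum of diagrams which collapse by the rigidity of $\blue{\Heis[blue]{A}{l}}$ and $\red{\Heis[red]{A}{m}}$ individually and by \cref{wormhole}. With \cref{squid} in hand, the hypotheses of \cref{lock} become available, so to finish the construction it suffices to check two things: (i) the image under $\cDelt{l}{m}$ of the morphism \cref{invrel} is invertible in $\Add(\blue{\Heis[blue]{A}{l}}\barodot\red{\Heis[red]{A}{m}})$; and (ii) with left cup and cap defined as in \cref{gloves}, and sideways crossings/negatively dotted bubbles defined via \cref{fall,fakel,faker}, the relations \cref{chew,curly,squish} hold. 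Condition (i) follows once (ii) is established, since the remaining pieces of the argument in \cite[Th.~1.2]{Sav18} apply verbatim inside the localized symmetric product.

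To establish \cref{chew}, one expands a bubble using the proposed definition of the left cup/cap and appeals to the infinite Grassmannian relation \cref{infgrass} separately inside blue and red; the formula \cref{shoes} then emerges as part of the same calculation. For \cref{curly} one uses \cref{desk} together with the analogous curl relations \cref{curls} inside each color. The main obstacle is \cref{squish}: one must expand the sideways crossing on the left hand side using \cref{fall} and the definitions, and then systematically push internal bubbles and dot dumbbells through crossings using precisely the prepared lemmas \cref{wormhole,dizzy,desk,centipede,snail,ladder}, finally collecting the result into the exact form prescribed by the right-hand side of \cref{squish}. This is the same strategy as in \cite[\S5]{BSW-K0}, but with the extra bookkeeping demanded by teleporters and tokens (and with signs tracked via the super interchange law \cref{interchange}).

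Finally, the bubble formula \cref{shoes} drops out of the verification of \cref{chew} (for the leading terms) and \cref{infgrass} (to solve for negatively-dotted bubbles via \cref{fakel,faker}). The formulas \cref{gloves} for left cup/cap are then automatic from the uniqueness clause of \cref{lock}. To pass from the monoidal superfunctor just constructed to a \emph{graded} monoidal superfunctor one checks that each of the proposed generator images is homogeneous of the correct degree; the degrees prescribed in \cref{par,trappist} together with the degrees assigned to dot dumbbells and internal bubbles are precisely arranged so that each term on the right-hand side of \cref{fuzzy1,fuzzy3,fuzzy2,gloves,shoes} has the same degree as the corresponding generator on the left.
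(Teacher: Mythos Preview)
Your overall strategy matches the paper's: define $\cDelt{l}{m}$ on all generators (including left cup and cap via \cref{gloves}), then verify the defining relations \cref{wreathrel,affrel1,upslides1,squid,chew,curly,squish} plus the bubble formula \cref{shoes}, invoking \cref{lock} for uniqueness. Your use of \cref{AWcoprod} for the upward generators, \cref{wormhole} for \cref{squid}, \cref{desk} for \cref{curly}, and \cref{centipede,snail,ladder} for \cref{squish} is exactly what the paper does.

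Two points need correction. First, your treatment of \cref{shoes} has the logic reversed and, as written, leaves a gap. The paper proves \cref{shoes} \emph{first}: for $k\geq 0$ it uses \cref{dizzy} to compute the image of genuine clockwise bubbles directly, handles negatively-dotted clockwise bubbles as scalars, and then exploits \cref{eyes} together with a uniqueness argument (there is a unique $f(a;u)\in \tr(a)u^{k}1_\one+u^{k-1}\End(\one)\llbracket u^{-1}\rrbracket$ with $\sum_b f(ab;u)g(b^\vee;u)=\tr(a)1_\one$) to deduce the counterclockwise formula; \cref{chew} is then an immediate consequence of \cref{shoes}. Your sketch instead tries to extract \cref{shoes} from \cref{chew}, but \cref{chew} concerns only the finite range $0\leq r<|k|$ and so cannot by itself yield \cref{shoes} for all $n\in\Z$; you also need \cref{dizzy}, which you mention only under \cref{squish}.

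Second, your final paragraph is false: $\cDelt{l}{m}$ does \emph{not} preserve degrees. In \cref{fuzzy2} the right cap has degree $-kd=-(l+m)d$, while the blue term has degree $-ld$ and the red term (red cap composed with the clockwise blue internal bubble, of degree $-2ld$) has degree $-md-2ld$; these are pairwise distinct whenever $l,m\neq 0$. This is precisely why the theorem lands in the \emph{ungraded} localization $\barodotnonumber$ rather than $\barodot$, and why the subsequent \cref{bunnycor} passes to the $(Q,\Pi)$-envelope and inserts explicit grading shifts on the colored summands. The grading claim is not part of the theorem and should be deleted.
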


\begin{proof}
  We define $\cDelt{l}{m}$ on the generating morphisms from \cref{star} by \cref{fuzzy1,fuzzy3,fuzzy2,gloves}, and must check that the images under $\cDelt{l}{m}$ of the defining relations \cref{squid,wreathrel,affrel1,upslides1,squish,chew,curly} all hold in the category $\Add (\Heis[blue]{A}{l} \barodotnonumber \Heis[red]{A}{m})$.  In addition, we will verify that $\cDelt{l}{m}$ satisfies \cref{shoes}.  Using \cref{lock}, this is sufficient to prove the theorem as stated.

  We already verified \cref{wreathrel,affrel1,upslides1} in \cref{AWcoprod}.  The relation \cref{squid} is straightforward to verify since all of the matrices involved are diagonal.  One just needs to use \cref{wormhole} and its image under the symmetry \cref{wormy}.

  Now we check \cref{shoes}.  Note in terms of bubble generating functions that it is equivalent to
  \begin{equation} \label{circus}
    \cDelt{l}{m}
    \left(

    \ ,
  \]
  where the last equality follows from \cref{ladder,wormhole}.  The proof of the fourth identity is analogous, using the images of \cref{ladder,wormhole} under the symmetry flip.
\end{proof}

The functor in \cref{bunny} does not respect grading. In the next
corollary, we explain how to rectify this
 by passing to the $(Q,\Pi)$-envelope, recalling the notation (\ref{covid2}).

\begin{cor} \label{bunnycor}
    There is a unique strict graded monoidal functor
    \[
        \tilde\Delta_{\blue{l}|\red{m}} \colon
        \Heisenv{A}{k} \to \Add \left(\left( \Heis[blue]{A}{l} \barodot \Heis[red]{A}{m}\right)_{q,\pi} \right)
    \]
    such that $Q^r \Pi^t \uparrow \mapsto Q^r \Pi^t \upblue \oplus Q^{r-ld} \Pi^t \upred$, $Q^r \Pi^t \downarrow \mapsto Q^{r+md} \Pi^t \downblue \oplus Q^r \Pi^t \downred$, and on morphisms
    \begin{gather} \label{fuzzygraded1}
        \begin{tikzpicture}[anchorbase]
          \draw[->] (0,-0.3) to (0,0.3);
          \blackdot{0,0};
          \shiftline{-0.2,-0.3}{0.2,-0.3}{r,t};
          \shiftline{-0.2,0.3}{0.2,0.3}{s,t};
        \end{tikzpicture}
        \mapsto
        \begin{tikzpicture}[anchorbase]
          \draw[->,blue] (0,-0.3) to (0,0.3);
          \bluedot{0,0};
          \shiftline{-0.2,-0.3}{0.2,-0.3}{r,t};
          \shiftline{-0.2,0.3}{0.2,0.3}{s,t};
        \end{tikzpicture}
        \ +\
        \begin{tikzpicture}[anchorbase]
          \draw[->,red] (0,-0.3) to (0,0.3);
          \reddot{0,0};
          \shiftline{-0.2,-0.3}{0.2,-0.3}{r-ld,t};
          \shiftline{-0.2,0.3}{0.2,0.3}{s-ld,t};
        \end{tikzpicture}
        \ ,\qquad
        \begin{tikzpicture}[anchorbase]
          \draw[->] (0,-0.3) to (0,0.3);
          \blacktoken[west]{0,0}{a};
          \shiftline{-0.2,-0.3}{0.2,-0.3}{r,t};
          \shiftline{-0.2,0.3}{0.2,0.3}{s,t};
        \end{tikzpicture}
        \mapsto
        \begin{tikzpicture}[anchorbase]
          \draw[->,blue] (0,-0.3) to (0,0.3);
          \bluetoken[west]{0,0}{a};
          \shiftline{-0.2,-0.3}{0.2,-0.3}{r,t};
          \shiftline{-0.2,0.3}{0.2,0.3}{s,t};
        \end{tikzpicture}
        +\
        \begin{tikzpicture}[anchorbase]
          \draw[->,red] (0,-0.3) to (0,0.3);
          \redtoken[west]{0,0}{a};
          \shiftline{-0.2,-0.3}{0.2,-0.3}{r-ld,t};
          \shiftline{-0.2,0.3}{0.2,0.3}{s-ld,t};
        \end{tikzpicture}
        \ ,
        \\ \label{fuzzygraded3}
        \begin{tikzpicture}[anchorbase]
          \draw[->] (-0.4,-0.4) to (0.4,0.4);
          \draw[->] (0.4,-0.4) to (-0.4,0.4);
          \shiftline{-0.5,-0.4}{0.5,-0.4}{r,t};
          \shiftline{-0.5,0.4}{0.5,0.4}{s,t};
        \end{tikzpicture}\!
        \mapsto
        \begin{tikzpicture}[anchorbase]
          \draw[->,blue] (-0.4,-0.4) to (0.4,0.4);
          \draw[->,blue] (0.4,-0.4) to (-0.4,0.4);
          \shiftline{-0.5,-0.4}{0.5,-0.4}{r,t};
          \shiftline{-0.5,0.4}{0.5,0.4}{s,t};
        \end{tikzpicture}
        \!\! +\!
        \begin{tikzpicture}[anchorbase]
          \draw[->,red] (-0.4,-0.4) to (0.4,0.4);
          \draw[->,red] (0.4,-0.4) to (-0.4,0.4);
          \shiftline{-0.5,-0.4}{0.5,-0.4}{r-2ld,t};
          \shiftline{-0.5,0.4}{0.5,0.4}{s-2ld,t};
        \end{tikzpicture}
        \!\! +\!
        \begin{tikzpicture}[anchorbase]
          \draw[->,blue] (-0.4,-0.4) to (0.4,0.4);
          \draw[->,red] (0.4,-0.4) to (-0.4,0.4);
          \dotdumb{-0.2,0.2}{0.2,0.2};
          \shiftline{-0.5,-0.4}{0.5,-0.4}{r-ld,t};
          \shiftline{-0.5,0.4}{0.5,0.4}{s-ld,t};
        \end{tikzpicture}
        \!\! +\!
        \begin{tikzpicture}[anchorbase]
          \draw[->,red] (-0.4,-0.4) to (0.4,0.4);
          \draw[->,blue] (0.4,-0.4) to (-0.4,0.4);
          \sqdumb{-0.2,0.2}{0.2,0.2};
          \shiftline{-0.5,-0.4}{0.5,-0.4}{r-ld,t};
          \shiftline{-0.5,0.4}{0.5,0.4}{s-ld,t};
        \end{tikzpicture}
        \!\! -
        \begin{tikzpicture}[anchorbase]
          \draw[->,blue] (-0.2,-0.4) to (-0.2,0.4);
          \draw[->,red] (0.2,-0.4) to (0.2,0.4);
          \teleport{blue}{red}{-0.2,-0.2}{0.2,-0.2};
          \dotdumb{-0.2,0.15}{0.2,0.15};
          \shiftline{-0.3,-0.4}{0.3,-0.4}{r-ld,t};
          \shiftline{-0.3,0.4}{0.3,0.4}{s-ld,t};
        \end{tikzpicture}
        \!+
        \begin{tikzpicture}[anchorbase]
          \draw[->,red] (-0.2,-0.4) to (-0.2,0.4);
          \draw[->,blue] (0.2,-0.4) to (0.2,0.4);
          \teleport{red}{blue}{-0.2,-0.2}{0.2,-0.2};
          \dotdumb{-0.2,0.15}{0.2,0.15};
          \shiftline{-0.3,-0.4}{0.3,-0.4}{r-ld,t};
          \shiftline{-0.3,0.4}{0.3,0.4}{s-ld,t};
        \end{tikzpicture}
        \! ,
        \\ \label{fuzzygraded2}
        \begin{tikzpicture}[anchorbase]
          \draw[->] (0,-.2) -- (0,0) arc (180:0:.3) -- (.6,-.2);
          \shiftline{-0.1,-0.2}{0.7,-0.2}{r,t};
          \shiftline{-0.1,0.5}{0.7,0.5}{s,t};
        \end{tikzpicture}
        \mapsto 
        \begin{tikzpicture}[anchorbase]
          \draw[->,blue] (0,-.2) -- (0,0) arc (180:0:.3) -- (.6,-.2);
          \shiftline{-0.1,-0.2}{0.7,-0.2}{r+md,t};
          \shiftline{-0.1,0.5}{0.7,0.5}{s,t};
        \end{tikzpicture}
        -
        \begin{tikzpicture}[anchorbase]
          \draw[->,red] (-0.3,-0.5) -- (-0.3,0) arc (180:0:0.3) -- (0.3,-0.5);
          \intright{blue}{-0.3,-0.1};
          \shiftline{-0.4,-0.5}{0.4,-0.5}{r-ld,t};
          \shiftline{-0.4,0.5}{0.4,0.5}{s,t};
        \end{tikzpicture}
        \ ,\qquad
        \begin{tikzpicture}[anchorbase]
          \draw[->] (0,.2) -- (0,0) arc (180:360:.3) -- (.6,.2);
          \shiftline{-0.1,-0.5}{0.7,-0.5}{r,t};
          \shiftline{-0.1,0.2}{0.7,0.2}{s,t};
        \end{tikzpicture}
        \mapsto
        \begin{tikzpicture}[anchorbase]
          \draw[->,blue] (0,.2) -- (0,0) arc (180:360:.3) -- (.6,.2);
          \shiftline{-0.1,-0.5}{0.7,-0.5}{r,t};
          \shiftline{-0.1,0.2}{0.7,0.2}{s+md,t};
        \end{tikzpicture}
        +
        \begin{tikzpicture}[anchorbase]
          \draw[->,red] (-0.3,0.5) -- (-0.3,0) arc (180:360:0.3) -- (0.3,.5);
          \intleft{blue}{-0.3,0.1};
          \shiftline{-0.4,-0.5}{0.4,-0.5}{r,t};
          \shiftline{-0.4,0.5}{0.4,0.5}{s-ld,t};
        \end{tikzpicture}
        \! ,\\
\label{fuzzygraded4}
        \begin{tikzpicture}[anchorbase]
          \draw[<-] (0,-.2) -- (0,0) arc (180:0:.3) -- (.6,-.2);
          \shiftline{-0.1,-0.2}{0.7,-0.2}{r,t};
          \shiftline{-0.1,0.5}{0.7,0.5}{s,t};
        \end{tikzpicture}
        \mapsto
        \begin{tikzpicture}[anchorbase]
          \draw[<-,blue] (-0.3,-0.5) -- (-0.3,0) arc (180:0:0.3) -- (0.3,-0.5);
          \intleft{red}{0.3,-0.1};
          \shiftline{-0.4,-0.5}{0.4,-0.5}{r+md,t};
          \shiftline{-0.4,0.5}{0.4,0.5}{s,t};
        \end{tikzpicture}
        +
        \begin{tikzpicture}[anchorbase]
          \draw[<-,red] (0,-.2) -- (0,0) arc (180:0:.3) -- (.6,-.2);
          \shiftline{-0.1,-0.2}{0.7,-0.2}{r-ld,t};
          \shiftline{-0.1,0.5}{0.7,0.5}{s,t};
        \end{tikzpicture}
        \ ,\qquad
        \begin{tikzpicture}[anchorbase]
          \draw[<-] (0,.2) -- (0,0) arc (180:360:.3) -- (.6,.2);
          \shiftline{-0.1,-0.5}{0.7,-0.5}{r,t};
          \shiftline{-0.1,0.2}{0.7,0.2}{s,t};
        \end{tikzpicture}
        \mapsto
        \begin{tikzpicture}[anchorbase]
          \draw[<-,blue] (-0.3,0.5) -- (-0.3,0) arc (180:360:0.3) -- (0.3,.5);
          \intright{red}{0.3,0.1};
          \shiftline{-0.4,-0.5}{0.4,-0.5}{r,t};
          \shiftline{-0.4,0.5}{0.4,0.5}{s+md,t};
        \end{tikzpicture}
        +
        \begin{tikzpicture}[anchorbase]
          \draw[<-,red] (0,.2) -- (0,0) arc (180:360:.3) -- (.6,.2);
          \shiftline{-0.1,-0.5}{0.7,-0.5}{r,t};
          \shiftline{-0.1,0.2}{0.7,0.2}{s-ld,t};
        \end{tikzpicture}
        \ .
    \end{gather}
\end{cor}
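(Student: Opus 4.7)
The plan is to apply the universal property of the $(Q,\Pi)$-envelope of $\Heis{A}{k}$ (the graded analogue of the property stated after \cref{pienv}) to a graded enhancement of $\cDelt{l}{m}$ from \cref{bunny}. Specifically, I define a graded monoidal superfunctor $F$ from $\Heis{A}{k}$ into the graded $(Q,\Pi)$-supercategory $\Add\bigl((\blue{\Heis[blue]{A}{l}} \barodot \red{\Heis[red]{A}{m}})_{q,\pi}\bigr)$ by composing $\cDelt{l}{m}$ with the canonical embedding into the $(Q,\Pi)$-envelope, then modifying its values on the generating objects to $F(\uparrow) := \upblue \oplus Q^{-ld}\upred$ and $F(\downarrow) := Q^{md}\downblue \oplus \downred$. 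The images on generating morphisms are given by the same formulas as in \cref{bunny}, each summand now interpreted as a morphism in the envelope between the appropriate grading-shifted objects. The defining relations of $\Heis{A}{k}$ continue to hold for $F$ since they hold for $\cDelt{l}{m}$ and only the tracking of object shifts has changed.

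The critical step is to verify that $F$ is actually graded, i.e.\ that each summand of each generator image is a degree-homogeneous morphism in the envelope of the correct degree. Using the formula $\deg(g_{m,r}^{n,s}) = \deg(g) + n - m$ from \cref{qpienv}, this is a uniform degree calculation. For the dots and tokens \cref{fuzzygraded1}, the red summand has identical source and target shifts, so its envelope degree equals its base degree $2\dA$. For the crossings \cref{fuzzygraded3}, the mixed-color summands each combine a two-color crossing (base degree $\pm 2\dA$) with a dumbbell (base degree $\mp 2\dA$) for a total base degree of $0$, while the source and target shifts coincide. For the cups and caps \cref{fuzzygraded2,fuzzygraded4}, the shifts $Q^{\pm md}$ on blue downstrands and $Q^{\mp ld}$ on red upstrands are chosen precisely so that the blue summand (base degree $\pm l\dA$, shifted by $\pm md$) and the red-plus-internal-bubble summand (base degree $\pm m\dA \pm 2l\dA$, shifted by $\mp ld$) each yield envelope degree $\pm k\dA$, matching the degree of the original cup or cap in $\Heis{A}{k}$.

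Once $F$ is established as a graded monoidal superfunctor, the universal property of the $(Q,\Pi)$-envelope gives a unique extension to the desired strict graded monoidal superfunctor $\tilde\Delta_{\blue{l}|\red{m}}$ on $\Heisenv{A}{k}$, acting on shifted objects by $Q^r\Pi^t X \mapsto Q^r\Pi^t F(X)$ and on shifted morphisms by transporting the shifts through; this recovers precisely \cref{fuzzygraded1,fuzzygraded3,fuzzygraded2,fuzzygraded4}. Uniqueness is then immediate. The main obstacle is just the degree bookkeeping across the many summands that appear in \cref{Deltcross} and in \cref{fuzzy2,gloves}, but each verification is a short arithmetic check of the form outlined above, and no new relation checks are needed beyond those already carried out in \cref{bunny}.
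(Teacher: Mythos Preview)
Your proposal is correct and follows essentially the same approach as the paper's proof: define a graded monoidal superfunctor from $\Heis{A}{k}$ into $\Add\bigl((\Heis[blue]{A}{l} \barodot \Heis[red]{A}{m})_{q,\pi}\bigr)$ using the object assignments $\uparrow \mapsto \upblue \oplus Q^{-ld}\upred$, $\downarrow \mapsto Q^{md}\downblue \oplus \downred$ and the morphism formulas from \cref{bunny}, check that the degrees are consistent, note that the relations hold by \cref{bunny}, and then extend via the universal property of the $(Q,\Pi)$-envelope. The paper's proof is terser on the degree verification (it simply says ``note first that the degrees of cups/caps are consistent''), whereas you spell out the arithmetic in more detail, but the structure of the argument is the same.
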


\begin{proof}
    There is a strict graded monoidal functor $\Delta_{\blue{l}|\red{m}} \colon \Heis{A}{k} \to \Add (( \Heis[blue]{A}{l} \barodot \Heis[red]{A}{m})_{q,\pi})$ defined on objects by $\uparrow \mapsto Q^0 \Pi^0 \upblue \oplus Q^{-ld} \Pi^0 \upred$, $\downarrow \mapsto Q^{md} \Pi^0 \downblue \oplus Q^0 \Pi^0 \downred$, and on generating morphisms by the same expressions as on the right hand sides of \cref{fuzzygraded1,fuzzygraded3,fuzzygraded2,fuzzygraded4} taking $t=0$.  To see this, note first that the degrees of cups/caps are consistent.  Then we must verify that the images of the relations from \cref{star} are satisfied in $\Add (( \Heis[blue]{A}{l} \barodot \Heis[red]{A}{m})_{q,\pi})$.  These relations hold in the strict monoidal supercategory $\Add (\Heis[blue]{A}{l} \barodot     \Heis[red]{A}{m} )$ thanks to \cref{bunny}; so the same relations also hold in $\Add ((\Heis[blue]{A}{l} \barodot \Heis[red]{A}{m})_{q,\pi} )$.  Finally, we get induced the functor in the statement of the corollary by applying the universal property of $(Q,\Pi)$-envelope.
\end{proof}

\begin{rem} \label{coass3}
    As in \cref{coass1,coass2}, the categorical comultiplication $\cDelt{l}{m}$, hence $\tilde\Delta_{\blue{l}|\red{m}}$, is coassociative in the appropriate sense.
\end{rem}

In the last result of the section, we upgrade \cref{coprodcat} to the
Frobenius Heisenberg category; see also \cref{camping} below for
further discussion of the significance of this result.  As before, we assume for
this that $A$ satisfies the hypothesis \cref{virginia} from the
introduction, and make a choice of idempotents as we did after \cref{coass1}.
Recall the homomorphisms $\imath_n$ and $\jmath_n$ from \cref{imath,jmath}.  For $\blambda \in \cP^N$ with $|\blambda|=n$, we have the associated object
\begin{equation}\label{dungeons}
    S_\blambda^+ := \left( \uparrow^{\otimes n}, \imath_n(e_\blambda) \right) \in \Kar\left(\Heis{A}{k}_{q,\pi}\right).
\end{equation}
For $n \in \N$, $1 \le i \le N$, define
\begin{equation}\label{dragons}
    H_{n,i}^+ := \left( \uparrow^{\otimes n}, \imath_n(e_{(n),i}) \right)
    \qquad
    E_{n,i}^+ := \left( \uparrow^{\otimes n}, \imath_n(e_{(1^n),i}) \right).
\end{equation}
Let $S_\blambda^- := (S_\blambda^+)^*$, $H_{n,i}^- := (H_{n,i}^+)^*$,
and $E_{n,i}^- := (E_{n,i}^+)^*$.
Note that there are degree zero isomorphisms
\begin{equation}\label{dd}
    S_\blambda^- \cong \left( \downarrow^{\otimes n},
    \jmath_n(e_{\blambda^T}) \right),
    \qquad
     E_{n,i}^- := \left( \downarrow^{\otimes n}, \jmath_n(e_{(n),i}) \right),
    \qquad
    H_{n,i}^- := \left( \downarrow^{\otimes n}, \jmath_n(e_{(1^n),i}) \right),
\end{equation}
hence
\begin{equation} \label{peaches}
    \Omega_k (S_\blambda^\pm) \cong S_{\blambda^T}^\mp,\qquad
    \Omega_k (E_{n,i}^\pm) \cong H_{n,i}^\mp,\qquad
    \Omega_k (H_{n,i}^\pm) \cong E_{n,i}^\mp.
\end{equation}
 The transpose appears here because the sign arising when $\jmath_n$ is
applied to a transposition is the opposite of the one that arises when
$*$ is applied to a crossing.

\begin{theo} \label{morebunny}
    Extending the functor $\tilde\Delta_{\blue{l}|\red{m}}$ from \cref{bunnycor} to the Karoubi envelopes in the canonical way, there are degree zero isomorphisms
    \begin{equation} \label{thedrop}
        \begin{aligned}
            \tilde\Delta_{\blue{l}|\red{m}} (H_{n,i}^+) &\cong \bigoplus_{r=0}^n Q^{-lr \dA} \blue{H_{n-r,i}^+} \otimes \red{H_{r,i}^+},
            &
            \tilde\Delta_{\blue{l}|\red{m}} (H_{n,i}^-) &\cong \bigoplus_{r=0}^n Q^{m(n-r) \dA } \blue{H_{n-r,i}^-} \otimes \red{H_{r,i}^-},
            \\
            \tilde\Delta_{\blue{l}|\red{m}} (E_{n,i}^+) &\cong \bigoplus_{r=0}^n Q^{-lr \dA} \blue{E_{n-r,i}^+} \otimes \red{E_{r,i}^+},
            &
            \tilde\Delta_{\blue{l}|\red{m}} (E_{n,i}^-) &\cong \bigoplus_{r=0}^n Q^{m (n-r)\dA} \blue{E_{n-r,i}^-} \otimes \red{E_{r,i}^-},
        \end{aligned}
    \end{equation}
    for $n \ge 0$ and $1 \le i \le N$,
\end{theo}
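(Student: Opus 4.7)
The plan is to deduce \cref{morebunny} from \cref{coprodcat} by exploiting the fact that the categorical comultiplication $\cDelt{l}{m}$ is compatible with $\imath$ and $\jmath$ and the corresponding coproducts on the (affine) wreath product side. First I would handle the $+$ versions, then reduce the $-$ versions to them using the pivotal structure.

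For the $+$ case, the key observation is that comparing the formulas \cref{fuzzygraded1,fuzzygraded3} with \cref{Deltcross} and the formulas for dots and tokens in \cref{AWcoprod}, the restriction $\cDelt{l}{m} \circ \imath$ coincides (as an ungraded superfunctor) with $(\blue{\imath} \barodot \red{\imath}) \circ \Delta$, where $\Delta$ is the coproduct on $\AWC$ from \cref{AWcoprod}. Passing to $(Q,\Pi)$-envelopes and tracking the shift coming from $\cDelt{l}{m}(\uparrow) = \upblue \oplus Q^{-l\dA}\upred$, the summand with $r$ red strands acquires a grading shift of $Q^{-lr\dA}$. Since $H_{n,i}^+$ and $E_{n,i}^+$ are just the images under $\imath$ (extended to Karoubi envelopes) of the objects $H_{n,i}, E_{n,i} \in \Kar(\AWC_{q,\pi})$ from \cref{hide,seek}, applying \cref{coprodcat} and transporting along $\imath$ gives the first column of \cref{thedrop}.

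For the $-$ case, I would use \cref{dd} to realize $H_{n,i}^- \cong (\downarrow^{\otimes n}, \jmath_n(e_{(1^n),i}))$ and $E_{n,i}^- \cong (\downarrow^{\otimes n}, \jmath_n(e_{(n),i}))$. The quickest route is to run the proof of \cref{coprodcat} directly on the downward side: construct morphisms $u, v$ in the Karoubi envelope of $(\Heis[blue]{A}{l} \barodot \Heis[red]{A}{m})_{q,\pi}$ that are pivotal duals of those appearing in the proof of \cref{coprodcat}. Concretely, the key relations \cref{jack,wrap} translate via the strictly pivotal structure \cref{spin} to analogous statements for the idempotents $\jmath_n(e_{(n),i})$ and $\jmath_n(e_{(1^n),i})$, since applying $*$ to the morphisms in the $+$ proof produces the required conjugating pair for the $-$ case. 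The grading shift $Q^{m(n-r)\dA}$ on the summand with $r$ red strands arises from $\cDelt{l}{m}(\downarrow) = Q^{m\dA}\downblue \oplus \downred$ and the fact that there are $n-r$ blue downward strands.

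The main obstacle will be verifying the grading shifts in the $-$ case and disentangling the sign conventions. Recall that $\jmath_n$ sends $-s_i$ to the downward crossing, so under $\jmath_n$ the symmetrizer $e_{(n),i}$ is realized by a sum of downward braids with signs that turn it into the image of the antisymmetrizer from the upward perspective (and vice versa). This swap between the $e_{(n),i}$ and $e_{(1^n),i}$ indexing explains why $H_{n,i}^-$ is built from $e_{(1^n),i}$ via $\jmath_n$, and one must be careful that the two decomposition statements in the $-$ column of \cref{thedrop} correspond correctly under the pivotal dualization of the $+$ column. A parallel consistency check is that the shift $Q^{m(n-r)\dA}$ appearing for the $-$ case is not merely $Q^{-l(n-r)\dA}$ shifted globally; rather, the difference $Q^{m(n-r)\dA - (-l r\dA)} = Q^{(mn + (l-m)r)\dA}$ between the $-$ and $+$ shifts reflects the cup/cap degree $\pm k\dA = \pm(l+m)\dA$ in \cref{trappist} together with the fact that $H_{n,i}^-$ and $H_{n,i}^+$ have the same intrinsic degree in $\Kar(\Heis{A}{k}_{q,\pi})$, so this bookkeeping must be done carefully before quoting \cref{coprodcat}.
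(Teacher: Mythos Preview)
Your proposal is correct and follows essentially the same approach as the paper. The paper's proof is even terser than yours: for the $+$ sign it simply invokes \cref{coprodcat} together with the grading shifts built into the definition of $\tilde\Delta_{\blue{l}|\red{m}}$ (noting that \cref{fuzzygraded3} agrees with \cref{Deltcross} up to shifts), and for the $-$ sign it takes right duals, again tracking the grading shifts---exactly your plan of applying the pivotal structure to transport the $+$ isomorphisms to the $-$ side.
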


\begin{proof}
    For the $+$ sign, this follows from \cref{coprodcat} using also the grading shifts in the definition of $\tilde\Delta_{\blue{l}|\red{m}}$, noting that \cref{fuzzygraded3} is the same as \cref{Deltcross} (except for grading shifts).  Then for the $-$ sign, it follows by taking right duals (using the right cups and caps), again taking into account the grading shifts.
\end{proof}

\section{Generalized cyclotomic quotients\label{sec:GCQ}}

Throughout this section, we forget the grading to view $\Heis{A}{k}$
as a strict monoidal supercategory rather than as a graded monoidal
supercategory.  We will construct some module supercategories over it
known as generalized cyclotomic quotients.  In a similar fashion, one
can construct graded module supercategories over $\Heis{A}{k}$, but
these require additional homogeneity assumptions on the defining
parameters which are too restrictive for the subsequent applications
in the paper.

We fix a supercommutative superalgebra $R$.
Defining the $R$-superalgebra $\AR := R \otimes_\kk A$, we can extend
the trace map to an $R$-linear map $\tr_R = \id \otimes \tr \colon \AR
\to R$.  We can then base change to obtain $\HeisR{A}{k} = R
\otimes_\kk \Heis{A}{k}$, which is a strict $R$-linear monoidal
supercategory.
Extra care is needed when working with this since scalars in the base
ring $R$ are not necessarily even, resulting in potential additional
signs.
Morphism spaces in $\HeisR{A}{k}$ are naturally
left $R$-supermodules; since $R$ is supercommutative they may also be viewed as right
$R$-supermodules via the usual formula $v r = (-1)^{\bar v \bar r} rv$.
In diagrams for morphisms in $\HeisR{A}{k}$, we can label tokens by
elements of $\AR$.  The generating function formalism used
in the relations
\cref{wednesdayam,curlgen,wednesdaylunch,wednight,wednesdaypm}
extends in the obvious way to the base ring $R$.

\begin{lem}
  For a polynomial $p(u) \in \AR[u]$, we have
  \begin{align} \label{road1}
    \begin{tikzpicture}[anchorbase]
      \draw[->] (0,-0.4) to (0,0.4);
      \blacktoken[west]{0,0}{p(x)};
    \end{tikzpicture}
    &=
    \left[
      \begin{tikzpicture}[anchorbase]
        \draw[->] (0,-0.4) to (0,0.4);
        \blacktoken[east]{0,0.15}{(u-x)^{-1}};
        \blacktoken[west]{0,-0.15}{p(u)};
      \end{tikzpicture}
    \right]_{u^{-1}},
    &
    \begin{tikzpicture}[anchorbase]
      \draw[<-] (0,-0.4) to (0,0.4);
      \blacktoken[west]{0,0}{p(x)};
    \end{tikzpicture}
    &=
    \left[
      \begin{tikzpicture}[anchorbase]
        \draw[<-] (0,-0.4) to (0,0.4);
        \blacktoken[east]{0,0.15}{(u-x)^{-1}};
        \blacktoken[west]{0,-0.15}{p(u)};
      \end{tikzpicture}
    \right]_{u^{-1}},
    \\ \label{road2}
    \begin{tikzpicture}[anchorbase]
      \draw[->] (0,0.2) arc(90:-270:0.2);
      \blacktoken[west]{0.2,0}{p(x)};
    \end{tikzpicture}
    &= -
    \left[
      \begin{tikzpicture}[anchorbase]
        \bubgenright{0,0}{u};
        \blacktoken[west]{0.2,0}{p(u)};
      \end{tikzpicture}
    \right]_{u^{-1}},
    &
     \begin{tikzpicture}[anchorbase]
      \draw[<-] (0,0.2) arc(90:-270:0.2);
      \blacktoken[west]{0.2,0}{p(x)};
    \end{tikzpicture}
    &=
    \left[
      \begin{tikzpicture}[anchorbase]
        \bubgenleft{0,0}{u};
        \blacktoken[west]{0.2,0}{p(u)};
      \end{tikzpicture}
    \right]_{u^{-1}},
    \\ \label{road3}
    \begin{tikzpicture}[anchorbase]
      \draw[->] (0,-0.5) to[out=up,in=180] (0.3,0.2) to[out=0,in=up] (0.45,0) to[out=down,in=0] (0.3,-0.2) to[out=180,in=down] (0,0.5);
      \blacktoken[west]{0.45,0}{p(x)};
    \end{tikzpicture}
    &=
    \left[
      \begin{tikzpicture}[anchorbase]
        \draw[->] (0,-0.5) -- (0,0.5);
        \draw[->] (0.5,0.2) arc(90:-270:0.2);
        \node at (0.5,0) {\dotlabel{u}};
        \blacktoken[east]{0,0.25}{(u-x)^{-1}};
        \teleport{black}{black}{0.3,0}{0,0};
        \blacktoken[west]{0.7,0}{p(u)};
      \end{tikzpicture}
    \right]_{u^{-1}}
    &
    \begin{tikzpicture}[anchorbase]
      \draw[<-] (0,-0.5) to[out=up,in=180] (0.3,0.2) to[out=0,in=up] (0.45,0) to[out=down,in=0] (0.3,-0.2) to[out=180,in=down] (0,0.5);
      \blacktoken[west]{0.45,0}{p(x)};
    \end{tikzpicture}
    &=
    \left[
      \begin{tikzpicture}[anchorbase]
        \draw[<-] (0,-0.5) -- (0,0.5);
        \draw[<-] (0.5,0.2) arc(90:-270:0.2);
        \node at (0.5,0) {\dotlabel{u}};
        \blacktoken[east]{0,0.25}{(u-x)^{-1}};
        \teleport{black}{black}{0.3,0}{0,0};
        \blacktoken[west]{0.7,0}{p(u)};
      \end{tikzpicture}
    \right]_{u^{-1}}.
  \end{align}
\end{lem}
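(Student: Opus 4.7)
My plan is to prove each of the six identities by first reducing to the case of a monomial token label $p(u) = a u^n$ with $a \in A_R$ and $n \ge 0$ via $R$-linearity of the morphism spaces in the token, and then performing a direct formal Laurent series computation using the generating-function definitions already established in the paper. The key input throughout is the expansion
\begin{equation*}
  (u-x)^{-1} = \sum_{m \ge 0} x^m u^{-m-1}
\end{equation*}
in $R[x]\Laurent{u^{-1}}$, together with the fact from \cref{affrel1} that tokens and dots commute on a single strand, so that each monomial $a x^m$ is unambiguously defined as a morphism.

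For \cref{road1}, the two stacked tokens on the right-hand side compose via the token-multiplication relation of \cref{wreathrel} to a single token with label $(u-x)^{-1} p(u) = \sum_{n, m \ge 0} a_n x^m u^{n-m-1}$, whose $u^{-1}$-coefficient is $\sum_n a_n x^n = p(x)$, matching the left-hand side; the downward strand version is identical. For \cref{road2}, apply the generating-bubble definitions \cref{hedge1} and \cref{hedge2} extended $R$-linearly to labels in $A_R[u]$: the $u^{-1}$-coefficient of the clockwise generating bubble with label $p(u)$ is $-\sum_n \cbubble{a_n}{n}$, whose negation agrees with the clockwise bubble labelled $p(x)$ by monomial expansion, and the counterclockwise identity follows analogously.

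For \cref{road3}, start from the established identity \cref{curlgen} for the right curl with label $(u-x)^{-1}$, where both sides are formal Laurent series in $u^{-1}$. For a monomial $p(u) = a u^n$, first use \cref{tokteleport} to slide the $a$-token from the bubble across the teleporter onto the main strand, reducing the right-hand side to $a \cdot u^n \cdot F(u)$, where $F(u)$ denotes the untokened bubble--strand morphism appearing in \cref{curlgen}. Extracting the $u^{-1}$-coefficient gives $a \cdot [F(u)]_{u^{-n-1}}$, which by \cref{curlgen} equals $a \cdot (\text{curl with } x^n)$, and the latter is precisely the curl with label $a x^n = p(x)$. Summing over the monomials of $p(u)$ completes the argument. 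The downward curl identity follows either by the same computation or by applying the pivotal duality from \cref{spin}, which exchanges upward and downward strands.

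The main subtlety to verify will be sign consistency when some $a_n \in A_R$ is odd and $R$ is supercommutative; however, since the formal variable $u$ and the dot $x$ are both even and tokens commute with dots on a single strand, the monomial-wise expansions introduce no new signs beyond those already built into the generating function formalism and the teleporter sign conventions.
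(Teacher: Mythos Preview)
Your proposal is correct and follows essentially the same strategy as the paper: reduce by linearity to the monomial case $p(u)=au^n$, then read off the $u^{-1}$-coefficient directly for \cref{road1,road2}, and combine with \cref{curlgen} for \cref{road3}. The paper's proof is terser but identical in substance; your argument for \cref{road3} via teleporting the token $a$ off the bubble and extracting the $u^{-n-1}$-coefficient of the curlgen diagram is exactly how the paper's ``use \cref{road1,curlgen}'' unpacks.

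One small imprecision: for the downward curl in \cref{road3}, applying the pivotal duality $*$ from \cref{spin} to the upward \emph{right} curl identity you proved yields an identity about a downward \emph{left} curl (rotation by $180^\circ$ swaps left and right), not the downward right curl stated in \cref{road3}. Your alternative ``the same computation'' is the correct route: one first obtains the downward analogue of \cref{curlgen} (by applying $*$ to the upward \emph{left} curl identity in \cref{curlgen}), and then repeats your argument verbatim. Also, the reference for sliding the token $a$ from the bubble to the strand is the discussion around \cref{sta} rather than \cref{tokteleport} itself, though the underlying principle is the same.
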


\begin{proof}
    By linearity, it suffices to consider the case $p(x) = ax^n$, $a \in A_R$, $n \ge 0$.  In that case, \cref{road1,road2} follow immediately from computing the $u^{-1}$ coefficient on the right-hand side.  Then \cref{road3} follows by using \cref{road1,curlgen}.
\end{proof}

Consider the center $Z(\AR) = R \otimes_\kk Z(A)$ of $\AR$.  Its even
part $Z(\AR)_\even$ is a commutative algebra.
Suppose that
\begin{equation}\label{fg}
f(u) = f_0 u^l + f_1 u^{l-1} + \dotsb + f_l
    \quad \text{and} \quad
    g(u) = g_0 u^m + g_1 u^{m-1} + \dotsb + g_m
\end{equation}
are monic polynomials in $Z(\AR)_\even[u]$ of polynomial degrees $l,m
\ge 0$; in particular, $f_0=g_0=1$.
Define $k := m-l$ and
\begin{equation} \label{harbinger}
    \OO(u) := g(u)/f(u) \in u^k + u^{k-1} Z(\AR)_\even \llbracket u^{-1} \rrbracket.
\end{equation}
Then define $\OO^{(r)}, \tilde{\OO}^{(r)} \in Z(\AR)_\even$ so that
\begin{equation}
    \OO(u) = \sum_{r \in \Z} \OO^{(r)} u^{-r-1},\qquad
    \OO(u)^{-1} = - \sum_{r \in \Z} \tilde{\OO}^{(r)} u^{-r-1}.
\end{equation}

\begin{lem} \label{Chicago}
    The left tensor ideal $\cIR(f|g)$ of $\HeisR{A}{k}$ generated by
    \begin{equation} \label{Chicago1}
        \begin{tikzpicture}[anchorbase]
            \draw[->] (0,-0.25) to (0,0.25);
            \blacktoken[east]{0,0}{f(x)};
        \end{tikzpicture}
        \quad \text{and} \quad
        \ccbubble{a}{r} - \tr_R(\OO^{(r)} a) 1_\one,\quad -k \le r < l,\ a \in \AR,
    \end{equation}
    is equal to the left tensor ideal of $\HeisR{A}{k}$ generated by
    \begin{equation} \label{Chicago2}
        \begin{tikzpicture}[anchorbase]
            \draw[<-] (0,-0.25) to (0,0.25);
            \blacktoken[east]{0,0}{g(x)};
        \end{tikzpicture}
        \quad \text{and} \quad
        \cbubble{a}{r} - \tr_R(\tilde{\OO}^{(r)} a) 1_\one,\quad k \le r < m,\ a \in \AR.
    \end{equation}
    Furthermore, this ideal contains
    \begin{equation} \label{Chicago3}
        \ccbubble{a}{r} - \tr_R(\OO^{(r)} a) 1_\one
        \quad \text{and} \quad
        \cbubble{a}{r} - \tr_R(\tilde{\OO}^{(r)} a) 1_\one
        \quad \text{for all } r \in \Z,\ a \in \AR.
    \end{equation}
\end{lem}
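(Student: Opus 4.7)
My plan has three main stages. First, I would show that the ideal $\cIR(f|g)$ generated by \cref{Chicago1} contains the \emph{extended} counterclockwise bubble relations $\ccbubble{a}{r} - \tr_R(\OO^{(r)} a) 1_\one$ for every $r \in \Z$, not only the prescribed range $-k \le r < l$. The key observation is that for any $r \ge 0$ and $a \in \AR$ the morphism $f(x) a x^r$ already lies in $\cIR(f|g)$, and closing it into a counterclockwise bubble via cups and caps gives, by \cref{road2}, the identity $\sum_{i=0}^l f_i \ccbubble{a}{i+r} \equiv 0 \pmod{\cIR(f|g)}$. This is exactly the linear recurrence satisfied by $\tr_R(\OO^{(r)} a)$ in view of $f(u)\OO(u) = g(u)$ (for $r \ge l$ the $u^{-r-1}$-coefficient of $g(u)$ vanishes, giving $\OO^{(r)} = -\sum_{i=1}^l f_i \OO^{(r-i)}$), so induction on $r$ starting from the known base cases (\cref{weed} for $r \le -k-1$ and \cref{Chicago1} for $-k \le r < l$) delivers the claim.

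Second, I would use the infinite Grassmannian identity \cref{eyes} to transport this to the clockwise side. Setting $a=1$ in \cref{eyes} and substituting the extended counterclockwise relation from step one yields $\sum_c \tr_R(\OO(u) c) \cdot \cbubble{c^\vee b}{u} \equiv \tr_R(b) 1_\one \pmod{\cIR(f|g)}$, which collapses via the dual-basis identity $\sum_c c^\vee \tr_R(\OO(u) c) = \OO(u)$ to $\cbubble{\OO(u) b}{u} \equiv \tr_R(b) 1_\one$ for every $b \in \AR$. Extending $\cbubble{-}{u}$ $R\Laurent{u^{-1}}$-linearly so as to accept tokens in $\AR\Laurent{u^{-1}}$, this identity upgrades to $\cbubble{\OO(u) \beta(u)}{u} \equiv \tr_R(\beta(u))$ for all $\beta(u) \in \AR \Laurent{u^{-1}}$. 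Substituting $\beta(u) = \OO(u)^{-1} c$ for $c \in \AR$---legitimate since $\OO(u)$ is monic of degree $k$ and hence a unit in $Z(\AR)_\even\Laurent{u^{-1}}$---gives $\cbubble{c}{u} \equiv \tr_R(\OO(u)^{-1} c) 1_\one \pmod{\cIR(f|g)}$ for all $c$. Extracting the coefficient of $u^{-r-1}$ produces the extended clockwise bubble relations, which in particular contain the clockwise bubble generators listed in \cref{Chicago2}.

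Third, and this is the genuinely delicate step, I need to show $g(x) \colon \downarrow \to \downarrow$ lies in $\cIR(f|g)$. The plan is to apply the downward-strand analog of \cref{curlgen}---obtained from \cref{curlgen} via the pivotal rotation or the symmetry $\Omega_k$---with $p(x) = g(x)$, rewriting the right curl on $\downarrow$ carrying a $g(x)$ token as the $u^{-1}$-residue of a strand decorated with $(u-x)^{-1}$ joined by a teleporter to a counterclockwise bubble generating function with token $g(u)$. Applying the extended counterclockwise relation from step one reduces the bubble side to $\tr_R(\OO(u) g(u)) = \tr_R(g(u)^2/f(u))$, and unpacking the teleporter via the dual-basis identity, combined with the $[\,\cdot\,]_{u^{-1}}$ extraction, recovers $g(x)|_\downarrow$ modulo terms manifestly in $\cIR(f|g)$ via $f(x) \equiv 0$. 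This is the hardest part of the argument: one must carefully manage the interplay between the teleporter, the pivotal rotation, and the formal Laurent series $g(u)^2/f(u)$ so that the residue isolates precisely $g(x)|_\downarrow$.

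Finally, the reverse containment---that the ideal generated by \cref{Chicago2} contains the generators of \cref{Chicago1}---follows by an entirely symmetric argument swapping $(f,g)$, $(\uparrow,\downarrow)$, the two bubble orientations, and the sign of $k$, formally via the monoidal isomorphism $\Omega_k$. Combining this with the three steps above gives the equality of the two ideals, and the extended bubble relations of \cref{Chicago3} are then precisely the family of relations we have already established along the way.
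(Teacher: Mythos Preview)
Your steps one, two, and the final symmetry argument are correct and match the paper's proof essentially verbatim. The gap is in step three.

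You propose to apply the downward analog of \cref{road3} with $p(x)=g(x)$, obtaining
\[
\text{(down right curl with }g(x)\text{ token)}
=\Big[\text{down strand, }(u-x)^{-1},\text{ teleporter, }\ccbubble{-}{u}\text{ with }g(u)\Big]_{u^{-1}}.
\]
Substituting the step-one relation $\ccbubble{c}{u}\equiv\tr_R(\OO(u)c)$ and collapsing the teleporter gives on the right the $u^{-1}$-coefficient of $(u-x)^{-1}\,\OO(u)g(u)=(u-x)^{-1}\,g(u)^2/f(u)$ on $\downarrow$. Writing $g(u)^2=q(u)f(u)+r(u)$ with $\deg r<l$, this residue equals $q(x)|_\downarrow$, not $g(x)|_\downarrow$; and on the left, the curl carrying $g(x)$ is not known to lie in $\cIR(f|g)$ (only $f(x)$ on $\uparrow$ is a generator). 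So neither side delivers what you need, and the argument stalls.

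The fix is to take $p(x)=f(x)$ instead. Then the curl on $\downarrow$ carries an $f(x)$ token which, by sliding around the loop via the pivotal structure, sits on the upward-oriented portion of the curl and hence lies in $\cIR(f|g)$. On the bubble side, the same substitution now produces $\OO(u)f(u)=g(u)$, and the residue is $[(u-x)^{-1}g(u)]_{u^{-1}}|_\downarrow=g(x)|_\downarrow$ by \cref{road1}. This is precisely the paper's computation, read from right to left:
\[
g(x)|_\downarrow
\overset{\text{\cref{road1}}}{=}
\big[(u-x)^{-1}g(u)\big]_{u^{-1}}
\overset{\text{\cref{harbinger}}}{=}
\big[(u-x)^{-1}\OO(u)f(u)\big]_{u^{-1}}
\overset{\text{step 1}}{\equiv}
\text{(down curl with }f(x)\text{)}
\equiv 0.
\]
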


\begin{proof}
    We first show by induction on $r$ that $\cIR(f|g)$ contains $\ccbubble{a}{r} - \tr_R(\OO^{(r)} a) 1_\one$ for all $r \in \Z$, $a \in \AR$.  The result holds for $r < l$ by \cref{chew} and the definition of $\cIR(f|g)$.  Now suppose $r \ge l$.  By \cref{harbinger}, we have $\OO(u) f(u) = g(u)$.  This is a polynomial in $u$, and so its $u^{l-r-1}$-coefficient is zero.  Thus
    \begin{equation} \label{jefe}
        \sum_{s=0}^l \OO^{(r-s)} f_s = 0.
    \end{equation}
    Then, by our induction hypothesis, we have
    \[
        \ccbubble{a}{r} - \tr_R(\OO^{(r)} a) 1_\one
        \overset{\cref{jefe}}{=} \ccbubble{a}{r} + \sum_{s=1}^l \tr_R \left( \OO^{(r-s)} f_s a \right) 1_\one
        = \sum_{s=0}^l \ccbubble{f_s a}{r-s}
        =
        \begin{tikzpicture}[baseline={(0,-0.15)}]
          \draw[->] (0,0.2) arc(90:450:0.2);
          \multblackdot[west]{0.2,0}{x^{r-l} f(x)};
          \blacktoken[north]{0,-0.2}{a};
        \end{tikzpicture}
        \in \cIR(f|g).
    \]

    For $a \in \AR$, let $\tr_R(\OO(u)a) = \sum_{r \in \Z} \tr_R(\OO^{(r)}a) u^{-r-1} \in u^k + u^{k-1} R \llbracket u^{-1} \rrbracket$.  We have shown that
    \[
        \begin{tikzpicture}[anchorbase]
            \bubgenleft{0,0}{u};
            \blacktoken[east]{-0.2,0}{a};
        \end{tikzpicture}
        = \tr_R \left( \OO(u) a \right) 1_\one
        \mod{\cIR(f|g)}.
    \]
    For $a,b \in \AR$, we have
    \[
        \sum_{c \in \B_A} \tr_R \left( \OO(u) ac \right) \tr_R \left( \OO(u)^{-1} c^\vee b \right)
        \overset{\cref{plane}}{=} \tr_R \left( \OO(u) \OO(u)^{-1} ab \right)
        = \tr_R(ab)
        \overset{\cref{eyes}}{=} 
        \begin{tikzpicture}[anchorbase]
            \bubgenleft{-0.4,0}{u};
            \bubgenright{0.4,0}{u};
            \teleport{black}{black}{-0.2,0}{0.2,0};
            \blacktoken[east]{-0.6,0}{a};
            \blacktoken[west]{0.6,0}{b};
        \end{tikzpicture}
        \ .
    \]
    This implies that, for $a \in \AR$,
    \[
        \begin{tikzpicture}[anchorbase]
            \bubgenright{0,0}{u};
            \blacktoken[east]{-0.2,0}{a};
        \end{tikzpicture}
        = \tr_R \left( \OO(u)^{-1} a \right) 1_\one
        \mod{\cIR(f|g)}
    \]
    (see the proof of \cref{controversy} below for a similar argument), proving \cref{Chicago3}.

    Now note that, in the quotient of $\HeisR{A}{k}$ by $\cIR(f|g)$, we have
    \[
        \begin{tikzpicture}[anchorbase]
            \draw[<-] (0,-0.4) to (0,0.4);
            \blacktoken[east]{0,0}{g(x)};
        \end{tikzpicture}
        \overset{\cref{road1}}{=}
        \left[
            \begin{tikzpicture}[anchorbase]
                \draw[<-] (0,-0.4) to (0,0.4);
                \blacktoken[east]{0,0.15}{(u-x)^{-1}};
                \blacktoken[west]{0,-0.15}{g(u)};
            \end{tikzpicture}
        \right]_{u^{-1}}
        \overset{\cref{harbinger}}{=}
        \left[
            \begin{tikzpicture}[anchorbase]
                \draw[<-] (0,-0.4) to (0,0.4);
                \blacktoken[east]{0,0.15}{(u-x)^{-1}};
                \blacktoken[west]{0,-0.15}{\OO(u) f(u)};
            \end{tikzpicture}
        \right]_{u^{-1}}
        \overset{\cref{Chicago3}}{\underset{\cref{plane}}{=}}
        \left[
            \begin{tikzpicture}[anchorbase]
                \draw[<-] (0,-0.4) to (0,0.4);
                \bubgenleft{0.6,0}{u};
                \blacktoken[west]{0.8,0}{f(u)};
                \teleport{black}{black}{0,0}{0.4,0};
                \blacktoken[east]{0,0.2}{(u-x)^{-1}};
            \end{tikzpicture}
        \right]_{u^{-1}}
        \overset{\cref{road3}}{=}
        \begin{tikzpicture}[anchorbase]
            \draw[<-] (0,-0.5) to[out=up,in=180] (0.3,0.2) to[out=0,in=up] (0.45,0) to[out=down,in=0] (0.3,-0.2) to[out=180,in=down] (0,0.5);
            \blacktoken[west]{0.45,0}{f(x)};
        \end{tikzpicture}
        = 0.
  \]
  So we have shown that the left tensor ideal generated by \cref{Chicago1} contains the elements \cref{Chicago2}.  A similar argument shows that the left tensor ideal generated by \cref{Chicago2} contains the elements \cref{Chicago1}, completing the proof.
\end{proof}

\begin{defin}
The \emph{generalized cyclotomic quotient} of $\HeisR{A}{k}$
corresponding to the supercommutative algebra $R$ and the polynomials
$f(u)$, $g(u)$ as fixed above
is the $R$-linear quotient category
\[
  \GCQ{f}{g} := \HeisR{A}{k} / \cIR(f|g).
\]
This is not a monoidal supercategory, rather, it is
a (left) module supercategory over $\HeisR{A}{k}$.
Unless the polynomials $f(u)$ and $g(u)$ are themselves homogeneous
when $u$ is given degree $2d$, the ideal $\cIR(f|g)$ is not
homogeneous, and this quotient category is no longer graded.
\end{defin}

If $\cV$ and $\cW$ are two $R$-linear supercategories, we define $\cV \boxtimes_R \cW$ to be the $R$-linear supercategory whose objects are pairs $(X,Y)$ of objects $X \in \cV$ and $Y \in \cW$, and with morphisms
\[
  \Hom_{\cV \boxtimes_R \cW} \big( (X_1, Y_1), (X_2, Y_2) \big)
  := \Hom_\cV (X_1, X_2) \otimes_R \Hom_\cW (Y_1, Y_2).
\]
Composition in $\cV \boxtimes_R \cW$ is defined by \cref{interchange}.
If $\cV$ and $\cW$ are module supercategories over $\HeisR[blue]{A}{l}$ and $\HeisR[red]{A}{m}$, respectively, then $\cV \boxtimes_R \cW$ is naturally a module supercategory over the $R$-linear symmetric product $\HeisR[blue]{A}{l} \odot_R \HeisR[red]{A}{m}$.  If, in addition, the action of the morphism
\begin{equation}\label{tea}
    \begin{tikzpicture}[anchorbase]
        \draw[->,blue] (-0.2,-0.4) to (-0.2,0.4);
        \draw[->,red] (0.2,-0.4) to (0.2,0.4);
        \dashdumb{-0.2,0}{0.2,0};
    \end{tikzpicture}
    \ =\
    \begin{tikzpicture}[anchorbase]
        \draw[->,blue] (-0.2,-0.4) to (-0.2,0.4);
        \draw[->,red] (0.2,-0.4) to (0.2,0.4);
        \reddot{0.2,0};
    \end{tikzpicture}
    \ -\
    \begin{tikzpicture}[anchorbase]
        \draw[->,blue] (-0.2,-0.4) to (-0.2,0.4);
        \draw[->,red] (0.2,-0.4) to (0.2,0.4);
        \bluedot{-0.2,0};
    \end{tikzpicture}
\end{equation}
is invertible on all objects of $\cV \boxtimes_R \cW$, then the
categorical action induces an action of the localization
$\HeisR[blue]{A}{l} \barodotnonumber_R \HeisR[red]{A}{m}$.  Then, using
the $R$-linearization of the categorical comultiplication $\cDelt{l}{m}$ of \cref{bunny}, we
have that $\cV \boxtimes_R \cW$ is a module supercategory over
$\HeisR{A}{l+m}$.

Since dual bases of $A$ over $\kk$ are also dual bases of $\AR$ over
$R$, we have $\AWAR{n} = R \otimes_\kk \AWA{n}$.  The \emph{cyclotomic
  wreath product algebra} $\CWAR{n}{f}$ associated to $f(u)$ as in
\cref{fg} is the
quotient $\AWAR{n}/(f(x_1))$.  We adopt the convention that
$\CWAR{0}{f} = \AWAR{0} =R$.  By \cite[Th.~6.11]{Sav20},
\begin{equation} \label{AW-basis}
    \left\{ x_1^{\alpha_1} \dotsm x_n^{\alpha_n} \bb \sigma :
      \alpha_1,\dotsc,\alpha_n < l,\ \bb \in \B_A^{\otimes n},\ \sigma
      \in \fS_n \right\}
\end{equation}
is an $R$-basis of $\CWAR{n}{f}$.
It follows that the natural homomorphism $\CWAR{n}{f} \to \CWAR{n+1}{f}$ is injective.

For $f(u),g(u)$ as in \cref{fg}, Define
\[
    \cV(f) := \bigoplus_{n \ge 0} \psmod \CWAR{n}{f},\quad
    \cV(g)^\vee := \bigoplus_{n \ge 0} \psmod \CWAopR{n}{g}.
\]
As explained in \cite[\S1]{Sav18} (except that we use right modules
here instead of left modules), we have a strict $R$-linear monoidal superfunctor
\begin{equation} \label{upaction}
    \Psi_f \colon \HeisR{A}{-l} \to \SEnd_R \left( \cV(f) \right)
\end{equation}
sending $\uparrow$ (resp.\ $\downarrow$) to the additive endofunctor that is the induction superfunctor $\ind_n^{n+1} = - \otimes_{\CWAR{n}{f}} \CWAR{n+1}{f}$ (resp.\ the restriction superfunctor $\res^n_{n-1}$) on $\psmod \CWAR{n}{f}$.  On generating morphisms, $\Psi_f$ sends
\begin{itemize}
  \item $
    \begin{tikzpicture}[anchorbase]
      \draw[->] (0,-0.25) to (0,0.25);
      \blackdot{0,0};
    \end{tikzpicture}
    $
    to the $R$-linear supernatural transformation defined on a projective $\CWAR{n}{f}$-supermodule $M$ by the map
    \[
      M \otimes_{\CWAR{n}{f}} \CWAR{n+1}{f}
      \to M \otimes_{\CWAR{n}{f}} \CWAR{n+1}{f},\quad
      v \otimes w \mapsto v \otimes x_{n+1} w;
    \]

  \item $
    \begin{tikzpicture}[anchorbase]
      \draw[->] (0,-0.25) to (0,0.25);
      \blacktoken[east]{0,0}{a};
    \end{tikzpicture}
    $
    to the $R$-linear supernatural transformation defined on a projective $\CWAR{n}{f}$-supermodule $M$ by the map
    \[
      M \otimes_{\CWAR{n}{f}} \CWAR{n+1}{f}
      \to M \otimes_{\CWAR{n}{f}} \CWAR{n+1}{f},\quad
      v \otimes w \mapsto (-1)^{\bar{a} \bar{v}} v \otimes (a \otimes 1^{\otimes n}) w;
    \]

  \item $
    \begin{tikzpicture}[anchorbase]
      \draw[->] (-0.2,-0.25) to (0.2,0.25);
      \draw[->] (0.2,-0.25) to (-0.2,0.25);
    \end{tikzpicture}
    $
    to the $R$-linear supernatural transformation defined on a projective $\CWAR{n}{f}$-supermodule $M$ by the map
    \[
      M \otimes_{\CWAR{n}{f}} \CWAR{n+2}{f}
      \to M \otimes_{\CWAR{n}{f}} \CWAR{n+2}{f},\quad
      v \otimes w \mapsto v \otimes s_{n+1} w;
    \]

  \item $
    \begin{tikzpicture}[anchorbase]
      \draw[->] (-0.2,0.2) to (-0.2,0) arc(180:360:0.2) to (0.2,0.2);
    \end{tikzpicture}
    $
    and
    $
    \begin{tikzpicture}[anchorbase]
      \draw[->] (-0.2,-0.2) to (-0.2,0) arc(180:0:0.2) to (0.2,-0.2);
    \end{tikzpicture}
    $
    to the $R$-linear supernatural transformations defined by the unit and counit of the adjunction making $(\ind_n^{n+1}, \res_n^{n+1})$ into an adjoint pair.
\end{itemize}
The superfunctor \cref{upaction} makes $\cV(f)$ into a module
supercategory over $\HeisR{A}{-l}$.

Similarly, we make $\cV(g)^\vee$ into a module supercategory over $\HeisR{A}{m}$ via the strict $R$-linear monoidal superfunctor
\begin{equation} \label{downaction}
    \Psi_g^\vee \colon \HeisR{A}{m} \to \SEnd_R \left( \cV(g)^\vee \right)
\end{equation}
sending $\downarrow$ (resp.\ $\uparrow$) to the endosuperfunctor defined on $\psmod \CWAopR{n}{g}$ by the induction functor $\ind_n^{n+1}$ (resp.\ the restriction functor $\res^n_{n-1}$).  On generating morphisms, $\Psi_f$ sends
\begin{itemize}
    \item $
        \begin{tikzpicture}[anchorbase]
            \draw[<-] (0,-0.25) to (0,0.25);
            \blackdot{0,0};
        \end{tikzpicture}
    $
    to the $R$-linear supernatural transformation defined on a projective $\CWAopR{n}{g}$-supermodule $M$ by the map
    \[
        M \otimes_{\CWAopR{n}{g}} \CWAopR{n+1}{g}
        \to M \otimes_{\CWAopR{n}{g}} \CWAopR{n+1}{g},\quad
        v \otimes w \mapsto v \otimes x_{n+1} w ;
    \]

    \item $
        \begin{tikzpicture}[anchorbase]
            \draw[<-] (0,-0.25) to (0,0.25);
            \blacktoken[east]{0,0}{a};
        \end{tikzpicture}
    $
    to the $R$-linear supernatural transformation defined on a projective $\CWAopR{n}{g}$-supermodule $M$ by the map
    \[
        M \otimes_{\CWAopR{n}{g}} \CWAopR{n+1}{g}
        \to M \otimes_{\CWAopR{n}{g}} \CWAopR{n+1}{g},\quad
        v \otimes w \mapsto (-1)^{\bar{a} \bar{v}} v \otimes (a \otimes 1^{\otimes n}) w;
    \]

    \item $
        \begin{tikzpicture}[anchorbase]
            \draw[<-] (-0.2,-0.25) to (0.2,0.25);
            \draw[<-] (0.2,-0.25) to (-0.2,0.25);
        \end{tikzpicture}
    $
    to the $R$-linear supernatural transformation defined on a projective $\CWAopR{n}{g}$-supermodule $M$ by the map
    \[
        M \otimes_{\CWAopR{n}{g}} \CWAopR{n+2}{g}
        \to M \otimes_{\CWAopR{n}{g}} \CWAopR{n+2}{g},\quad
        v \otimes w \mapsto - v \otimes s_{n+1} w;
    \]

    \item $
        \begin{tikzpicture}[anchorbase]
            \draw[<-] (-0.2,0.2) to (-0.2,0) arc(180:360:0.2) to (0.2,0.2);
        \end{tikzpicture}
    $
    and
    $
    \begin{tikzpicture}[anchorbase]
        \draw[<-] (-0.2,-0.2) to (-0.2,0) arc(180:0:0.2) to (0.2,-0.2);
    \end{tikzpicture}
    $
    to the $R$-linear supernatural transformations defined by the unit and counit of the adjunction making $(\ind_n^{n+1}, \res_n^{n+1})$ into an adjoint pair.
\end{itemize}

On combining \cref{upaction,downaction}, the $R$-linear supercategory
\begin{equation}\label{Vfg}
 \cV(f|g) := \cV(f) \boxtimes_R \cV(g)^\vee
\approx  \bigoplus_{m,n\geq 0} \psmod
\CWAR{m}{f}\otimes_R \CWAopR{n}{g}
\end{equation}
becomes a module supercategory over
$\HeisR[blue]{A}{l} \odot_R \HeisR[red]{A}{m}$.
We would like the morphism \cref{tea} to act invertibly too so that we
can make $\cV(f|g)$ into a module supercategory over $\HeisR{A}{k}$, but for
this we need to impose a genericity condition on the defining
parameters.
To formulate this, {\em we assume for the remainder of the section}
that the
base ring $R$ is a finite-dimensional supercommutative superalgebra
over an algebraically closed field $\KK\supseteq \kk$;
by ``eigenvalue'' we mean eigenvalue in this field $\KK$.
Noting that $\AR \otimes_R \AR$ is
finite dimensional over $\KK$,
let $\Gamma_R$ be the abelian subgroup of $(\KK,+)$ generated by the eigenvalues of
the endomorphism of
$\AR \otimes_R \AR$ defined by left multiplication
by $\tau_1 = \sum_{b \in \B_A} b \otimes b^\vee$.
For example, if $\dA \neq 0$, then $\tau_1$ is necessarily nilpotent
for degree reasons, hence $\Gamma_R = \{0_\KK\}$; on the other hand, if $A
= \kk$ then $\tau_1 = 1 \otimes 1$ so $\Gamma_R = \Z \cdot 1_\KK$.
By Schur's lemma, $Z(\AR)_\even$ acts on any irreducible $\AR$-supermodule
$L$ via a central
character $\chi_L \colon Z(\AR)_\even \to \KK$.  Hence, for $f(u), g(u)$ as in (\ref{fg}), we have $\chi_L(f(u)), \chi_L(g(u)) \in
\KK[u]$. Let $\Xi_f$ and $\Xi_g$ denote the subsets of $\KK$ consisting of the
roots of $\chi_L(f(u))$ or $\chi_L(g(u))$, respectively, for all
irreducible $\AR$-supermodules $L$.

\begin{defin}\label{genericity}
We say that $f(u), g(u)$ are a {\em generic pair} if
the sets $\Xi_f+\Gamma_R$ and $\Xi_g+\Gamma_R$ are disjoint.
\end{defin}

\begin{lem}
    Assume that $V$ is an $\AWAR{2}$-supermodule.
Let $\lambda_2$ be an eigenvalue of $x_2$ on $V$.
Then $\lambda_2 \in \lambda_1+\Gamma_R$ for some eigenvalue $\lambda_1$ of $x_1$ on
$V$.
\end{lem}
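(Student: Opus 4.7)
The plan is to introduce an intertwiner in $\AWAR{2}$ and exploit its algebraic properties. Define $\Phi := s_1(x_1 - x_2) + \tau_1$. Using the defining relations $s_1 x_1 = x_2 s_1 - \tau_1$ and $s_1 x_2 = x_1 s_1 + \tau_1$ from \cref{sif}, together with the fact that $\tau_1 \in \AR^{\otimes 2}$ commutes with each of $x_1, x_2$ and with $s_1$ (the latter relation $s_1 \tau_1 = \tau_1 s_1$ being forced by comparing the two expressions for $x_1$ obtained as $s_1\cdot(s_1 x_1)$ and $(s_1 x_2)\cdot s_1$), a short manipulation establishes
\[
    \Phi x_1 = x_2 \Phi, \qquad \Phi x_2 = x_1 \Phi, \qquad \Phi^2 = \tau_1^2 - (x_1 - x_2)^2.
\]

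Next, I produce a simultaneous eigenvector of $x_2$ and $\tau_1$ in $V$. Starting from a nonzero $v_0 \in V$ with $x_2 v_0 = \lambda_2 v_0$, set $W := \AR^{\otimes 2}\cdot v_0 \subseteq V$. Since $\AR^{\otimes 2}$ is finite-dimensional over $\KK$, so is $W$, and the surjection $\AR^{\otimes 2} \twoheadrightarrow W$, $a \mapsto a v_0$, is a map of left $\AR^{\otimes 2}$-modules. Consequently the eigenvalues of $\tau_1$ acting on $W$ are contained in those of left multiplication by $\tau_1$ on $\AR^{\otimes 2}$, which lie in $\Gamma_R$ by definition. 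Algebraic closedness of $\KK$ then provides a nonzero $v \in W$ with $\tau_1 v = \gamma v$ for some $\gamma \in \Gamma_R$; and because $x_2$ commutes with $\AR^{\otimes 2}$ and $x_2 v_0 = \lambda_2 v_0$, the operator $x_2$ acts as $\lambda_2$ on all of $W$, so $x_2 v = \lambda_2 v$.

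To conclude, I split on whether $\Phi v$ vanishes. If $\Phi v \neq 0$, then $x_1(\Phi v) = \Phi(x_2 v) = \lambda_2(\Phi v)$, so $\Phi v$ is an honest $x_1$-eigenvector with eigenvalue $\lambda_1 := \lambda_2$, giving $\lambda_2 - \lambda_1 = 0 \in \Gamma_R$. If $\Phi v = 0$, then $\Phi^2 v = 0$; using $(x_1 - x_2)^2 v = (x_1 - \lambda_2)^2 v$ (which holds because $x_2 v = \lambda_2 v$ and $x_1, x_2$ commute) together with $\tau_1 v = \gamma v$, the square identity gives
\[
    0 = \Phi^2 v = \gamma^2 v - (x_1 - \lambda_2)^2 v = -(x_1 - \lambda_2 - \gamma)(x_1 - \lambda_2 + \gamma) v.
\]
Setting $u := (x_1 - \lambda_2 + \gamma) v$, either $u \neq 0$ is an $x_1$-eigenvector with eigenvalue $\lambda_2 + \gamma$, or $u = 0$ and $v$ itself is an $x_1$-eigenvector with eigenvalue $\lambda_2 - \gamma$; in both cases $\lambda_2 - \lambda_1 \in \{\pm\gamma\} \subseteq \Gamma_R$. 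The one delicate point is the verification of the square identity $\Phi^2 = \tau_1^2 - (x_1 - x_2)^2$ in the super setting, which rests on the commutation $s_1 \tau_1 = \tau_1 s_1$ (equivalently, the super-invariance $s_1(\tau_1) = \tau_1$); this ultimately reflects the evenness of the Frobenius trace on the symmetric graded Frobenius superalgebra $A$.
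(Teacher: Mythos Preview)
Your proof is correct but takes a genuinely different route from the paper's. The paper proceeds more directly: since $x_1$, $x_2$, and $\tau_1$ all commute, it picks a simultaneous eigenvector $v$ of all three (with eigenvalues $\lambda_1$, $\lambda_2$, $\gamma$), then splits on whether $v$ is also an eigenvector of $s_1$. If $s_1 v = \pm v$, the relation $s_1 x_2 = x_1 s_1 + \tau_1$ immediately forces $\lambda_2 = \lambda_1 \pm \gamma$; if not, the matrix of $x_1$ on $\operatorname{span}\{v, s_1 v\}$ is upper-triangular with diagonal $(\lambda_1, \lambda_2)$, so $\lambda_2$ is itself an eigenvalue of $x_1$.

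Your approach instead builds the intertwiner $\Phi = s_1(x_1 - x_2) + \tau_1$ and exploits $\Phi x_2 = x_1 \Phi$ together with $\Phi^2 = \tau_1^2 - (x_1 - x_2)^2$. This is the degenerate analogue of Cherednik--Lusztig intertwiners in affine Hecke algebra theory, and it has the pleasant feature that you only need a simultaneous eigenvector of $x_2$ and $\tau_1$ (which you produce inside the finite-dimensional cyclic subspace $\AR^{\otimes 2} v_0$), rather than assuming from the outset that an $x_1$-eigenvector exists in the $\lambda_2$-eigenspace. The paper's argument is shorter and more elementary; yours is more structural and arguably more robust, since it \emph{constructs} the required $x_1$-eigenvector rather than invoking simultaneous triangularisability.
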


\begin{proof}
Since $x_1$, $x_2$, and $\tau_1$ all commute, we can choose a vector
$v \in V$ in the $\lambda_2$-eigenspace of $x_2$ that is also an
eigenvector of $x_1$ with some eigenvalue $\lambda_1$ and an
eigenvector of $\tau_1$ with some eigenvalue $\gamma \in \Gamma_R$.
First suppose that $v$ is an eigenvector for $s_1$.  Since $s_1^2 = 1$, this means that $s_1 v = \pm v$.  Then we have
  \[
    \lambda_2 v
    = \pm s_1 x_2 v
    = (\pm x_1 s_1 \pm \tau_1) v
    = (\lambda_1 \pm \gamma) v,
  \]
hence $\lambda_2 \in \lambda_1+\Gamma_R$.
  On the other hand, suppose $v$ is not an eigenvector of $s_1$.  Then, by the relation $x_1 s_1 = s_1 x_2 - \tau_1$, the matrix describing the action of $x_1$ on the subspace with basis $\{v,s_1v\}$ is
  \[
    \begin{bmatrix}
      \lambda_1 & -\gamma \\
      0 & \lambda_2
    \end{bmatrix}.
  \]
  Hence $\lambda_2$ is also an eigenvalue of $x_1$.
\end{proof}

\begin{cor} \label{hot}
    The eigenvalues of $x_1,\dotsc,x_n$ on any $\CWAR{n}{f}$-supermodule
    lie in $\Xi_f + \Gamma_R$.
\end{cor}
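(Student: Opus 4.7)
The plan is to reduce to the case of $x_1$ and then iterate the preceding lemma. More precisely, I will argue in two steps: first that every eigenvalue of $x_1$ on a $\CWAR{n}{f}$-supermodule lies in $\Xi_f$, and then that eigenvalues of $x_{i}$ differ from eigenvalues of $x_{i-1}$ only by elements of $\Gamma_R$.

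For the first step, suppose $v \in V$ is a nonzero vector with $x_1 v = \lambda_1 v$ for some $\lambda_1 \in \KK$. Since $f(x_1) = 0$ in $\CWAR{n}{f}$, we have $f(\lambda_1)v = 0$, with $f(\lambda_1) \in Z(\AR)_{\even}$. Now the key observation is that in $\AWAR{n}$ the element $x_1$ commutes with the subalgebra $\AR \hookrightarrow \AWAR{n}$ sitting in the rightmost tensor factor (this is part of the defining relations of the affine wreath product algebra, as recorded in the proof of \cref{dog}). Hence $\AR \cdot v$ is a finite-dimensional $\AR$-supermodule contained in the $\lambda_1$-eigenspace of $x_1$, and is annihilated by the central element $f(\lambda_1)$. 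Choosing an irreducible subquotient $L$ of $\AR \cdot v$, the central element $f(\lambda_1)$ must act on $L$ by the scalar $\chi_L(f(\lambda_1))$, which is therefore zero, so $\lambda_1$ is a root of $\chi_L(f(u))$ and thus lies in $\Xi_f$.

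For the second step, I would exhibit, for each $1 \le i \le n-1$, a homomorphism $\AWAR{2} \to \AWAR{n}$ sending $x_1, x_2, s_1$ and $1^{\otimes 2-j} \otimes a \otimes 1^{\otimes j-1}$ (for $a \in A$) to $x_i, x_{i+1}, s_i$ and $1^{\otimes n-i-j+1} \otimes a \otimes 1^{\otimes i+j-2}$ respectively; this is well-defined because the defining relations of $\AWAR{2}$ only involve these generators and are preserved, and only the $(i,i+1)$-factors of $A_R^{\otimes n}$ and the transposition $s_i$ intervene. Restricting $V$ through this homomorphism yields an $\AWAR{2}$-supermodule on which $x_1$ and $x_2$ act as $x_i$ and $x_{i+1}$. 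The preceding lemma then gives that every eigenvalue of $x_{i+1}$ lies in $\lambda + \Gamma_R$ for some eigenvalue $\lambda$ of $x_i$ on $V$.

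Iterating this from $i=n-1$ down to $i=1$ shows that every eigenvalue of $x_i$ lies in $\lambda_1 + \Gamma_R$ for some eigenvalue $\lambda_1$ of $x_1$, and combining with the first step yields the desired conclusion. The most subtle point is the first step, since $f(u)$ has non-scalar coefficients in $Z(\AR)_{\even}$; the trick is to use the $\AR$-supermodule structure (which commutes with $x_1$) to cut out an irreducible on which the central action becomes a scalar. The rest of the argument is a straightforward reduction to the two-variable lemma via the evident subalgebra embeddings.
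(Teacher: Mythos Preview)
Your argument is correct and is precisely the intended deduction from the preceding lemma (the paper states this as a corollary without proof, and your two-step outline---analyze $x_1$-eigenvalues via an irreducible $\AR$-subquotient, then iterate the lemma through the embeddings $\AWAR{2}\hookrightarrow\AWAR{n}$---is the natural way to fill in the details). One minor correction: the commutation of $x_1$ with the rightmost copy of $\AR$ is simply the definition $P_n(\AR)=\AR^{\otimes n}\otimes\kk[x_1,\dots,x_n]$ (equivalently relation \cref{affrel1}), not something recorded in the proof of \cref{dog}.
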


\begin{cor} \label{bike}
    Suppose that $f(u), g(u) \in Z(\AR)_\even[u]$ are a generic pair
    in the sense of \cref{genericity}.  In the categorical action of
    $\HeisR[blue]{A}{-l} \odot_R \HeisR[red]{A}{m}$ on $\cV(f|g)$, the
    morphism \cref{tea} acts invertibly.
\end{cor}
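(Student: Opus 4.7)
My plan is to identify the action of \cref{tea} concretely on objects of $\cV(f|g) = \cV(f) \boxtimes_R \cV(g)^\vee$ and reduce invertibility to the disjointness of two eigenvalue sets coming from \cref{hot}.

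First, fix an object $(M,N) \in \cV(f|g)$ with $M \in \psmod \CWAR{m}{f}$ and $N \in \psmod \CWAopR{n}{g}$. Under the categorical action, $\upblue \otimes \upred$ sends $(M,N)$ to $(\ind_f M, \res_g N)$, where $\ind_f$ denotes induction from $\CWAR{m}{f}$ to $\CWAR{m+1}{f}$ and $\res_g$ denotes restriction from $\CWAopR{n}{g}$ to $\CWAopR{n-1}{g}$. Unpacking the definitions of $\Psi_f$ in \cref{upaction} and $\Psi_g^\vee$ in \cref{downaction} (for the latter, transferring the dot on $\upred$ through the adjunction in \cref{slc1,bogey}), the blue dot acts on $\ind_f M$ as multiplication by $x_{m+1}$, while the red dot acts on $\res_g N = N$ as multiplication by the last original variable $x_n$. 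Writing $A := x_{m+1} \otimes_R 1_{\res_g N}$ and $B := 1_{\ind_f M} \otimes_R x_n$ in $\End_R(\ind_f M) \otimes_R \End_R(\res_g N)$, these commute (acting on different $\boxtimes_R$ factors), and \cref{dashdumb} shows that the endomorphism of $(\ind_f M, \res_g N)$ corresponding to \cref{tea} is precisely $B - A$.

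Next, I will control the spectra of $A$ and $B$. Both $\ind_f M$ and $N$ are finite-dimensional over $\KK$, because $R$ is finite-dimensional over $\KK$ and the cyclotomic wreath products $\CWAR{m+1}{f}$ and $\CWAopR{n}{g}$ are finitely generated as $R$-modules (by \cref{AW-basis}). Since $\ind_f M$ is a $\CWAR{m+1}{f}$-module, \cref{hot} gives that every eigenvalue of $A$ lies in $\Xi_f + \Gamma_R$. The analogous statement for $\AR^\op$ (with the same value of $\Gamma_R$, since $\tau_1$ is intrinsic to $A$) shows that every eigenvalue of $B$ lies in $\Xi_g + \Gamma_R$.

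Finally, I invoke the generic pair hypothesis: $(\Xi_f + \Gamma_R) \cap (\Xi_g + \Gamma_R) = \emptyset$, so the commuting operators $A$ and $B$ have disjoint spectra. Standard linear algebra—namely, the Chinese Remainder Theorem applied to the minimal polynomials $p_A(s), p_B(t) \in \KK[s,t]$, whose zero loci are disjoint—produces $q(s,t) \in \KK[s,t]$ with $q(s,t)(t-s) \equiv 1 \pmod{(p_A(s), p_B(t))}$. Expanding $q(s,t) = \sum_i p_i(s) r_i(t)$ and setting $q(A,B) := \sum_i (p_i(x_{m+1}) \otimes_R 1) \cdot (1 \otimes_R r_i(x_n))$ exhibits a two-sided inverse of $B-A$ lying in $\End_R(\ind_f M) \otimes_R \End_R(\res_g N)$, which is the morphism space in $\cV(f|g)$. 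Since invertibility of a natural transformation is pointwise, this shows that \cref{tea} acts invertibly on every object. The main (and fortunately minor) technical obstacle is verifying that the inverse produced from disjoint spectra actually lives in the tensor-product morphism space of $\boxtimes_R$ rather than in some larger completion; but the polynomial nature of $q(A,B)$ in commuting factored operators handles this cleanly.
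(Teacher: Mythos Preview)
Your argument is correct and follows essentially the same route as the paper: both use \cref{hot} (and its $A^\op$ analogue) together with the genericity hypothesis to conclude that the commuting operators coming from the blue and red dots have disjoint spectra, hence their difference is invertible. Your version is more careful on one point the paper glosses over, namely that the inverse actually lives in the $\boxtimes_R$ morphism space $\End_R(\ind_f M)\otimes_R \End_R(\res_g N)$ rather than merely in $\End_\KK$ of the underlying vector space; your polynomial construction via the Chinese Remainder Theorem handles this cleanly, whereas the paper simply asserts invertibility from the eigenvalues being nonzero.
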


\begin{proof}
\cref{hot} and the genericity assumption imply that
the set of
eigenvalues of $x_1,\dots,x_n$ on any
finite-dimensional $\CWAR{n}{f}$-supermodule is disjoint from the
set of eigenvalues of $x_1,\dots,x_m$ on any finite-dimensional $\CWAopR{m}{g}$-supermodule.
Consequently, the commuting endomorphisms
defined by evaluating
$$    \begin{tikzpicture}[anchorbase]
        \draw[->,blue] (-0.2,-0.4) to (-0.2,0.4);
        \draw[->,red] (0.2,-0.4) to (0.2,0.4);
        \bluedot{-0.2,0};
    \end{tikzpicture}
\qquad\text{ and }\qquad
\begin{tikzpicture}[anchorbase]
        \draw[->,blue] (-0.2,-0.4) to (-0.2,0.4);
        \draw[->,red] (0.2,-0.4) to (0.2,0.4);
        \reddot{0.2,0};
    \end{tikzpicture}
$$
on an object of
$\mathcal{V}(f|g)$ have disjoint spectra.
Hence, all eigenvalues of the endomorphism
defined by \cref{tea}
lie in $\kk^\times$.
Hence, this endomorphism is invertible.
\end{proof}

Assuming the genericity condition holds,
\cref{bike} is exactly what is needed to see that we can make
$\cV(f|g)$ into a
$\HeisR{A}{k}$-module supercategory as explained after \cref{tea}.
Thus, recalling that $k=m-l$, we have constructed a strict $R$-linear monoidal superfunctor
\begin{equation} \label{tiki}
    \Psi_{f|g} \colon \HeisR{A}{k} \to \SEnd_R \big( \cV(f|g) \big).
\end{equation}
    Recall the generating functions
    $
        \begin{tikzpicture}[baseline={(0,-0.1)}]
            \bubgenleft{0,0}{u};
            \blacktoken[east]{-0.2,0}{a};
        \end{tikzpicture}
    $
    and
    $
        \begin{tikzpicture}[baseline={(0,-0.1)}]
            \bubgenright{0,0}{u};
            \blacktoken[west]{0.2,0}{a};
        \end{tikzpicture}
    $
    defined in \cref{hedge1,hedge2}.

\begin{lem} \label{bbq}
We have that
    \begin{align*}
        \Psi_{f|g}
        \left(
            \begin{tikzpicture}[anchorbase]
                \bubgenleft{0,0}{u};
                \blacktoken[east]{-0.2,0}{a};
            \end{tikzpicture}\
        \right)_{(\CWAR{0}{f}, \CWAopR{0}{g})}
        &=
        \tr_R \left( g(u) f(u)^{-1} a \right) \in u^{k} R \llbracket
          u^{-1} \rrbracket,
\\
        \Psi_{f|g}
        \left(\
            \begin{tikzpicture}[anchorbase]
                \bubgenright{0,0}{u};
                \blacktoken[west]{0.2,0}{a};
            \end{tikzpicture}
        \right)_{(\CWAR{0}{f}, \CWAopR{0}{g})}
        &=
        \tr_R \left( f(u) g(u)^{-1} a \right) \in u^{-k} R \llbracket u^{-1} \rrbracket.
    \end{align*}
\end{lem}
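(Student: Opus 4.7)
The plan is to apply the categorical comultiplication $\cDelt{-l}{m}$ from Theorem~\ref{bunny}, specifically the generating function identity (\ref{circus}), to rewrite the generating function bubble from $\HeisR{A}{k}$ as a composition of blue and red bubbles in $\HeisR[blue]{A}{-l} \barodot \HeisR[red]{A}{m}$ joined by a teleporter. Evaluating on the vacuum object $(R,R) = (\CWAR{0}{f},\CWAopR{0}{g})$, where there are no horizontal strands, each colored bubble acts as multiplication by a scalar in $R$; expanding the teleporter using $\B_A$ and the dual basis $\{b^\vee : b \in \B_A\}$ reduces the first calculation to the sum
\[
    \sum_{b \in \B_A} \Psi_f\!\left(\begin{tikzpicture}[anchorbase]\bubgenleft{0,0}{u};\blacktoken[west]{0.2,0}{ab};\end{tikzpicture}\right)_{\!R} \cdot \Psi_g^\vee\!\left(\begin{tikzpicture}[anchorbase]\bubgenleft{0,0}{u};\blacktoken[west]{0.2,0}{b^\vee};\end{tikzpicture}\right)_{\!R}.
\]

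Next I will evaluate each scalar factor using the cyclotomic structure of the individual module supercategories. For the blue factor, the $(\ind_0^1, \res_0^1)$-adjunction and the cyclotomic relation $f(x_1) = 0$ in $\CWAR{1}{f}$ imply, via a standard partial fraction expansion on generating functions (which is the diagrammatic incarnation of Lemma~\ref{Chicago} applied with $g=1$, where the bubble index range $-k \le r < l$ is empty), that the blue ccbubble generating function with token $c$ evaluates on the vacuum $R$ as $\tr_R(f(u)^{-1}c)$. For the red factor, an analogous computation for $\Psi_g^\vee$---where the roles of $\uparrow$ and $\downarrow$ are swapped relative to the blue case---gives $\tr_R(g(u)c)$; this comes from the cyclotomic relation $g(x_1)=0$ in $\CWAopR{1}{g}$ using the equivalent presentation of $\cIR(f|g)$ of Lemma~\ref{Chicago} with $f=1$ (so that the index range $k \le r < m$ is again empty), or equivalently by first applying the symmetry $\Omega$ from Section~5 and then invoking the blue calculation.

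Combining and using the Frobenius pairing identity $\sum_{b \in \B_A} \tr_R(\alpha b)\tr_R(\beta b^\vee) = \tr_R(\alpha\beta)$ (which follows from $\sum_b \tr_R(\alpha b)\, b^\vee = \alpha$ by (\ref{plane}) together with the super-cyclicity of $\tr_R$), and using the centrality of $f(u),g(u) \in Z(\AR)_\even[u]$, produces the desired $\tr_R(g(u)f(u)^{-1}a)$. The second formula follows by an entirely parallel argument applied to the clockwise bubble: the image under $\cDelt{-l}{m}$ of $\bubgenright$ is again a product of blue and red clockwise bubbles joined by a teleporter, and the vacuum evaluations of these factors are $\tr_R(f(u)c)$ and $\tr_R(g(u)^{-1}c)$ respectively, combining via the same Frobenius identity to give $\tr_R(f(u)g(u)^{-1}a)$.

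The main obstacle is the verification of the individual bubble evaluations in the middle step---particularly the red one, where the opposite-algebra convention in the definition of $\Psi_g^\vee$ requires careful tracking of signs and of the $(\ind,\res)$-diagrammatic calculus on the cyclotomic algebras. The underlying algebraic content is the partial fraction identity $f(u)^{-1} \equiv \sum_{n\ge 0} u^{-n-1}(x^n \bmod f(x))$ in $\AR[x]/(f(x))\llbracket u^{-1}\rrbracket$, which translates the cyclotomic polynomial directly into the desired generating function; once this is in hand, the rest of the argument is formal manipulation of power series.
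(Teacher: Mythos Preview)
Your proposal is correct and follows essentially the same route as the paper: invoke \cref{Chicago} with $g=1$ (resp.\ $f=1$) to evaluate the blue (resp.\ red) bubble generating functions on the vacuum, then combine them via the teleporter identity \cref{circus} and the Frobenius pairing \cref{plane}. The paper's proof is more direct in that it simply quotes \cref{Chicago} to obtain all four single-color bubble evaluations immediately, without the auxiliary discussion of index ranges or partial fractions, but the substance is the same.
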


\begin{proof}
    Applying \cref{Chicago} with $g(u)=1$, we get that
    \[
        \Psi_f
        \left(
            \begin{tikzpicture}[anchorbase]
                \bubgenleft[blue]{0,0}{u};
                \blacktoken[east]{-0.2,0}{a};
            \end{tikzpicture}\
        \right)_{\CWAR{0}{f}}
        = \tr_R \left( f(u)^{-1} a \right),
        \qquad
        \Psi_f
        \left(\
            \begin{tikzpicture}[anchorbase]
                \bubgenright[blue]{0,0}{u};
                \blacktoken[west]{0.2,0}{a};
            \end{tikzpicture}
        \right)_{\CWAR{0}{f}}
        = \tr_R \left( f(u) a \right).
    \]
    Similarly, applying it with $f(u)=1$, we get that
    \[
        \Psi_g^\vee
        \left(
            \begin{tikzpicture}[anchorbase]
                \bubgenleft[red]{0,0}{u};
                \blacktoken[east]{-0.2,0}{a};
            \end{tikzpicture}\
        \right)_{\CWAopR{0}{g}}
        = \tr_R \left( g(u) a \right),
        \qquad
        \Psi_g^\vee
        \left(\
            \begin{tikzpicture}[anchorbase]
                \bubgenright[red]{0,0}{u};
                \blacktoken[west]{0.2,0}{a};
            \end{tikzpicture}
        \right)_{\CWAopR{0}{g}}
        = \tr_R \left( g(u)^{-1} a \right).
    \]
    Thus, by \cref{circus}, we have
    \begin{align*}
        \Psi_{f|g}
        \left(
            \begin{tikzpicture}[anchorbase]
                \bubgenleft{0,0}{u};
                \blacktoken[west]{0.2,0}{a};
            \end{tikzpicture}
        \right)_{(\CWAR{0}{f}, \CWAopR{0}{g})}
        &= \sum_{b \in \B_A}
        \tr_R \left( f(u)^{-1} ab \right) \tr_R \left( g(u) b^\vee \right)
        \overset{\cref{plane}}{=}
        \tr_R \left( g(u) f(u)^{-1} a \right),
        \\
        \Psi_{f|g}
        \left(
            \begin{tikzpicture}[anchorbase]
                \bubgenright{0,0}{u};
                \blacktoken[west]{0.2,0}{a};
            \end{tikzpicture}
        \right)_{(\CWAR{0}{f}, \CWAopR{0}{g})}
        &= \sum_{b \in \B_A}
        \tr_R \left( f(u) b^\vee a \right) \tr_R \left( g(u)^{-1} b \right)
        \overset{\cref{plane}}{=}
        \tr_R \left( f(u) g(u)^{-1} a \right). \qedhere
    \end{align*}
\end{proof}

\begin{theo} \label{GCQ}
    Suppose that $f(u), g(u) \in Z(\AR)_\even[u]$ are a generic pair
    as in \cref{genericity}.
Let $$\Ev \colon \SEnd_R ( \cV(f|g) ) \to \cV(f|g)$$
be the $R$-linear functor defined by evaluation on $(\CWAR{0}{f}, \CWAopR{0}{g}) \in \cV(f|g)$.  Then $\Ev \circ \Psi_{f|g}$ factors through the generalized cyclotomic quotient $\GCQ{f}{g}$ to induce an equivalence of $\HeisR{A}{k}$-module supercategories
    \[
        \psi_{f|g} \colon \Kar(\GCQ{f}{g}_{\pi}) \to \cV(f|g).
    \]
\end{theo}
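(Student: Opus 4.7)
My plan is to establish the equivalence in four steps: factorization, essential surjectivity, fullness, and faithfulness, addressing the module-category compatibility implicitly through the monoidal construction of $\Psi_{f|g}$.

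First, I would verify that $\Ev \circ \Psi_{f|g}$ annihilates the generators of the left tensor ideal $\cIR(f|g)$. Since $\Psi_{f|g}$ is a strict $R$-linear monoidal superfunctor and $\cIR(f|g)$ is a left tensor ideal, it suffices to check that the generators act as zero after evaluation at $(\CWAR{0}{f}, \CWAopR{0}{g})$. The generator $\begin{tikzpicture}[anchorbase,scale=0.7] \draw[->] (0,-0.2) to (0,0.2); \blacktoken[east]{0,0}{f(x)}; \end{tikzpicture}$ acts by multiplication by $f(x_1)$ on the induced object $\CWAR{1}{f}$ (tensored with the identity on the red side), which is zero by definition of the cyclotomic quotient. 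The bubble generators $\ccbubble{a}{r} - \tr_R(\OO^{(r)}a) 1_\one$ are annihilated by \cref{bbq}, which identifies the generating function of counterclockwise bubbles at $(\CWAR{0}{f}, \CWAopR{0}{g})$ with $\tr_R(g(u) f(u)^{-1} a) = \tr_R(\OO(u) a)$. So $\Ev \circ \Psi_{f|g}$ descends to $\GCQ{f}{g}$, and extends uniquely to $\Kar(\GCQ{f}{g}_\pi)$ by the universal properties.

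Next, for essential surjectivity, I would show $\psi_{f|g}(\uparrow^{\otimes n} \otimes \downarrow^{\otimes m}) \cong (\CWAR{n}{f}, \CWAopR{m}{g})$. The categorical action factors via $\cDelt{-l}{m}$ from \cref{bunny}, so $\uparrow$ acts on the blue side by induction and on the red side by restriction (and dually for $\downarrow$). At the ground object $(\CWAR{0}{f}, \CWAopR{0}{g})$, restriction summands vanish, leaving only the term in which every up-arrow inducts on blue and every down-arrow inducts on red, yielding the regular module $(\CWAR{n}{f}, \CWAopR{m}{g})$. Since every object of $\cV(f|g) = \bigoplus_{n,m\ge 0} \psmod(\CWAR{n}{f}\otimes_R \CWAopR{m}{g})$ is a summand of a free module over some $\CWAR{n}{f}\otimes_R \CWAopR{m}{g}$, passage to $\Kar(\GCQ{f}{g}_\pi)$ yields essential surjectivity.

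For fullness, it suffices to check that the induced map on endomorphism algebras of the regular object is surjective: the images of $\imath_n \colon \AWAR{n} \to \End(\uparrow^{\otimes n})$ and $\jmath_m \colon \AWAR[\AR^\op]{m} \to \End(\downarrow^{\otimes m})$ land (after quotienting by $\cIR(f|g)$) in $\CWAR{n}{f}$ and $\CWAopR{m}{g}$ respectively, and they super-commute via horizontal juxtaposition, generating all of $\CWAR{n}{f}\otimes_R \CWAopR{m}{g} = \End_{\cV(f|g)}((\CWAR{n}{f},\CWAopR{m}{g}))$.

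The main obstacle, as always with such results, is faithfulness. The plan is to establish a spanning set for the morphism spaces $\Hom_{\GCQ{f}{g}}(\uparrow^{\otimes n} \otimes \downarrow^{\otimes m}, \uparrow^{\otimes n'} \otimes \downarrow^{\otimes m'})$ and show its image is linearly independent. Using the pivotal structure to rotate cups and caps outward, together with the straightening relations \cref{squid,squish,curly,altbraid}, every morphism in $\Heis{A}{k}$ between such objects can be written as a linear combination of diagrams in a normal form consisting of permutation diagrams decorated with dots and tokens. Imposing the cyclotomic relations $f(x_1)=0$ and $g(x_1)=0$ together with the bubble evaluations bounds the dot multiplicities on each strand by $\deg f - 1$ (resp.\ $\deg g - 1$), giving a spanning set of the same cardinality as the basis \cref{AW-basis} of $\CWAR{n}{f} \otimes_R \CWAopR{m}{g}$. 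The genericity hypothesis of \cref{genericity} enters essentially here through \cref{bike}, ensuring that the action via $\cDelt{-l}{m}$ is well-defined and that the images of distinct normal-form diagrams remain distinguishable in $\cV(f|g)$; linear independence then propagates back, yielding faithfulness of $\psi_{f|g}$ on the underlying supercategory and hence on the Karoubi envelope of its $\Pi$-envelope.
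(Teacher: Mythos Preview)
Your proof follows the same structure as the paper's, which handles factorization exactly as you do (via the action of $f(x_1)$ and \cref{bbq}) and then defers full/faithful/dense to \cite[Th.~9.5]{BSW-qheis}. Your reconstruction of that deferred argument is on the right track but contains an imprecision that affects the faithfulness step.

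In your essential surjectivity paragraph, the assertion that ``restriction summands vanish'' is only valid at the first step. Once some arrows have been applied and one of the two levels is positive, the next arrow contributes a nonzero restriction term: for instance, after $\downarrow^{\otimes m}$ brings us to $(\CWAR{0}{f},\CWAopR{m}{g})$, the action of $\uparrow$ via $\cDelt{-l}{m}$ produces both the blue induction term $(\CWAR{1}{f},\CWAopR{m}{g})$ and the red restriction term $(\CWAR{0}{f},\CWAopR{m-1}{g})$. So $\psi_{f|g}(\uparrow^{\otimes n}\otimes\downarrow^{\otimes m})$ is a nontrivial direct sum in which $(\CWAR{n}{f},\CWAopR{m}{g})$ appears only as a proper summand. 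This is harmless for essential surjectivity (you are in a Karoubi envelope), but it breaks your dimension-count for faithfulness: the endomorphism algebra of the image is strictly larger than $\CWAR{n}{f}\otimes_R\CWAopR{m}{g}$, so ``spanning set of the same cardinality as \cref{AW-basis}'' no longer pins anything down. The repair used in the cited argument is either to reduce via the adjunctions to morphism spaces of the form $\Hom(\one,X)$, or to filter by $n+m$ and induct, observing that the extra summands of $\psi_{f|g}(\uparrow^{\otimes n}\otimes\downarrow^{\otimes m})$ are images of words of strictly smaller length; with either fix, your outline goes through.
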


\begin{proof}
    It is clear that
    $
        \begin{tikzpicture}[anchorbase]
            \draw[->] (0,-0.25) to (0,0.25);
            \blacktoken[east]{0,0}{f(x)};
        \end{tikzpicture}
    $
    acts as zero on $(\CWAR{0}{f}, \CWAopR{0}{g})$.  Together with \cref{bbq}, this implies that $\Ev \circ \Psi_{f|g}$ factors through the quotient $\GCQ{f}{g}$ to induce an $R$-linear superfunctor $\GCQ{f}{g} \to \cV(f|g)$.  Since $\cV(f|g)$ is an additive Karoubian $(Q,\Pi)$-category, this extends to induce the superfunctor $\psi_{f|g}$ from the statement of the theorem.  To show that $\psi_{f|g}$ is an equivalence, it remains to prove that it is full, faithful and dense.  This argument is almost identical to the one in the proof of \cite[Th.~9.5]{BSW-qheis}.
\end{proof}

\begin{rem}
    When $g(u)=1$, the genericity assumption is vacuous.  Hence \cref{GCQ} yields an equivalence of categories $\psi_{f|1} \colon \Kar(\GCQ{f}{1}_{\pi}) \to \cV(f)$.  So the generalized cyclotomic quotient $\GCQ{f}{1}$ is Morita equivalent to the usual cyclotomic quotient, that is, the locally unital superalgebra $\bigoplus_{n \ge 0} \CWAR{n}{f}$.
\end{rem}

\section{Basis theorem\label{sec:basis}}

Recall that $C(A)$ denotes the cocenter of $A$ with homogeneous basis $\B_{C(A)}$.  Let $\Sym(A)$ denote the graded symmetric superalgebra generated by the graded vector superspace $C(A)[x]$, where $x$ here is an even indeterminate of degree $2d$.  For $n \in \Z$ and $a \in A$, let $e_n(a) \in \Sym(A)$ denote
\begin{equation}
    e_n(a) :=
    \begin{cases}
        0 & \text{if $n < 0$}, \\
        \tr(a) & \text{if $n=0$}, \\
        \cocenter{a} x^{n-1} & \text{if $n > 0$}.
    \end{cases}
\end{equation}
This defines a parity-preserving homogeneous linear map $e_n \colon A
\rightarrow \Sym(A)$ of degree $2(n-1)d$.
For example, in the special case that $A = \kk$, the algebra $\Sym(A)$
is a polynomial algebra generated freely by the elements
$\{e_n(1) : n > 0\}$. In this case, $\Sym(A)$ may be identified with the algebra of
symmetric functions so that $e_n(1)$ corresponds to the $n$th
elementary symmetric function; then the elements $h_n(1)$ defined by
the following lemma correspond to the complete symmetric functions.

\begin{lem}\label{controversy}
    For each $n \in \Z$, there is a unique homogeneous parity-preserving linear map $h_n \colon A \rightarrow \Sym(A)$ of degree $2(n-1)d$ such that
    \begin{equation} \label{banana}
        \sum_{c \in \B_A} e(ac;-u) h(c^\vee b; u) = \tr(ab),
        \qquad \text{for all }a,b \in A,
    \end{equation}
    where we are using the generating functions
    \begin{equation} \label{moon}
        e(a;u) := \sum_{n \geq 0} e_n(a) u^{-n}, \quad
        h(a;u) := \sum_{n \geq 0} h_n(a) u^{-n}
        \in \Sym(A)\llbracket u^{-1}\rrbracket.
    \end{equation}
\end{lem}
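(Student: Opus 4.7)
The plan is to define $h_n \colon A \to \Sym(A)$ by induction on $n$, reading off coefficients of $u^{-n}$ in \cref{banana}. Expanding $e(ac;-u) = \sum_m (-1)^m e_m(ac) u^{-m}$ and $h(c^\vee b;u) = \sum_n h_n(c^\vee b) u^{-n}$, the identity \cref{banana} is equivalent to the system
\[
    \sum_{c \in \B_A}\sum_{m+n=N}(-1)^m e_m(ac)\, h_n(c^\vee b) \;=\; \delta_{N,0}\,\tr(ab) \qquad (N \ge 0,\ a,b \in A).
\]
The $N=0$ case reads $\sum_c \tr(ac)\, h_0(c^\vee b) = \tr(ab)$; setting $a=1$ and using $\sum_c \tr(c)\, c^\vee = 1$ from \cref{plane} forces $h_0 = \tr$, which is parity-preserving of degree $-2\dA = 2(0-1)\dA$.

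Assume inductively that $h_0,\dotsc,h_{N-1}$ have been constructed so that the $N'$th identity holds for all $a,b$ and all $N' < N$. Isolating the $m=0$ summand and applying $\kk$-linearity of $h_N$ together with $\sum_c \tr(ac)\, c^\vee = a$, the $N$th identity becomes
\[
    h_N(ab) \;=\; -\sum_{c\in\B_A}\sum_{\substack{m+n=N\\ m\ge 1}}(-1)^m e_m(ac)\, h_n(c^\vee b).
\]
Specialising to $a=1$ pins down $h_N$ uniquely and yields the candidate definition
\[
    h_N(b) \;:=\; -\sum_{c\in\B_A}\sum_{\substack{m+n=N\\ m\ge 1}}(-1)^m e_m(c)\, h_n(c^\vee b),
\]
which is manifestly $\kk$-linear in $b$. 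A degree count using $\deg(c)+\deg(c^\vee) = 2\dA$, and a parity count using $\bar c = \bar{c^\vee}$, shows that each summand on the right has degree $\deg(b) + 2(N-1)\dA$ and parity $\bar b$, so $h_N$ has the required homogeneity.

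The nontrivial step, and the main obstacle, is to verify the general-$a$ recursion for this particular $h_N$. Substituting the definition into the left-hand side produces $-\sum_{c'}\sum_{m,n}(-1)^m e_m(c') h_n((c')^\vee ab)$. Since $x$ is central and $e_m(\cdot) = \cocenter{\cdot}\, x^{m-1}$ for $m \ge 1$, the claim reduces for each $n \le N-1$ to the Casimir-type identity
\[
    \sum_{c\in\B_A}\cocenter{ac}\, h_n(c^\vee b) \;=\; \sum_{c'\in\B_A}\cocenter{c'}\, h_n((c')^\vee ab).
\]
I will prove this by expanding $ac = \sum_{c'} \tr((c')^\vee ac)\, c'$ via \cref{plane}, so that $\cocenter{ac} = \sum_{c'}\tr((c')^\vee ac)\, \cocenter{c'}$, and then applying \cref{plane} a second time in the form $\sum_c \tr((c')^\vee a c)\, c^\vee = (c')^\vee a$ to collapse $\sum_c \tr((c')^\vee ac)\, h_n(c^\vee b) = h_n((c')^\vee ab)$ by $\kk$-linearity of $h_n$. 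This closes the induction, establishing existence and uniqueness of the sequence $h_n$ simultaneously.
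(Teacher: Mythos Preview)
Your proof is correct and follows essentially the same approach as the paper's: both arguments recursively determine $h_n$ from the coefficient identities and use \cref{plane} to reduce the two-variable identity to a one-variable one. The only cosmetic difference is that the paper specializes $b=1$ (so the recursion reads $h_n(a) = -\sum_{r\ge 1}\sum_c (-1)^r e_r(ac)\,h_{n-r}(c^\vee)$) whereas you specialize $a=1$; your explicit ``Casimir-type identity'' is precisely what the paper compresses into the one-line remark ``in light of \cref{plane}, it suffices to consider the case where $b=1$.''
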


\begin{proof}
    In light of \cref{plane}, it suffices to consider the case where $b=1$.  Then the generating function identity \cref{banana} is equivalent to the identities
    \[
        \sum_{r=0}^n \sum_{c \in \B_A} (-1)^r e_r (ac) h_{n-r}(c^\vee b) = \delta_{n,0} \tr(ab),\quad a,b \in C(A),\ n \in \N.
    \]
    Using \cref{plane}, this is equivalent to $h_0(a)=\tr(a)$ and, for $n \ge 1$,
    \[
        h_n(a) = - \sum_{r=1}^n \sum_{c \in \B_A} e_r(ac) h_{n-r}(c^\vee).
    \]
    This uniquely determines the $h_n(a)$ recursively.
\end{proof}

It follows from \cref{controversy,eyes} that we have a well-defined homomorphism of graded algebras
\begin{equation} \label{beta}
      \beta \colon \Sym(A) \to \End_{\Heis{A}{k}}(\one),\quad
      e_n(a) \mapsto (-1)^{n-1} \ccbubble{a}{n-k-1},\quad
      h_n(a) \mapsto \cbubble{a}{n+k-1}.
\end{equation}
Let $X = X_n \otimes \dotsb \otimes X_1$ and $Y = Y_m \otimes \dotsb \otimes Y_1$ be objects of $\Heis{A}{k}$ for $X_i, Y_j \in \{\uparrow, \downarrow\}$.  An \emph{$(X,Y)$-matching} is a bijection between the sets
\[
    \{i : X_i = \uparrow\} \sqcup \{j : Y_j = \downarrow\}
    \quad \text{and} \quad
    \{i : X_i = \downarrow\} \sqcup \{j : Y_j = \uparrow\}.
\]
A \emph{reduced lift} of an $(X,Y)$-matching is a string diagram representing a morphism $X \to Y$ such that
\begin{itemize}
    \item the endpoints of each string are points which correspond under the given matching;
    \item there are no floating bubbles and no dots or tokens on any string;
    \item there are no self-intersections of strings and no two strings cross each other more than once.
\end{itemize}
For each $(X,Y)$ matching, fix a set $D(X,Y)$ consisting of a choice
of reduced lift for each $(X,Y)$-matching.  Then let $D_\circ(X,Y)$
denote the set of all morphisms that can be obtained from the elements
of $D(X,Y)$ by adding a nonnegative number of dots
and one element of $\B_A$ near to the terminus of each string.

Using the homomorphism $\beta$ from \cref{beta}, we have that, for $X,Y \in \Heis{A}{k}$, $\Hom_{\Heis{A}{k}}(X,Y)$ is a right $\Sym(A)$-module under the action
\[
    \phi \theta := \phi \otimes \beta(\theta),\quad
    \phi \in \Hom_{\Heis{A}{k}}(X,Y),\ \theta \in \Sym(A).
\]

\begin{theo} \label{basis}
    For $X,Y \in \Heis{A}{k}$, the morphism space $\Hom_{\Heis{A}{k}}(X,Y)$ is a free right $\Sym(A)$-module with basis $D_\circ(X,Y)$.
\end{theo}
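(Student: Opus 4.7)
\medskip

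\noindent\textbf{Proof plan.}

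The argument has two parts: spanning and linear independence. For the spanning claim, I would run the standard straightening algorithm using the relations established in Section \ref{sec:GCQ} and at the end of \cref{sec:wreath}. Given any string diagram representing a morphism $X \to Y$, I would first use the pivotal structure \cref{slc1,slc2,bogey} to pull all tokens and dots along strings, then apply the alternating braid relation \cref{altbraid} together with \cref{squish,curly} to remove all but one crossing between any pair of strings and eliminate internal curls, producing a diagram whose underlying topological skeleton is a reduced lift of some $(X,Y)$-matching. The wreath relations \cref{wreathrel,affrel,affrel1,upslides1,tokteleport} then allow one to slide all remaining tokens and dots to the termini of their strings, multiplying any two tokens on the same string end into a single basis element of $\B_A$. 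Any floating bubble produced by this process can be moved past all strings using the bubble slide relations \cref{bubslide} and converted into an element of $\End_{\Heis{A}{k}}(\one)$; by the infinite Grassmannian relations \cref{infgrass} and \cref{fakel,faker}, every such endomorphism is the image under $\beta$ of an element of $\Sym(A)$. This writes the original morphism as an element of the right $\Sym(A)$-span of $D_\circ(X,Y)$.

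For linear independence, I would use the generalized cyclotomic quotients of Section \ref{sec:GCQ} together with the iterated categorical comultiplication of \cref{bunny,bunnycor}. Fixing $X,Y$ and integers $k_1,\dots,k_N$ with $k = k_1 + \dots + k_N$, coassociativity (\cref{coass3}) gives an iterated functor
\[
\Delta^{(N)} \colon \Heis{A}{k} \to \Add\!\left(\Heis[{}]{A}{k_1} \barodotnonumber \cdots \barodotnonumber \Heis[{}]{A}{k_N}\right).
\]
For each index $i$, choose a finite-dimensional local supercommutative $R_i$ with a generic pair $(f_i,g_i)$ in the sense of \cref{genericity} with $\deg g_i - \deg f_i = k_i$, producing by \cref{tiki} a module supercategory $\cV(f_i|g_i)$ over $\HeisR[{}]{A}[R_i]{k_i}$. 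Base-changing to $R := R_1 \otimes_\kk \cdots \otimes_\kk R_N$ and taking the external tensor product over $R$ gives a module supercategory $\cV := \cV(f_1|g_1) \boxtimes_R \cdots \boxtimes_R \cV(f_N|g_N)$ over the localized symmetric product of the $\HeisR[{}]{A}[R]{k_i}$; composing with $\Delta^{(N)}$ yields an action $\Psi^{(N)}_{f_\bullet|g_\bullet} \colon \HeisR[{}]{A}[R]{k} \to \SEnd_R(\cV)$.

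The core technical step is then to show that, for any fixed pair $X,Y$, one can choose $N$ and the $(f_i,g_i)$ so that the induced $R$-linear map
\[
\Hom_{\HeisR[{}]{A}[R]{k}}(X,Y) \to \Hom_{\cV}\!\bigl(\Psi^{(N)}_{f_\bullet|g_\bullet}(X)(e_\varnothing),\ \Psi^{(N)}_{f_\bullet|g_\bullet}(Y)(e_\varnothing)\bigr),\qquad e_\varnothing := (\CWAR{0}{f_1},\CWAopR{0}{g_1},\dots,\CWAR{0}{f_N},\CWAopR{0}{g_N}),
\]
separates the images of the elements of $D_\circ(X,Y)$ on the nose. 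To do this, I would unravel $\Psi^{(N)}_{f_\bullet|g_\bullet}$ strand by strand using the explicit formulas in \cref{fuzzy1,fuzzy3,fuzzy2,gloves}: each upward/downward strand in a reduced lift becomes, after applying $\Delta^{(N)}$, a choice of one of the $2N$ colors together with a string of dumbbells and teleporters recording the ``interactions'' with the other colors. Using \cref{morebunny} and the known basis \cref{AW-basis} of $\CWAR{n}{f_i}$, together with the matrix decomposition \cref{shipped}, the image of each element of $D_\circ(X,Y)$ lies in a distinct ``weight space'' indexed by the color assignment, and is detected by pairing with a particular basis vector of the cyclotomic module. Increasing $\deg f_i,\deg g_i$ and $N$ makes any prescribed finite collection of $D_\circ(X,Y)$-elements linearly independent in the image, which forces them to be linearly independent in $\Hom_{\Heis{A}{k}}(X,Y)$ after passing back through faithful flat base change from $\kk$ to $R$.

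The main obstacle is precisely the asymptotic faithfulness claim, which cannot be achieved by any single cyclotomic action as observed in \cite[Rem.~8.11]{RS17}: for general $A$ there are morphisms (built from bubbles in the center of $A$) whose image vanishes on every $\cV(f|g)$ individually for degree reasons. The resolution is that, once we pass through $\Delta^{(N)}$, such problematic morphisms get spread across the $N$ factors via the two-color dot dumbbells in \cref{fuzzy3} and the internal bubbles \cref{intblue,intred}; the genericity condition in \cref{genericity} guarantees \cref{bike}, which is what lets these dumbbells act invertibly and ultimately lets the $N$ cyclotomic actions jointly detect everything. The required book-keeping, matching each $D_\circ(X,Y)$ element to the appropriate weight component and basis vector, is the longest computation, but it is purely combinatorial and modeled closely on the corresponding argument for $A = \kk$ in \cite[Th.~3.3]{BSW-K0}.
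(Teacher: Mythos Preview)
Your spanning argument matches the paper's. The gap is in the linear-independence half.

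A preliminary point: your ``weight space'' claim is false as stated. Two elements of $D_\circ(X,Y)$ sharing the same underlying matching but differing only in dot multiplicities or token labels have components in exactly the same color-weight spaces under $\Delta^{(N)}$, so the weight decomposition alone cannot distinguish them.

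The more serious problem is that iterating the comultiplication does not resolve the faithfulness obstruction you correctly identify. By \cref{shoes}, the image under $\cDelt{l}{m}$ of a dotted bubble is a sum of products of bubbles in the two factors; iterating, a bubble in $\Heis{A}{k}$ maps to a product of bubbles across all $N$ factors. By \cref{bbq}, each of these then evaluates on the vacuum object $(\CWAR{0}{f_i},\CWAopR{0}{g_i})$ to a \emph{fixed} scalar in $R_i$. Hence every element of $\beta(\Sym(A))$ acts on the vacuum as a fixed element of $R = R_1 \otimes_\kk \cdots \otimes_\kk R_N$, and distinct elements of $\Sym(A)$ will collapse to the same scalar no matter how large you take $N$ or $\deg f_i,\deg g_i$. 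Your assertion that problematic bubbles ``get spread across the $N$ factors'' and are thereby detected is not substantiated: spreading a fixed scalar over more factors still yields a fixed scalar.

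The paper's proof uses only a \emph{single} application of the comultiplication (via $\Psi_{f|g}$ of \cref{tiki}), but over a carefully chosen base ring $R$, and this choice is the key idea you are missing. The coefficients of $g(u)$ are taken to be \emph{formal indeterminates} $w_{i,j}$, one for each $1\le i\le m$ and each basis element $\cocenter{b}_j\in\B_{C(A)}$ of the cocenter, with parity matching that of $b_j$; explicitly $f(u)=u^l$ and $g(u)=u^m+\sum_{i=1}^m\bigl(\sum_j w_{i,j}\,\cocenter{b}_j^\vee\bigr)u^{m-i}$. Then \cref{bbq} gives that $\beta(e_i(b_j))$ acts on the vacuum as $\pm w_{i,j}$, and these are algebraically independent in $R$ by construction. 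After the standard adjunction reduction to the case $X=Y={\uparrow}^{\otimes n}$ (which you also omit), the images of the diagrams $\phi_s$ are $R$-linearly independent in $\CWAR{n}{f}$ by the explicit basis \cref{AW-basis}, having chosen $l$ to exceed every dot multiplicity appearing; this forces each $\theta_s$ to vanish as a polynomial in the $w_{i,j}$, hence in $\Sym(A)$. It is this generic-base-ring trick, not iteration of $\Delta$, that overcomes the obstruction of \cite[Rem.~8.11]{RS17}.
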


\begin{proof}
    We prove the result when $k \le 0$.  The result for $k > 0$ then follows by applying the symmetry $\Omega_k$ of \cref{Omega}.  Let $X,Y$ be two objects in $\Heis{A}{k}$.

    That $D_\circ(X,Y)$ spans $\Hom_{\Heis{A}{k}}(X,Y)$ as a right $\Sym(A)$-module follows from the fact that we have an algorithm permitting us to express any diagram representing a morphism $X \to Y$ as a linear combination of elements of $D_\circ(X,Y)$.  We use induction on the number of crossings.  Dots can be moved past crossings modulo diagrams with fewer crossings, and so we can assume that all dots are near the termini of their strings.  Then we can use the relations \cref{wreathrel,squish,curls,altbraid} to move the strings into the same configuration as one of the chosen reduced lifts modulo diagrams with fewer crossings plus some bubbles.  Finally, bubbles can be moved to the right-hand side of the diagram using \cref{bubslide}, where they become scalars in $\Sym(A)$.

    It remains to prove linear independence of $D_\circ(X,Y)$.  We
    begin with the case $X = Y =\ \uparrow^{\otimes n}$.  Consider a
    linear relation $\sum_{s=1}^N \phi_s \beta(\theta_s)$ for some
    $\phi_s \in D_\circ(X,Y)$ and $\theta_s \in \Sym(A)$.  Enumerate
    $\B_{C(A)}$ as $\cocenter{b}_1,\dotsc,\cocenter{b}_r$ so that $\cocenter{b}_1,\dotsc,\cocenter{b}_{r'}$ are
    even and $\cocenter{b}_{r'+1},\dotsc,\cocenter{b}_r$ are odd.  Choose $l \ge m > 0$ such that
    \begin{itemize}
        \item $k=m-l$;
        \item the multiplicities of dots in all $\phi_s$ arising in the linear relation are $< l$;
        \item all of the elements $\theta_s \in \Sym(A)$ are polynomials in $e_i(b_j)$, $1 \le i \le m$, $1 \le j \le r$.
    \end{itemize}

    Let $w_{i,j}$, $1 \le i \le m$, $1 \le j \le r$ be indeterminates,
    with $w_{i,j}$ even for $1 \le j \le r'$ and odd for $r'+1 \leq j \le
    r$.  Let $\KK$ be the algebraic closure of $\kk(w_{i,j} : 1 \le i
    \le m,\, 1 \le j \le r')$ and define $R$ to be the free
    supercommutative $\KK$-superalgebra generated by $w_{i,j}$, $1 \le i
    \le m$, $r'+1 \leq j \le r$.  Since the $w_{i,j}$ are odd for $r'+1 \leq j
    \le r$, $R$ is finite dimensional over $\KK$.  We will now work
    with algebras/categories that are linear over $R$, as in
    \cref{sec:GCQ}.  Recall that
    $\cocenter{b}_1^\vee,\dotsc,\cocenter{b}_r^\vee$ denotes the basis
    of $Z(A)$ that is dual to the basis $\cocenter{b}_1,\dotsc,\cocenter{b}_r$ of $C(A)$ under the pairing of \cref{slipstream}. Consider the cyclotomic wreath product algebras $\CWAR{n}{f}$ and $\CWAopR{n}{g}$ associated to the polynomials
    \[
        f(u) := u^l, \quad
        g(u) := u^m + (w_{1,1} \cocenter{b}_1^\vee + \dotsb + w_{1,r} \cocenter{b}_r^\vee) u^{m-1} + \dotsb + (w_{m,1} \cocenter{b}_1^\vee + \dotsb + w_{m,r} \cocenter{b}_r^\vee)
        \quad \in Z(\AR)_\even[u].
    \]
We claim that $f(u), g(u)$ are a generic pair in the sense of
\cref{genericity}.
To see this, note
    since $\KK$ is algebraically closed that it contains the algebraic
    closure $\overline{\kk}$ of $\kk$.  As $A_R \otimes_R A_R$ is defined over this
    algebraic closure, all elements of $\Gamma_R$ belong to
    $\overline{\kk}$.
Thus, since $\Xi_f = \{0\}$ by the definition of $f(u)$, it suffices
to show that no element of $\overline{\kk}$ is a root of the polynomial
$\chi_L(g(u))$
for any irreducible $\AR$-supermodule
    $L$. This follows because the evaluation of $\chi_L(g(u))$ at any element of
    $\bar{\kk}$ involves at least one of the even indeterminates
    $w_{i,j}$ with nonzero coefficient, hence is nonzero.

Hence, by \cref{bike}, we can use the superfunctor $\Psi_{f|g}$ of
\cref{tiki} to make $\cV(f|g)$ into a $\HeisR{A}{k}$-module
supercategory.  Since $\kk$ embeds into $R$, we have a canonical monoidal superfunctor $\Heis{A}{k} \to \HeisR{A}{k}$.  Hence we can view $\cV(f|g)$ as a module supercategory over $\Heis{A}{k}$.  We now evaluate the relation $\sum_{s=1}^N \phi_s \otimes \beta(\theta_s) = 0$ on $(\CWAR{0}{f}, \CWAopR{0}{g}) \in \cV(f|g)$ to obtain a relation in $\CWAR{n}{f}$.  It follows from \cref{AW-basis} and the choice of $l$ that the images of $\phi_1,\dotsc,\phi_N$ in $\CWAR{n}{f}$ are linearly independent over $R$.  Thus the image of $\beta(\theta_s) \in R$ is zero for each $s$.  Now, by our choice of $m$, each $\theta_s$ is a supercommuting polynomial in $e_i(b_j)$, $1 \le i \le m$ for $1 \le j \le r$.  Since
    \[
        g(u)f(u)^{-1} = u^k + (w_{1,1} \cocenter{b}_1^\vee + \dotsb + w_{1,r} \cocenter{b}_r^\vee) u^{k-1} + \dotsb + (w_{m,1} \cocenter{b}_1^\vee + \dotsb + w_{m,r} \cocenter{b}_r^\vee) u^{k-m},
    \]
    it follows from \cref{beta,bbq} that the image of $(-1)^{i-1} \beta(e_i(b_j))$ in $R$ is $w_{i,j}$.  Thus the $\theta_i$, $1 \le i \le r$, are zero.

    We have now proved the linear independence when $X = Y = \uparrow^{\otimes n}$.  The general case reduces to this case as in the proof of \cite[Prop.~5]{Kho14}.  See the proof of \cite[Th.~6.4]{BSW-K0} for further details.  The only difference in the current setting is that one picks up some tokens at each step in the proof.  Since tokens slide through all crossings and dots, these have only a minor effect on the argument.
\end{proof}

\begin{cor} \label{blue}
    The homomorphism $\beta \colon \Sym(A) \to \End_{\Heis{A}{k}} (\one)$ of \cref{beta} is an isomorphism.
\end{cor}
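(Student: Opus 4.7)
The plan is to deduce the corollary as an immediate consequence of the basis theorem (\cref{basis}) applied to the special case $X = Y = \one$. I first need to understand what $D_\circ(\one, \one)$ looks like. There is exactly one $(\one,\one)$-matching, namely the empty matching, and its only reduced lift is the empty diagram $1_\one$. Since the empty diagram has no strings, the prescription for building $D_\circ(\one,\one)$ from $D(\one,\one)$ by adding dots and tokens near the termini yields no new elements. Thus $D_\circ(\one,\one) = \{1_\one\}$.

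By \cref{basis}, $\End_{\Heis{A}{k}}(\one)$ is a free right $\Sym(A)$-module with basis $\{1_\one\}$. Unraveling the definition of the right $\Sym(A)$-action,
\[
1_\one \cdot \theta = 1_\one \otimes \beta(\theta) = \beta(\theta)
\]
for all $\theta \in \Sym(A)$. Hence the map $\Sym(A) \to \End_{\Heis{A}{k}}(\one)$ sending $\theta \mapsto 1_\one \cdot \theta$ is a bijection, and this map is precisely $\beta$. This simultaneously gives injectivity (freeness of the module) and surjectivity (the basis element $1_\one$ generates).

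There is no real obstacle here; the entire content of the corollary is packaged inside \cref{basis}, and the only subtle point is that one must verify that $\beta$ is compatible with the module structure, which is true by the very definition of the right $\Sym(A)$-action on morphism spaces. The proof should therefore be a short one-paragraph deduction from the basis theorem.
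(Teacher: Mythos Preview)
Your proposal is correct and matches the paper's approach exactly: the paper states this as an immediate corollary of \cref{basis} with no proof, and your argument spells out precisely the intended one-line deduction by specializing to $X=Y=\one$ and observing $D_\circ(\one,\one)=\{1_\one\}$.
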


\Cref{basis} implies Conjectures 1, 2, and 3 of \cite{CL12}, which
concerns the special case where $A$ is a zigzag algebra (cf. \cref{motivatingeg} below).  Note that \cite[Conj.~1]{CL12} was already proved in \cite[Cor.~8.15]{RS17}.  See also Conjectures~8.10 and~8.16 of \cite{RS17}.

\section{Morita theorem\label{sec:morita}}

We say that graded supercategories $\cA$ and $\cB$ are \emph{graded Morita equivalent} if there is a graded
superequivalence $F \colon \Kar(\cA_{q,\pi}) \to \Kar(\cB_{q,\pi})$ between their
graded Karoubi envelopes.
Suppose that we are given a family $\be = (e_X)_{X \in \Xi}$ of
homogeneous idempotents $e_X \in \End_{\cA}(X)$ for some subset $\Xi$
of the object set of $\cA$.
Let $\be \cA \be$ be the full subcategory of $\Kar(\cA)$ generated by
the objects $\{(X,e_X)\colon X \in \Xi\}$. In other words, $\be \cA \be$
is the
graded supercategory with object set $\Xi$, morphisms
\[
    \Hom_{\be \cA \be}(X,Y) := e_Y \Hom_\cA(X,Y) e_X \quad
    \text{for all } X,Y \in \Xi,
\]
and composition induced by that of $\cA$.
Also let $\cA \be \cA$  be the two-sided ideal of $\cA$ generated by the
morphisms
$\left\{e_X : X \in \Xi\right\}$.
The notation $\cA \be \cA = \cA$ in the following lemma indicates that
every morphism $f: X \rightarrow Y$ in $\cA$ can be written as a
linear combination of morphisms of the form $f_Y \circ e_Z \circ f_X$
for $Z \in \Xi$, $f_X : X \rightarrow Z$ and $f_Y:Z \rightarrow Y$.

\begin{lem}\label{zoom}
If $\cA \be \cA = \cA$ then the graded supercategories $\cA$ and $\be
\cA \be$ are graded Morita equivalent.
\end{lem}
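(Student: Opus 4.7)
My plan is to show that the natural inclusion functor $\iota \colon \be\cA\be \hookrightarrow \cA$, which sends $(X,e_X)$ to the same object and a morphism $e_Y f e_X$ to itself, extends via the universal properties of $(Q,\Pi)$-envelopes and Karoubi envelopes to a graded superfunctor
\[
    \tilde\iota \colon \Kar\bigl((\be\cA\be)_{q,\pi}\bigr) \to \Kar(\cA_{q,\pi})
\]
which is a graded superequivalence.

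I would first check that $\tilde\iota$ is fully faithful, which is essentially automatic. By definition $\Hom_{\be\cA\be}(X,Y) = e_Y \Hom_\cA(X,Y) e_X$, and this agrees with $\Hom_{\Kar(\cA)}((X,e_X),(Y,e_Y))$. Fullness and faithfulness are then preserved under passage to $(Q,\Pi)$-envelope, additive envelope, and Karoubi envelope, because all of these constructions are defined purely in terms of Hom-spaces and formal (co)products/idempotents.

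The substantive step is essential surjectivity of $\tilde\iota$. Since every object of $\Kar(\cA_{q,\pi})$ is a direct summand of a finite formal sum of grading/parity shifts of objects of $\cA$, it suffices to show that each $Y \in \cA$ is isomorphic in $\Kar(\cA_{q,\pi})$ to a summand of an object in the image of $\tilde\iota$. The hypothesis $\cA\be\cA = \cA$ applied to the identity morphism yields a finite expression
\[
    1_Y = \sum_{i=1}^n g_i \circ e_{X_i} \circ h_i,
\]
with $X_i \in \Xi$, $h_i \in \Hom_\cA(Y,X_i)$, and $g_i \in \Hom_\cA(X_i,Y)$. Since $1_Y$ is even of degree zero and each $e_{X_i}$ is homogeneous, by projecting onto graded parity components of both sides I may assume that each $h_i$ and $g_i$ is homogeneous and that the graded parities of $h_i$ and $g_i$ are opposite in the appropriate sense so that the composite lands in the even degree-zero part. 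The morphisms $(e_{X_i} h_i)_i$ and $[g_i e_{X_i}]$ then define mutually inverse retraction data exhibiting $Y$ as a direct summand of $\bigoplus_{i=1}^n Q^{r_i}\Pi^{s_i}(X_i, e_{X_i})$ in $\Kar(\cA_{q,\pi})$ for appropriate shifts $r_i,s_i$. This summand lies in the image of $\tilde\iota$, completing essential surjectivity.

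The only real obstacle is graded/super bookkeeping: one must verify that the decomposition of $1_Y$ survives refinement to homogeneous pieces of matching degree and parity across the three factors $g_i, e_{X_i}, h_i$, and that the shifts $Q^{r_i}\Pi^{s_i}$ in the Karoubi envelope of the $(Q,\Pi)$-envelope absorb these. This is a routine verification using the definitions of $\cA_{q,\pi}$ and $(\be\cA\be)_{q,\pi}$ from \cref{qpienv}, together with the fact that the $e_X$ are homogeneous by hypothesis.
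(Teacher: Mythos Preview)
Your argument is correct and gives a direct categorical proof: the inclusion $\be\cA\be \hookrightarrow \Kar(\cA)$ is fully faithful by construction, and the hypothesis $\cA\be\cA = \cA$ applied to each $1_Y$ exhibits $Y$ as a retract of a finite sum of shifts of objects $(X_i,e_{X_i})$, from which essential surjectivity of $\tilde\iota$ follows because a fully faithful functor out of a Karoubian additive $(Q,\Pi)$-supercategory has essential image closed under summands, sums, and shifts. Your handling of the graded/parity bookkeeping is fine once one notes that a homogeneous idempotent is automatically even of degree zero, so projecting the identity $1_Y = \sum_i g_i e_{X_i} h_i$ onto its degree-zero even part only requires refining the $g_i,h_i$ into homogeneous pieces with complementary degree and parity.

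The paper takes a different, shorter route: it invokes the Yoneda equivalence \cref{yoneda2} to translate $\cA$ and $\be\cA\be$ into locally unital graded superalgebras $A$ and $B$, observes that $\cA\be\cA=\cA$ means $\{e_X A : X\in\Xi\}$ is a projective generating family for $\gsmod A$, and then cites a standard Morita-theoretic result. Your approach has the merit of being self-contained and of making explicit that the equivalence is induced by the obvious inclusion functor (a fact used in the proof of \cref{moritathm}). The paper's approach is more economical but relies on the reader being comfortable with the dictionary between supercategories and locally unital superalgebras.
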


\begin{proof}
If $A$ and $B$ are the locally unital algebras associated to $\cA$ and
$\cB := \be \cA \be$ as in \cref{yoneda2}, then $\cA$ and $\cB$ are graded Morita
equivalent if and only if $A$ and $B$ are graded Morita equivalent
superalgebras in
the usual sense, i.e.\ there is an
equivalence between their module categories
$\gsmod A$ and $\gsmod B$ that respects degrees and parities of
morphisms.
Also the assumption $\cA \be \cA = \cA$ implies that $\left\{e_X A : X
\in \Xi\right\}$ is a projective generating family for $\gsmod A$.
With this dictionary in mind, the lemma follows from a standard
algebraic result, e.g.\ see \cite[Theorem 2.4]{BD}.
\end{proof}

Now return to the usual setup, so $A$ is a symmetric graded Frobenius
superalgebra as in \cref{sec:affwreath}.
Let $e \in A$ be a homogeneous idempotent such that $A = A e A$;
equivalently, $eA$ is a projective generator for $\gsmod A$.
Consider the decomposition
\[
    A = eAe \oplus (1-e)A(1-e) \oplus eA(1-e) \oplus (1-e)Ae.
\]
Under the bilinear form induced by the trace map of $A$, the first two
summands are self-dual, while the last two summands are dual to each
other.  In particular, the trace map of $A$ restricts to a
nondegenerate trace on $eAe$, and so $eAe$ is itself a graded
Frobenius superalgebra, which is graded Morita equivalent to $A$.

Let $\B_{eAe}$ be a homogeneous basis of $eAe$, and let $\B_A'$ be a homogeneous basis of $(1-e)A(1-e) \oplus eA(1-e) \oplus (1-e)Ae$.  Then $\B_A := \B_{eAe} \sqcup \B_A'$ is a homogeneous basis of $A$, and we have
\begin{equation} \label{vaporize}
    \sum_{b \in \B_A}
    \begin{tikzpicture}[anchorbase]
        \draw[->] (-0.2,-0.4) -- (-0.2,0.4);
        \draw[->] (0.2,-0.4) -- (0.2,0.4);
        \blacktoken[east]{-0.2,-0.2}{e};
        \blacktoken[east]{-0.2,0}{b};
        \blacktoken[east]{-0.2,0.2}{e};
        \blacktoken[west]{0.2,-0.2}{e};
        \blacktoken[west]{0.2,0}{b^\vee};
        \blacktoken[west]{0.2,0.2}{e};
    \end{tikzpicture}
    =
    \sum_{b \in \B_{eAe}}
    \begin{tikzpicture}[anchorbase]
        \draw[->] (-0.2,-0.4) -- (-0.2,0.4);
        \draw[->] (0.2,-0.4) -- (0.2,0.4);
        \blacktoken[east]{-0.2,0}{b};
        \blacktoken[west]{0.2,0}{b^\vee};
    \end{tikzpicture}
    \ .
\end{equation}
In other words, placing tokens labeled $e$ at both ends of both strands turns the teleporter in $\Heis{A}{k}$ into the teleporter in $\Heis{eAe}{k}$.

Define
$e_\uparrow =
\begin{tikzpicture}[anchorbase]
    \draw[->] (0,-0.15) --(0,0.2);
    \blacktoken[east]{0,0}{e};
\end{tikzpicture}
$
and
$e_\downarrow =
\begin{tikzpicture}[anchorbase]
    \draw[<-] (0,-0.2) -- (0,0.15);
    \blacktoken[east]{0,0}{e};
\end{tikzpicture}
$.
Let $e_{X_n \otimes \dotsb \otimes X_1} := e_{X_n} \otimes
\dotsb \otimes e_{X_1}$ for $X_1,\dotsc,X_n \in \{\uparrow,
\downarrow\}$; in particular, $e_{\one} := 1_\one$.  Then set $\be =
(e_X)_{X \in \Heis{A}{k}}$, where here $X$ ranges over all objects in $\Heis{A}{k}$, and define  the
graded supercategory $\be \Heis{A}{k} \be$
as in the opening paragraph of the
section.
In fact, since we have that $e_Y \otimes e_X = e_{Y \otimes X}$ for all $X, Y
\in \Heis{A}{k}$,
this is a monoidal subcategory of $\Kar(\Heis{A}{k})$, so it is also a
graded monoidal supercategory.

\begin{lem} \label{cards}
The inclusion $eAe \hookrightarrow A$ induces an isomorphism
between the graded monoidal supercategories $\Heis{eAe}{k}$ and $\be
\Heis{A}{k} \be$.
\end{lem}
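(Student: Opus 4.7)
The plan is to construct a strict graded monoidal superfunctor
$F \colon \Heis{eAe}{k} \to \be\Heis{A}{k}\be$
by using the generators-and-relations presentation of $\Heis{eAe}{k}$, and then to deduce from the basis theorem \cref{basis} that $F$ is an isomorphism. On the generating objects, $F$ will send $\uparrow \mapsto (\uparrow, e_\uparrow)$ and $\downarrow \mapsto (\downarrow, e_\downarrow)$, which we identify with $\uparrow$ and $\downarrow$ in the ambient object monoid using $e_{X\otimes Y} = e_X \otimes e_Y$. On each generating morphism of $\Heis{eAe}{k}$, $F$ will send to the same-shape generator of $\Heis{A}{k}$, interpreting any token label $a \in eAe$ as an element of $A$ via the inclusion. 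Each image lies automatically in $\be\Heis{A}{k}\be$, because the idempotent $e$-tokens commute with crossings, dots, and tokens labeled in $eAe$, and slide over cups and caps by \cref{bogey}.

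The next step is to check that the defining relations \cref{wreathrel,affrel,affrel1,squid,chew,curly,squish} of $\Heis{eAe}{k}$ are preserved by $F$. Most relations transfer directly from the corresponding relations in $\Heis{A}{k}$ because $e$-tokens commute with every ingredient. The essential case is the teleporter relation \cref{affrel}: its $eAe$-right-hand side is $\sum_{b \in \B_{eAe}} b \otimes b^\vee$ on the two strands, which, by the basis compatibility $\B_{eAe} \subseteq \B_A$ and the key identity \cref{vaporize}, agrees with the sandwiched expression $e_{\uparrow\uparrow} \cdot \bigl(\sum_{b\in\B_A} b\otimes b^\vee\bigr) \cdot e_{\uparrow\uparrow}$ of the $A$-teleporter sum. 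The bubble relations \cref{chew} hold under $F$ because the trace on $eAe$ is the restriction of the trace on $A$.

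To establish the isomorphism, bijectivity on objects is immediate. For morphism spaces, I will apply the basis theorem \cref{basis} to both sides, using the chosen basis $\B_A = \B_{eAe} \sqcup \B_A'$. The basis $D_\circ^{eAe}(X, Y)$ of $\Hom_{\Heis{eAe}{k}}(X, Y)$ (viewed as a free right $\Sym(eAe)$-module) maps under $F$ into the basis $D_\circ^A(X, Y)$ of $\Hom_{\Heis{A}{k}}(X, Y)$ over $\Sym(A)$, which yields injectivity. For surjectivity onto $e_Y \Hom_{\Heis{A}{k}}(X, Y) e_X$, I will take any basis element $\phi \cdot \beta(\theta)$ with $\phi \in D_\circ^A(X, Y)$ and slide the $e$-tokens from $e_X$ and $e_Y$ along each strand (this is legitimate since tokens commute with crossings and dots and slide over cups/caps) until they meet the terminal $\B_A$-token $b$ on that strand, producing an $eAe$-token $ebe$, which I then re-expand in $\B_{eAe}$. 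This expresses the given morphism as a linear combination of elements of $F(D_\circ^{eAe}(X, Y))$ with coefficients in $F(\Sym(eAe))$.

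The main obstacle will be to confirm that $F|_{\End(\one)} \colon \Sym(eAe) \to \Sym(A)$ is an isomorphism. Via the identifications $\beta$ from \cref{blue}, this reduces to showing that the inclusion $eAe \hookrightarrow A$ induces an isomorphism of cocenters $C(eAe) \xrightarrow{\cong} C(A)$. Surjectivity follows by writing $1 = \sum_i x_i e y_i$ (which exists since $A = AeA$) and computing $\overline{a} = \overline{e\bigl(\sum_i y_i a x_i\bigr) e}$ in $C(A)$. Injectivity is a standard consequence of Morita invariance of Hochschild homology in degree zero; it can also be checked directly by inserting $1 = \sum_i x_i e y_i$ into both factors of any commutator $[b, c] \in eAe \cap [A, A]$ and using cyclic invariance modulo $[eAe, eAe]$ to express it as a commutator within $eAe$.
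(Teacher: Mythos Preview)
Your proof is correct and follows essentially the same route as the paper: construct $F$ on generators, verify the relations (the crucial point being \cref{vaporize} for the teleporter), and then use the basis theorem \cref{basis} on both sides together with the isomorphism $C(eAe)\cong C(A)$. The only minor difference is in justifying the latter isomorphism: the paper obtains injectivity via the Frobenius duality $Z(-)\cong C(-)^*$ of \cref{slipstream} and Morita invariance of centers, whereas you invoke Morita invariance of $HH_0$ directly---both are standard and equivalent here.
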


\begin{proof}
By definition, the categories $\Heis{eAe}{k}$ and $\be \Heis{A}{k}
\be$ have the same objects.
The natural graded monoidal superfunctor $F \colon \Heis{eAe}{k} \to
\be \Heis{A}{k} \be$ is the identity on objects and, on morphisms, it sends the diagram
for a morphism in $\Heis{eAe}{k}$ to the morphism in $\Heis{A}{k}$
represented by the same picture.
This actually belongs to the
subcategory $\be \Heis{A}{k} \be$ since we can use the idempotent
property $e^2 = e$ to
add an additional token labeled $e$ to both ends of all strands
appearing in the diagram.
To see that $F$ is well defined, one
needs to check that the relations are preserved, which follows
ultimately due to \cref{vaporize}.
Finally, to see that $F$ is an isomorphism, consider
the composition $eAe \hookrightarrow A \twoheadrightarrow
C(A)$.  In $C(A)$, we have $eAe = AeA = A$, and so this composition is
surjective.
Since Morita equivalent superalgebras have isomorphic centers, it then
follows from \cref{slipstream} that the inclusion $eAe \hookrightarrow
A$ induces an isomorphism  $C(eAe) \xrightarrow{\cong} C(A)$ of graded vector superspaces.
It remains to apply
\cref{basis} to see that $F$ defines an isomorphism on all morphism spaces.
\end{proof}

\begin{theo}\label{moritathm}
Let $F \colon \Heis{eAe}{k} \rightarrow \Kar(\Heis{A}{k}_{q,\pi})$
be the composition of the isomorphism
$\Heis{eAe}{k} \xrightarrow{\cong}
\be \Heis{A}{k} \be$ from \cref{cards} and the natural inclusion
$\be \Heis{A}{k} \be \hookrightarrow \Kar(\Heis{A}{k}_{q,\pi})$, so that
$F$ takes $\uparrow \mapsto (\uparrow, e)$ and $\downarrow
\mapsto (\downarrow, e^\op)$.
The canonical extension of $F$ to the graded Karoubi envelope
defines a graded monoidal superequivalence
\[
\tilde F \colon
\Kar\left(\Heis{eAe}{k}_{q,\pi}\right) \rightarrow \Kar\left(\Heis{A}{k}_{q,\pi}\right).
\]
In particular, $\Heis{eAe}{k}$ and $\Heis{A}{k}$ are graded Morita equivalent.
\end{theo}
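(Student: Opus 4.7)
My plan is to deduce the theorem as a formal consequence of \cref{cards} together with the general graded Morita principle given by \cref{zoom}. I would apply \cref{zoom} to the category $\cA = \Heis{A}{k}$ with the family $\be = (e_X)_X$ of idempotents, which yields a graded superequivalence $\Kar(\Heis{A}{k}_{q,\pi}) \simeq \Kar((\be \Heis{A}{k} \be)_{q,\pi})$ provided one can verify that $\Heis{A}{k} \be \Heis{A}{k} = \Heis{A}{k}$. Composing this equivalence with the isomorphism $\Heis{eAe}{k} \xrightarrow{\cong} \be \Heis{A}{k} \be$ from \cref{cards}, and passing to graded Karoubi envelopes, produces exactly the functor $\tilde F$ described in the theorem. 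The result is automatically graded monoidal because both constituent functors are: \cref{cards} is explicitly a monoidal isomorphism, and $\be \Heis{A}{k} \be$ is a monoidal subcategory of $\Kar(\Heis{A}{k}_{q,\pi})$ since $e_{X \otimes Y} = e_X \otimes e_Y$.

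The one verification needed is that each identity $1_X$ for $X \in \Heis{A}{k}$ factors through $e_X$, which immediately gives $\cA \be \cA = \cA$ via $f = f \circ 1_Y$ for any $f \colon Y \to Z$. Since $A = A e A$, fix homogeneous elements $a_i, b_i \in A$ with $\sum_i a_i e b_i = 1$. For $X = \uparrow$, stacking these elements as three tokens on a single upward strand gives $1_\uparrow = \sum_i (\text{token } a_i) \circ e_\uparrow \circ (\text{token } b_i)$, and an analogous identity holds for $X = \downarrow$. For a general tensor product $X = X_n \otimes \dotsb \otimes X_1$, I would expand $1_X = 1_{X_n} \otimes \dotsb \otimes 1_{X_1}$, substitute the above factorizations in each tensor slot, and then use the super interchange law to regroup terms into compositions through $e_X = e_{X_n} \otimes \dotsb \otimes e_{X_1}$.

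The genuine difficulty is all upstream: \cref{cards} rests on the basis theorem \cref{basis}, and \cref{zoom} is a formal algebraic result. There is no separate essential surjectivity argument needed for $\tilde F$ beyond observing that the factorization of $1_X$ above exhibits each $X \in \Heis{A}{k}$ as a summand of $(X, e_X)^{\oplus N}$ for large enough $N$, so that every object of $\Kar(\Heis{A}{k}_{q,\pi})$ is a summand of a direct sum of formal shifts of objects of the form $(X, e_X)$, i.e., of objects in the image of $\tilde F$. Fully faithfulness of $\tilde F$ is automatic since $\be \Heis{A}{k} \be$ is by construction a full subcategory of $\Kar(\Heis{A}{k}_{q,\pi})$, and full faithfulness is preserved under passage to graded Karoubi envelopes.
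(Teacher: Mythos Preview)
Your proposal is correct and follows essentially the same route as the paper: verify that $1_X$ lies in $\End_{\Heis{A}{k}}(X)\, e_X\, \End_{\Heis{A}{k}}(X)$ using $A = AeA$, hence $\Heis{A}{k}\,\be\,\Heis{A}{k} = \Heis{A}{k}$, and then invoke \cref{cards,zoom}. The paper's proof is simply a terser version of yours, omitting the explicit token factorization and the separate remarks on monoidality and full faithfulness that you (correctly but redundantly) spell out.
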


\begin{proof}
    Since $AeA = A$, it follows easily that, for all $X \in \Heis{A}{k}$, $\End_{\Heis{A}{k}}(X) e_X \End_{\Heis{A}{k}}(X)$ contains the identity morphism $1_X$, and hence $\Heis{A}{k} \be \Heis{A}{k} = \Heis{A}{k}$.  Then the theorem follows from \cref{cards,zoom}.
\end{proof}

As mentioned in the introduction, it follows from \cref{moritathm} that, when $A$ is semisimple and purely even with trivial grading, the graded Karoubi envelope of $\Heis{A}{k}$ is monoidally equivalent to the graded Karoubi envelope of the symmetric product of $N$ copies of $\Heis{\kk}{k}$, where $N$ is the number of pairwise inequivalent irreducible $A$-supermodules.  This reduces us to the setting of \cite{MS18,Bru18,BSW-K0}.

\begin{rem}\label{color}
Assume that the hypothesis \cref{virginia}
from the introduction is satisfied and fix idempotents
$e_1,\dots,e_N$ as was done after \cref{coass1}.
Let $e := e_1+\cdots+e_N$, so that $A$ is graded Morita equivalent to
the basic graded superalgebra $eAe$. Then
\cref{moritathm} can be used to replace the Frobenius Heisenberg
category $\Heis{A}{k}$ by $\Heis{eAe}{k}$. The latter
is the full monoidal subcategory
of $\Kar(\Heis{A}{k})$ generated by  the objects $(\uparrow, e_i)$ and
$(\downarrow, e_i)$ for $i=1,\dots,N$. It may be viewed as a simpler
``colored'' version of the original category. For this, one denotes a string labeled by the idempotent $e_i$ instead
by adding an additional color $i$ to the string. Then tokens labeled
by elements of $e_j A e_i$
change the color of a string from $i$ to $j$.
This is particularly convenient when $A$ is a zigzag
algebra like in \cref{motivatingeg}. For this strings are colored by
 vertices in the quiver, and one only needs tokens for each
 edge of this quiver.
\end{rem}

\section{The lattice Heisenberg algebra\label{sec:lattic}}

We assume throughout this section that $A$ satisfies condition
\cref{virginia} from the introduction (see also \cref{Dagger}).  Since
the trace map has degree $-2\dA$, it follows that $A$ has grading $A =
\bigoplus_{i=0}^{2\dA} A_i$, $\dA \ge 0$, with $A_{2d} \ne 0$.  If
$\dA = 0$, then $A = A_0$ is purely even and semisimple.  In this
case, it makes sense to forget gradings and work in the setting of
strict $\kk$-linear monoidal categories.  This amounts to setting
$q=1$ and $\pi=1$ everywhere below.  Since the constructions of the
current section depend only on $A$ up to graded Morita equivalence, we will also assume here that $A$ is a basic algebra.

The (normalized) \emph{Cartan pairing} is the Hermitian form
\begin{equation}\label{cartanform}
    \langle -, - \rangle \colon K_0(\pgsmod A) \times K_0(\pgsmod A) \to \Zq,\quad
    \langle [P], [P'] \rangle := q^{-\dA}\grdim \Hom_A(P,P').
\end{equation}
Here, \emph{Hermitian} means that it is sesquilinear, i.e.\ $\langle v, z w \rangle = z \langle v, w \rangle = \langle \bar{z} v, w \rangle$, and conjugate-symmetric, i.e.\ $\langle v, w \rangle = \overline{\langle w, v \rangle}$, where $\bar{\ } \colon \Zq \rightarrow \Zq, q \mapsto q^{-1}, \pi \mapsto \pi$; the latter property is a consequence of the assumption that $A$ is graded Frobenius.
\details{
    It suffices to consider the basis of $K_0(\pgsmod A)$ given by the classes $[e_i A]$, $1 \le i \le N$, of the indecomposable projective $A$-supermodules.  For $1 \le i,j \le N$, the trace map on $A$ induces a nondegenerate $\kk$-bilinear pairing of graded $\kk$-vector spaces
    \[
        e_i A e_j \times e_j A e_i \to \kk,\quad
        (e_i a e_j, e_j b e_i) \mapsto \tr(e_i a e_j b e_i).
    \]
    It follows that $\Hom_A(e_i A, e_j A) \cong e_j A e_i$ is dual to $\Hom_A(e_j A, e_i A) \cong e_i A e_j$.  Since the trace map has degree $-2\dA$, this implies the desired conjugate-symmetry.
}
Choose a decomposition $1 = e_1 + \dotsc + e_N$ of the unit element of
$A$ as a sum of pairwise orthogonal primitive homogeneous idempotents
like after \cref{coass1}. Then the \emph{graded Cartan matrix} of $A$ is the Hermitian matrix $\left(\langle i,j\rangle\right)_{1 \leq i,j \leq N}$ defined from
\begin{equation}\label{grcartanmatrix}
    \langle i,j \rangle_A
    := \langle [e_i A], [e_j A] \rangle
    =q^{-d} \grdim e_i A e_j.
\end{equation}

\begin{eg}\label{motivatingeg}
    As a motivating example, the reader may like to think about the case when $A$ is a \emph{zigzag algebra} in the sense of \cite{HK}. For example, the zigzag algebra of type $\mathrm{A}_4$ is defined by the quiver and relations
    \begin{align*}
        \xymatrix{
        \stackrel{1}{\bullet}
        \ar@/^/[r]^{a_{21}}&\ar@/^/[l]^{a_{12}}
        \stackrel{2}{\bullet}
        \ar@/^/[r]^{a_{32}}&\ar@/^/[l]^{a_{23}}
        \stackrel{3}{\bullet}
        \ar@/^/[r]^{a_{43}}&\ar@/^/[l]^{a_{34}}
        \stackrel{4}{\bullet}}
        &\qquad\qquad
        \begin{array}{l}
        a_{12} a_{23} = a_{23} a_{34} = a_{32} a_{21} =
        a_{43}a_{32} =0,\\
        a_{23}a_{32}+a_{21}a_{12}=a_{34}a_{43}+a_{32}a_{23}=0,
        \end{array}
    \end{align*}
    with positive grading defined by path length, $\Z/2$-grading induced by the $\Z$-grading, and $d=1$.  The vertex idempotents $e_1,e_2,e_3,e_4$ give a basis for $A_0 = A_{0,\even}$, and the central elements $c_1 = a_{12}a_{21}$, $c_2 = a_{23}a_{32}$, $c_3 = a_{34} a_{43}$, $c_4 = -a_{43} a_{34}$ give a basis for $A_2 = A_{2,\even}$.  The trace sends $c_i \mapsto 1$.  The graded Cartan matrix is
    \[
        \begin{pmatrix}
            q+q^{-1}&1&0&0\\
            1&q+q^{-1}&1&0\\
            0&1&q+q^{-1}&1\\
            0&0&1&q+q^{-1}
        \end{pmatrix}.
    \]
    Also in this case, the center $Z(A)$ and cocenter $C(A)$ are of
    dimension five, with dual bases $\{e_1+e_2+e_3+e_4, c_1,c_2,c_3,c_4\}$ and $\{\cocenter{c}_1 = \cocenter{c}_2= \cocenter{c}_3 = \cocenter{c}_4, \cocenter{e}_1, \cocenter{e}_2, \cocenter{e}_3, \cocenter{e}_4\}$.
\end{eg}

We can associate to $A$ a natural lattice, and hence a lattice
Heisenberg algebra, as we now describe.
For $z = \sum_{n \in \Z, p \in \Z/2} z_{n,p} q^n \pi^p \in \Zq$, define $\extdim{r}{z} \in \Zq$ for $r \in \N$ by
\begin{equation} \label{moose}
    \prod_{n \in \Z} \frac{(1+q^{n} u)^{z_{n,\bar 0}}}{(1-q^{n}\pi
      u)^{z_{n,\bar 1}}}
    = \sum_{r \ge 0} \extdim{r}{z} u^r.
\end{equation}
The significance of this definition is that for a finite dimensional graded vector space $W$, we have
\[
    \grdim \left({\textstyle\bigwedge^r}(W)\right) = \extdim{r}{\grdim W},
\]
where the exterior superalgebra $\bigwedge(W)$ is the quotient of the tensor superalgebra $T(W) = \bigoplus_{r \in \N} W^{\otimes r}$ by the two-sided ideal generated by the elements $v \otimes w + (-1)^{\bar{v} \bar{w}} w \otimes v$; then $\bigwedge^r(W)$ is the image of $W^{\otimes r}$ in $\bigwedge(W)$.  It follows from the definition \cref{moose} that $\extdim{r}{n} = \binom{n}{r}$ for $n \in \Z$.  We also have $\extdim{r}{\bar{z}} = \overline{\extdim{r}{z}}$, where the involution $\bar{\ }$ is the one introduced below \cref{cartanform}, and
\begin{equation} \label{poke}
    \sum_{\substack{r,t \ge 0 \\ r+t=n}} \extdim{r}{z} \extdim{t}{-z} = 0
    \qquad \text{for } z \in \Zq,\ n \ge 1.
\end{equation}
\details{
    This follows from the identity
    \[
        1 = \prod_{n \in \Z} \frac{(1+q^{2n} u)^{z_{2n}}}{(1-q^{2n+1} u)^{z_{2n+1}}} \frac{(1+q^{2n} u)^{-z_{2n}}}{(1-q^{2n+1} u)^{-z_{2n+1}}}
        = \left( \sum_{r \ge 0} \extdim{r}{z} u^r \right) \left( \sum_{t \ge 0} \extdim{r}{-z} u^t \right).
    \]
}

For $n \in \Z$, define the $\Q$-algebra homomorphism
\[
    \theta_n \colon \Qq \to \Qq,\quad
    \theta_n(q) = q^n.
\]
Recalling \cref{dogs,cats}, for $0 \neq k \in \Z$, we form the \emph{Heisenberg double} $\Sym_\Qq^{\otimes N} \#_{\Qq} \Sym_\Qq^{\otimes N}$ with respect to the sesquilinear Hopf pairing determined by
\begin{equation} \label{Hopfpair}
    \langle -, - \rangle_k \colon \Sym_\Qq^{\otimes N} \otimes_\Qq \Sym_\Qq^{\otimes N},\quad
    \langle p_{m,i}, p_{n,i} \rangle_k = \delta_{m,n} n \theta_n \left( q^{-d} \qint{k}_d \langle i,j \rangle_A \right),
\end{equation}
where
$\qint{k}_d
    := q^{(k-1)\dA} + q^{(k-3)\dA} + \dotsb + q^{-(k-3)\dA} + q^{-(k-1)\dA}$.
By definition, $\Sym_\Qq^{\otimes N} \#_{\Qq} \Sym_\Qq^{\otimes N}$ is the $\Qq$-module $\Sym_\Qq^{\otimes N} \otimes_{\Qq} \Sym_\Qq^{\otimes N}$ with associative multiplication defined by
\[
    (e \otimes f)(g \otimes h)
    := \sum_{(f),(g)} \langle f_{(1)}, g_{(2)} \rangle_k e g_{(1)} \otimes f_{(2)} h,
\]
recalling the comultiplication \cref{Symcomult}.  For $f \in \Sym_\Qq^{\otimes N}$, we write $f^-$ and $f^+$ for the elements $f \otimes 1$ and $1 \otimes f$ of $\Sym_\Qq^{\otimes N} \#_{\Qq} \Sym_\Qq^{\otimes N}$, respectively.  Then $\Sym_\Qq^{\otimes N} \#_{\Qq} \Sym_\Qq^{\otimes N}$ is generated by the elements $\{p_{n,i}^\pm : n \in \N,\ 1 \le i \le N\}$, subject to the relations
\begin{equation} \label{fitting}
    \begin{gathered}
        p_{0,i}^+ = p_{0,i}^- = 1,\quad
        p_{m,i}^+ p_{n,j}^+ = p_{n,j}^+ p_{m,i}^+,\quad
        p_{m,i}^- p_{n,j}^- = p_{n,j}^- p_{m,i}^-,
        \\
        p_{m,i}^+ p_{n,j}^- = p_{n,j}^- p_{m,i}^+ + \delta_{m,n} n \theta_n \left( \qint{k}_d \langle i,j \rangle_A \right).
  \end{gathered}
\end{equation}

The pairing under \cref{Hopfpair} of $h_{m,i}$ and $h_{n,j}$ lies in $\Zq$, as follows for example by comparing the coefficients appearing in \cite[Th.~5.3]{Sua17} to \cite[(2.2)]{Sua17}.  Hence we can restrict to obtain a biadditive form $\langle -, - \rangle_k \colon \Sym_\Zq^{\otimes N} \times \Sym_\Zq^{\otimes N} \to \Zq$.  The associated Heisenberg double
\begin{equation}\label{hd}
    \rHeis_k(A) := \Sym_\Zq^{\otimes N} \#_{\Zq} \Sym_\Zq^{\otimes N}
\end{equation}
is a natural $\Zq$-form for $U(\fh_L) \cong \Sym_\Qq^{\otimes N} \#_{\Qq} \Sym_\Qq^{\otimes N}$.  We call $\rHeis_k(A)$ a \emph{lattice Heisenberg algebra}.  Then $\rHeis_k(A)$ is generated as a $\Zq$-algebra by the elements $\{h_{n,i}^+, e_{n,i}^- : n \in \N,\ 1 \le i \le N\}$ subject to the relations
\begin{equation} \label{synth1}
    \begin{gathered}
        h_{0,i}^+ = e_{0,i}^- = 1,\quad
        h_{m,i}^+ h_{n,j}^+ = h_{n,j}^+ h_{m,i}^+,\quad
        e_{m,i}^- e_{n,j}^- = e_{n,j}^- e_{m,i}^-,
        \\
        h_{m,i}^+ e_{n,j}^- = \sum_{r=0}^{\min(m,n)} \extdim{r}{\qint{k}_d \langle i,j \rangle_A} e_{n-r,j}^- h_{m-r,i}^+.
    \end{gathered}
\end{equation}
We refer the reader to \cite[\S5]{Sua17} and \cite[App.~A]{LRS18}
where this and other presentations are computed. When $k=0$, the form
\cref{Hopfpair} is zero.  In this case we simply define
$\Sym_\Qq^{\otimes N} \#_{\Qq} \Sym_\Qq^{\otimes N} =
\Sym_\Qq^{\otimes N} \otimes_{\Qq} \Sym_\Qq^{\otimes N}$ (tensor
product of graded algebras).  The presentation \cref{synth1} continues
to hold in this case.
The next lemma gives an equivalent formulation of this presentation
which is more convenient for the present purposes.

\begin{lem} \label{marahau}
    As a $\Zq$-algebra, $\rHeis_k(A)$ is generated by $\{h_{n,i}^+, e_{n,i}^- : n \in \N,\ 1 \le i \le N\}$ subject to the relations
    \begin{equation} \label{synth2}
        \begin{gathered}
            h_{0,i}^+ = e_{0,i}^- = 1,\quad
            h_{m,i}^+ h_{n,j}^+ = h_{n,j}^+ h_{m,i}^+,\quad
            e_{m,i}^- e_{n,j}^- = e_{n,j}^- e_{m,i}^-,
            \\
            e_{m,i}^- h_{n,j}^+ = \sum_{r=0}^{\min(m,n)} \extdim{r}{-\qint{k}_d \langle j,i \rangle_A} h_{n-r,j}^+ e_{m-r,i}^-.
        \end{gathered}
    \end{equation}
\end{lem}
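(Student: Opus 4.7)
The plan is to show \cref{synth2} is equivalent to \cref{synth1} after translating both into generating function form and applying the inversion identity \cref{poke}. Since the generators in \cref{synth2} coincide with those of \cref{synth1}, and the commutation relations among the $h^+$'s and among the $e^-$'s are identical in both presentations, it suffices to verify that the mixed commutation relation
$$e_{m,i}^- h_{n,j}^+ = \sum_{r=0}^{\min(m,n)} \extdim{r}{-\qint{k}_d \langle j,i\rangle_A}\, h_{n-r,j}^+ e_{m-r,i}^-$$
holds in $\rHeis_k(A)$, and conversely, that the relation from \cref{synth1} follows from it (together with the unchanged commuting relations).

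First, in an appropriate completion of $\rHeis_k(A)$, introduce the generating functions
$$H_j^+(u) := \sum_{n \geq 0} h_{n,j}^+\, u^n, \qquad E_i^-(v) := \sum_{m \geq 0} e_{m,i}^-\, v^m.$$
A direct multiplication of the third relation of \cref{synth1} (with indices renamed so that $h$ carries the index $j$ and $e$ carries the index $i$) by $u^n v^m$ and summation over $m, n \geq 0$ reindexes to give the compact form
$$H_j^+(u)\, E_i^-(v) = F_{ji}(uv)\, E_i^-(v)\, H_j^+(u), \qquad F_{ji}(t) := \sum_{r \geq 0} \extdim{r}{\qint{k}_d \langle j,i\rangle_A}\, t^r.$$
Likewise, the relation of \cref{synth2} is equivalent to
$$E_i^-(v)\, H_j^+(u) = G_{ji}(uv)\, H_j^+(u)\, E_i^-(v), \qquad G_{ji}(t) := \sum_{r \geq 0} \extdim{r}{-\qint{k}_d \langle j,i\rangle_A}\, t^r.$$

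Second, since $F_{ji}(t) \in 1 + t\, \Zq\llbracket t\rrbracket$, it is invertible in the power series ring, and the identity \cref{poke} applied to $z = \qint{k}_d \langle j,i\rangle_A \in \Zq$ gives
$$F_{ji}(t)\, G_{ji}(t) = \sum_{n \geq 0} \Bigl(\sum_{r+s = n} \extdim{r}{z} \extdim{s}{-z}\Bigr) t^n = 1.$$
Therefore the two generating function identities displayed above are equivalent: each one can be obtained from the other by multiplying on the appropriate side by $F_{ji}(uv)^{\pm 1}$. Extracting the coefficient of $u^n v^m$ then yields the equivalence of the mixed commutation relations in \cref{synth1} and \cref{synth2}, and the lemma follows.

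The only thing to watch is the bookkeeping of the indices $(i,j)$ versus $(j,i)$ and the signs in the Hopf pairing, but there is no real obstacle here — everything reduces to the single inversion identity \cref{poke}.
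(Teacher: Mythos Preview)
Your proof is correct. Both your argument and the paper's proof reduce to showing that the mixed relation in \cref{synth2} is equivalent to that in \cref{synth1}, with the key input being the inversion identity \cref{poke}. The paper carries this out by direct induction on $n$: it isolates the $r=0$ term of \cref{synth1} to write $e_{m,i}^- h_{n,j}^+$ in terms of $h_{n,j}^+ e_{m,i}^-$ plus lower terms, applies the inductive hypothesis to those lower terms, and collapses the resulting double sum using \cref{poke}. Your generating-function formulation packages the same computation more cleanly: the inversion identity becomes the single statement $F_{ji}(t)G_{ji}(t)=1$, and equivalence of the two relations is then immediate by multiplying through by $F_{ji}(uv)^{\pm 1}$. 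Your version also handles both implications at once, whereas the paper proves one direction and declares the other ``similar.'' The only informality is the phrase ``an appropriate completion,'' but since the scalar series $F_{ji}, G_{ji}$ lie in $\Zq\llbracket t\rrbracket$ and are central, the manipulation in $\rHeis_k(A)\llbracket u,v\rrbracket$ is unproblematic.
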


\begin{proof}
    It suffices to prove that the last relation in \cref{synth1} is equivalent to the last relation in \cref{synth2}.  We prove that \cref{synth1} implies \cref{synth2}; the reverse implication is similar.  We proceed by induction on $n$, the case $n=0$ being immediate.  For $n \ge 1$, we have
    \begin{align*}
        e_{m,i}^- h_{n,j}^+
        \ &\overset{\mathclap{\cref{synth1}}}{=}\
        h_{n,j}^+ e_{m,i}^- - \sum_{r=1}^{\min(m,n)} \extdim{r}{[k]_d\langle j,i \rangle_A} e_{m-r,i}^- h_{n-r,j}^+ \\
        &= h_{n,j}^+ e_{m,i}^- - \sum_{r=1}^{\min(m,n)} \sum_{t=0}^{\min(m-r,n-r)} \extdim{r}{[k]_d\langle j,i \rangle_A} \extdim{t}{-[k]_d \langle j, i \rangle_A} h_{n-r-t,j}^+ e_{m-r-t,i}^- \\
        &= h_{n,j}^+ e_{m,i}^- - \sum_{l=1}^{\min(m,n)} \sum_{\substack{r \ge 1,\, t \ge 0 \\ r+t=l}}  \extdim{r}{[k]_d\langle j,i \rangle_A} \extdim{t}{-[k]_d \langle j, i \rangle_A} h_{n-l,j}^+ e_{m-l,i}^- \\
        &\overset{\mathclap{\cref{poke}}}{=}\ h_{n,j}^+ e_{m,i}^- + \sum_{l=1}^{\min(m,n)} \extdim{l}{-[k]_d \langle j, i \rangle_A} h_{n-l,j}^+ e_{m-l,i}^-.  \qedhere
    \end{align*}
\end{proof}

\begin{cor} \label{ramen}
    We have a conjugate-linear ring isomorphism
    \[
        \omega_{k} \colon \rHeis_k(A) \xrightarrow{\cong} \rHeis_{-k}(A),\quad
        s_{\lambda,i}^\pm \mapsto s_{\lambda^T,i}^\mp.
    \]
\end{cor}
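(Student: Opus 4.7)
The plan is to construct $\omega_k$ on generators using the two presentations \cref{synth1,synth2} of $\rHeis_k(A)$, check the defining relations are preserved, verify invertibility by exhibiting a two-sided inverse, and then identify the image on Schur functions via the classical involution of $\Sym$.

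First, I would define $\omega_k$ on the generators of $\rHeis_k(A)$ described in \cref{synth2} by declaring
\[
\omega_k(h_{n,i}^+) := e_{n,i}^-, \qquad \omega_k(e_{n,i}^-) := h_{n,i}^+,
\]
and extending conjugate-linearly over $\Zq$ (so $\omega_k(q)=q^{-1}$ and $\omega_k(\pi)=\pi$). To see this is well defined, I need to check that the images satisfy the relations \cref{synth2} as relations in $\rHeis_{-k}(A)$. The commutation of the $h_{n,i}^+$'s and of the $e_{n,i}^-$'s amongst themselves is immediate. The mixed relation
\[
e_{m,i}^- h_{n,j}^+ = \sum_{r=0}^{\min(m,n)} \extdim{r}{-\qint{k}_d \langle j,i\rangle_A}\, h_{n-r,j}^+ e_{m-r,i}^-
\]
is sent by $\omega_k$ to
\[
h_{m,i}^+ e_{n,j}^- = \sum_{r=0}^{\min(m,n)} \overline{\extdim{r}{-\qint{k}_d \langle j,i\rangle_A}}\, e_{n-r,j}^- h_{m-r,i}^+.
\]
Since $\overline{\extdim{r}{z}} = \extdim{r}{\bar z}$, $\overline{\qint{k}_d} = \qint{k}_d$, $\overline{\langle j,i\rangle_A} = \langle i,j\rangle_A$ and $\qint{-k}_d = -\qint{k}_d$, the coefficient equals $\extdim{r}{\qint{-k}_d \langle i,j\rangle_A}$, which is precisely the coefficient in the relation \cref{synth1} for $\rHeis_{-k}(A)$ (with $k$ replaced by $-k$). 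Hence $\omega_k$ descends to a well-defined conjugate-linear ring homomorphism.

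Next, to see that $\omega_k$ is an isomorphism, I would construct $\omega_{-k}$ using the same formulae on generators, and observe that the composition $\omega_{-k}\circ\omega_k$ is the identity of $\rHeis_k(A)$: on generators this is obvious, and on $\Zq$ the conjugation composes with itself to the identity. A symmetric argument shows $\omega_k\circ\omega_{-k}=\id$, so $\omega_k$ is an isomorphism.

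Finally, it remains to identify $\omega_k$ on Schur functions. This reduces to a statement purely inside $\Sym_\Zq^{\otimes N}$: the $\Z$-linear ring automorphism of $\Sym$ swapping $h_n$ with $e_n$ is the classical involution $\omega$, which on the Schur basis satisfies $\omega(s_\lambda) = s_{\lambda^T}$ (see, for example, Macdonald's book). Extending $\Zq$-linearly on each tensor factor, the map sending $h_{n,i}^\pm \mapsto e_{n,i}^\mp$ therefore sends $s_{\lambda,i}^\pm \mapsto s_{\lambda^T,i}^\mp$, which gives the desired formula for $\omega_k$. The main (and essentially only) nontrivial step is the sign/conjugation bookkeeping in step~1 above; everything else is formal, and I do not anticipate any real obstacle.
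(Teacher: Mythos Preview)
Your proposal is correct and follows essentially the same approach as the paper: the paper's proof invokes \cref{marahau} together with the coefficient identity $\overline{\extdim{r}{-[k]_d\langle j,i \rangle_A}} = \extdim{r}{[-k]_d \langle i,j \rangle_A}$, which is exactly the computation you carry out (combining $\overline{\extdim{r}{z}}=\extdim{r}{\bar z}$, $\overline{[k]_d}=[k]_d$, $\overline{\langle j,i\rangle_A}=\langle i,j\rangle_A$, and $-[k]_d=[-k]_d$). Your explicit identification of $\omega_k$ on Schur functions via the classical involution and the verification of invertibility are reasonable elaborations that the paper leaves implicit.
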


\begin{proof}
    This follows from \cref{marahau}, together with the fact that
    \begin{equation} \label{walk}
        \overline{\extdim{r}{-[k]_d\langle j,i \rangle_A}}
        = \extdim{r}{- \overline{[k]_d} \overline{\langle j,i \rangle_A}}
        = \extdim{r}{[-k]_d \langle i,j \rangle_A}.  \qedhere
    \end{equation}
\end{proof}

\begin{lem}
    For $k = l + m$, there is an algebra homomorphism $\delta_{l|m} \colon \rHeis_k(A) \to \rHeis_l(A) \otimes_{\Zq} \rHeis_m(A)$ given by
    \begin{equation} \label{puppet}
        \begin{aligned}
            \delta_{l|m}(p_{n,i}^+) &= p_{n,i}^+ \otimes 1 + 1\otimes q^{-l\dA n} p_{n,i}^+,
            &
            \delta_{l|m}(p_{n,i}^-) &= q^{m \dA n} p_{n,i}^- \otimes 1 + 1 \otimes p_{n,i}^-,
            \\
            \delta_{l|m}(h_{n,i}^+) &= \sum_{r=0}^n h_{n-r,i}^+ \otimes q^{-l \dA r} h_{r,i}^+,
            &
            \delta_{l|m}(h_{n,i}^-) &= \sum_{r=0}^n q^{m \dA r} h_{r,i}^- \otimes h_{n-r,i}^-,
            \\
            \delta_{l|m}(e_{n,i}^+) &= \sum_{r=0}^n e_{n-r,i}^+ \otimes q^{-l \dA r} e_{r,i}^+,
            &
            \delta_{l|m}(e_{n,i}^-) &= \sum_{r=0}^n q^{m \dA r} e_{r,i}^- \otimes e_{n-r,i}^-.
        \end{aligned}
    \end{equation}
\end{lem}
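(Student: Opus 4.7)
The plan is to verify the lemma in two stages. First, working over $\Qq$ where $\rHeis_k(A)$ is generated by the power sums $p_{n,i}^\pm$, we use the presentation \cref{fitting} to check directly that the formulas in \cref{puppet} for $\delta_{l|m}(p_{n,i}^\pm)$ define a $\Qq$-algebra homomorphism after base change to $\Qq$. Then we show that the image of each $\Zq$-generator of $\rHeis_k(A)$ lies in $\rHeis_l(A)\otimes_\Zq \rHeis_m(A)$ and matches the formulas given in \cref{puppet}, so that $\delta_{l|m}$ restricts to a $\Zq$-algebra homomorphism.

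For the first stage, the power sums on the $+$ (resp.\ $-$) side supercommute among themselves (and are all even under hypothesis \cref{virginia}), so the only nontrivial relation to check is the cross-relation. A direct expansion shows
\[
\delta_{l|m}(p_{m,i}^+)\delta_{l|m}(p_{n,j}^-) - \delta_{l|m}(p_{n,j}^-)\delta_{l|m}(p_{m,i}^+)
= q^{m\dA n}\bigl(p_{m,i}^+ p_{n,j}^- - p_{n,j}^- p_{m,i}^+\bigr)\otimes 1 + 1\otimes q^{-l\dA m}\bigl(p_{m,i}^+ p_{n,j}^- - p_{n,j}^- p_{m,i}^+\bigr),
\]
the mixed cross terms cancelling. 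Applying the cross-relations within $\rHeis_l(A)$ and $\rHeis_m(A)$, this simplifies to $\delta_{m,n}\, n\, \theta_n(\langle i,j\rangle_A) \bigl(q^{m^2\dA}\theta_m(\qint{l}_\dA) + q^{-lm\dA}\theta_m(\qint{m}_\dA)\bigr) \cdot (1\otimes 1)$. The identity $\qint{k}_\dA = q^{m\dA}\qint{l}_\dA + q^{-l\dA}\qint{m}_\dA$, immediately checked by splitting the quantum integer at the middle, yields after applying $\theta_m$ the equality $\theta_m(\qint{k}_\dA) = q^{m^2\dA}\theta_m(\qint{l}_\dA) + q^{-lm\dA}\theta_m(\qint{m}_\dA)$. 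Hence the displayed expression equals $\delta_{m,n}n\theta_n(\qint{k}_\dA\langle i,j\rangle_A)\cdot(1\otimes 1)$, which matches $\delta_{l|m}$ applied to the right side of the cross-relation in $\rHeis_k(A)$.

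For the second stage, recall that $\Sym_\Zq$ is generated over $\Zq$ by the $h_n$'s (or $e_n$'s), not by the $p_n$'s. Introduce the $\Zq$-algebra automorphism $\sigma \colon \Sym_\Zq^{\otimes N} \to \Sym_\Zq^{\otimes N}$ defined by $\sigma(p_{n,i}) := q^{-l\dA n} p_{n,i}$. Since $h_{n,i}$ and $e_{n,i}$ are isobaric polynomials in the $p_{m,i}$'s of weighted degree $n$, we automatically have $\sigma(h_{n,i}) = q^{-l\dA n} h_{n,i}$ and $\sigma(e_{n,i}) = q^{-l\dA n} e_{n,i}$. Writing $\delta$ for the standard Hopf coproduct of \cref{Symcomult}, the restriction of $\delta_{l|m}$ to the $+$-subalgebra agrees with $(\id\otimes\sigma)\circ\delta$, and evaluating this at $h_{n,i}^+$ or $e_{n,i}^+$ recovers the formulas in \cref{puppet}, which visibly lie in $\rHeis_l(A)\otimes_\Zq\rHeis_m(A)$. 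A symmetric analysis using the automorphism $\tau(p_{n,i}) := q^{m\dA n} p_{n,i}$ handles the $-$-subalgebra.

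The main obstacle is the bookkeeping with grading shifts in the power-sum cross-relation computation, with the key algebraic input being the quantum integer identity $\qint{k}_\dA = q^{m\dA}\qint{l}_\dA + q^{-l\dA}\qint{m}_\dA$. We note that $\delta_{l|m}$ categorifies the categorical comultiplication $\tilde\Delta_{\blue{l}|\red{m}}$ of \cref{bunnycor}, in the sense of \cref{morebunny} combined with the isomorphism \cref{emerald}; this conceptual viewpoint motivates the precise shape of the grading shifts appearing in \cref{puppet}.
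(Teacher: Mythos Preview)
Your proof is correct and follows essentially the same approach as the paper: extend scalars to $\Qq$, verify the power-sum cross-relation via the identity $\qint{k}_d = q^{m\dA}\qint{l}_d + q^{-l\dA}\qint{m}_d$, then deduce the formulas for $h_{n,i}^\pm$ and $e_{n,i}^\pm$ and restrict back to $\Zq$. Your second stage, using the grading automorphism $\sigma$ and isobaricity, makes explicit what the paper summarizes as ``just as for the usual comultiplication on $\Sym_\Qq$''. Two minor remarks: you reuse the symbol $m$ both for the parameter in $k=l+m$ and for the power-sum index, which makes expressions like $q^{m^2\dA}$ ambiguous (the paper avoids this by using $n,r$ as indices); and in your final sentence the direction is reversed---it is $\tilde\Delta_{\blue{l}|\red{m}}$ that categorifies $\delta_{l|m}$, not the other way around.
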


\begin{proof}
    Extend scalars to $\Qq$ and define $\delta_{l|m}$ via the first two expressions in \cref{puppet}.  To see that $\delta_{l|m}$ is well defined, we must check the last relation in \cref{fitting}, since the others are clear.  We have
    \begin{align*}
        \delta_{l|m} ( p_{n,i}^+ p_{r,j}^- )
        &= q^{m \dA r} p_{n,i}^+ p_{r,j}^- \otimes 1 + p_{n,i}^+ \otimes p_{r,j}^- + q^{(mr - ln) \dA} p_{r,j}^- \otimes p_{n,i}^+ + q^{-l \dA n} \otimes p_{n,i}^+ p_{r,j}^-, \\
        \delta_{l|m} ( p_{r,j}^- p_{n,i}^+ )
        &= q^{m \dA r} p_{r,j}^- p_{n,i}^+\otimes 1 + p_{n,i}^+ \otimes p_{r,j}^- + q^{(mr - ln) \dA} p_{r,j}^- \otimes p_{n,i}^+ + q^{-l \dA n} \otimes p_{r,j}^- p_{n,i}^+.
    \end{align*}
    Thus
    \begin{align*}
        \delta_{l|m} ( p_{n,i}^+ p_{r,j}^- - p_{r,j}^- p_{n,i}^+ )
        &= \delta_{n,r} n \left( q^{m\dA n} \theta_n([l]_d \langle i,j \rangle_A) + q^{-l\dA n} \theta_n([m]_d \langle i,j \rangle_A) \right)
        \\
        &= \delta_{n,r} n \theta_n \left( \left( q^{m\dA} [l]_d + q^{-l\dA} [m]_d \right) \langle i,j \rangle_A \right)
        \\
        &= \delta_{n,r} n \theta_n([k]_d \langle i,j \rangle_A).
    \end{align*}
    The expressions for $\delta_{l|m}(h_{n,i}^\pm)$ and $\delta_{l|m}(e_{n,i}^\pm)$ now follow just as for the usual comultiplication on $\Sym_\Qq$.  Restricting scalars to $\Zq$ then gives the desired result.
\end{proof}

\section{Grothendieck ring\label{sec:k0}}

In this final section, our goal is compute the Grothendieck ring of
the additive Karoubi envelope of $\Heis{A}{k}$ and show that it is
isomorphic to the lattice Heisenberg algebra $\rHeis_k(A)$ from
\cref{sec:lattic}, under the assumption that {\em $A$ satisfies condition
\cref{virginia} from the introduction}.  (However, see \cref{Dagger}.)
When $A$ is purely even and semisimple, this follows easily from
\cref{moritathm} and results of \cite{BSW-K0}.  Therefore, until the
statement of \cref{K0isom}, {\em we assume also that the grading on $A$ is nontrivial}, i.e. $d > 0$.
Recall the objects $S_{\blambda}^{\pm}, H_{n,i}^{\pm}, E_{n,i}^{\pm}\in\Kar(\Heis{A}{k}_{q,\pi})$
from \cref{dungeons,dragons,dd}.

\begin{lem} \label{coffee}
    The category $\Kar(\Heis{A}{k}_{q,\pi})$ is Krull--Schmidt.  If $k \ge 0$ (resp.\ $k \le 0$) then the objects $S_\bmu^- \otimes S_\blambda^+$ (resp.\ $S_\bmu^+ \otimes S_\blambda^-$), $\bmu,\blambda \in \cP^N$, are a complete set of pairwise non-isomorphic indecomposable projective objects in $\Kar(\Heis{A}{k}_{q,\pi})$, up to grading and parity shifts.
\end{lem}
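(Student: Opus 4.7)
The plan is to adapt the strategy of \cite[Th.~7.1]{BSW-K0}, which established this result for the special case $A = \kk$. We focus on the case $k \ge 0$; the case $k \le 0$ follows by applying the symmetry $\Omega_k$ from \cref{Omega} together with \cref{peaches}.

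The first step is to establish the Krull--Schmidt property. By \cref{basis}, every morphism space $\Hom_{\Heis{A}{k}_{q,\pi}}(X,Y)$ is a free right $\Sym(A)$-module on the set $D_\circ(X,Y)$, which is finite up to the action of dots. Under hypothesis \cref{virginia} with $d > 0$, both $A$ and $\Sym(A)$ are positively graded and finite-dimensional in each degree, and using the strictly pivotal structure to rotate all dots and tokens onto straight strands one deduces that $\End(X)$ is a locally finite dimensional, bounded below graded superalgebra for every object $X$. Such algebras are semiperfect, idempotents lift from the degree-zero part, and the category is Krull--Schmidt.

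The second step is to show that $e := \jmath_m(e_{\bmu^T}) \otimes \imath_n(e_\blambda)$ is primitive in $E := \End_{\Heis{A}{k}_{q,\pi}}(\downarrow^{\otimes m} \otimes \uparrow^{\otimes n})$, where $m = |\bmu|$ and $n = |\blambda|$. Using the $\Sym(A)$-action from \cref{beta}, the basis theorem decomposes $E$ into a ``diagonal'' subalgebra generated by the images of $\jmath_m$ and $\imath_n$, together with a sum of basis elements featuring at least one cup-cap pair. For $k \ge 1$, every non-scalar bubble $\ccbubble{a}{r}$ has degree $\deg(a) + 2rd + 2kd > 0$; for $k = 0$ or for morphisms containing cup-cap pairs without bubbles, the relevant structure carries dots or tokens of positive degree, using $d > 0$ and the positivity of the grading on $A$. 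Hence in the degree-zero part of $E$, all off-diagonal contributions lie in the nilpotent strictly-positive-degree radical, and modulo this radical $E_0$ reduces to $(\WA[A^\op]{m} \otimes \WA{n})_0$ (invoking \cref{salad} to replace the affine wreath product algebras by the wreath product algebras). The image of $e$ in this quotient is primitive by \cref{trips}, so $e$ itself is primitive in $E$, whence $S_\bmu^- \otimes S_\blambda^+$ is indecomposable.

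For non-isomorphism and exhaustiveness, the approach is to apply the categorical action $\Psi_{f|g}$ from \cref{tiki} on the $\HeisR{A}{k}$-module supercategory $\cV(f|g)$. For appropriately generic polynomials $f(u), g(u)$, the functor $\Psi_{f|g}$ sends $S_\bmu^- \otimes S_\blambda^+$, up to grading and parity shift, to the projective supermodule $e_{\bmu^T} \CWAopR{m}{g} \boxtimes_R e_\blambda \CWAR{n}{f}$; invoking \cref{trips} separately for the two cyclotomic factors shows that distinct pairs $(\bmu, \blambda)$ give non-isomorphic projectives, whence the original objects in $\Heis{A}{k}$ are pairwise non-isomorphic. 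For exhaustiveness, every indecomposable object in $\Kar(\Heis{A}{k}_{q,\pi})$ is a summand of some $\downarrow^{\otimes m} \otimes \uparrow^{\otimes n}$ corresponding to a primitive idempotent in $E$; the analysis of $E_0$ modulo its nilpotent radical then shows every such primitive idempotent is conjugate to one of the form $\jmath_m(e_{\bmu^T}) \otimes \imath_n(e_\blambda)$.

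The main obstacle will be the detailed degree-zero analysis of $E$. One needs to verify carefully, using the cross-bubble relations \cref{squish}, the alternating braid relation \cref{altbraid}, and the bubble slides \cref{bubslide}, that every basis element of $D_\circ$ containing a cup-cap pair can be expressed as a sum of terms each carrying a factor of strictly positive degree. This closely parallels \cite[\S7]{BSW-K0}, but requires more delicate bookkeeping because of the additional $A$-valued tokens and the more general bubble and teleporter relations available in the Frobenius setting, with the wreath product idempotents $e_\blambda$ of \cref{trips} replacing the Young idempotents throughout.
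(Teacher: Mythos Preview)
Your overall strategy is in the right direction, but it is considerably more elaborate than what is actually needed, and there is one genuine inaccuracy. The paper's proof is much shorter: after reducing to $k\ge 0$ via $\Omega_k$, one uses \cref{invrel} to see that every object is, up to shifts, a summand of some $\downarrow^{\otimes m}\otimes\uparrow^{\otimes n}$, and then proves the single claim that $\End_{\Heis{A}{k}}(\downarrow^{\otimes m}\otimes\uparrow^{\otimes n})$ is nonnegatively graded with degree-zero piece \emph{equal to} $\WA[A_0^\op]{m}\otimes\WA[A_0]{n}$. This follows from a direct degree count on the basis $D_\circ$ of \cref{basis}: each dot contributes $2d>0$, each token contributes $\ge 0$, and (crucially) for this particular ordering of the object the relevant cups and caps are the left cap and right cup, each of degree $kd\ge 0$; meanwhile $\beta(\Sym(A))$ is positively graded with degree-zero piece $\kk 1_\one$ by \cref{weed}. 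So any degree-zero basis element has no cups, caps, or dots, and only $A_0$-tokens. Krull--Schmidt then follows from \cite[Ch.~XII, Prop.~3.3]{Bass}, and the classification of indecomposables from \cref{trips} applied to $A_0$. No ``reduction modulo a nilpotent radical'' is needed, nor \cref{salad} (affine wreath products do not enter), nor the action $\Psi_{f|g}$ for non-isomorphism, nor any case-by-case analysis of \cref{squish,altbraid,bubslide}.

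The inaccuracy in your plan is the assertion that $\End(X)$ is bounded below for \emph{every} object $X$. This is false: for $X=\uparrow\otimes\downarrow$ with $k>0$, the right cap/left cup pair gives an endomorphism of degree $-2kd<0$. The reduction via \cref{invrel} to the specific form $\downarrow^{\otimes m}\otimes\uparrow^{\otimes n}$ is essential precisely because it forces the cups/caps appearing in endomorphisms to be of the nonnegative-degree types. Once you insert that step, your argument can be collapsed to the paper's much shorter one.
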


\begin{proof}
    It suffices to prove the result for $k \ge 0$, since the case $k
    \le 0$ then follows by applying $\Omega_k$. By
    \cref{invrel}, every object of $\Kar(\Heis{A}{k}_{q,\pi})$ is
    isomorphic, up to grading and parity shifts, to a summand of $\downarrow^{\otimes m} \otimes \uparrow^{\otimes n}$ for some $m,n \in \N$.  We claim that
    \[
        \End_{\Heis{A}{k}} (\downarrow^{\otimes m} \otimes \uparrow^{\otimes n})_i \cong
        \begin{cases}
            0 & \text{if } i < 0, \\
            \WA[A_0^\op]{m} \otimes \WA[A_0]{n} & \text{if } i=0.
        \end{cases}
    \]
    Using \cite[Ch.~XII, Prop.~3.3]{Bass}, the statement that $\Kar(\Heis{A}{k}_{q,\pi})$ is Krull--Schmidt then follows from this claim and the fact that $\WA[A_0^\op]{m}\otimes\WA[A_0]{n}$ is finite dimensional for all $m,n \in \N$, while the statement about indecomposable projective objects follows from \cref{trips}.

    The proof of the claim is essentially the same as that of \cite[Lem.~10.2]{RS17}.  However, the basis theorem (\cref{basis}) allows us to be more concise.  First note that
    \[
        \deg \ccbubble{a}{n-k-1} = 2(n-1) \dA + \deg a.
    \]
    Together with \cref{weed}, it follows that $\beta(\Sym(A))$ is
    positively graded with degree zero piece equal to $\kk 1_\one$.
    Then, by \cref{basis,trappist}, we see that $\End_{\Heis{A}{k}}
    (\downarrow^{\otimes m} \otimes \uparrow^{\otimes n})_i = 0$ if $i < 0$ and that any basis elements of degree zero must have no cups, caps, or dots.  Then the isomorphism for $i=0$ is induced by \cref{imath,jmath}.
\end{proof}

Consider the $\Zpi[q,q^{-1}]$-module homomorphism
\begin{equation} \label{gamma}
  \gamma_{k} \colon \rHeis_k(A) \to K_0\left(\Kar\big(\Heis{A}{k}_{q,\pi}\big)\right),\quad
  \begin{cases}
    s_{\blambda}^- s_{\bmu}^+ \mapsto [ S_{\blambda}^- \otimes S_{\bmu}^+ ] & \text{if } k \ge 0,
    \\
    s_{\blambda}^+ s_{\bmu}^- \mapsto [ S_{\blambda}^+ \otimes S_{\bmu}^- ] & \text{if } k \le 0.
  \end{cases}
\end{equation}
In particular, $\gamma_{k} ( h_{n,i}^\pm ) = [H_{n,i}^\pm]$ and
$\gamma_{k} ( e_{n,i}^\pm ) = [E_{n,i}^\pm]$.  By \cref{coffee},
$\gamma_{k}$ is an isomorphism of $\Zq$-modules.  Our goal is to show that
it is an isomorphism of $\Zq$-algebras.

We begin by forgetting the grading, working in the
$\Pi$-envelope $\Heis{A}{k}_\pi$ rather than in $\Heis{A}{k}_{q,\pi}$.
This is merely a monoidal $\Pi$-supercategory rather than a graded
monoidal $(Q,\Pi)$-supercategory.
We use the notation $\ug{S}_\blambda^{\pm}$
to denote the objects of $\Kar(\Heis{A}{k}_\pi)$ defined in the same
way as \cref{dungeons,dd}.
Also let $\rtHeis_k(A) := \Zpi \otimes_{\Zq} \rHeis_k(A)$, where we
view $\Zpi$ as a $\Zq$-module via the map $q \mapsto 1$.
Note that $\rtHeis_k(A)$ can be defined directly by mimicking \cref{hd}
over the ring $\Zpi$ instead of $\Zq$, replacing $q$ by $1$ in
formulae such as \cref{Hopfpair}.
The ungraded analog of the map \cref{gamma} is
the $\Zpi$-module homomorphism
\begin{equation}\label{tgamma}
  \tgamma_{k} \colon \rtHeis_k(A) \to K_0\left(\Kar\big(\Heis{A}{k}_{\pi}\big)\right),\quad
        \begin{cases}
            \underline{s}_{\blambda}^- \underline{s}_{\bmu}^+ \mapsto [ \ug{S}_{\blambda}^- \otimes \ug{S}_{\bmu}^+ ] & \text{if } k \ge 0,
            \\
            \underline{s}_{\blambda}^+ \underline{s}_{\bmu}^- \mapsto [ \ug{S}_{\blambda}^+ \otimes \ug{S}_{\bmu}^- ] & \text{if } k \le 0,
        \end{cases}
\end{equation}
where $\underline{s}_{\blambda}^{\pm} := 1\otimes s_{\blambda}^{\pm}$.
We do not have available the ungraded analog of Lemma~\ref{coffee}, so
do not know that this map is an isomorphism. However, we have the
following, which is proved in a completely
different way, exploiting the categorical comultiplication.

\begin{lem} \label{shadow}
    Assuming \cref{virginia} and $d > 0$, the map $\tgamma_k$ is an injective $\Zpi$-algebra homomorphism.
\end{lem}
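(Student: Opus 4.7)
The proof proceeds in two parts: verifying that $\tgamma_k$ is a $\Zpi$-algebra homomorphism, and verifying injectivity. The strategy mirrors that of \cite[Th.~7.1]{BSW-K0}, which handled the special case $A = \kk$.

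For the algebra homomorphism property, it suffices to check that the defining relations of $\rtHeis_k(A)$ (the ungraded specialization of \cref{synth1}) hold in $K_0(\Kar(\Heis{A}{k}_\pi))$. The commutativity relations among the $+$-generators are pulled back from the Grothendieck group of $\psmod \AWA{n}$ via the homomorphism $\imath_n$ of \cref{imath}: the ungraded versions of \cref{salad,definingg} identify $\bigoplus_n K_0(\psmod \AWA{n})$ with $\Sym^{\otimes N}_\Zpi$, a commutative ring, so these relations hold. Analogously for the $-$-generators via $\jmath_n$ of \cref{jmath}. The nontrivial relation is the mixed cross relation $[\ug{H}^+_{n,i}] \cdot [\ug{E}^-_{m,j}] = \sum_{r \geq 0} \extdim{r}{k \langle i,j\rangle_A|_{q=1}} [\ug{E}^-_{m-r,j} \otimes \ug{H}^+_{n-r,i}]$ for $k \geq 0$ (and the analogous statement for $k \leq 0$). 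To establish this, I iterate the sideways crossing relation \cref{squish}: beginning with the inversion isomorphism \cref{invrel}, I obtain an explicit decomposition of $\uparrow^{\otimes n} \otimes \downarrow^{\otimes m}$ into summands of the form $\downarrow^{\otimes (m-r)} \otimes \uparrow^{\otimes (n-r)}$ with multiplicities given by bubble contributions. Applying the appropriate symmetrizer idempotents corresponding to $\ug{H}^+_{n,i}$ and antisymmetrizer idempotents corresponding to $\ug{E}^-_{m,j}$, and then passing to classes in $K_0$, yields the desired identity, with the bubble multiplicities matching $\extdim{r}{k \langle i,j\rangle_A|_{q=1}}$.

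For injectivity, by the definition of $\tgamma_k$ on the $\Zpi$-basis $\{\underline{s}^\mp_\blambda \underline{s}^\pm_\bmu : \blambda, \bmu \in \cP^N\}$ of $\rtHeis_k(A)$, it suffices to prove linear independence of the images $[\ug{S}^\mp_\blambda \otimes \ug{S}^\pm_\bmu] \in K_0(\Kar(\Heis{A}{k}_\pi))$. For this I invoke the family of module supercategories $\cV(f|g)$ constructed in \cref{sec:GCQ}: for any generic pair $(f,g) \in Z(A_R)_\even[u]^2$ in the sense of \cref{genericity}, the functor $\Psi_{f|g}$ of \cref{tiki} induces a $\Zpi$-module homomorphism from $K_0(\Kar(\Heis{A}{k}_\pi))$ to the Grothendieck group of $\Kar(\cV(f|g)_\pi)$, which by \cref{GCQ} is identified with the Grothendieck group of the generalized cyclotomic quotient $\GCQ{f}{g}$. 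As $(f,g)$ varies over a sufficiently rich family of generic pairs, possibly over varying base rings $R$, the induced maps collectively separate the images of the basis elements.

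The main obstacle is the combinatorial extraction of the binomial multiplicities $\extdim{r}{k \langle i,j\rangle_A|_{q=1}}$ from the diagrammatic expansion in \cref{squish}: one must carefully track contributions from the nested bubbles arising in the sideways crossing expansion, project them through the symmetrizer and antisymmetrizer idempotents defining $\ug{H}^+_{n,i}$ and $\ug{E}^-_{m,j}$, and verify that the result matches the generating function identity defining $\extdim{r}{-}$ in \cref{moose}.
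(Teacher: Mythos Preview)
Your plan for multiplicativity has a real gap. You propose to verify the cross relation between $[\ug{H}^+_{n,i}]$ and $[\ug{E}^-_{m,j}]$ by iterating the inversion isomorphism \cref{invrel} to decompose $\uparrow^{\otimes n}\otimes\downarrow^{\otimes m}$, and then applying the (anti)symmetrizer idempotents. But the isomorphism built from \cref{invrel} does not intertwine those idempotents with anything tractable on the other side: the sideways crossing fails to commute with the one-colour crossings that build $e_{(n),i}$ and $e_{(1^m),j}$, so the conjugated idempotent on $\bigoplus_r c_r\,\downarrow^{\otimes(m-r)}\otimes\uparrow^{\otimes(n-r)}$ is not one you can identify directly. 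The ``main obstacle'' you flag is genuine, and the paper itself cannot overcome it at this stage: its own diagrammatic analysis (\cref{earl}, \cref{grey}) produces only a split monomorphism from $H^+_{m,i}\otimes E^-_{n,j}$ into the expected direct sum, and the proof of \cref{K0isom} then \emph{invokes} \cref{shadow} to kill the complement. Alternatively, trying to isolate the desired piece from the $K_0$-identity for the full $\uparrow^{\otimes n}\otimes\downarrow^{\otimes m}$ by expanding both sides over the classes $[\ug{S}^-_\bmu\otimes\ug{S}^+_\blambda]$ presupposes their linear independence---which is the injectivity half of what you are proving.

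The paper's route is structurally different and never checks the cross relation directly. It builds a commutative diagram connecting $\tgamma_k$ to (a) the graded map $\gamma_k$, already a $\Zq$-module bijection by the Krull--Schmidt result \cref{coffee}, via forget-the-grading maps $\operatorname{ev}$ and $\nu$; and (b) a faithful tensor-power-of-Fock-space representation $\psi_{l|m}$ of $\rtHeis_k(A)$, categorified by the action on $\cV(u-u_1)\boxtimes\dotsb\boxtimes\cV(u-v_m)^\vee$ for distinct scalars $u_i,v_j$ (the hypothesis $d>0$ gives $\Gamma_R=\{0\}$, so \cref{genericity} is automatic). Injectivity of $\tgamma_k$ follows from faithfulness of $\psi_{l|m}$. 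Multiplicativity is then a trick: since $\gamma_k$ and $\operatorname{ev}$ are surjective, $\operatorname{im}\tgamma_k=\operatorname{im}\nu$ is already a subring; so $\tgamma_k(a)\tgamma_k(b)=\tgamma_k(c)$ for some $c$, and chasing the diagram through the injective composite on the Fock side forces $c=ab$. Your injectivity sketch via varying generalized cyclotomic quotients is in the same spirit but vaguer; the paper's specific choice of degree-one polynomials is what ties the module category to the Fock representation and makes the comparison precise.
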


\begin{proof}
    We essentially follow the argument in the proof of
    \cite[Th.~7.2]{BSW-K0}.  Namely, we prove the result by
    categorifying a natural faithful representation of $\rtHeis_k(A)$.
    The \emph{Fock space representation} of $\rtHeis_{-1}(A)$ is the $\Zpi$-module
$\Sym^{\otimes N}_\Zpi$ defined as in \cref{dogs} but replacing $\Zq$
by $\Zpi$. It is a $\rtHeis_{-1}(A)$-module so that, for $f \in \Sym^{\otimes
  N}_\Zpi$, the element $f^+$ acts by left multiplication by $f$, and
$f^-$ acts by the adjoint operator with respect to the
form that is \cref{Hopfpair} specialized at $q=1$.
Let $(\Sym^{\otimes N}_\Zpi)^\vee$ be the $\rtHeis_1(A)$-module
obtained from pulling back the $\rtHeis_{-1}(A)$-action on
$\Sym^{\otimes N}_\Zpi$ along the isomorphism $\underline{\omega}_{1}
\colon \rtHeis_1(A) \xrightarrow{\cong} \rtHeis_{-1}(A)$ arising from
\cref{ramen} specialized at $q=1$.  More generally, for any $l,m \ge 0$ and $k:= m-l$, the tensor product $V(l|m) := (\Sym^{\otimes N}_\Zpi)^{\otimes l} \otimes_\Zpi \left( (\Sym^{\otimes N}_\Zpi)^\vee \right)^{\otimes m}$ is naturally a $\rtHeis_k(A)$-module.  It has a natural basis
    \[
      \left\{ \underline{s}_{\blambda^{(1)}} \otimes \dotsb \otimes \underline{s}_{\blambda^{(l)}} \otimes \underline{s}_{\bmu^{(1)}}^\vee \otimes \dotsb \otimes \underline{s}_{\bmu^{(m)}}^\vee :
      \blambda^{(1)}, \dotsc, \blambda^{(l)}, \bmu^{(1)}, \dotsc, \bmu^{(m)} \in \cP^N \right\},
    \]
    where we use the notation $\underline{s}_\blambda^\vee$ to denote
    $\underline{s}_\blambda$ viewed as an element of $(\Sym^{\otimes
      N}_\Zpi)^\vee$ rather than $\Sym^{\otimes N}_\Zpi$.  The associated representation
    \[
      \psi_{l|m} \colon \rtHeis_k(A) \to \End_\Zpi \left( V(l|m) \right)
    \]
    is faithful providing $l + m > 0$; this is easy to see when both
    $l> 0$ and $m > 0$ which may be assumed for the present purposes.

    For $f(u) \in \kk[u]$ of degree one, the natural map $\WA{n}
    \hookrightarrow \CWA{n}{f}$ is an isomorphism by
    \cite[Cor.~6.13]{Sav20}.  Thus, the $\Heis{A}{-1}$-module
    supercategory $\cV(f)$ from \cref{upaction} is the category
    $\psmod \WA{n}$, and there is an isomorphism of $\Zpi$-modules
    \[
      \Sym^{\otimes N}_\Zpi \xrightarrow{\cong} K_0(\cV(f)),\quad
      \underline{s}_\blambda \mapsto \left[e_\blambda \WA{|\blambda|} \right],
    \]
by the last part of \cref{trips}.
Analogous statements hold for the $\Heis{A}{1}$-module category
$\cV(g)^\vee$ from \cref{downaction} when $g(u) \in \kk[u]$ is of
degree one.  More generally, for $u_1,\dotsc,u_l,v_1,\dotsc,v_m \in
\kk$, we have the Karoubian $\Pi$-supercategory
    \[
      \cV(u_1,\dotsc,u_l | v_1,\dotsc,v_m)
      := \cV(u-u_1) \boxtimes \dotsb \boxtimes \cV(u-u_l) \boxtimes \cV(u-v_1)^\vee \boxtimes \dotsb \boxtimes \cV(u-v_m)^\vee,
    \]
and there is a $\Zpi$-module isomorphism
    \begin{align} \label{maltese}
        V(l|m) &\xrightarrow{\cong} K_0 \left( \cV(u_1,\dotsc,u_l | v_1,\dotsc,v_m) \right).
    \end{align}
This is a
module supercategory over $\Heis{A}{-1} \odot \dotsb \odot
    \Heis{A}{-1} \odot \Heis{A}{1} \odot \dotsb \odot \Heis{A}{1}$.

Now assume in addition that $u_1,\dotsc,u_l,v_1,\dotsc,v_m$ are
all different elements of $\kk$.
Since $d > 0$, the set $\Gamma_R$ as defined prior to
\cref{genericity} is $\{0\}$,
taking $R = \KK$ to be the algebraic closure of $\kk$.
Then, interpreting ``eigenvalue'' as eigenvalue over $\KK$,
\cref{hot} shows that $x_i$ has only one eigenvalue $u_i$ on any
$\operatorname{Wr}_n^{u-u_i}(A)$-supermodule (resp.\
$x_j$ has only one eigenvalue $v_j$ on any $\operatorname{Wr}_m^{u-v_j}(A^\op)$-supermodule).
In particular, all of these eigenvalues actually belong to the ground field $\kk$.
Then we can argue as in \cref{bike}
to see that the action of
$\Heis{A}{-1} \odot \dotsb \odot
    \Heis{A}{-1} \odot \Heis{A}{1} \odot \dotsb \odot \Heis{A}{1}$
on
$\cV(u_1,\dotsc,u_l | v_1,\dotsc,v_m)$
extends to an action of the localization
      $\Heis{A}{-1} \barodotnonumber \dotsb \barodotnonumber \Heis{A}{-1} \barodotnonumber \Heis{A}{1} \barodotnonumber \dotsb \barodotnonumber \Heis{A}{1}$.
    Using the iterated comultiplication from \cref{bunny,coass3},
$\cV(u_1,\dotsc,u_l | v_1,\dotsc,v_m)$
    then becomes a module supercategory over $\Heis{A}{k}$.  Therefore, we have a strict monoidal superfunctor
    \[
      \Psi_{l|m} \colon \Heis{A}{k} \to \cEnd_\kk \left(\cV(u_1,\dotsc,u_l | v_1,\dotsc,v_m) \right),
    \]
which extends canonically to
a monoidal superfunctor $\tilde \Psi_{l|m}$ from $\Kar(\Heis{A}{k}_\pi)$.

Now we consider the diagram
    \begin{equation*}
        \begin{gathered}
            \xymatrixcolsep{3pc}
            \xymatrix{
\rHeis_k(A)\ar[r]^{\operatorname{ev}}\ar[d]_{\gamma_k}&                \rtHeis_k(A) \ar[r]^-{\psi_{l|m}} \ar[d]_{\tgamma_{k}} &
                \End_\Zpi \left( V(l|m) \right) \ar[d]^{c_{k}}
                \\
                K_0\left(\Kar\left(\Heis{A}{k}_{q,\pi}\right)\right) \ar[r]^-{\nu}                 &K_0\left(\Kar\left(\Heis{A}{k}_\pi\right)\right) \ar[r]^-{[\tilde\Psi_{l|m}(-)]} &
                \End_\Zpi \left( K_0 \left( \cV(u_1,\dotsc,u_l | v_1, \dotsc, v_m) \right) \right).
            }
        \end{gathered}
    \end{equation*}
In this diagram, the right hand map $c_k$
is the $\Zpi$-algebra isomorphism defined by
    conjugating with \cref{maltese}.
The top left map $\operatorname{ev}$ is the obvious surjective
$\Zq$-algebra homomorphism
defined by evaluation at $q=1$; it sends $s_\blambda^{\pm} \mapsto
\underline{s}_\blambda^{\pm}$.
The bottom left map $v$ is another $\Zq$-algebra homomorphism which is
induced by the obvious monoidal functor
$\Heis{A}{k}_{q,\pi} \rightarrow \Heis{A}{k}_\pi, Q^m \Pi^r
X\mapsto \Pi^r X, f_{m,r}^{n,s} \mapsto f_r^s$; it sends
$[S_\blambda^{\pm}] \mapsto [\underline{S}_\blambda^{\pm}]$.
The bottom right map is the
    $\Zpi$-algebra
    homomorphism $[X] \mapsto [\tilde\Psi_{l|m}(X)]$.  We also know
    already that
    $\psi_{l|m}$ is an injective $\Zpi$-algebra homomorphism.
The left hand square of the diagram obviously commutes.
The argument in the proof of \cite[Th.~7.2]{BSW-K0} shows that the
right hand square commutes too. Since $c_k \circ
\psi_{l|m}$ is injective, we deduce that the $\Zpi$-module
homomorphism $\tgamma_k$ is injective.
Finally we must show that $\tgamma_k$ is multiplicative. Take $a,b \in
\rtHeis_k(A)$.
Since both $\gamma_k$ and
$\operatorname{ev}$ are onto, it follows that the image of $\tgamma_k$
is equal to the image of $\nu$. Since $\nu$ is an algebra
homomorphism, it follows that the image of $\tgamma_k$ is a $\Zpi$-subalgebra
of $K_0(\Kar(\Heis{A}{k}_\pi))$.
Hence, we have that
$\tgamma_k(a) \tgamma_k(b) = \tgamma_k(c)$ for some $c \in
\rtHeis_k(A)$, and it remains to show that $c=ab$.
As all of the maps in the right hand square apart from $\tgamma_k$ are
already known to be algebra homomorphisms, we have that
\begin{align*}
c_k(\psi_{l|m}(ab)) &=
c_k(\psi_{l|m}(a)) c_k(\psi_{l|m}(b))=
[\tilde\Psi_{l|m}(\tgamma_k(a))] [\tilde\Psi_{l|m}(\tgamma_k(b))]\\
&=[\tilde\Psi_{l|m}(\tgamma_k(a) \tgamma_k(b))]
= [\tilde\Psi_{l|m}(\tgamma_k(c))] = c_k(\psi_{l|m}(c)).
\end{align*}
Since $c_k \circ \psi_{l|m}$ is injective, this implies that $ab = c$
as required.
\end{proof}

Now we return to the graded setting.
Recall that for objects $X,Y \in \Heis{A}{k}$ and homogeneous idempotents $e_X \colon X \to X$, $e_Y \colon Y \to Y$ we have
\[
  \Hom_{\Kar(\Heis{A}{k})} \big( (X,e_X), (Y,e_Y) \big) = e_Y \Hom_{\Heis{A}{k}}(X,Y) e_X
\]
by the definition of Karoubi envelope.  We will use a thick upward (resp.\ downward) string labeled $n$ to denote the identity morphism of $H_n^+$ (resp.\ $E_n^-$).  As in \cite[\S8]{BSW-K0}, we also introduce the diagrammatic shorthands
\begin{align*}
  \begin{tikzpicture}[anchorbase,>=to]
    \draw[->,line width=2pt] (0,-0.01) to (0,0.4) node[anchor=south] {\dotlabel{n,i}};
    \draw[line width=1pt] (-0.3,-0.4) node[anchor=north] {\dotlabel{(n-r),i}\ } to (-0.02,0);
    \draw[line width=1pt] (0.3,-0.4) node[anchor=north] {\ \dotlabel{r,i}} to (0.02,0);
  \end{tikzpicture}
  &:= \imath_n(e_{(n),i}) \colon H_{n-r,i}^+ \otimes H_{r,i}^+ \to H_{n,i}^+,
  &
  \begin{tikzpicture}[anchorbase,>=to]
    \draw[line width=2pt] (0,-0.4) node[anchor=north] {\dotlabel{n,i}} to (0,0.01);
    \draw[->,line width=1pt] (-0.02,0) to (-0.3,0.4) node[anchor=south] {\dotlabel{(n-r),i}\ };
    \draw[->,line width=1pt] (0.02,0) to (0.3,0.4) node[anchor=south] {\ \dotlabel{r,i}};
  \end{tikzpicture}
  &:= \binom{n}{r} \imath_n(e_{(n),i}) \colon H_{n,i}^+ \to H_{n-r,i}^+ \otimes H_{r,i}^+,
  \\
  \begin{tikzpicture}[anchorbase,>=to]
    \draw[->,line width=2pt] (0,0.01) to (0,-0.4) node[anchor=north] {\dotlabel{n,i}};
    \draw[line width=1pt] (-0.3,0.4) node[anchor=south] {\dotlabel{(n-r),i}\ } to (-0.02,0);
    \draw[line width=1pt] (0.3,0.4) node[anchor=south] {\ \dotlabel{r,i}} to (0.02,0);
  \end{tikzpicture}
  &:= \jmath_n(e_{(n),i}) \colon E_{n,i}^- \to E_{n-r,i}^- \otimes E_{r,i}^-,
  &
  \begin{tikzpicture}[anchorbase,>=to]
    \draw[line width=2pt] (0,0.4) node[anchor=south] {\dotlabel{n,i}} to (0,-0.01);
    \draw[->,line width=1pt] (-0.02,0) to (-0.3,-0.4) node[anchor=north] {\dotlabel{(n-r),i}\ };
    \draw[->,line width=1pt] (0.02,0) to (0.3,-0.4) node[anchor=north] {\ \dotlabel{r,i}};
  \end{tikzpicture}
  &:= \binom{n}{r} \jmath_n(e_{(n),i}) \colon E_{n-r,i}^- \otimes E_{r,i}^- \to E_{n,i}^-,
\end{align*}
for $0 \le r \le n$ and $1 \le i \le N$.  These merge and split morphisms are associative in an obvious sense allowing their definition to be extended to more strings, e.g.\ for three strings:
\[
  \begin{tikzpicture}[anchorbase,>=to]
  	\draw[-,thick] (0.36,-.3) to (0.09,0.14);
  	\draw[-,thick] (0.08,-.3) to (0.08,0.14);
  	\draw[-,thick] (-0.2,-.3) to (0.07,0.14);
  	\draw[<-,line width=2pt] (0.08,.45) to (0.08,.1);
  \end{tikzpicture}
  :=
  \begin{tikzpicture}[anchorbase,>=to]
  	\draw[-,thick] (0.35,-.3) to (0.08,0.14);
  	\draw[-,thick] (0.1,-.3) to (-0.04,-0.06);
  	\draw[-,line width=1pt] (0.085,.14) to (-0.035,-0.06);
  	\draw[-,thick] (-0.2,-.3) to (0.07,0.14);
  	\draw[<-,line width=2pt] (0.08,.45) to (0.08,.1);
  \end{tikzpicture}
  =
  \begin{tikzpicture}[anchorbase,>=to]
  	\draw[-,thick] (0.36,-.3) to (0.09,0.14);
  	\draw[-,thick] (0.06,-.3) to (0.2,-.05);
  	\draw[-,line width=1pt] (0.07,.14) to (0.19,-.06);
  	\draw[-,thick] (-0.19,-.3) to (0.08,0.14);
  	\draw[<-,line width=2pt] (0.08,.45) to (0.08,.1);
  \end{tikzpicture}
  ,\quad
  \begin{tikzpicture}[anchorbase,>=to]
  	\draw[<-,thick] (0.36,.3) to (0.09,-0.14);
  	\draw[<-,thick] (0.08,.3) to (0.08,-0.14);
  	\draw[<-,thick] (-0.2,.3) to (0.07,-0.14);
  	\draw[-,line width=2pt] (0.08,-.45) to (0.08,-.1);
  \end{tikzpicture}
  :=
  \begin{tikzpicture}[anchorbase,>=to]
  	\draw[<-,thick] (0.35,.3) to (0.08,-0.14);
  	\draw[<-,thick] (0.1,.3) to (-0.04,0.06);
  	\draw[-,line width=1pt] (0.085,-.14) to (-0.035,0.06);
  	\draw[<-,thick] (-0.2,.3) to (0.07,-0.14);
  	\draw[-,line width=2pt] (0.08,-.45) to (0.08,-.1);
  \end{tikzpicture}
  =
  \begin{tikzpicture}[anchorbase,>=to]
  	\draw[<-,thick] (0.36,.3) to (0.09,-0.14);
  	\draw[<-,thick] (0.06,.3) to (0.2,.05);
  	\draw[-,line width=1pt] (0.07,-.14) to (0.19,.06);
  	\draw[<-,thick] (-0.19,.3) to (0.08,-0.14);
  	\draw[-,line width=2pt] (0.08,-.45) to (0.08,-.1);
  \end{tikzpicture}
  ,\quad
  \begin{tikzpicture}[anchorbase,>=to]
  	\draw[-,thick] (0.36,.3) to (0.09,-0.14);
  	\draw[-,thick] (0.08,.3) to (0.08,-0.14);
  	\draw[-,thick] (-0.2,.3) to (0.07,-0.14);
  	\draw[<-,line width=2pt] (0.08,-.45) to (0.08,-.1);
  \end{tikzpicture}
  :=
  \begin{tikzpicture}[anchorbase,>=to]
  	\draw[-,thick] (0.35,.3) to (0.08,-0.14);
  	\draw[-,thick] (0.1,.3) to (-0.04,0.06);
  	\draw[-,line width=1pt] (0.085,-.14) to (-0.035,0.06);
  	\draw[-,thick] (-0.2,.3) to (0.07,-0.14);
  	\draw[<-,line width=2pt] (0.08,-.45) to (0.08,-.1);
  \end{tikzpicture}
  =
  \begin{tikzpicture}[anchorbase,>=to]
  	\draw[-,thick] (0.36,.3) to (0.09,-0.14);
  	\draw[-,thick] (0.06,.3) to (0.2,.05);
  	\draw[-,line width=1pt] (0.07,-.14) to (0.19,.06);
  	\draw[-,thick] (-0.19,.3) to (0.08,-0.14);
  	\draw[<-,line width=2pt] (0.08,-.45) to (0.08,-.1);
  \end{tikzpicture}
  ,\quad
  \begin{tikzpicture}[anchorbase,>=to]
  	\draw[<-,thick] (0.36,-.3) to (0.09,0.14);
  	\draw[<-,thick] (0.08,-.3) to (0.08,0.14);
  	\draw[<-,thick] (-0.2,-.3) to (0.07,0.14);
  	\draw[-,line width=2pt] (0.08,.45) to (0.08,.1);
  \end{tikzpicture}
  :=
  \begin{tikzpicture}[anchorbase,>=to]
  	\draw[<-,thick] (0.35,-.3) to (0.08,0.14);
  	\draw[<-,thick] (0.1,-.3) to (-0.04,-0.06);
  	\draw[-,line width=1pt] (0.085,.14) to (-0.035,-0.06);
  	\draw[<-,thick] (-0.2,-.3) to (0.07,0.14);
  	\draw[-,line width=2pt] (0.08,.45) to (0.08,.1);
  \end{tikzpicture}
  =
  \begin{tikzpicture}[anchorbase,>=to]
  	\draw[<-,thick] (0.36,-.3) to (0.09,0.14);
  	\draw[<-,thick] (0.06,-.3) to (0.2,-.05);
  	\draw[-,line width=1pt] (0.07,.14) to (0.19,-.06);
  	\draw[<-,thick] (-0.19,-.3) to (0.08,0.14);
  	\draw[-,line width=2pt] (0.08,.45) to (0.08,.1);
  \end{tikzpicture}
  .
\]
We define thick crossings recursively by
\[
  \begin{tikzpicture}[baseline={(0,-0.05)},>=to]
    \draw[->,line width=2pt] (-0.5,-0.5) node[anchor=north] {\dotlabel{n,i}} to (0.5,0.5);
    \draw[->,line width=2pt] (0.5,-0.5) node[anchor=north] {\dotlabel{m,j}} to (-0.5,0.5);
  \end{tikzpicture}
  := \binom{m}{r}^{-1}
  \begin{tikzpicture}[baseline={(0,-0.05)},>=to]
    \draw[->,line width=2pt] (-0.5,-0.5) node[anchor=north] {\dotlabel{n,i}} to (0.5,0.5);
    \draw[line width=2pt] (0.5,-0.5) node[anchor=north] {\dotlabel{m,j}} to (0.29,-0.29);
    \draw[->,line width=2pt] (-0.29,0.29) to (-0.5,0.5);
    \draw[line width=1pt] (0.3,-0.3) to[out=100,in=-10] (-0.3,0.3);
    \draw[line width=1pt] (0.3,-0.3) to[out=170,in=-80] (-0.3,0.3);
    \node at (-0.08,0.4) {\dotlabel{r,j}};
    \node at (-0.7,0.1) {\dotlabel{m-r,j}};
  \end{tikzpicture}
  = \binom{n}{s}^{-1}
  \begin{tikzpicture}[baseline={(0,-0.05)},>=to]
    \draw[->,line width=2pt] (0.5,-0.5) node[anchor=north] {\dotlabel{m,j}} to (-0.5,0.5);
    \draw[line width=2pt] (-0.5,-0.5) node[anchor=north] {\dotlabel{n,i}} to (-0.29,-0.29);
    \draw[->,line width=2pt] (0.29,0.29) to (0.5,0.5);
    \draw[line width=1pt] (-0.3,-0.3) to[out=80,in=190] (0.3,0.3);
    \draw[line width=1pt] (-0.3,-0.3) to[out=10,in=-100] (0.3,0.3);
    \node at (0.08,0.4) {\dotlabel{s,i}};
    \node at (0.66,0.08) {\dotlabel{n-s,i}};
  \end{tikzpicture}.
\]
Similarly, there are thick downward, rightward, and leftward crossings.  Recalling the definition \cref{Panti}, we also defined decorated thick left caps by
\[
  \begin{tikzpicture}[baseline={(0,-0.05)},>=to]
    \draw[line width=2pt,->] (-0.4,-0.4) node[anchor=north] {\dotlabel{n,i}} to (-0.4,0) arc(180:0:0.4) to (0.4,-0.4) node[anchor=north] {\dotlabel{n,j}};
    \multblackdot[north]{0,0.4}{f};
  \end{tikzpicture}
  \ :=\
  \begin{tikzpicture}[anchorbase,>=to]
    \draw[line width=2pt] (-0.8,-1) node[anchor=north] {\dotlabel{n,i}} to (-0.8,-0.6);
    \draw (-0.83,-0.61) \braidto (-1.2,0) arc (180:0:1.2) to[out=down,in=up] (0.83,-0.61);
    \draw (-0.815,-0.61) \braidto (-1,0) arc (180:0:1) to[out=down,in=up] (0.815,-0.61);
    \draw (-0.8,-0.61) \braidto (-0.8,0) arc (180:0:0.8) to[out=down,in=up] (0.8,-0.61);
    \draw (-0.785,-0.61) \braidto (-0.6,0) arc (180:0:0.6) to[out=down,in=up] (0.785,-0.61);
    \draw (-0.77,-0.61) \braidto (-0.4,0) arc (180:0:0.4) to[out=down,in=up] (0.77,-0.61);
    \draw[<-,line width=2pt] (0.81,-1) node[anchor=north] {\dotlabel{n,j}} to (0.8,-0.6);
    \draw[rounded corners,fill=white] (-1.3,-0.2) rectangle (-0.3,0.2);
    \node at (-0.8,0) {\dotlabel{\imath_n(f)}};
  \end{tikzpicture}
\]
for $f \in P_n(e_j A e_i)^{\fS_n\anti}$. (We are essentially working
here in the colored Heisenberg category discussed in \cref{color}.)

\begin{lem} \label{earl}
  For $k \ge 0$, $1 \le i,j \le N$, and $m,n > 0$, we have
  \[
    \begin{tikzpicture}[anchorbase,>=to]
      \draw[->,line width=2pt] (-0.3,-0.7) node[anchor=north] {\dotlabel{m,i}} to[out=60,in=down] (0.3,0) to[out=up,in=-60] (-0.3,0.7);
      \draw[<-,line width=2pt] (0.3,-0.7) to[out=120,in=down] (-0.3,0) to[out=up,in=-120] (0.3,0.7) node[anchor=south] {\dotlabel{n,j}};
    \end{tikzpicture}
    =
    \begin{tikzpicture}[anchorbase,>=to]
      \draw[->,line width=2pt] (-0.3,-0.7) node[anchor=north] {\dotlabel{m,i}} to (-0.3,0.7);
      \draw[<-,line width=2pt] (0.3,-0.7) to (0.3,0.7) node[anchor=south] {\dotlabel{n,j}};
    \end{tikzpicture}
    + \sum_{r=0}^{k-1} \sum_{b \in \B_A}
    \begin{tikzpicture}[anchorbase,>=to]
      \draw[line width=2pt] (-0.35,-0.7) node[anchor=north] {\dotlabel{m,i}} to (-0.35,-0.5);
      \draw[line width=1.6pt] (-0.356,-0.52) to (-0.356,0.2);
      \draw[<-,line width=2pt] (0.35,-0.7) node[anchor=north] {\dotlabel{n,j}} to (0.35,-0.5);
      \draw[line width=1.6pt] (0.356,-0.52) to (0.356,0.2);
      \draw (-0.32,-0.52) arc(180:0:0.32);
      \draw[->,line width=2pt] (-0.35,0.2) to (-0.35,0.7) node[anchor=south] {\dotlabel{m,i}};
      \draw[line width=2pt] (0.35,0.2) to (0.35,0.7) node[anchor=south] {\dotlabel{n,j}};
      \draw[fill=lightgray] (-0.5,0.05) rectangle (0.5,0.35);
      \multblackdot[north]{-0.135,-0.23}{r};
      \blacktoken[north]{0.135,-0.23}{b};
    \end{tikzpicture}
  \]
  where the shaded rectangle indicates a morphism that will not be determined precisely.
\end{lem}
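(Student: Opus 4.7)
The plan is to reduce this identity about thick strands to repeated applications of the thin-string squish relation \eqref{squish}. First, I would expand both thick strands via their defining idempotents $\imath_m(e_{(m),i})$ and $\jmath_n(e_{(n),j})$, so that the left-hand side becomes $m$ thin upward strands and $n$ thin downward strands in a cup-cap configuration, sandwiched between the symmetrizers. By the associativity of the merge/split diagrams and the fact that each $e_i$, $e_j$ token commutes through crossings, it suffices to compute the composition of thin cups and caps at the level of the endomorphism algebra and then reintroduce the symmetrizers at the end.

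Next, I would apply \eqref{squish} inductively to pull apart the nested cup-cap configuration: each application of the first relation in \eqref{squish} to a pair of adjacent thin strands of opposite orientation yields either (a) the identity term, in which the two strands are straightened, or (b) a term involving a counterclockwise internal bubble with a negatively-indexed dot decoration, joined by teleporters to the remaining strands. By iterating this, together with the curl relations \eqref{curls} and the pitchfork-style identities in \eqref{bogey}, the left-hand side is rewritten as a sum over all ways of choosing how many and which pairs of strands get contracted into bubbles, with the rest passing through straight. Crucially, \eqref{chew} kills all clockwise bubbles of total dot-weight less than $k-1$, so only finitely many bubble configurations survive, parametrized by an integer $r$ with $0 \le r \le k-1$ and a label $b \in \B_A$ chosen from the teleporter sum.

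The all-straight term then recomposes with the outer symmetrizers to give exactly the first summand $1_{H_{m,i}^+ \otimes E_{n,j}^-}$ on the right-hand side. Every remaining term has the structure of the diagrams in the second sum: a single dotted-and-tokened bubble pinched off between the two thick strands, connected through a morphism (which absorbs all the intermediate symmetrizer and teleporter data) back into the two thick strands going out. Since we only need the structural form of these error terms and not their explicit coefficients, we may lump all of the resulting internal morphisms into the single shaded rectangle, which is exactly the freedom the statement allows.

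The main obstacle is the combinatorial bookkeeping of the symmetrizers together with the teleporters: different orders in which the thin squish moves are applied produce seemingly different bubble/teleporter configurations, and one has to verify that all of them can be uniformly organized into the single displayed shape. This is handled exactly as in the analogous argument for Lemma~8.3 of \cite{BSW-K0}; the only new feature here is that each thin crossing or teleporter carries a token from the Frobenius algebra $A$, but since tokens slide freely through crossings, dots, cups, and caps (by \eqref{inkyoto}, \eqref{tokteleport}, \eqref{bogey}), these token labels can be collected at the bubble and at the corresponding thin strand of the input merge, producing precisely the paired $b$, $b^\vee$ summation indicated. Since we leave the precise middle morphism unspecified, no further computation is required beyond verifying this structural form.
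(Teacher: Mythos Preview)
Your proposal is correct and follows essentially the same approach as the paper, which simply defers to the analogous argument in \cite[Lem.~8.1]{BSW-K0} (you cite Lemma~8.3; it should be 8.1). The only new ingredient beyond that reference is the presence of Frobenius-algebra tokens, and as you correctly observe these slide freely through crossings, dots, cups and caps, contributing only the additional summation over $b \in \B_A$.
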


\begin{proof}
  The proof is almost identical to that of \cite[Lem.~8.1]{BSW-K0} and so will be omitted here.
\end{proof}

\begin{cor} \label{grey}
  For $k \ge 0$ and $m,n > 0$, we have
  \[
    \begin{tikzpicture}[anchorbase,>=to]
      \draw[->,line width=2pt] (-0.3,-0.8) node[anchor=north] {\dotlabel{m,i}} to (-0.3,0.8);
      \draw[<-,line width=2pt] (0.3,-0.8) to (0.3,0.8) node[anchor=south] {\dotlabel{n,j}};
    \end{tikzpicture}
    =
    \sum_{r=0}^{\min(m,n)}
    \sum_{f \in \B^{(r)}_{j,i}}
    \begin{tikzpicture}[anchorbase,>=to]
      \draw[line width=2pt] (-0.35,-0.8) node[anchor=north] {\dotlabel{m,i}} to (-0.35,-0.6);
      \draw[line width=1pt] (-0.367,-0.62) to (-0.356,-0.4) node[anchor=south east] {\dotlabel{m-r}} \braidto (0.367,0.3);
      \draw[<-,line width=2pt] (0.35,-0.8) node[anchor=north] {\dotlabel{n,j}} to (0.35,-0.6);
      \draw[line width=1pt] (0.367,-0.62) to (0.356,-0.4) \braidto (-0.367,0.3);
      \draw[line width=1pt] (-0.333,-0.62) arc(180:0:0.333);
      \draw[->,line width=2pt] (-0.35,0.3) to (-0.35,0.8) node[anchor=south] {\dotlabel{m,i}};
      \draw[line width=2pt] (0.35,0.3) to (0.35,0.8) node[anchor=south] {\dotlabel{n,j}};
      \draw[fill=lightgray] (-0.5,0.15) rectangle (0.5,0.45);
      \multblackdot[north]{0,-0.287}{f};
    \end{tikzpicture}
  \]
  where $\B^{(r)}_{j,i}$ is a basis of the subspace of $P_r(e_j A e_i)^{\fS_r\anti}$ spanned by elements of polynomial degree $\le k-1$.
\end{cor}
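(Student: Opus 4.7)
The plan is to deduce the corollary from \cref{earl} by iteration. Starting with the identity morphism on $H_{m,i}^+ \otimes E_{n,j}^-$, we apply \cref{earl} once, which produces the identity term (contributing the $r=0$ summand on the right-hand side of the corollary) plus a sum of terms, each of which has a single cap at the bottom decorated by a dot of multiplicity strictly less than $k$ and by a token $b \in \B_A$. Since the cap strands are forced to pass through the idempotent tokens $e_i$ and $e_j$ sitting inside the thick strings, the effective token on each cap strand lies in $e_j A e_i$, giving the correct color structure.

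Now we iterate. Each summand produced contains a grey-box morphism at the top whose only outputs/inputs are a thick upward $(m,i)$ and thick downward $(n,j)$ pair of strings appearing just above the cap; feeding these back into \cref{earl} again splits off another cap strand, and so on. After $r$ iterations we obtain a term with $r$ strands capped at the bottom, each carrying a dot of multiplicity $<k$ and a token from $\B_{e_j A e_i}$, topped by some grey-box morphism. The process terminates at $r=\min(m,n)$, since no further splitting is possible.

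To finish, we reorganize the dots and tokens on the $r$ capped strands. Collectively, such decorations parameterize a basis of $P_r(e_j A e_i)$ of polynomial degree $\le k-1$. However, the cap strands pass through the antisymmetric idempotent $e_{(1^n),j}$ built into $E_{n,j}^-$, which forces the symmetric part to be absorbed into the ambient grey-box morphism at the top (via the merge/split morphisms and the identities $e_{(1^n),j}\sigma = (-1)^{\ell(\sigma)} e_{(1^n),j}$ in the wreath product). Consequently, only the $\fS_r$-antisymmetric part survives, i.e.\ the label can be taken to run over the basis $\B^{(r)}_{j,i}$ of $P_r(e_j A e_i)^{\fS_r\anti}$ of polynomial degree $\le k-1$, at the cost of redefining the grey-box term appropriately (which is harmless since it is not specified precisely).

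The principal obstacle is the bookkeeping for the antisymmetrization step: tracking how tokens and dots can be freely swapped across adjacent cap strands modulo terms that can be absorbed into the grey box, and verifying that the upper bound $r \le \min(m,n)$ together with the polynomial degree bound $\le k-1$ exactly matches the indexing set $\B^{(r)}_{j,i}$. This is essentially parallel to the analogous reduction in \cite[\S8]{BSW-K0}, with the additional subtlety that tokens must be handled carefully and respect the color constraints imposed by the idempotents $e_i, e_j$.
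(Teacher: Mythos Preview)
Your overall strategy—rearranging \cref{earl} and then iterating (equivalently, inducting on $\min(m,n)$)—is exactly the paper's approach. The gap is in your justification of the antisymmetrization step.

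You argue that the decoration on the $r$ capped strands can be taken in $P_r(e_jAe_i)^{\fS_r\anti}$ because the antisymmetric idempotent sits at one end and ``forces the symmetric part to be absorbed into the grey box,'' citing only the identity $e_{(1^r)}\sigma = (-1)^{\ell(\sigma)}e_{(1^r)}$. That identity lets you absorb a bare permutation, but the decoration $g \in P_r(e_jAe_i)$ carries dots, and in the affine wreath product a crossing does \emph{not} commute with dots: $s_i g = s_i(g) s_i + \partial_i(g)$. So you cannot simply slide a permutation through $g$ to antisymmetrize it; correction terms of lower polynomial degree appear, and you have not explained why those stay inside the desired span.

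What is actually needed—and what the paper invokes—is \cref{dog}: the map
\[
P_r(e_jAe_i)^{\fS_r\anti}\;\longrightarrow\; e_{(1^r)}\,P_r(e_jAe_i)\,e_{(r)},\qquad f\mapsto e_{(1^r)}\,f\,e_{(r)},
\]
is an isomorphism (proved precisely by the triangularity-in-polynomial-degree argument that controls those correction terms). Note this uses \emph{both} idempotents: the symmetric one $e_{(r),i}$ coming from $H_{m,i}^+$ on one side of the cap and the antisymmetric one (arising from $\jmath_n(e_{(n),j})$ in $E_{n,j}^-$) on the other. You only mention the latter. Once you cite \cref{dog}, the rest of your argument goes through: the sandwiched decorations $e_{(1^r)}\,g\,e_{(r)}$ for $g$ ranging over a basis of $P_r(e_jAe_i)$ with each $x$-exponent $\le k-1$ have the same span as those for $f\in\B^{(r)}_{j,i}$, and the change of basis is absorbed into the unspecified grey box.
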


\begin{proof}
    Rearranging the identity from \cref{earl} gives the $r=0$ term in the sum exactly.  Then we use induction on $\min(m,n)$, together with \cref{dog} to get the other terms.
\end{proof}

\begin{theo} \label{K0isom}
    For any graded symmetric Frobenius superalgebra $A$ satisfying
    \cref{virginia}, the map \eqref{gamma} is an isomorphism of
    $\Zq$-algebras. Moreover,
in $\Kar(\Heis{A}{k}_{q,\pi})$, there are distinguished isomorphisms
\begin{align}
    H_{m,i}^+ \otimes H_{n,j}^+ &\cong H_{n,j}^+ \otimes H_{m,i}^+,\qquad\qquad
    E_{m,i}^- \otimes E_{n,j}^- \cong E_{n,j}^- \otimes E_{m,i}^-,\label{last1}
    \\
    H_{m,i}^+ \otimes E_{n,j}^- &\cong \bigoplus_{r=0}^{\min(m,n)} \extdim{r}{[k]_d \langle i,j \rangle_A} E_{n-r,j}^- \otimes H_{m-r,i}^+ \quad \text{if } k \ge 0,\label{last2}
    \\
    E_{m,i}^- \otimes H_{n,j}^+ &\cong \bigoplus_{r=0}^{\min(m,n)} \extdim{r}{-[k]_d \langle j,i \rangle_A} H_{n-r,j}^+ \otimes E_{m-r,i}^- \quad \text{if } k \le 0,\label{last3}
  \end{align}
  for all $m,n >0$ and $i,j \in \{1,2,\dotsc,N\}$.
(Here, for $z = \sum_{n \in\Z, p \in \Z/2} z_{n,p} q^n \pi^p \in
\Zq$ with all $z_{n,p} \geq 0$, we are using the notation $z V$ to
denote
$\bigoplus_{n \in \Z, p \in \Z/2} Q^n \Pi^p V^{\oplus z_{n,p}}$.)
\end{theo}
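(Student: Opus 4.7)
The plan is to establish the distinguished isomorphisms \cref{last1,last2,last3} and then deduce the $\Zq$-algebra isomorphism from them together with \cref{coffee}. By the latter, $\gamma_k$ is already known to be a $\Zq$-module isomorphism, and by the presentation of $\rHeis_k(A)$ recorded in \cref{marahau}, the isomorphisms \cref{last1} together with \cref{last2} (for $k \geq 0$) or \cref{last3} (for $k \leq 0$) are exactly what is required for the assignment $h_{n,i}^\pm \mapsto [H_{n,i}^\pm]$, $e_{n,i}^\pm \mapsto [E_{n,i}^\pm]$ to extend to a $\Zq$-algebra homomorphism $\rHeis_k(A) \to K_0(\Kar(\Heis{A}{k}_{q,\pi}))$.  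By the classical Jacobi--Trudi expressions for Schur functions in terms of complete (respectively elementary) symmetric functions, this algebra map agrees with $\gamma_k$ on all basis elements of the form $s_\blambda^- s_\bmu^+$ (respectively $s_\blambda^+ s_\bmu^-$), proving the algebra isomorphism assertion.

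For the commutativity isomorphisms in \cref{last1}, I will use the $(m,n)$-shuffle permutation in $\fS_{m+n}$: since tokens slide through all crossings (relation \cref{wreathrel}), applying the image of the shuffle under $\imath_{m+n}$ conjugates $e_{(m),i} \otimes e_{(n),j}$ to $e_{(n),j} \otimes e_{(m),i}$, furnishing an even, degree zero isomorphism $H_{m,i}^+ \otimes H_{n,j}^+ \cong H_{n,j}^+ \otimes H_{m,i}^+$ in the Karoubi envelope.  The analogous argument using downward strands and $\jmath_{m+n}$ handles the assertion for the $E$ objects.

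The technical heart of the proof is \cref{last2}; the case \cref{last3} (for $k \le 0$) will then follow by applying the contravariant graded isomorphism $\Omega_k$ of \cref{Omega}, which sends $H_{n,i}^\pm \leftrightarrow E_{n,i}^\mp$ by \cref{peaches} and categorifies the conjugate-linear isomorphism $\omega_k \colon \rHeis_k(A) \xrightarrow{\cong} \rHeis_{-k}(A)$ of \cref{ramen} (together with the identity \cref{walk} relating the two extdim expressions).  To prove \cref{last2} for $k \ge 0$, I plan to adapt the strategy of \cite[Th.~8.3]{BSW-K0} to the present Frobenius setting, applying \cref{grey} to the thick rightward crossing between $H_{m,i}^+$ and $E_{n,j}^-$ capped above and below by the symmetrizer idempotents.  \cref{grey} then expresses the relevant morphism as a sum, indexed by $0 \le r \le \min(m,n)$ and basis elements $f \in \B^{(r)}_{j,i}$, of terms factoring through summands isomorphic---up to explicit grading and parity shifts determined by $\deg(f)$ and $\bar f$---to $E_{n-r,j}^- \otimes H_{m-r,i}^+$.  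Summing the shifts across $\B^{(r)}_{j,i}$ yields a multiplicity equal to the graded dimension of the antisymmetric polynomial space appearing in \cref{dog}, namely $\extdim{r}{\qint{k}_\dA \langle i,j \rangle_A}$; here $\qint{k}_\dA$ reflects the $k$ allowable dot-powers $x^0,\dotsc,x^{k-1}$ of the even variable of degree $2\dA$, and the factor $q^{-\dA}$ implicit in $\langle i,j \rangle_A$ accounts for the degree of the trace.

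The main obstacle will be verifying that this decomposition is a genuine direct sum rather than merely an identity in $K_0$.  For this I will rely on Krull--Schmidt (\cref{coffee}): once the basis theorem \cref{basis} is used to compute graded endomorphism dimensions of both sides, the mutually inverse projections and injections supplied by \cref{grey} together with the standard adjunctions promote the $K_0$-level equality to a genuine isomorphism of objects.  A secondary bookkeeping issue will be reconciling the various grading shifts---stemming from the cup and cap degrees in \cref{trappist}, the degree $2\dA$ of the polynomial variable, and the normalization $q^{-\dA}$ in the Cartan pairing \cref{cartanform}---but this is ultimately routine.
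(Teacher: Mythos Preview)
Your overall strategy is close to the paper's: establish \cref{last1,last2,last3}, then deduce that $\gamma_k$ is an algebra map from the presentation in \cref{marahau}; use $\Omega_k$ and \cref{walk} to reduce \cref{last3} to \cref{last2}; and invoke \cref{grey} to produce the candidate morphism $\theta_{m,n}\colon H_{m,i}^+\otimes E_{n,j}^-\to Y$. The gap is in the ``main obstacle'' paragraph.

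\cref{grey} does \emph{not} supply mutually inverse projections and injections. It only expresses $1_X$ (with $X=H_{m,i}^+\otimes E_{n,j}^-$) as a sum of compositions in which one factor is the explicit column vector $\theta_{m,n}$ and the other is an \emph{undetermined} shaded morphism. This yields a one-sided inverse, hence $Y\cong X\oplus Z$, but nothing more. Your plan to ``compute graded endomorphism dimensions of both sides via the basis theorem'' would amount to verifying directly the graded identity $\grdim e'\End_{\Heis{A}{k}}(\uparrow^{m}\downarrow^{n})e'=\grdim e\End_{\Heis{A}{k}}(\downarrow^{n}\uparrow^{m})e$ for the relevant idempotents $e,e'$, which is a nontrivial combinatorial statement you have not indicated how to prove; Krull--Schmidt by itself does not close this. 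The paper's mechanism is different: it first establishes \cref{shadow}, showing the \emph{ungraded} map $\tgamma_k$ is an injective $\Zpi$-algebra homomorphism via the action of $\Heis{A}{k}$ on a tensor product of level-one cyclotomic module supercategories. Forgetting gradings, the algebra relation in $\rtHeis_k(A)$ forces $[\ug{X}]=[\ug{Y}]$, hence $[\ug{Z}]=0$; then linear independence of the $[\ug{S}_\bmu^-\otimes\ug{S}_\blambda^+]$ (again from \cref{shadow}) and nonnegativity of the graded multiplicities $z_{\blambda,\bmu}(q)$ force $Z=0$. So the missing idea is \cref{shadow} and the categorical action it is built on; without it the split from \cref{grey} cannot be promoted to an isomorphism.

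A secondary point: your argument tacitly uses \cref{coffee}, which in the paper is proved assuming $d>0$. The case $d=0$ (so $A$ is purely even semisimple) is handled separately via \cref{moritathm} and the known result for $A=\kk$.
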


\begin{proof}
We first prove the result in the case that $d >0$.
In this case, \cref{coffee} shows that $\gamma_k$ is a $\Zq$-module
isomorphism, and it just remains to show that it is an algebra
homomorphism. To see this, it suffices to verify \cref{last1,last2,last3},
since these isomorphisms imply that the images of the
relations from \cref{marahau} are satisfied in
$K_0(\Kar(\Heis{A}{k}_{q,\pi}))$.
To establish \cref{last1,last2,last3}, we just treat the case $k \ge 0$,
since the result for $k \le 0$ then follows by applying $\Omega_k$ and
using \cref{walk}.
The thick upward (resp.\ downward) crossing gives a canonical
isomorphism $H_{m,i}^+ \otimes H_{n,j}^+ \cong H_{n,j}^+ \otimes
H_{m,i}^+$ (resp.\ $E_{m,i}^- \otimes E_{n,j}^- \cong E_{n,j}^-
\otimes E_{m,i}^-$),
verifying \cref{last1}.  It remains to show that the objects
    \[
        X := H_{m,i}^+ E_{n,j}^-
        \quad \text{and} \quad
        Y := \bigoplus_{r=0}^{\min(m,n)} \extdim{r}{[k]_d \langle i,j \rangle_A} E_{n-r,j}^- \otimes H_{m-r,i}^+
    \]
    are isomorphic.  \Cref{grey} implies that the morphism $\theta_{m,n} \colon X \to Y$ defined by the column vector
    \[
        \left[
            \begin{tikzpicture}[anchorbase,>=to]
                \draw[line width=2pt] (-0.35,-0.8) node[anchor=north] {\dotlabel{m,i}} to (-0.35,-0.6);
                \draw[line width=1pt,->] (-0.367,-0.62) to (-0.356,-0.4) \braidto (0.367,0.3) node[anchor=south] {\dotlabel{m-r,i}};
                \draw[<-,line width=2pt] (0.35,-0.8) node[anchor=north] {\dotlabel{n,j}} to (0.35,-0.6);
                \draw[line width=1pt] (0.367,-0.62) to (0.356,-0.4) \braidto (-0.367,0.3) node[anchor=south] {\dotlabel{n-r,j}};
                \draw[line width=1pt] (-0.333,-0.62) arc(180:0:0.333);
                \multblackdot[north]{0,-0.287}{f};
            \end{tikzpicture}
        \right]_{0 \le r \le \min(m,n),\, f \in \B^{(r)}_{j,i}}
    \]
    has a left inverse $\phi_{m,n}$.  Using \cref{yoneda} to translate
    into the language of projective graded
supermodules over the locally unital graded superalgebra
corresponding to $\Heis{A}{k}$, we have finitely generated projective
graded supermodules $X$, $Y$ and homomorphisms $\theta_{m,n} \colon Y
\to X$ and $\phi_{m,n} \colon X \to Y$ such that $\theta_{m,n} \circ
\phi_{m,n} = \id_X$, and we wish to show that $\theta_{m,n}$ is an
isomorphism. Since $\theta_{m,n}$ has
a right inverse, it is surjective, and it remains to show that
$Z := \ker \theta_{m,n}$ is zero.
By \cref{coffee}, we have that
\[
Z \cong \bigoplus_{\blambda,\bmu \in \cP^N}
z_{\blambda,\bmu}(q) S_{\bmu}^- \otimes S_{\blambda}^+
\]
for $z_{\blambda,\bmu}(q) \in \Zq$ with nonnegative coefficients.
Since $Y$ is projective, we have
$Y \cong X \oplus Z$,
hence $\ug{Y} \cong \ug{X} \oplus \ug{Z}$
where $\ug{X}, \ug{Y}$ and $\ug{Z}$ denote
the projective supermodules obtained from $X, Y$ and $Z$ by
forgetting the grading.
By \cref{shadow} specialized at $q=1$ and \cref{marahau}, we know that $[\ug{Y}]$ and $[\ug{X}]$ are
equal in $K_0(\Kar(\Heis{A}{k}_\pi))$, hence
\[
[\ug{Z}] = \sum_{\blambda,\bmu \in \cP^N} z_{\blambda,\bmu}(1)
[\underline{S}_\bmu^- \otimes \underline{S}_\blambda^+]
= 0.
\]
By \cref{shadow}, the classes of the objects
$\underline{S}_\bmu^- \otimes \underline{S}_, \:\blambda^+\, \blambda,\bmu
\in \cP^N$
are linearly independent in the $\Zpi$-module $K_0(\Heis{A}{k}_\pi)$.
We deduce that $z_{\blambda,\bmu}(1) = 0$, hence since the
coefficients of $z_{\blambda,\bmu}(q)$ are all nonnegative we
actually have that $z_{\blambda,\bmu}(q) = 0$. This implies that $Z =
0$ as required.

Now suppose $d=0$, i.e.\ the grading on $A$ is trivial.  Then, by our
assumption \cref{virginia}, $A$ is purely even and semisimple.  Using
\cref{moritathm}, we may therefore assume that $A = \kk^{\oplus n}$
for some $n \ge 1$.  This case was considered in \cite{Gan18}, where
it was shown that $\Kar(\Heis{\kk^{\oplus n}}{k})$ is isomorphic to
the Karoubi envelope of the symmetric product of $n$ copies of
$\Heis{\kk}{k}$. It was shown in \cite[Th.~1.1]{BSW-K0} that
$K_0(\Kar(\Heis{\kk}{k}))$ is isomorphic to
$\rtHeis_k(\kk)$ specialized at $\pi = 1$ (there being no reason to
consider any $\Z$- or $\Z/2$-gradings).
Reintroducing the trivial gradings, it follows that
    \[
        \Kar\left(\Heis{\kk^{\oplus n}}{k}_{q,\pi}\right) \cong \rHeis_k(\kk)^{\otimes n} \cong \rHeis_k(\kk^{\oplus n}),
    \]
    where the second isomorphism follows from the
    definition of $\rHeis_k(\kk^{\oplus n})$, or from
    \cref{marahau}. The first part of the theorem follows. For the second part, the same morphisms as defined above give the isomorphisms \cref{last1,last2,last3} when $d=0$ too, as follows by similar reductions from
        \cite[Th.~1.2]{BSW-K0} when $i=j$, the result being obvious when $i
    \neq j$.
\end{proof}

For $A = \kk$, \cref{K0isom} was conjectured in \cite[Conj.~1]{Kho14} and proved in \cite[Th.~1.1]{BSW-K0}.  When $\dA > 0$, \Cref{K0isom} was proved in \cite[Th.~1.5]{Sav18} under an additional assumption that one has imposed certain additional relations on the bubbles.  As noted after \cite[Th.~1.5]{Sav18}, the basis theorem (\cref{basis}) allows us to prove the result without these additional relations.

Let us finally prove that the functor $\tilde\Delta_{l|m}$ from
\cref{bunnycor} categorifies the comultiplication $\delta_{l|m}$.

\begin{theo} \label{camping}
  Suppose $A$ is a graded symmetric Frobenius superalgebra $A$ satisfying \cref{virginia}.  For $l,m \in \Z$ with $k=l+m$, we have a commutative diagram
  \[
    \begin{tikzcd}
      \rHeis_k(A) \arrow[r, "\delta_{l|m}"] \arrow[dd, "\gamma_{k}" right, "\cong" left]
      &
      \rHeis_l(A) \otimes_{\Zq} \rHeis_m(A) \arrow[d, "\gamma_{l} \otimes \gamma_{m}" right, "\cong" left]
      \\
      & K_0\left(\Kar\left(\Heis{A}{l}_{q,\pi}\right)\right) \otimes_{\Zq} K_0\left(\Kar\left(\Heis{A}{m}_{q,\pi}\right)\right) \arrow[d, "\epsilon_{l|m}" right]
      \\
      K_0\left(\Kar\left(\Heis{A}{k}_{q,\pi}\right)\right) \arrow[r, "{[\tilde\Delta_{l|m}]}"]
      &
      K_0\left(\Kar\left(\big(\Heis{A}{l} \barodot \Heis{A}{m}\big)_{q,\pi}\right)\right)
    \end{tikzcd}
  \]
  where $\epsilon_{l|m}$ is the ring homomorphism induced by the canonical functors from $\Heis{A}{l}$ and $\Heis{A}{m}$ to $\Heis{A}{l} \barodot \Heis{A}{m}$.
\end{theo}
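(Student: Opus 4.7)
The plan is to observe that both composite maps around the diagram are $\Zq$-algebra homomorphisms and then check commutativity on a generating set of $\rHeis_k(A)$. Along the top-right route, the comultiplication $\delta_{l|m}$ is an algebra map by construction, $\gamma_l \otimes \gamma_m$ is a tensor product of $\Zq$-algebra isomorphisms by \cref{K0isom}, and $\epsilon_{l|m}$ is explicitly a $\Zq$-algebra homomorphism. Along the bottom route, $\gamma_k$ is an algebra isomorphism by \cref{K0isom}, and $[\tilde\Delta_{l|m}]$ respects multiplication because $\tilde\Delta_{l|m}$ is a graded monoidal superfunctor.

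By the presentation recorded in \cref{marahau}, $\rHeis_k(A)$ is generated as a $\Zq$-algebra by the elements $\{h_{n,i}^+, e_{n,i}^- : n \ge 0,\ 1 \le i \le N\}$, so it will suffice to verify commutativity on these. For $h_{n,i}^+$, I would apply the formula $\delta_{l|m}(h_{n,i}^+) = \sum_{r=0}^n h_{n-r,i}^+ \otimes q^{-l\dA r} h_{r,i}^+$ from \cref{puppet}, push through $\gamma_l \otimes \gamma_m$ and then $\epsilon_{l|m}$, and use \cref{elephant} to convert the $\Zq$-action into a grading shift. The result is $\sum_{r=0}^n [Q^{-l\dA r} \blue{H_{n-r,i}^+} \otimes \red{H_{r,i}^+}]$, which is precisely $[\tilde\Delta_{l|m}(H_{n,i}^+)]$ by the first isomorphism in \cref{thedrop}. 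The case of $e_{n,i}^-$ proceeds identically, using $\delta_{l|m}(e_{n,i}^-) = \sum_{r=0}^n q^{m\dA r} e_{r,i}^- \otimes e_{n-r,i}^-$ from \cref{puppet} together with the fourth isomorphism in \cref{thedrop}; the two expressions agree after relabeling $r \mapsto n-r$.

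There is no real obstacle in this argument, since all of the substantial categorical work has already been done in \cref{morebunny} (which supplies the key decompositions of $\tilde\Delta_{l|m}(H_{n,i}^\pm)$ and $\tilde\Delta_{l|m}(E_{n,i}^\pm)$) and in \cref{K0isom} (which supplies the needed algebra isomorphism $\gamma_k$ and its companions). What remains is careful bookkeeping: matching the grading shifts $Q^{-l\dA r}$ and $Q^{m(n-r)\dA}$ arising categorically with the Laurent polynomial coefficients $q^{-l\dA r}$ and $q^{m\dA r}$ in the explicit formulas for $\delta_{l|m}$, which is immediate from \cref{elephant}.
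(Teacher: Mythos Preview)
Your proposal is correct and follows essentially the same approach as the paper's own proof: both reduce to checking commutativity on the generators $h_{n,i}^+$ and $e_{n,i}^-$ using the explicit formulas for $\delta_{l|m}$ from \cref{puppet} together with the categorical decompositions in \cref{thedrop}. The paper compresses this into a single sentence, while you spell out the bookkeeping with the grading shifts, but the substance is identical.
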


\begin{proof}
  Since all the maps in the diagram are ring homomorphisms, it suffices to show that the diagram commutes on the generators $h_{n,i}^+$ and $e_{n,i}^-$ of $\rHeis_k(A)$.  This follows from \cref{thedrop,puppet}.
\end{proof}

\begin{rem}\label{Dagger}
    The hypothesis \eqref{virginia} assumed throughout this section and \cref{sec:lattic} can actually be weakened slightly: our arguments only require that all irreducible $A$-modules are of type $\mathtt{M}$, and that \emph{either} $A$ is semisimple with trivial grading \emph{or} $A$ is nontrivially positively graded. The assumption \eqref{virginia} as  formulated in the introduction is slightly stronger than this but actually it holds in all of the examples of interest to us.
\end{rem}

\begin{rem}\label{park}
    We have assumed throughout the article that the ground ring $\kk$ is a field of characteristic zero.  The category $\Heis{A}{k}$ can actually be defined for any symmetric Frobenius superalgebra $A$ over any commutative ground ring $\kk$; such an $A$ comes equipped with a dual pair of homogeneous bases as a free $\kk$-supermodule. Providing $\kk$ is an integral domain, \Cref{basis} continues to hold in this more general setting.  However, the computation of the Grothendieck ring in the current section uses the characteristic zero assumption in a more fundamental way, since the idempotents $e_{(n),i}$ involve division by $n!$.  To work in positive characteristic, one would need to develop a ``thick calculus'' for the Frobenius Heisenberg category.
\end{rem}


\bibliographystyle{alphaurl}
\bibliography{Foundations}

\end{document}